\documentclass[12pt, twoside]{amsart}
\setlength{\headheight}{14.0pt}
\textwidth=150mm
\textheight=210mm
\usepackage[a4paper, width=150mm, top=25mm, bottom=25mm, hmarginratio=1:1]{geometry}
\usepackage{fancyhdr}
\pagestyle{fancy}
\fancyhf{}
\fancyhead[R]{\thepage}
\fancyhead[CE]{\textsl{Julien Schanz}}
\fancyhead[CO]{\textsl{Quantum Symmetries of Vertex-Transitive Graphs on 12 Vertices}}
\fancyfoot{}
\usepackage{setspace}

\usepackage[colorinlistoftodos, bordercolor=xkcdForestGreen, backgroundcolor=xkcdForestGreen!20, linecolor=xkcdForestGreen, textsize=scriptsize]{todonotes}
\usepackage[latin1, utf8]{inputenc}
\usepackage{amsbsy}
\usepackage{amscd} 
\usepackage{amsfonts}
\usepackage{amsgen} 
\usepackage{amsmath}
\usepackage{amsopn} 
\usepackage{amssymb}
\usepackage{amstext}
\usepackage{amsthm} 
\usepackage{amsxtra}
\usepackage{breakurl}
\usepackage{hyperref}
 \hypersetup{
           breaklinks=true,  
           colorlinks=true,
           pdfusetitle=true, 
        }

\usepackage[titlenumbered, linesnumbered, ruled, vlined]{algorithm2e}
\usepackage{booktabs}
\usepackage[english]{babel}
\usepackage[sorting=anyt, style=nature, maxbibnames=99, backend=biber]{biblatex}
\usepackage{csquotes}
\usepackage{setspace}
\usepackage{physics}
\usepackage{enumitem}
\usepackage{xkcdcolors}
\usepackage[page, header, toc]{appendix}
\usepackage{float} 

\usepackage{tikz}
\usetikzlibrary{positioning, calc}
\usepackage{ifthen}

\usepackage{tabularx}
\usepackage{makecell}

\addbibresource{bibfile.bib}

\swapnumbers
\theoremstyle{plain} 
\newtheorem{thm}{Theorem}[section] 

\newtheorem{theorem*}{Main Theorem}
\newtheorem{obs*}{Observation}

\newtheorem*{thm*}{Main Theorem}
\newtheorem{prop}[thm]{Proposition}
\newtheorem{lem}[thm]{Lemma}
\newtheorem{propr}[thm]{Properties}
\newtheorem{cor}[thm]{Corollary}
\theoremstyle{definition}
\newtheorem{defn}[thm]{Definition}
\newtheorem{definition}[thm]{Definition}

\newtheorem{ex}[thm]{Example}

\newtheorem{notation}[thm]{Notation}

\numberwithin{equation}{section}

\renewcommand{\theta}{\vartheta}
\renewcommand{\phi}{\varphi}
\renewcommand{\epsilon}{\varepsilon}

\renewcommand{\labelenumi}{(\roman{enumi})}
\renewcommand{\theenumi}{(\roman{enumi})}

\newcommand{\N}{\mathbb N}
\newcommand{\Z}{\mathbb Z}

\newcommand{\R}{\mathbb R}
\newcommand{\C}{\mathbb C}

\makeatletter
\def\moverlay{\mathpalette\mov@rlay}
\def\mov@rlay#1#2{\leavevmode\vtop{%
   \baselineskip\z@skip \lineskiplimit-\maxdimen
   \ialign{\hfil$\m@th#1##$\hfil\cr#2\crcr}}}
\newcommand{\charfusion}[3][\mathord]{
    #1{\ifx#1\mathop\vphantom{#2}\fi
        \mathpalette\mov@rlay{#2\cr#3}
      }
    \ifx#1\mathop\expandafter\displaylimits\fi}
\makeatother

\DeclareMathOperator{\Aut}{Aut}

\DeclareMathOperator{\CN}{CN}

\DeclareMathOperator{\Line}{L}
\DeclareMathOperator{\Trunc}{Trunc}
\DeclareMathOperator{\Antip}{Antip}
\newcommand{\QBan}{G_{aut}^+}
\newcommand{\QBic}{G_{aut}^*}

\newlist{pfsteps}{enumerate}{3}
\setlist[pfsteps,1]{%
  label=\sffamily{Step {\arabic*}:},
  ref=\normalfont{\arabic*},
  wide,itemsep=0pt,topsep=0pt
}

\author{Julien Schanz}
\title{Quantum Symmetries of Vertex-Transitive Graphs on 12 Vertices}
\begin{document}
\begin{abstract}
    Recently, the work on quantum automorphism groups of graphs has seen renewed progress, which we expand in this paper.
    Quantum symmetry is a richer notion of symmetry than the classical symmetries of a graph. In general, it is non-trivial to decide
    whether a given graph does have quantum symmetries or not. For vertex-transitive graphs, the quantum symmetries have already
    been determined in earlier work 
    on up to $11$ and on  $13$ vertices. This paper fills the gap by determining for all vertex-transitive graphs on  $12$ vertices, whether they have
    quantum symmetries and for most of these graphs we also give their quantum automorphism group explicitly.
\end{abstract}
\maketitle
\setcounter{tocdepth}{1}
\tableofcontents

\section*{Introduction}
Quantum automorphism groups of graphs were first studied by Bichon in~\cite{bichon_2003} and by 
Banica in~\cite{banica_2005}.
They are compact matrix quantum groups, as introduced by Woronowicz in~\cite{woronowicz_1987, woronowicz_1991},
in particular they are quantum subgroups of the quantum symmetric group $S_n^+$, which was first described by Wang in~\cite{wang_1998}.
If $n \ge 4$, the quantum symmetric group $S_n^+$ is bigger than the classical symmetric group  $S_n$, and thus
for graphs on more than  $4$ vertices, also the quantum automorphism group is potentially bigger than the 
classical automorphism group, i.e. we have
\[
    G_{aut}(\Gamma) \subseteq \QBan(\Gamma)
\] 
for any graph $\Gamma$.

Previous work on quantum groups of graphs includes a computer based approach by Eder et al. in~\cite{eder_et_al_2022},
where the authors compute for all connected, simple graphs on up to $6$ vertices and all connected, simple graphs 
on  $7$ vertices with classical automorphism group of order less than  $2$, whether they have quantum symmetries.
Moreover, in~\cite{vanDobbendeBruyn_et_al_2023_1}, van Dobben de Bruyn et al. give a complete characterisation of 
quantum automorphism groups of trees, including a polynomial-time algorithm to explicitly compute the 
quantum automorphism group of a given tree. Some of the same authors also give the first construction of 
a graph that has trivial automorphism group but non-trivial quantum automorphism group 
in~\cite{vanDobbendeBruyn_et_al_2023_2}.
In~\cite{gromada_2022}, Gromada defines and investigates quantum automorphisms of Hadamard matrices and shows 
that these results also pass to the corresponding Hadamard graphs. The study of quantum symmetries of graphs 
has also been extended to the study of quantum symmetries of quantum graphs in~\cite{brannan_et_al_2019, chirvasitu_wasilewski_2022}.

A particular class of graphs that is of interest are the vertex-transitive graphs. They are interesting, because the
vertex-transitivity ensures that the graphs have at least a certain amount of symmetry.
In~\cite{banica_bichon_2006_1}, Banica and Bichon described the quantum automorphism goups of all vertex-transitive graphs on
up to $11$ vertices except for the Petersen graph, for which Schmidt showed in~\cite{schmidt_2018} that it does not have quantum
symmetries. Moreover, Chassaniol described the quantum automorphism groups of all vertex-transitive graphs on $13$ vertices
in~\cite{chassaniol_2016, chassaniol_2019}.
This paper fills the gap by investigating all vertex-transitive graphs on  $12$ vertices to see which of these have quantum
symmetries and which do not, partially answering a question posed in the PhD thesis of Schmidt~\cite{schmidt_2020_1}.

We sort the vertex-transitive graphs on $12$ vertices in  $5$ subclasses: \hyperref[section:non_connected]{disconnected graphs},
\hyperref[section:products]{products of smaller graphs},
\hyperref[section:circulant_graphs]{circulant graphs}, \hyperref[section:semicirculant_graphs]{semicirculant graphs}
and \hyperref[section:special_cases]{special cases} that do not fit into any of the other subclasses.

In total, there are $74$ vertex-transitive graphs on  $12$ vertices. However, since the quantum automorphism group of 
a graph is the same as the one of its complement, one only needs to consider one of the two. Due to this, 
we study only  $37$ graphs. 

Our main result is the continuation of the classification programme of quantum automorphism groups of graphs.
We also observe that the existence of disjoint automorphisms is equivalent to a graph having quantum symmetries for 
vertex-transitive graphs on up to $13$ vertices, which is not the case in general.

In Section~\ref{section:preliminaries}, we give the basic definitions around quantum automorphism groups of graphs.
In Section~\ref{section:useful_lemmas}, we present some lemmas, that are useful when computing the quantum 
automorphism group of a graph. In Section~\ref{section:overview_and_notation}, we give an overview of the graphs that are
considered in this paper and introduce notation to help with some of the proofs.
In Sections~\ref{section:constructions_from_smaller_graphs}, \ref{section:circulant_and_semi_circulant_graphs} and~\ref{section:special_cases}, we give the results on the
existence of quantum symmetries for all the different groups of vertex transitive graphs and then compute the quantum
automorphism group of two such graphs explicitly in Section~\ref{section:computation_of_quantum_automorphism_groups}. In Appendix~\ref{appendix:omitted-proofs}, we collect the
proofs omitted in Section~\ref{section:circulant_and_semi_circulant_graphs} about computing the quantum symmetries of 
the graphs. In Appendix~\ref{appendix:computing_non_commutative_groebner_bases_in_oscar}, we give a short overview
of the Groebner basis computation in OSCAR and its limitations, that was used for some results in this paper.

\section*{Main Results}
The main result of this paper is that we have decided the existence of quantum symmetries of vertex-transitive graphs on $12$ vertices.
Combining this with previous results of Banica and Bichon in~\cite{banica_bichon_2006_1}, Schmidt in~\cite{schmidt_2018} and Chassaniol in~\cite{chassaniol_2016, chassaniol_2019}, we 
get the following:
 \begin{theorem*}
    For vertex transitive graphs on up to $13$ vertices, the existence of quantum symmetries has been completely determined.
\end{theorem*}

Of the $37$ graphs we studied,  $20$ have quantum symmetries. Among those with quantum symmetries there are all graphs
that are disjoint copies of smaller graphs, some graph products, three circulant and three semi-circulant graphs.
\begin{table}[H]
    \centering
    \begin{tabular}{l|l|l}
        subclass & total graphs & graphs with quantum symmetries\\
        \hline
        disconnected graphs & $9$ &  $9$\\
        products of smaller graphs &  $6$ &  $5$ \\
        circulant graphs & $12$ &  $3$\\
        semi-circulant graphs &  $5$ &  $3$\\ 
        special cases &  $5$ &  $0$
    \end{tabular}
\end{table}
In the following table, we recollect the previous results on quantum symmetries of vertex-transitive graphs by Banica and 
Bichon in~\cite{banica_bichon_2006_1} and by Chassaniol in~\cite{chassaniol_2016, chassaniol_2019}. Also included is the result that the Petersen
graph has no quantum symmetries by Schmidt~\cite{schmidt_2018}. These are all vertex transitive graphs of the 
given order up to complements.
We exclude the trivial cases of the full graphs and of disjoint copies of smaller graphs, due to the 
fact that the full graph on $n$ vertices always has quantum automorphism group  $S_n^+$, which can be seen easily,
and due to the result of Banica and Bichon in~\cite{banica_bichon_2006_2}, that a graph consisting of  $n$ disjoint
copies of a connected graph  $\Gamma$ has quantum automorphism group
     \[
        \QBan(n\Gamma) = \QBan(\Gamma) \wr_{*} S_{n}^+
    .\] 

\begin{table}[H]
    \centering
    \caption{Vertex-transitive graphs on $\le 11$ and on $13$ vertices}
    \begin{tabular}{l|l|l|l}
        Order & Graph& Automorphism Group& Quantum Automorphism Group\\
        \hline
        $n\neq 4$ &  $C_n$ & $D_n$ & $D_n$\\
        $8$ &  $C_8(4)$ & $D_8$ & $D_8$\\
        $8$ &  $K_2 \square C_4$ & $S_4 \times \Z_2$ & $S_4^+ \times \Z_2$\\
        $9$ &  $C_9(3)$ & $D_9$ & $D_9$\\
        $9$ &  $K_3 \square K_3$ & $S_3 \wr \Z_2$ & $S_3 \wr \Z_2$\\
        $10$ &  $C_{10}(2)$ &$D_{10}$ & $D_{10}$\\
        $10$ &  $C_{10}(5)$ &$D_{10}$ & $D_{10}$\\
        $10$ &  $K_2 \square C_{5}$ &$D_{10}$ & $D_{10}$\\
        $10$ &  Petersen &  $S_5$ & $S_5$\\
        $10$ &  $K_2 \square K_5 $ & $S_5 \times \Z_2$ & $S_5^+ \times \Z_2$\\
        $10$ &  $C_{10}(4)$ & $\Z_2 \wr D_5$ & $\Z_2 \wr_{*} D_5$ \\
        $11$ &  $C_{11}(2)$ & $D_{11}$ & $D_{11}$\\
        $11$ &  $C_{11}(3)$ & $D_{11}$ & $D_{11}$\\
        $13$ &  $C_{13}$ & $D_{13}$ & $D_{13}$\\
        $13$ &  $C_{13}(2)$ & $D_{13}$ & $D_{13}$\\
        $13$ &  $C_{13}(2, 5)$ & $D_{13}$ & $D_{13}$\\
        $13$ &  $C_{13}(2, 6)$ & $D_{13}$ & $D_{13}$\\
        $13$ &  $C_{13}(3)$ & $D_{13}$ & $D_{13}$\\
        $13$ &  $C_{13}(5)$ & $\Z_{13} \rtimes \Z_4$ & $\Z_{13} \rtimes \Z_4$\\
        $13$ &  $C_{13}(3, 4)$ & $\Z_{13} \rtimes \Z_6$ & $\Z_{13} \rtimes \Z_6$
    \end{tabular}
\end{table}
Next, we give a table of all vertex-transitive graphs on $12$ vertices up to complements with their automorphism 
groups and their quantum automorphism groups, if it is known. For completeness sake, we include the trivial 
cases in this table. 
In particular, we have computed for all of the following graphs, whether they have quantum symmetries. Therefore,
whenever the column with the quantum automorphism group is a question mark, the graph in question does have quantum 
symmetries, but we did not manage to find the actual quantum automorphism group.

\begin{table}[H]
    \centering
    \caption{Vertex-transitive graphs on $12$ vertices}
    \begin{tabular}{l|l|l}
        Graph& Automorphism Group& Quantum Automorphism Group\\
        \hline
        $6 K_2$ & $\Z_2 \wr S_6$ & $\Z_2 \wr_* S_6^+$\\
        $4 K_3$ & $S_3 \wr S_4$ & $S_3 \wr_* S_4^+$\\
        $3 K_4$ & $S_4 \wr S_3$ & $S_4^+ \wr_* S_3$\\
        $3C_4$ & $H_2 \wr S_3$ & $H_2^+ \wr_* S_3$ \\
        $2 K_6$ & $S_6 \wr \Z_2$ & $S_6^+ \wr_* \Z_2$\\
        $2 C_6$ & $D_6 \wr \Z_2$ & $D_6 \wr_* \Z_2$\\
        $2(K_2 \square K_3)$ & $D_{6} \wr \Z_2$ & $D_{6} \wr_* \Z_2$\\
         $2 C_6(2)$ &  $(\Z_2 \wr S_3) \wr \Z_2$ & $(\Z_2 \wr_* S_3) \wr_* \Z_2$\\
        $2 C_6(3)$ & $(S_3 \wr \Z_2) \wr \Z_2$ & $(S_3 \wr_* \Z_2) \wr_* \Z_2$\\
        $K_6 \times K_2$& $S_6 \times \Z_2$ & $S_6^+ \times \Z_2$\\
        $K_3 \times K_4$ & $S_3 \times S_4$ & $S_3 \times S_4^+$\\
        $C_4 \square C_3$ & $H_2 \times S_3$ & $H_2^+ \times S_3$\\
        $K_2 \square C_6(3) $ & $\Z_2 \times (S_3 \wr \Z_2)$ & $\Z_2 \times (S_3 \wr_* \Z_2)$\\
        $K_2 \square C_6 $ & $\Z_2 \times D_6$ & $\Z_2 \times D_6$\\ 
        $K_2 \square C_6(2) $ & $\Z_2 \times (\Z_2 \wr S_3)$& ?\\
        $C_{12}$ & $D_{12}$ & $D_{12}$\\
         $C_{12}(3)$ & $D_{12}$ & $D_{12}$\\
         $C_{12}(6)$ & $D_{12}$ & $D_{12}$\\
         $K_{12}$ & $S_{12}$ & $S_{12}^+$\\
         $C_{12}(5)$ & $\mathcal{A}$&?\\
         $C_{12}(4, 5)$ & $H_2 \times S_3$ & $H_2^+ \times S_3$\\
         $C_{12}(5, 6)$ & $\mathcal{A}$ & ?\\
         $C_{12}(2)$& $D_{12}$ & $D_{12}$\\
         $C_{12}(4)$& $D_{12}$ & $D_{12}$\\
         $C_{12}(2, 6)$&$D_{12}$&$D_{12}$\\
         $C_{12}(3, 6)$&$D_{12}$&$D_{12}$\\
         $C_{12}(4, 6)$&$D_{12}$&$D_{12}$\\
         $C_{12}(5^+)$ & $H_3$ & ?\\
         $C_{12}(3^+, 6)$ & $H_2 \times S_3$ & $H_2^+ \times S_3$\\
         $C_{12}(5^+, 6)$ & $H_3$ & ?\\
         $C_{12}(2, 5^+)$&$D_6$& $D_6$\\
         $C_{12}(4, 5^+)$&$D_6$& $D_6$\\
          Icosahedral Graph & $\Z_2 \times A_5$ & $\Z_2 \times A_5$\\ 
         $L(C_6(2))$ & $H_3$ & $H_3$\\
         $Trunc(K_4)$ &$S_4$ & $S_4$\\
         $Antip(Trunc(K_4))$ &$S_4$ & $S_4$\\ 
         Cuboctahedral Graph & $H_3$ & $H_3$
    \end{tabular}
\end{table}
Here, we call
 \[
    \mathcal{A} := (\Z_2 \times ((((\Z_2 \times \Z_2 \times \Z_2 \times \Z_2)\rtimes \Z_2)\rtimes \Z_3)\rtimes \Z_2)) \rtimes \Z_2
.\] 
Looking at the data in the above tables, one can make some observations.
\begin{obs*}
    \label{obs:observation1}
    For vertex transitive graphs on up to $13$ vertices, the existence of quantum symmetries is equivalent to the 
    existence of disjoint automorphisms.
\end{obs*}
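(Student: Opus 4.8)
The plan is to establish the two directions of the equivalence by entirely different means. The implication that disjoint automorphisms force quantum symmetries holds for every graph and should already be available as one of the lemmas of Section~\ref{section:useful_lemmas}: from two non-trivial $\sigma,\tau\in\Aut(\Gamma)$ with disjoint support one constructs a magic unitary whose entries generate a non-commutative quotient of $C(\QBan(\Gamma))$ --- essentially the group $C^*$-algebra of the free product $\langle\sigma\rangle * \langle\tau\rangle$ --- so that $\QBan(\Gamma)$ is non-classical. I would simply invoke this lemma; it requires no case distinction and settles this direction for all orders simultaneously.

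The actual content of the observation is the converse, which is false for general graphs and therefore cannot follow from a structural argument; instead it must be read off from the complete classification collected in the two tables above, graph by graph. For each graph recorded as having quantum symmetries I would exhibit an explicit pair of disjoint automorphisms. For the disjoint unions $n\Gamma$ this is immediate, since any non-trivial automorphism supported inside one copy is disjoint from one supported in another; for most of the products it can be read directly off the wreath- or direct-product decomposition of $\Aut(\Gamma)$ listed in the tables, the two factors acting on disjoint blocks of vertices; and for the remaining circulant and semicirculant graphs one produces two permutations of the twelve vertices with disjoint support that preserve the adjacencies.

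For the converse direction I would check that every graph whose quantum automorphism group coincides with its classical one admits no disjoint pair, and here a single clean criterion disposes of almost all cases: if every non-trivial element of $\Aut(\Gamma)\subseteq S_n$ has support of size strictly greater than $n/2$, then by the pigeonhole principle no two such elements can have disjoint support. Computed in the concrete permutation representation, this covers the large family of circulant graphs with dihedral automorphism group (non-trivial rotations have full support and reflections fix at most two of the twelve vertices), the products such as $K_2\square C_6$, the rook graph $K_3\square K_3$, the Petersen graph in its action on the ten $2$-subsets with $\Aut=S_5$, the icosahedral graph with $\Aut=\Z_2\times A_5$, and the two truncated-tetrahedron graphs with $\Aut=S_4$.

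The main obstacle is precisely that this uniform criterion breaks down for the hyperoctahedral group, and the four graphs on twelve vertices with $\Aut(\Gamma)=H_3$ make the difficulty sharp: $L(C_6(2))$ and the cuboctahedral graph have no quantum symmetries, whereas $C_{12}(5^+)$ and $C_{12}(5^+,6)$ do. Since the abstract automorphism group is identical in all four cases, disjointness cannot be decided from $\Aut(\Gamma)$ alone but depends on how $H_3$ is embedded into $S_{12}$, \ie on the concrete action on the vertices. The delicate part of the argument is therefore to analyse these four actions individually --- verifying that in the two without quantum symmetries no non-trivial element has a disjoint partner, while in the other two such a pair does occur --- and to carry out the analogous representation-dependent verification for any sporadic graph that the size criterion does not already settle. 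It is here that the observation ceases to be a formal corollary of the general lemma and becomes a finite but genuine verification against the classification.
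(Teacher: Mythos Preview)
Your proposal is correct and matches the paper's approach: the observation is not given a standalone proof in the paper but is explicitly presented as something one reads off from the classification tables, which is precisely what you outline. One direction is Lemma~\ref{lem:disjoint_automorphisms}; for the other, every graph in the tables that has quantum symmetries was in fact \emph{shown} to have them by exhibiting disjoint automorphisms (see the propositions in Sections~\ref{section:products}, \ref{section:circulant_graphs} and~\ref{section:semicirculant_graphs}), so that half of the case check is already done in the paper. Your additional remarks on verifying the absence of disjoint automorphisms for the remaining graphs --- the support-size pigeonhole criterion and the observation that the $H_3$ cases must be distinguished by the concrete permutation representation rather than the abstract group --- are sound elaborations that the paper leaves implicit.
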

It is already known that the statement from Observation~\ref{obs:observation1} does not hold for all 
graphs, as has been shown in~\cite{schmidt_2020_1}, but it is an open question 
for what families of graphs this is an alternative characterisation of the existence of quantum symmetries.

We moreover observe the following fact.
\begin{obs*}
    \label{obs:observation2}
    Let $\Gamma$ be a vertex-transitive graph on up to  $13$ vertices. If the automorphism
    group of $\Gamma$ is  $D_n$ with  $n \neq 4$, then $\Gamma$ does not have quantum symmetries.
\end{obs*}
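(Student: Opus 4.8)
The plan is to deduce the statement from Observation~\ref{obs:observation1}, which for vertex-transitive graphs on up to $13$ vertices identifies the presence of quantum symmetries with the existence of a pair of disjoint automorphisms. It therefore suffices to show that whenever $\Aut(\Gamma) \cong D_n$ with $n \neq 4$ acts on the vertex set, no two nontrivial automorphisms can have disjoint supports. I would run the whole argument through the support $\operatorname{supp}(g) = \{v : g(v) \neq v\}$ of an automorphism $g$: if every nontrivial $g \in \Aut(\Gamma)$ satisfies $\abs{\operatorname{supp}(g)} > N/2$, where $N$ is the number of vertices, then two nontrivial automorphisms can never have disjoint supports, since their supports would have to sum to at most $N$. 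This gives \enquote{no disjoint automorphisms}, and hence, by Observation~\ref{obs:observation1}, no quantum symmetries.

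First I would classify the faithful transitive actions of $D_n$ for $n \ge 3$; note that $\Aut(\Gamma) \le \operatorname{Sym}(V)$ acts faithfully by definition and transitively by vertex-transitivity. Writing $D_n = \langle r, s \mid r^n = s^2 = 1,\ srs = r^{-1}\rangle$, such an action corresponds to a core-free point stabiliser $H \le D_n$ with $N = 2n/\abs{H}$. Every subgroup of the rotation part $\langle r\rangle$ is normal in $D_n$, and every reflection-containing subgroup of order greater than $2$ has the form $\langle r^d, sr^i\rangle$ with $d \mid n$, $d < n$, hence contains the nontrivial normal subgroup $\langle r^d\rangle$. Thus the only core-free subgroups are the trivial one and the order-two reflection subgroups $\langle sr^i\rangle$ (which are non-normal for $n \ge 3$). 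This leaves exactly two possibilities: the regular action ($H = 1$, $N = 2n$) and the standard action of $D_n$ on the vertices of an $n$-gon ($H$ a reflection, $N = n$).

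Next I would bound the minimal support in each case. In the regular action every nontrivial element is fixed-point-free, so $\abs{\operatorname{supp}(g)} = N$ for all $g \neq 1$. In the standard action a nontrivial rotation has full support $n$, while a reflection fixes at most two vertices (exactly one if $n$ is odd), so $\abs{\operatorname{supp}(g)} \ge n - 2$. Comparing with $N/2$: in the regular case $N > N/2$ is immediate, and in the standard case $n - 2 > n/2 \iff n > 4$, while for odd $n$ the sharper bound $n - 1 > n/2$ already holds for $n \ge 3$. Precisely at $n = 4$ one has $n - 2 = n/2$, and indeed the two reflections of $D_4$ through opposite vertices are disjoint; this is exactly why $n = 4$ must be excluded. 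For every other $n$ the minimal support exceeds $N/2$, which finishes the argument.

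The crux, and the step I expect to need the most care, is the reduction to exactly these two actions: one must check that no exotic faithful transitive $D_n$-action occurs for the graphs in question, and in particular confirm that the graphs with $\Aut(\Gamma) \cong D_6$ on $12$ vertices (such as $C_{12}(2, 5^+)$ and $C_{12}(4, 5^+)$) realise the regular action rather than some intermediate one. Everything else is elementary fixed-point counting, and the only genuine subtlety is the sharp threshold at $n = 4$, which is precisely the excluded case. As a sanity check, the conclusion can also be read off directly from the classification tables, where each such $\Gamma$ has $\QBan(\Gamma) = \Aut(\Gamma) = D_n$.
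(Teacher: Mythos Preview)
Your argument is correct and in fact goes well beyond what the paper does: in the paper, Observation~\ref{obs:observation2} is not proved at all, it is simply read off from the classification tables (every graph listed with $\Aut(\Gamma)=D_n$, $n\neq 4$, also has $\QBan(\Gamma)=D_n$). You instead give a conceptual explanation, deducing it from Observation~\ref{obs:observation1} together with a purely group-theoretic fact about $D_n$-actions. This is genuinely more informative, since it makes clear \emph{why} $n=4$ is the exceptional case.

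One remark: the ``crux'' you flag is not actually a gap. Your classification of faithful transitive $D_n$-actions for $n\ge 3$ is already complete and correct --- every subgroup of $D_n$ either lies in $\langle r\rangle$ (hence is normal), or contains a reflection and, if of order $>2$, contains some nontrivial $\langle r^d\rangle$ (again normal). So the only core-free point stabilisers are $\{1\}$ and single reflections, forcing the regular or the standard $n$-gon action. In particular there is nothing further to verify for $C_{12}(2,5^+)$ or $C_{12}(4,5^+)$: a faithful transitive $D_6$-action on $12$ points is automatically the regular one. You can safely drop the hedging in that paragraph. (For completeness you might add a word about $n\le 2$: only the regular action is faithful and transitive there, so the support bound is immediate; in any case no such $D_n$ occurs in the tables.)
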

Due to the result of van Dobben de Bruyn et al in~\cite{vanDobbendeBruyn_et_al_2023_2} that there exists a graph 
with quantum symmetry but with trivial automorphism group, we know that a question of the form from the above theorem
does not make sense for general graphs: calling the graph constructed in~\cite{vanDobbendeBruyn_et_al_2023_2} $\Gamma_{0}$ and
given a graph $\Gamma$ with any automorphism group, one could simply take the disjoint union 
of $\Gamma$ and would get a graph with the same automorphism group
as  $\Gamma$ (as long as $\Gamma$ is not the same as $\Gamma_{0}$) that now has quantum symmetries.
In fact, in their paper they even state a more general result, namely that for any given group, there exist both graphs with
that group as an automorphism group that do have quantum symmetries and graphs that do not have quantum symmetries.
However, one could investigate this question only for certain classes of graphs, such as vertex-transitive graphs.
Since $\Gamma_{0}$ is asymmetric, it is necessarily also not vertex-transitive and thus any construction with $\Gamma_{0}$ 
is also not vertex-transitive. Inspired by Observation~\ref{obs:observation2}, one could therefore ask the question:
Does any group $G$ exist such that whenever a vertex-transitive graph $\Gamma$ has automorphism group  $G$, then  $\Gamma$
has no quantum symmetries?

In addition to computing whether the graphs have quantum symmetries and giving the explicit quantum automorphism
group wherever it was known by some previous result, we also compute the quantum automorphism group of two graphs for
which it was not yet known in Section~\ref{section:computation_qaut_groups}.

\section*{Acknowledgements}
I want to thank Nicolas Faroß, Luca Junk, Simon Schmidt and Moritz Weber for numerous fruitful discussions.

This article is part of the author’s PhD thesis, under the supervision of Moritz Weber and Simon Schmidt. The author was supported by the SFB-TRR 195.

\section{Preliminaries}
\label{section:preliminaries}
\begin{definition}
    A \emph{finite graph without multiple edges} $\Gamma$ consists of a finite vertex set
    $V(\Gamma)$ and an edge set  $E(\Gamma) \subseteq V(\Gamma) \times V(\Gamma)$.
    It is called \emph{undirected}, if for any edge $(u, v) \in E(\Gamma)$ it also holds that
    $(v, u) \in E(\Gamma)$.
    A \emph{loop} is an edge of the form $(i, i) \in E(\Gamma)$.

    The \emph{adjacency matrix} $A_{\Gamma}$ of a graph on $n$ vertices is the $n \times n$-matrix 
    with the $(i, j)$-entry being the number of edges form vertex $i$ to vertex  $j$.
    If $\Gamma$ is undirected and without multiple edges,  $A_{\Gamma}$ is thus a symmetric matrix
    with $\{0, 1\}$-entries.

    In this paper, we only consider undirected finite graphs without multiple edges and without loops.
    Since all graphs will be undirected, we will write $u \sim v$ to mean that  $(u, v) \in E(\Gamma)$
    and thus also $(v, u) \in E(\Gamma)$.
\end{definition}

\begin{definition}
    Given a graph $\Gamma$ on  $n$ vertices with adjacency matrix $A_{\Gamma}$, its automorphism group  $G_{aut}(\Gamma)$ is the group of all 
    automorphisms of $\Gamma$ and can be expressed as a group of permutation matrices in the following sense:
     \[
        G_{aut}(\Gamma) = \{P \in S_{n} | P A_{\Gamma} = A_{\Gamma} P\} 
    .\] 
\end{definition}

\begin{definition}
    A graph $\Gamma$ is \emph{vertex-transitive} if for any two vertices  $i, j \in V(\Gamma)$
    there exists an automorphism $\phi \in G_{aut}(\Gamma)$ such that
    $\phi(i) = j$.
\end{definition}

\begin{definition}
    Given a graph $\Gamma$ and two automorphisms  $\phi$ and  $\psi$ of  $\Gamma$, we say that 
    $\phi$ and  $\psi$ are \emph{disjoint} if we have
     \[
        \phi(i) \neq i \implies \psi(i) = i \text{ and } \psi(i) \neq i \implies \phi(i) = i
    \] 
    for all vertices $i \in V(\Gamma)$.

    We say that $\Gamma$ \emph{has disjoint automorphisms}, if there are two non-trivial disjoint automorphisms
    in  $G_{aut}(\Gamma)$.
\end{definition}

\begin{definition}
    \label{def:cmqg}
    A \emph{compact matrix quantum group} $G$ as defined by 
    Woronowicz~\cite{woronowicz_1987, woronowicz_1991} in 1987
    is a unital $C^*$-algebra $C(G)$ equipped with a $*$-homomorphism 
    $\Delta: C(G) \rightarrow C(G) \otimes C(G)$ and a unitary $u \in M_n(C(G))$, $n\in \N$, such that
    \begin{enumerate}
            \item $\Delta(u_{ij}) = \sum_k u_{ik} \otimes u_{kj}$ for all $i, j$\\
            \item $\bar{u}$ is an invertible matrix\\
            \item the elements $u_{ij}$ $(1 \leq i, j \leq n)$ generate $C(G)$ (as a $C^*$-algebra).\\
    \end{enumerate}
    The unitary $u$ is called the \emph{fundamental corepresentation (matrix)} of $(C(G), \Delta, u)$.\\
    Since (i) and (iii) uniquely determine $\Delta$, one can also refer to the pair $(C(G), u)$ as a compact matrix quantum group.\\
    If $G = \left( C(G), u \right)$ and $H = \left( C(H), v \right)$ are compact matrix quantum groups with $u~\in~M_n(C(G))$ and $v \in M_n(C(H))$, we say that $G$ is a 
    \emph{compact matrix quantum subgroup} of $H$, if there is a surjective $^{*}$-isomorphism from $C\left( H \right)$ to $C\left( G \right)$ mapping generators to generators.
    We then write $G \subseteq H$. If we have $G \subseteq H$ and $H \subseteq G$, they are said to be equal as compact matrix quantum groups.
\end{definition}

\begin{definition}
    \label{def:snplus}
    The \emph{quantum symmetric group} $S_n^+ = (C(S_n^+), u)$, which was first defined by 
    Wang~\cite{wang_1998} in 1998, is the compact matrix quantum group given by
    \begin{displaymath}
            C\left(S_n^{+}\right) := C^{*}\left(u_{ij} | u_{ij} = u_{ij}^{*} = u_{ij}^2, \sum_{k=1}^{n} u_{ik} = \sum_{k=1}^n u_{ki} = 1 \forall i, j = 1,\cdots, n\right)
    \end{displaymath}
    and the $*$-homomorphism $\Delta$ is given by 
    \begin{displaymath}
            \Delta(u_{ij}) := u_{ij}^{'} := \sum_{k}u_{ik} \otimes u_{kj}.
    \end{displaymath}
    It can be shown that the quotient of $C\left(S_n^{+}\right)$ by the relation that all $u_{ij}$ commute is exactly $C\left(S_n\right)$.
    Moreover, $S_n$ can be seen as a compact matrix quantum group $S_n = \left( C\left( S_n \right), u \right)$, where $u_{ij}: S_n \rightarrow \C$ are the evaluation maps 
    of the matrix entries. We then have $S_n \subseteq S_n^{+}$ as compact matrix quantum groups, 
    which justifies the name ``quantum symmetric group''.
    
    For $n < 4$, one can see that the relations of $S_n^+$ already imply the commutation of all
    the generators, which means that for  $n < 4$, we have $S_n = S_n^+$ as compact matrix quantum 
    groups. However, for  $n \ge 4$, one can show, that $S_n^+$ is non-commutative and thus strictly 
    bigger than  $S_n$.
\end{definition}

\begin{definition}
    \label{def:hnplus}
    The hyperoctahedral quantum group $H_{n}^+$ is a quantization of the hyperoctahedral group $H_{n}$ and was first defined by Bichon in~\cite{bichon_2004} and
    further studied by Banica, Bichon and Collins in~\cite{banica_bichon_collins_2007}.
    It is defined as the free wreath product of $S_2$ with $S_{n}^+$ :
    \[
        C(H_{n}^+) = C(S_2) \wr_{*} C(S_{n}^+)
    .\] 
\end{definition}
\begin{ex}
    In~\cite{schmidt_weber_2018}, Schmidt and Weber showed that the hyperoctahedral quantum group $H_2^+$ is the quantum automorphism group of the $4$-cycle:
     \[
        \QBan(C_4) = H_2^+
    .\] 
\end{ex}

\begin{definition}
    Given a graph $\Gamma$ on $n$ vertices, its quantum automorphism group  $\QBan(\Gamma)$ 
    is the compact matrix quantum group $\left(C(\QBan(\Gamma)), u\right)$, where 
    $C(\QBan(\Gamma))$ is the universal  $C^*$-algebra with generators  
    $u_{ij}$, $1 \le i, j \le j$ subject to the following 
    relations:
    \begin{align}
        \label{rel:symmetric_1}&u_{ij} = u_{ij}^* = u_{ij}^{2}\quad\quad & 1 \le i, j \le n&\\
        \label{rel:symmetric_2}&\sum_{k = 1}^{n} u_{ik} = 1 = \sum_{k=1}^n u_{ki}\quad \quad& 1\le i\le n&\\
        \label{rel:graph_1} &u_{ij} u_{kl} = u_{kl} u_{ij} = 0\quad\quad & i \sim k, j \not\sim l&\\
        \label{rel:graph_2} &u_{ij} u_{kl} = u_{kl} u_{ij} = 0 \quad \quad & i\not\sim k, j \sim l&
    .\end{align}
    To see the connection to the classical automorphism group of $\Gamma$, 
    the relations above can equivalently be written as
    \[
        C(\QBan(\Gamma)) = C^* ( u_{ij} = u_{ij}^* = u_{ij}^2,  \sum_{k = 1}^{n} u_{ik} = 1 = \sum_{k=1}^n u_{ki}, 1\le i\le n, A_{\Gamma} u = u A_{\Gamma})
    .\] 
\end{definition}
The quantum automorphism group of a graph is a quantum subgroup of the quantum symmetric group
$S_n^+$. Therefore, as soon as a graph has at least  $4$ vertices, the quantum automorphism group
might not be commutative. Sometimes however, the additional relations coming from the graph, i.e.
relations~(\ref{rel:graph_1}) and (\ref{rel:graph_2}) from the above definition, already are 
enough to get commutativity of the entire quantum group. In that case, we obtain that
\[
    C(\QBan(\Gamma)) = C(G_{aut}(\Gamma))
,\] 
i.e. the quantum automorphism group is equal to the classical automorphism group as a compact matrix quantum group.

\begin{definition}
    We say a graph $\Gamma$ has quantum symmetries, whenever  $\QBan(\Gamma) \neq G_{aut}(\Gamma)$, i.e. when 
    $\QBan(\Gamma)$ is not commutative. If  $\QBan(\Gamma)$ is commutative, we say  that  $\Gamma$ has no quantum symmetries.
\end{definition}

\section{Useful Lemmas} 
\counterwithin{thm}{subsection}
\label{section:useful_lemmas}
In this section, we collect some lemmas that are useful when computing whether a given graph 
has quantum symmetries or not.
Some of these lemmas come from the PhD thesis of Schmidt~\cite{schmidt_2020_1} or are 
generalizations of lemmas from there, while some are new.

\subsection{General Lemmas}
First, we give a few lemmas that hold in any graph.
The following one makes use of the adjoint in the $C^*$-algebra defining the quantum automorphism group and is
an essential part in most of the proofs of commutativity of the generators.
\begin{lem}[\cite{schmidt_2020_1}]
	\label{lem:adj_commute}
        Let $(u_{ij})_{1 \leq i, j \leq n}$ be the generators of $C(\QBan(\Gamma))$. If we have 
        \[
                u_{ij} u_{kl} = u_{ij} u_{kl} u_{ij}
        \]
        then $u_{ij}$ and $u_{kl}$ commute.
\end{lem}
The next lemma has been shown by Schmidt in his PhD thesis~\cite{schmidt_2020_1} and is an essential tool
to check whether a graph has quantum symmetries. For a lot of graphs of which we know that they have quantum
symmetries, this lemma is the reason we know it. However, we also know that not all quantum symmetries in graphs
come from disjoint automorphisms, as has been shown by Schmidt in~\cite{schmidt_2020_1}. 
\begin{lem}[Theorem 3.1.2 in~\cite{schmidt_2020_1}]
    \label{lem:disjoint_automorphisms}
    Let $\Gamma$ be a graph. If there exist two non-trivial, disjoint automorphisms in  $Aut(\Gamma)$, then
     $\Gamma$ has quantum symmetries and its quantum automorphism group is infinite dimensional. 
\end{lem}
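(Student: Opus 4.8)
The plan is to show that two non-trivial disjoint automorphisms $\phi, \psi \in \Aut(\Gamma)$ give rise to two non-commuting elements inside $C(\QBan(\Gamma))$, which forces the $C^*$-algebra to be non-commutative and hence $\Gamma$ to have quantum symmetries. The natural way to produce elements of $C(\QBan(\Gamma))$ from a classical automorphism is via its \emph{magic unitary}: any permutation $\sigma$ commuting with $A_\Gamma$ yields a $*$-homomorphism from $C(\QBan(\Gamma))$ to $\mathbb{C}$ (evaluation at $\sigma$), but to get genuinely non-commutative elements one wants to build two separate copies of a quantum group and combine them. Concretely, I would exhibit a surjection from $C(\QBan(\Gamma))$ onto a non-commutative $C^*$-algebra. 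The most transparent target is the group $C^*$-algebra $C^*(\mathbb{Z}_2 * \mathbb{Z}_2)$ (the infinite dihedral group algebra), which is non-commutative and infinite-dimensional; this simultaneously explains both conclusions of the lemma.

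First I would set up the generators. Let $\phi$ have support $S_\phi = \{i : \phi(i) \neq i\}$ and $\psi$ have support $S_\psi$; disjointness means $S_\phi \cap S_\psi = \emptyset$. From $\phi$, being an order-$m$ permutation commuting with $A_\Gamma$, one obtains a magic unitary $P^\phi$ with entries $P^\phi_{ij} = \delta_{i,\phi(j)}$, and similarly $P^\psi$. The key point is to define a candidate representation of the generators $u_{ij}$ into a non-commutative algebra: I would send $u$ to a matrix that agrees with $P^\phi$ on the coordinates in $S_\phi$, with $P^\psi$ on the coordinates in $S_\psi$, and with the identity elsewhere, but with the two blocks weighted by two non-commuting projections or unitaries $p, q$ generating $C^*(\mathbb{Z}_2 * \mathbb{Z}_2)$. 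Because the supports are disjoint, the block structure is genuinely independent, so the magic-unitary relations~(\ref{rel:symmetric_1}) and~(\ref{rel:symmetric_2}) are preserved, and because $\phi$ and $\psi$ individually commute with $A_\Gamma$ the graph relations~(\ref{rel:graph_1}) and~(\ref{rel:graph_2}) also hold. Verifying that this assignment satisfies all four families of defining relations is the technical heart, and by universality it then extends to a $*$-homomorphism out of $C(\QBan(\Gamma))$.

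The main obstacle is twofold. The harder conceptual step is choosing the target algebra and the interpolating formula so that the images of $u_{ij}$ do \emph{not} all commute: one must arrange, using the disjointness of the supports, that the $\phi$-block and the $\psi$-block get attached to two free generators of $\mathbb{Z}_2 * \mathbb{Z}_2$, so that their images satisfy no commutation relation. This is where disjointness is essential — if the supports overlapped, the relations forcing products $u_{ij}u_{kl}$ to vanish or the row/column sums to equal $1$ would couple the two blocks and could collapse the free product down to something abelian. The more routine (but still careful) step is the relation-checking: I would organize the vertices into the three classes $S_\phi$, $S_\psi$, and the common fixed set, and verify the four relation families block by block, using that $\phi, \psi$ are graph automorphisms to handle~(\ref{rel:graph_1}) and~(\ref{rel:graph_2}). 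Once the surjection onto the infinite-dimensional non-commutative $C^*(\mathbb{Z}_2 * \mathbb{Z}_2)$ is established, both conclusions follow at once: non-commutativity of the quotient forces non-commutativity of $C(\QBan(\Gamma))$, giving quantum symmetries, and the infinite-dimensionality of the quotient forces $C(\QBan(\Gamma))$ to be infinite-dimensional as well.
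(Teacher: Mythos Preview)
The paper does not give its own proof of this lemma; it is quoted verbatim from Schmidt's thesis~\cite{schmidt_2020_1} and used as a black box. So there is no in-paper argument to compare against. Your outline is essentially the standard proof from that reference: build a $*$-homomorphism from $C(\QBan(\Gamma))$ onto a free-product group $C^*$-algebra by interpolating between the identity and the permutation matrices $P^\phi$, $P^\psi$ via two free projections, using disjointness of the supports to decouple the blocks and the automorphism property to verify $A_\Gamma U = U A_\Gamma$. That is exactly the right mechanism, and your identification of why disjointness matters (preventing the row/column relations from coupling the two blocks) is the key insight.

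Two small points where your write-up is looser than it should be. First, the formula ``agrees with $P^\phi$ on $S_\phi$ \ldots\ weighted by projections'' needs to be made precise: on the $\phi$-block the correct assignment is of the form $u_{ij} = p_k$ when $i = \phi^k(j)$ for suitable orthogonal projections summing to $1$, i.e.\ an interpolation $u|_{S_\phi} = \sum_k p_k\, (P^\phi)^k$, not literally $P^\phi$ itself. Second, your choice of target $C^*(\mathbb{Z}_2 * \mathbb{Z}_2)$ tacitly assumes $\phi$ and $\psi$ are involutions. For general orders $m = \lvert\phi\rvert$, $n = \lvert\psi\rvert$ the natural target is $C^*(\mathbb{Z}_m * \mathbb{Z}_n)$, which is still non-commutative and infinite-dimensional whenever $m,n \ge 2$; the projections $p_k$ are then the spectral projections of the order-$m$ generator (with appropriate grouping on orbits of size $d \mid m$). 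This is a routine adjustment, not a gap in the strategy.
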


The next lemma is a quantum version of the fact that any automorphism of a graph has to preserve the distance of any pair of 
vertices.
\begin{lem}[\cite{schmidt_2020_1}]
        \label{lem:distances}
        Let $\Gamma$ be a finite, undirected graph and let $(u_{ij})_{1 \leq i, j \leq n}$ be the generators of $C(\QBan(\Gamma))$.
        If we have $d(i, k) \neq d(j, l)$, then $u_{ij} u_{kl} = 0$.
\end{lem}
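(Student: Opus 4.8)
The plan is to read the graph relations~(\ref{rel:graph_1}) and~(\ref{rel:graph_2}) as a single statement that $u$ commutes with the adjacency matrix at the level of length-two words: since $u_{ij}u_{kl}$ is forced to vanish exactly when precisely one of the pairs $(i,k)$, $(j,l)$ is an edge, these relations are equivalent to
\[
    (A_\Gamma)_{ik}\, u_{ij}u_{kl} = (A_\Gamma)_{jl}\, u_{ij}u_{kl}
    \qquad\text{for all } i,j,k,l,
\]
both sides being $0$ unless $(A_\Gamma)_{ik} = (A_\Gamma)_{jl}$. Distance is detected by powers of the adjacency matrix: one has $d(i,k) = \min\{m : (A_\Gamma^m)_{ik} \neq 0\}$, because $(A_\Gamma^m)_{ik}$ counts the walks of length $m$ from $i$ to $k$, and any walk of length $m$ contains a path of length at most $m$. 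The strategy is therefore to bootstrap the displayed relation from $A_\Gamma$ to every power $A_\Gamma^m$ and then read off the distance statement.

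Writing $A := A_\Gamma$, the key claim I would prove is
\[
    (A^m)_{ik}\, u_{ij}u_{kl} = (A^m)_{jl}\, u_{ij}u_{kl}
    \qquad\text{for all } m \ge 0 \text{ and all } i,j,k,l.
\]
For $m = 0$ this reads $\delta_{ik}\,u_{ij}u_{kl} = \delta_{jl}\,u_{ij}u_{kl}$, which follows from the magic-unitary relations~(\ref{rel:symmetric_1}),~(\ref{rel:symmetric_2}): generators sharing a row or a column are orthogonal, giving $u_{ij}u_{il} = \delta_{jl}u_{ij}$ and $u_{ij}u_{kj} = \delta_{ik}u_{ij}$, so both sides vanish unless $i=k$ and $j=l$ hold together. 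For $m = 1$ the claim is precisely the reformulation of the graph relations above. For the step from $m$ (with $m \ge 1$) to $m+1$, I would expand $(A^{m+1})_{ik} = \sum_p A_{ip}(A^m)_{pk}$, insert the column-sum identity $1 = \sum_q u_{pq}$ from~(\ref{rel:symmetric_2}) between $u_{ij}$ and $u_{kl}$, and obtain
\[
    (A^{m+1})_{ik}\, u_{ij}u_{kl}
    = \sum_{p,q} A_{ip}(A^m)_{pk}\, u_{ij}\,u_{pq}\,u_{kl}.
\]
Applying the $m=1$ identity to the word $u_{ij}u_{pq}$ converts $A_{ip}$ into $A_{jq}$, applying the inductive hypothesis to the word $u_{pq}u_{kl}$ converts $(A^m)_{pk}$ into $(A^m)_{ql}$, and summing over $p$ via $\sum_p u_{pq} = 1$ collapses the middle generator and leaves $\sum_q A_{jq}(A^m)_{ql}\,u_{ij}u_{kl} = (A^{m+1})_{jl}\,u_{ij}u_{kl}$. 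I expect this bookkeeping --- choosing the right resolution of the identity and tracking which index each relation acts on --- to be the main obstacle; the rest is formal.

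With the claim established the conclusion is immediate. Suppose $d(i,k) \neq d(j,l)$ and say $m = d(i,k) < d(j,l)$, the other case being symmetric (the smaller distance is necessarily finite, since otherwise both would equal $\infty$). Then $(A^m)_{ik} \ge 1$ is a positive integer, while $(A^m)_{jl} = 0$ because there is no walk of length $m$ from $j$ to $l$. Evaluating the claim at this $m$ yields $c\,u_{ij}u_{kl} = 0$ with $c$ a nonzero integer scalar, hence invertible in the $C^*$-algebra, so $u_{ij}u_{kl} = 0$, as desired.
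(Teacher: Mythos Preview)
Your argument is correct. The inductive claim
\[
    (A^m)_{ik}\, u_{ij}u_{kl} \;=\; (A^m)_{jl}\, u_{ij}u_{kl}
\]
is well set up, the base cases are exactly the magic-unitary orthogonality ($m=0$) and the graph relations ($m=1$), and the induction step is clean: inserting the row resolution $1=\sum_q u_{pq}$, converting $A_{ip}\to A_{jq}$ via the $m=1$ relation on $u_{ij}u_{pq}$, converting $(A^m)_{pk}\to (A^m)_{ql}$ via the hypothesis on $u_{pq}u_{kl}$, and collapsing $\sum_p u_{pq}=1$ all go through. The final step, reading off the vanishing from the first $m$ at which exactly one of $(A^m)_{ik}$, $(A^m)_{jl}$ is nonzero, is fine and indeed works symmetrically in both cases since the identity itself is symmetric in the roles of the two index pairs.

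As for comparison: the paper does not actually supply a proof of this lemma; it is stated with a citation to Schmidt's thesis~\cite{schmidt_2020_1} and used as a black box. Your route---bootstrapping the intertwining relation $Au=uA$ to all powers $A^m$ at the level of length-two words and then invoking the walk-counting interpretation of $(A^m)_{ik}$---is the standard one in the literature and is essentially how the result is established in the cited source.
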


The following two lemmas are quite technical. However they allow to drastically simplify some computations by making 
the question whether certain products of generators are zero a question of simple properties of the graph in terms of the 
distance of certain sets of vertices to one another. This makes it possible to check e.g. with a computer in an automated 
fashion whether these products are zero.
The two lemmas are used extensively in the proofs of this paper, which is why we introduce a special notation for certain 
applications of the lemmas in~\ref{notation:compact_notation_lemma_choose_q_right} and~\ref{notation:compact_notation_lemma_choose_q_middle}.
\begin{lem}[\cite{schmidt_2020_1}]
	\label{lem:choose_q_right}
        Let $\Gamma$ be a finite, undirected graph and let $(u_{ij})_{1 \leq i, j \leq n}$ be the generators of $C(\QBan(\Gamma))$.
        Let $d(i, k) = d(j, l) = m$. Let $q$ be a vertex with $d(j, q) = s$, $d(q, l) = t$ and $u_{kl}u_{aq} = u_{aq}u_{kl}$ for all 
        $a$ with $d(a, k) = t$. Then
        \[
                u_{ij}u_{kl} = u_{ij} u_{kl} \sum_{\substack{p;\, d(l, p) = m, \\ d(p, q) = s}} u_{ip}.
        \]
        In particular, if we have $m = 2$ and if $\QBan(\Gamma) = \QBic(\Gamma)$ holds, then choosing $s = t = 1$ implies
        \[
                u_{ij} u_{kl} = u_{ij} u_{kl} \sum_{\substack{p; \, d(p, l) = 2\\ (p, q) \in E}} u_{ip}.
        \]
\end{lem}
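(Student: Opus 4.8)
The plan is to begin from the magic-unitary row relation and then strip the resulting sum down to the claimed index set using only Lemma~\ref{lem:distances} and the commutation hypothesis attached to $q$. Since $\sum_{p} u_{ip} = 1$ by relation~(\ref{rel:symmetric_2}), I would write
\[
u_{ij} u_{kl} = u_{ij} u_{kl} \sum_{p} u_{ip} = \sum_{p} u_{ij} u_{kl} u_{ip},
\]
so the whole statement reduces to showing that $u_{ij} u_{kl} u_{ip} = 0$ whenever $d(l,p) \neq m$ or $d(p,q) \neq s$.

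The first condition is immediate. As $d(i,k) = m$, Lemma~\ref{lem:distances} applied to the factor $u_{kl} u_{ip}$ (first indices $k, i$ at distance $m$, second indices $l, p$) forces $u_{kl} u_{ip} = 0$ as soon as $d(l,p) \neq m$, and hence kills the summand.

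The second condition is the heart of the argument and is where the vertex $q$ enters. Here I would insert the column relation $\sum_{a} u_{aq} = 1$ between $u_{ij}$ and $u_{kl}$, giving
\[
u_{ij} u_{kl} u_{ip} = \sum_{a} u_{ij} u_{aq} u_{kl} u_{ip}.
\]
Two applications of Lemma~\ref{lem:distances} then prune this sum: the factor $u_{ij} u_{aq}$ (using $d(j,q) = s$) forces $d(i,a) = s$, and the factor $u_{aq} u_{kl}$ (using $d(q,l) = t$) forces $d(a,k) = t$, so only $a$ with $d(i,a) = s$ and $d(a,k) = t$ survive. For precisely these $a$ the hypothesis $u_{aq} u_{kl} = u_{kl} u_{aq}$ applies, so I can swap the middle two factors to obtain $u_{ij} u_{kl} u_{aq} u_{ip}$; a final application of Lemma~\ref{lem:distances} to $u_{aq} u_{ip}$ (first indices at distance $d(a,i) = s$, second indices $q, p$) forces $d(p,q) = s$, so every surviving summand vanishes when $d(p,q) \neq s$. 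Combining the two cases leaves exactly the sum over $p$ with $d(l,p) = m$ and $d(p,q) = s$.

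I expect the main obstacle to be the bookkeeping around the second insertion: one must insert $\sum_{a} u_{aq} = 1$ in the single position that simultaneously produces both constraints $d(i,a) = s$ and $d(a,k) = t$, and one must check that the commutation hypothesis, which is only assumed for $d(a,k) = t$, is invoked solely on the index set that actually survives the distance pruning. For the specialization I would set $m = 2$ and $s = t = 1$; then $d(a,k) = 1$ means $a \sim k$ and $d(q,l) = 1$ means $q \sim l$, and under $\QBan(\Gamma) = \QBic(\Gamma)$ the extra commutation relations built into $\QBic$ for neighbouring generators supply $u_{kl} u_{aq} = u_{aq} u_{kl}$ for free, so the commutation hypothesis may be dropped. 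Rewriting $d(p,q) = 1$ as $(p,q) \in E$ then yields the stated form.
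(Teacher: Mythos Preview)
Your argument is correct. The paper does not actually reprove this lemma itself---it is quoted from \cite{schmidt_2020_1}---but your line of reasoning is the standard one: insert the column sum $\sum_a u_{aq}$ between $u_{ij}$ and $u_{kl}$, prune via Lemma~\ref{lem:distances} to the survivors with $d(i,a)=s$ and $d(a,k)=t$, invoke the commutation hypothesis on exactly that surviving index set, and then kill the remaining factor $u_{aq}u_{ip}$ by one more distance comparison. This is precisely the mechanism the paper itself employs in its proof of the closely related Lemma~\ref{lem:monomial_zero}, so your approach matches both the source and the ambient style of the paper.
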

\begin{lem}[\cite{schmidt_2020_1}]
        \label{lem:choose_q_middle}
        Let $\Gamma$ be a finite, undirected graph and let $(u_{ij})_{1 \leq i, j\leq n}$ be the generators of $C(\QBan(\Gamma))$.
        Let $d(i, k) = d(j, l) = m$ and let $p \neq j$ be a vertex with $d(p, l) = m$. Let $q$ be a vertex with $d(q, l) = s$ and $d(j, q) \neq d(q, p)$.
        Then
        \[
                u_{ij} \left( \sum_{\substack{t; \, d(t, j) = d(t, p) = m \\ d(t, q) = s}} u_{kt} \right) u_{ip} = 0.
        \]
        Especially, if $l$ is the only vertex satisfying $d(l, q) = s$, $d(l, j) = m$ and $d(l, p) = m$, we obtain $u_{ij} u_{kl} u_{ip} = 0$.
\end{lem}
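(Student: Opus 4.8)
The plan is to exploit the fact that, for a fixed first index $k$, the generators $\{u_{kt}\}_{t}$ form a family of orthogonal projections (they are projections by~\eqref{rel:symmetric_1} and sum to $1$ by~\eqref{rel:symmetric_2}, hence are pairwise orthogonal). This lets me treat the three ``localized row sums''
\[
    L_j := \sum_{t;\, d(t,j) = m} u_{kt}, \qquad L_p := \sum_{t;\, d(t,p) = m} u_{kt}, \qquad R_s := \sum_{t;\, d(t,q) = s} u_{kt}
\]
as commuting projections whose products behave like intersections of the defining index sets; in particular $\sum_{t \in T} u_{kt} = L_j L_p R_s$, since $T$ is exactly the set of $t$ meeting all three distance conditions simultaneously.

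The first main step is to absorb the factors $L_j$ and $L_p$ into the outer generators $u_{ij}$ and $u_{ip}$. By Lemma~\ref{lem:distances}, $u_{ij} u_{kt} = 0$ whenever $d(j,t) \neq d(i,k) = m$, so multiplying the row relation $\sum_t u_{kt} = 1$ on the left by $u_{ij}$ yields $u_{ij} L_j = u_{ij}$; symmetrically $L_p u_{ip} = u_{ip}$. Combining these with the commutativity of $L_p$ and $R_s$, the expression collapses:
\[
    u_{ij}\Big(\sum_{t \in T} u_{kt}\Big) u_{ip} = u_{ij}\, L_j L_p R_s\, u_{ip} = u_{ij}\, L_p R_s\, u_{ip} = u_{ij}\, R_s L_p\, u_{ip} = u_{ij}\, R_s\, u_{ip}.
\]
Thus the conditions $d(t,j) = d(t,p) = m$ built into $T$ are essentially free, and everything reduces to proving $u_{ij} R_s u_{ip} = 0$, so that only the single distance condition $d(t,q) = s$, encoded by $R_s$, carries weight.

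For this reduced statement I would bring in the vertex $q$ by inserting the column relation $1 = \sum_a u_{aq}$ between $u_{ij}$ and $R_s$, giving $u_{ij} R_s u_{ip} = \sum_a u_{ij} u_{aq} R_s u_{ip}$. The advantage of having isolated $R_s$ is that $u_{aq} R_s$ now simplifies cleanly: applying Lemma~\ref{lem:distances} to $u_{aq} u_{kt}$ shows that $u_{aq} R_s = u_{aq}$ when $d(a,k) = s$ and $u_{aq} R_s = 0$ otherwise, since the only surviving $t$ are those at distance $s$ from $q$. Hence the sum collapses to $\sum_{a;\, d(a,k)=s} u_{ij} u_{aq} u_{ip}$. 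Finally, Lemma~\ref{lem:distances} applied to the two adjacent pairs forces $d(i,a) = d(j,q)$ from $u_{ij} u_{aq} \neq 0$ and $d(i,a) = d(p,q)$ from $u_{aq} u_{ip} \neq 0$; as $d(j,q) \neq d(p,q)$ these are incompatible, so every summand vanishes and $u_{ij} R_s u_{ip} = 0$. The ``especially'' part is then immediate: if $l$ is the unique vertex meeting the three conditions, then $T = \{l\}$ and the sum is the single term $u_{ij} u_{kl} u_{ip}$.

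The subtle point, and the reason I would route the argument through this projection decomposition rather than the more obvious move of inserting $1 = \sum_a u_{aq}$ on both sides at once, is that the naive two-sided insertion produces cross terms $u_{ij} u_{aq}\big(\sum_{t\in T} u_{kt}\big) u_{bq} u_{ip}$ with $a \neq b$ forced (because $d(i,a) = d(j,q) \neq d(p,q) = d(i,b)$), and these need not vanish term by term: $u_{aq} u_{kt} u_{bq}$ need not be zero for $a \neq b$, since the column orthogonality $u_{aq} u_{bq} = 0$ cannot be applied across the intervening factor $u_{kt}$. Recognizing that $L_j$ and $L_p$ are redundant, and that $R_s$ is precisely the combination making $u_{aq} R_s = u_{aq}$, is what sidesteps this obstacle and keeps the whole computation one-sided.
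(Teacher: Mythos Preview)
Your proof is correct. The paper does not supply its own proof of this lemma (it is quoted from Schmidt's thesis~\cite{schmidt_2020_1}), so there is nothing to compare against; but your argument stands on its own: the key observation that $L_j$, $L_p$, $R_s$ are pairwise commuting projections with $L_j L_p R_s = \sum_{t\in T} u_{kt}$, together with the absorption identities $u_{ij}L_j = u_{ij}$ and $L_p u_{ip} = u_{ip}$ coming from Lemma~\ref{lem:distances}, cleanly reduces the claim to $u_{ij} R_s u_{ip} = 0$, and the final column insertion $1 = \sum_a u_{aq}$ then kills every term via the incompatibility $d(j,q)\neq d(p,q)$.
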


The next lemma is a simple fact about a monomial that vanishes, but it appears a lot in the computations
of this paper, and therefore we include it as a lemma here.
\begin{lem}
    \label{lem:monomial_zero}
    Let $\Gamma$ be a finite, undirected graph and let  $(u_{ij})$ be the generators of $\QBan(\Gamma)$. Let  $i, j, k, l, p, q \in V(\Gamma)$
    be vertices such that $d(p, q) \neq d(j, q)$ and $u_{kl} u_{rq} = u_{rq} u_{kl}$ for any vertex $r \in V(\Gamma)$ with 
    $d(i, r) = d(p, q)$.
    Then it holds that
    \[
        u_{ij} u_{kl} u_{ip} = 0
    .\] 
\end{lem}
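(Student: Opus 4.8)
The plan is to obtain the vanishing by splicing a resolution of the identity into the monomial and then playing the supplied commutation against the distance obstruction of Lemma~\ref{lem:distances}. Concretely, I would start from the column-sum relation in~(\ref{rel:symmetric_2}), which gives $\sum_{r \in V(\Gamma)} u_{rq} = 1$, and insert this identity between the factors $u_{kl}$ and $u_{ip}$:
\[
    u_{ij}\, u_{kl}\, u_{ip} = \sum_{r \in V(\Gamma)} u_{ij}\, u_{kl}\, u_{rq}\, u_{ip}.
\]
The reason for inserting it at exactly this spot, rather than at the end of the word, is that the commutation hypothesis only lets us move $u_{rq}$ past $u_{kl}$; placing the fresh factor to the immediate left of $u_{ip}$ is what makes it adjacent to $u_{kl}$.

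Next I would prune the sum. By Lemma~\ref{lem:distances} applied to the pair $u_{rq}\,u_{ip}$, a nonzero product forces $d(r,i) = d(q,p)$, so only the vertices $r$ with $d(i,r) = d(p,q)$ survive. These are precisely the vertices for which the hypothesis guarantees $u_{kl}\, u_{rq} = u_{rq}\, u_{kl}$. Commuting $u_{kl}$ past $u_{rq}$ in each surviving term yields
\[
    u_{ij}\, u_{kl}\, u_{ip} = \sum_{\substack{r;\, d(i,r) = d(p,q)}} u_{ij}\, u_{rq}\, u_{kl}\, u_{ip}.
\]

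Finally I would annihilate every remaining term with a second use of Lemma~\ref{lem:distances}, this time on the leading pair $u_{ij}\, u_{rq}$: a nonzero product would require $d(i,r) = d(j,q)$, whereas every $r$ in the sum satisfies $d(i,r) = d(p,q) \neq d(j,q)$ by hypothesis. Hence $u_{ij}\, u_{rq} = 0$ for all admissible $r$, and the entire right-hand side is zero, giving $u_{ij}\, u_{kl}\, u_{ip} = 0$.

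I do not expect a genuine obstacle here: the argument is a short three-move calculation, and both the selection of the admissible index set and the final cancellation come straight from Lemma~\ref{lem:distances}, with the commutation delivered verbatim by the hypothesis. The only thing to get right is the bookkeeping — choosing the insertion point for $1 = \sum_r u_{rq}$ so that, after commuting, the distance mismatch $d(p,q) \neq d(j,q)$ is the one that appears. This lemma is really a packaging of the pattern used in Lemmas~\ref{lem:choose_q_right} and~\ref{lem:choose_q_middle}, isolated so it can be invoked directly in the later computations.
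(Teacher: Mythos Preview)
Your proof is correct and matches the paper's own argument essentially step for step: insert the column sum $\sum_r u_{rq}=1$ next to $u_{ip}$, restrict to $d(i,r)=d(p,q)$ via Lemma~\ref{lem:distances}, commute past $u_{kl}$ by hypothesis, and kill each term with Lemma~\ref{lem:distances} applied to $u_{ij}u_{rq}$. The only cosmetic difference is that the paper writes the sum already restricted to $d(i,r)=d(p,q)$ from the start rather than pruning it afterwards.
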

\begin{proof}
    We observe that
    \begin{align*}
        u_{ij} u_{kl} u_{ip} &= u_{ij} u_{kl} \sum_{\substack{r \in V(\Gamma)\\d(i, r) = d(p, q)}}u_{rq} \;u_{ip} \\
                             &=  \underbrace{u_{ij} \sum_{\substack{r \in V(\Gamma)\\d(i, r) = d(p, q)}}u_{rq}}_{=0} u_{kl} u_{ip}\\
                             &= 0
    .\end{align*}
    Here, the commutation of $u_{kl}$ and $u_{rq}$ is by assumption and the fact that $u_{ij} u_{rq} = 0$ is due to the fact that
    $d(i, r) = d(p, q) \neq d(j, q)$ by assumption.

\end{proof}

\subsection{Lemmas for Vertex-Transitive Graphs}
The following lemma is similar to Lemma~3.2.4 in \cite{schmidt_2020_1}, however there it is a stronger version that is only 
applicable for distance-transitive graphs.
\begin{lem}
    \label{lem:get_commutation_by_automorphism}
    Let $\Gamma$ be a graph and  $(u_{ij})_{1 \le i, j \le n}$ be the generators of $C(\QBan(\Gamma))$. Let 
    $j_1, l_1, j_2, l_2$ be vertices in $\Gamma$ such that  $d(j_1, l_1) = d(j_2, l_2)$. If we have 
    $u_{a j_1} u_{b l_1} = u_{b l_1} u_{a j_1}$ for all $a, b \in V(\Gamma)$ and there exists an automorphism 
    $\phi \in G_{aut}(\Gamma)$ such that $\phi(j_1) = j_2$ and $\phi(l_1) = l_2$, then we get
    \[
    u_{a j_2} u_{b l_2} = u_{b l_2} u_{a j_2} \text{ for all } a, b \in V(\Gamma)
    .\] 
\end{lem}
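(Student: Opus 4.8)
The plan is to realize the graph automorphism $\phi$ as a symmetry of the algebra $C(\QBan(\Gamma))$ itself and then push the given commutation relation forward along it. Concretely, I would show that the assignment $u_{ij} \mapsto u_{\phi(i)\phi(j)}$ extends to a $*$-homomorphism $\alpha \colon C(\QBan(\Gamma)) \to C(\QBan(\Gamma))$, and then apply $\alpha$ to the hypothesis.

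First I would set $\tilde u_{ij} := u_{\phi(i)\phi(j)}$ and check that the family $(\tilde u_{ij})$ satisfies the four defining relations of $C(\QBan(\Gamma))$. The self-adjoint-projection relations~(\ref{rel:symmetric_1}) are immediate, since each $\tilde u_{ij}$ is itself one of the generators. The row/column sum relations~(\ref{rel:symmetric_2}) follow from the fact that $\phi$ is a bijection of $V(\Gamma)$: for fixed $i$, as $k$ runs over all vertices so does $\phi(k)$, hence $\sum_k \tilde u_{ik} = \sum_k u_{\phi(i)\phi(k)} = \sum_{k'} u_{\phi(i)\, k'} = 1$, and symmetrically for columns. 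The two graph relations are where the automorphism property is used: since $\phi$ preserves adjacency and non-adjacency, the condition $i \sim k,\, j \not\sim l$ is equivalent to $\phi(i) \sim \phi(k),\, \phi(j) \not\sim \phi(l)$, so relation~(\ref{rel:graph_1}) for $(\tilde u_{ij})$ reduces to the same relation for $(u_{ij})$; the case of~(\ref{rel:graph_2}) is identical. By the universal property of $C(\QBan(\Gamma))$, the family $(\tilde u_{ij})$ therefore defines a $*$-homomorphism $\alpha$ with $\alpha(u_{ij}) = u_{\phi(i)\phi(j)}$.

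With $\alpha$ in hand, I would simply apply it to the hypothesis. Because $\alpha$ is multiplicative and $\phi(j_1) = j_2$, $\phi(l_1) = l_2$, applying $\alpha$ to $u_{aj_1} u_{bl_1} = u_{bl_1} u_{aj_1}$ yields
\[
    u_{\phi(a)\, j_2}\, u_{\phi(b)\, l_2} = u_{\phi(b)\, l_2}\, u_{\phi(a)\, j_2}
\]
for all $a, b \in V(\Gamma)$. Since $\phi$ is a bijection, letting $a, b$ range over $V(\Gamma)$ makes $\phi(a), \phi(b)$ range over all of $V(\Gamma)$, which is exactly the desired conclusion after renaming the free indices. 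I would also remark that the hypothesis $d(j_1, l_1) = d(j_2, l_2)$ is in fact automatic, since graph automorphisms preserve distances, so $d(j_2, l_2) = d(\phi(j_1), \phi(l_1)) = d(j_1, l_1)$; it is recorded only to match the statement of the stronger distance-transitive version, Lemma~3.2.4 in~\cite{schmidt_2020_1}.

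The only genuine content is the well-definedness of $\alpha$, and within that the verification of the graph relations~(\ref{rel:graph_1}) and~(\ref{rel:graph_2}); everything afterwards is a bijective reindexing. The one point to be careful about is that adjacency and the index permutation must be applied simultaneously, but precisely because $\phi$ acts compatibly on both the row and column indices, no orientation subtlety arises and the relations match term by term.
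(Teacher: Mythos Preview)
Your argument is correct: the universal property gives the $*$-homomorphism $\alpha$ with $\alpha(u_{ij}) = u_{\phi(i)\phi(j)}$, and pushing the commutation relation along $\alpha$ followed by the bijective reindexing $a \mapsto \phi(a)$, $b \mapsto \phi(b)$ yields the conclusion. Your observation that the distance hypothesis is redundant is also correct.

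As for comparison: the paper in fact does not supply a proof of this lemma; it only remarks that it is a variant of Lemma~3.2.4 in~\cite{schmidt_2020_1} (the distance-transitive version) and then moves on to the corollary. Your proof is exactly the standard one that underlies that reference, so there is nothing to contrast. One minor stylistic alternative worth knowing: since the hypothesis already quantifies over all first indices $a,b$, one can use the one-sided action $u_{ij} \mapsto u_{i\,\phi(j)}$ instead, which satisfies the defining relations for the same reasons and lands directly on $u_{a j_2} u_{b l_2} = u_{b l_2} u_{a j_2}$ without the final reindexing step. This is not a different idea, just a slightly cleaner bookkeeping choice.
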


\begin{cor}
    \label{cor:commutation_with_one_implies_all}
    Let $\Gamma$ be a vertex-transitive graph  and  $(u_{ij})_{1 \le i, j \le n}$ be the generators of $C(\QBan(\Gamma))$. 
    If there exists a vertex $j_0 \in V(\Gamma)$ and a distance $m$ such that for all vertices $l \in V(\Gamma)$
    with $d(j_0, l) = m$ we have commutation of $u_{i j_0}$ and $u_{k l}$ for any vertices $i, k \in V(\Gamma)$, 
    then it already holds that for all vertices $i, j, k, l \in V(\Gamma)$ we have 
    \[
        d(j, l) = m \implies u_{ij} u_{kl} = u_{kl} u_{ij}
    .\] 

    In particular, if $u_{i j_0}$ commutes with all other generators $u_{kl}$ of $C(\QBan(\Gamma))$, then
     $\Gamma$ does not have quantum symmetries.
\end{cor}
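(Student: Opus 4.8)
The plan is to reduce the general commutation statement to the hypothesis at the single column $j_0$ by transporting it along automorphisms supplied by vertex-transitivity, using Lemma~\ref{lem:get_commutation_by_automorphism} as the transport mechanism.

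First I would fix arbitrary vertices $i, j, k, l \in V(\Gamma)$ with $d(j, l) = m$ and aim to show $u_{ij} u_{kl} = u_{kl} u_{ij}$. Since $\Gamma$ is vertex-transitive, there exists an automorphism $\phi \in G_{aut}(\Gamma)$ with $\phi(j_0) = j$. I would then set $l_0 := \phi^{-1}(l)$. Because automorphisms preserve distances, we obtain $d(j_0, l_0) = d(\phi(j_0), \phi(l_0)) = d(j, l) = m$, so the pair $(j_0, l_0)$ is exactly of the form covered by the hypothesis: for all $a, b \in V(\Gamma)$ we have $u_{a j_0} u_{b l_0} = u_{b l_0} u_{a j_0}$.

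Next I would apply Lemma~\ref{lem:get_commutation_by_automorphism} with $(j_1, l_1) = (j_0, l_0)$ and $(j_2, l_2) = (j, l)$. The distance condition $d(j_1, l_1) = d(j_2, l_2)$ holds since both equal $m$, the full commutation $u_{a j_0} u_{b l_0} = u_{b l_0} u_{a j_0}$ is the statement just established, and $\phi$ is an automorphism with $\phi(j_0) = j$ and $\phi(l_0) = l$ by construction. The lemma then yields $u_{a j} u_{b l} = u_{b l} u_{a j}$ for all $a, b \in V(\Gamma)$; specializing to $a = i$ and $b = k$ gives the desired commutation. As $i, j, k, l$ were arbitrary subject to $d(j, l) = m$, this proves the first assertion.

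For the final claim, suppose $u_{i j_0}$ commutes with every generator $u_{kl}$ for all $i$. Then for each distance value $m$ occurring in $\Gamma$ the hypothesis of the corollary is satisfied with this same $j_0$, trivially so, since $u_{i j_0}$ commutes with all $u_{kl}$ and in particular with those indexed by $l$ at distance $m$ from $j_0$. Applying the first part once for every such $m$ gives $u_{ij} u_{kl} = u_{kl} u_{ij}$ whenever $d(j, l) = m$; letting $m$ range over all distances exhausts all pairs $(j, l)$, so every pair of generators commutes and $C(\QBan(\Gamma))$ is commutative, whence $\Gamma$ has no quantum symmetries. The argument is essentially routine, and the only point requiring care is the correct choice $l_0 = \phi^{-1}(l)$ rather than $\phi(l)$, so that $\phi$ sends the pair where commutation is known onto the target pair; this is also precisely where the ``for all $a, b$'' strength of the hypothesis is needed to feed Lemma~\ref{lem:get_commutation_by_automorphism}.
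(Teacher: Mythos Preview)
Your proof is correct and follows essentially the same approach as the paper: use vertex-transitivity to find $\phi$ with $\phi(j_0)=j$, pull back $l$ to $l_0=\phi^{-1}(l)$ so that $d(j_0,l_0)=m$, and then invoke Lemma~\ref{lem:get_commutation_by_automorphism} to transport the known commutation to the pair $(j,l)$. Your explicit treatment of the ``in particular'' clause and the remark about needing the full ``for all $a,b$'' hypothesis are nice additions the paper leaves implicit.
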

\begin{proof}
    Let $j', l' \in V(\Gamma)$ be vertices of $\Gamma$ such that $d(j', l') = m$. 
    Since  $\Gamma$ is vertex-transitive, there exists an automorphism $\phi$ of $\Gamma$ that maps 
    $j_0$ to $j'$. Since $\phi$ is an automorphism, the preimage of $l_0 := \phi^{-1}(l')$ of $l'$ must 
    satisfy
    \[
        d(j_0, l_0) = d(j', l') = m.
    \]
    By assumption, we know that $u_{i j_0} u_{k l_0} = u_{k l_0} u_{i j_0}$ for any $i, k \in V(\Gamma)$.
    Applying Lemma~\ref{lem:get_commutation_by_automorphism}, we get that also 
    $u_{\phi(i) j'}$ and $u_{\phi(k) l'}$ commute. Since $i$ and  $k$ were arbitrary, we also get
     $u_{i j'} u_{k l'} = u_{k l'} u_{i j'}$ for arbitrary $i, k \in V(\Gamma)$.
\end{proof}

\subsection{Lemmas About Common Neighbours}
As seen in Lemma~\ref{lem:distances}, the distance between vertices matters when looking at the
generators of the quantum automorphism group of a graph. The simplest case to consider when looking at
distances is the neighbour. Because of this, a few lemmas were found concerning common neighbours of vertices.

The following two lemmas provide an easy way to see the commutation of all generators for adjacent vertices by looking 
at the structure of the graph, namely the existence of quadrangles and triangles.
\begin{lem}[\cite{schmidt_2020_1}]
        \label{lem:quadrangle}
        Let $\Gamma$ be an undirected graph that does not contain any quadrangle. Then for vertices $i\sim k$ and
        $j \sim l$ the generators  $u_{ij}$ and $u_{kl}$ commute.
\end{lem}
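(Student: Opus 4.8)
The plan is to reduce the statement to the vanishing of a single family of monomials and then feed that into Lemma~\ref{lem:choose_q_middle}, letting quadrangle-freeness enter only through its effect on common neighbours. By Lemma~\ref{lem:adj_commute}, it suffices to establish $u_{ij}u_{kl}u_{ij}=u_{ij}u_{kl}$. Writing $1=\sum_m u_{im}$ and splitting off the term $m=j$, so that $1-u_{ij}=\sum_{m\neq j}u_{im}$, this identity is equivalent to
\[
    u_{ij}u_{kl}(1-u_{ij})=\sum_{m\neq j}u_{ij}u_{kl}u_{im}=0,
\]
so I would prove the stronger statement that $u_{ij}u_{kl}u_{im}=0$ for every $m\neq j$.

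First I would dispose of the easy case. Since $i\sim k$, relation~\eqref{rel:graph_1} forces $u_{kl}u_{im}=0$ whenever $l\not\sim m$, and then the whole monomial vanishes. This leaves the indices $m\neq j$ with $m\sim l$; combined with the hypothesis $j\sim l$ this says precisely that $l$ is a common neighbour of the two distinct vertices $j$ and $m$.

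This is where quadrangle-freeness enters, and it is the conceptual heart of the argument. If $j$ and $m$ possessed a second common neighbour $x\neq l$, then $j\sim x\sim m\sim l\sim j$ would be a $4$-cycle on the four distinct vertices $j,x,m,l$, i.e. a quadrangle; hence $l$ is the \emph{unique} common neighbour of $j$ and $m$, the unique vertex $v$ with $d(v,j)=d(v,m)=1$. This is exactly the uniqueness needed for the ``especially'' clause of Lemma~\ref{lem:choose_q_middle}: I would take $M=1$ (we have $d(i,k)=d(j,l)=1$), set $p=m$, and choose the auxiliary vertex $q=j$ with $s=d(j,l)=1$, so that the condition $d(j,q)\neq d(q,p)$ reads $0\neq d(j,m)$, which holds because $m\neq j$. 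Lemma~\ref{lem:choose_q_middle} then yields $u_{ij}u_{kl}u_{im}=0$, finishing the proof.

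The step I expect to be the genuine obstacle is the middle factor $u_{kl}$. Naive attempts to show $u_{ij}u_{kl}u_{im}=0$ by inserting a resolution of the identity next to one of the outer generators simply collapse back to the same expression, and even a single off-diagonal insertion never involves more than three ``domain'' vertices ($i,k$ and an auxiliary) and three ``range'' vertices ($j,l,m$), so no quadrangle is ever forced and the manipulation becomes circular. Routing the computation through Lemma~\ref{lem:choose_q_middle} is what resolves this: it packages the correct insertion so that the hypothesis of quadrangle-freeness can be applied in its cleanest form, namely as the statement that two distinct vertices admit at most one common neighbour.
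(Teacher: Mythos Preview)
Your proof is correct. The paper does not give its own proof of this lemma---it is quoted from Schmidt's thesis~\cite{schmidt_2020_1}---so there is no argument in the paper to compare against. Your reduction via Lemma~\ref{lem:adj_commute} to showing $u_{ij}u_{kl}u_{im}=0$ for all $m\neq j$, followed by the application of Lemma~\ref{lem:choose_q_middle} with $q=j$ and $s=1$, is valid: the uniqueness hypothesis in the ``especially'' clause becomes precisely the statement that $l$ is the unique common neighbour of $j$ and $m$, which is exactly what quadrangle-freeness guarantees for two distinct vertices.

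One small remark: your closing paragraph slightly overstates the difficulty of a direct attack. Unwinding Lemma~\ref{lem:choose_q_middle} in this special case gives the one-line computation
\[
0=u_{ij}\cdot 1\cdot u_{im}=u_{ij}\Bigl(\sum_{t} u_{kt}\Bigr)u_{im}=\sum_{\substack{t\sim j\\ t\sim m}}u_{ij}u_{kt}u_{im}=u_{ij}u_{kl}u_{im},
\]
where the restriction of the sum uses $i\sim k$ together with relations~\eqref{rel:graph_1}--\eqref{rel:graph_2}, and the last equality uses that $l$ is the unique common neighbour of $j$ and $m$. This is the argument one would expect in Schmidt's thesis; Lemma~\ref{lem:choose_q_middle} is its abstraction, so your route and the direct one are the same proof in different packaging.
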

\begin{lem}[\cite{schmidt_2020_1}]
        \label{lem:one_common_neighbour}
        Let $\Gamma$ be an undirected graph such that adjacent vertices have exactly one common neighbour. 
        Then for vertices $i\sim k$ and $j \sim l$ the generators  $u_{ij}$ and $u_{kl}$ commute.
\end{lem}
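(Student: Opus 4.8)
The plan is to invoke Lemma~\ref{lem:adj_commute}: it suffices to establish the one-sided relation $u_{ij}u_{kl} = u_{ij}u_{kl}u_{ij}$, since that identity forces $u_{ij}$ and $u_{kl}$ to commute. To manufacture the trailing $u_{ij}$, I would resolve the identity in the $i$-th row via relation~(\ref{rel:symmetric_2}), writing $u_{ij}u_{kl} = u_{ij}u_{kl}\sum_{p} u_{ip} = \sum_p u_{ij}u_{kl}u_{ip}$, and then argue that only the summand $p=j$ survives.

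The first pruning is purely structural. Since $i \sim k$, relation~(\ref{rel:graph_1}) applied to the factor $u_{kl}u_{ip}$ forces $u_{ij}u_{kl}u_{ip}=0$ whenever $l \not\sim p$, so only neighbours $p$ of $l$ can contribute. Among these sits $p=j$ (because $j \sim l$), giving exactly the desired term $u_{ij}u_{kl}u_{ij}$. The whole weight of the lemma therefore rests on showing that every other neighbour $p \neq j$ of $l$ contributes nothing, that is, $u_{ij}u_{kl}u_{ip}=0$.

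This is where the hypothesis enters and is the crux of the argument. Both $j$ and $p$ are neighbours of $l$, so $d(j,l)=d(p,l)=1$, which matches the $m=1$ setup of Lemma~\ref{lem:choose_q_middle} and Lemma~\ref{lem:monomial_zero}. The idea is to choose an auxiliary vertex $q$ that separates $j$ from $p$, in the sense that $d(j,q)\neq d(p,q)$, and for which $l$ is the only common neighbour of $j$ and $p$ at the prescribed distance from $q$; the uniqueness of the common neighbour of an edge is precisely what singles out such an $l$. In the case $p \sim j$ this is immediate: the edge $jp$ has, by hypothesis, a unique common neighbour, which must be $l$ since $l \sim j$ and $l \sim p$. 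Taking $q=j$ then gives $d(j,q)=0 \neq 1 = d(p,q)$ and makes $l$ the unique relevant vertex, so the ``especially'' clause of Lemma~\ref{lem:choose_q_middle} (equivalently Lemma~\ref{lem:monomial_zero}) kills the term.

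I expect the main obstacle to be the remaining configuration $p \not\sim j$, where $j$ and $p$ are non-adjacent neighbours of $l$ with $d(j,p)=2$ and the hypothesis no longer bounds their common neighbours directly. The key sub-step I would develop is that any second common neighbour $m$ of $j$ and $p$ cannot be adjacent to $l$: otherwise the edge $lm$ would inherit the two distinct common neighbours $j$ and $p$, contradicting the hypothesis. This rigidity confines the troublesome vertices to a quadrangle $j\,l\,p\,m$ with $l \not\sim m$, and I would then combine it with Lemma~\ref{lem:distances} to locate a separating vertex $q$ — naturally looking at the unique common neighbour of the edge $jl$ — that simultaneously satisfies the distance inequality $d(j,q)\neq d(p,q)$ and the commutation premise required by Lemma~\ref{lem:monomial_zero}. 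Verifying that premise without circularity is the delicate point, and is where I anticipate the bulk of the work. Once all $p \neq j$ are eliminated, the sum collapses to $u_{ij}u_{kl}u_{ij}$ and Lemma~\ref{lem:adj_commute} finishes the proof.
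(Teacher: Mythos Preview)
Your overall skeleton---reduce to $u_{ij}u_{kl}=u_{ij}u_{kl}u_{ij}$ via Lemma~\ref{lem:adj_commute}, expand with $\sum_p u_{ip}=1$, prune to neighbours $p$ of $l$, and kill each $p\neq j$---is exactly what the paper (following Schmidt) does. Your Case~1 is also correct via Lemma~\ref{lem:choose_q_middle}; note however that your parenthetical ``equivalently Lemma~\ref{lem:monomial_zero}'' is false there, since that lemma would require the very distance-$1$ commutation you are proving.

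The genuine gap is Case~2, and your worry about circularity is misplaced because you are reaching for the wrong tool. Lemma~\ref{lem:monomial_zero} needs a commutation hypothesis, but Lemma~\ref{lem:choose_q_middle} does not, and it finishes Case~2 immediately with the very vertex you propose. Take $q$ to be the unique common neighbour of $j$ and $l$. Then $s=d(q,l)=1$ and $d(j,q)=1$. Since $j$ is the unique common neighbour of the edge $ql$ and $p\sim l$ with $p\neq j$, we get $p\not\sim q$, hence $d(q,p)\ge 2\neq d(j,q)$. For the ``especially'' clause, any vertex $t$ with $t\sim j$, $t\sim q$, $t\sim p$ is in particular a common neighbour of the edge $jq$, hence $t=l$ by the hypothesis. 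Thus $u_{ij}u_{kl}u_{ip}=0$ with no circularity and no further work.

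For comparison, the paper's proof (given for the generalisation Lemma~\ref{lem:one_common_neighbour_gen} and stated to coincide with Schmidt's original) does not invoke Lemma~\ref{lem:choose_q_middle} as a black box. It treats the case $p=q$ by inserting $\sum_b u_{kb}=1$ between $u_{ij}$ and $u_{iq}$, and for $p\neq q$ it additionally brings in the unique common neighbour $r$ of $i$ and $k$: one first shows $u_{ij}u_{kl}=u_{ij}u_{rq}u_{kl}$, then that $u_{rq}u_{kl}u_{ip}=0$ because $q\not\sim p$. Your route via Lemma~\ref{lem:choose_q_middle} is shorter once that lemma is available and never touches the $i,k$-side common neighbour; the paper's route is more self-contained.
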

The above lemma can be generalized a bit.
\begin{lem}
        \label{lem:one_common_neighbour_gen}
        Let $\Gamma = (V, E)$ be an undirected graph and let $(u_{ij})_{1 \leq i, j \leq n}$ be the generators of $C(\QBan(\Gamma))$.
        Let $i, j, k, l \in V$ be vertices of $\Gamma$ such that $(i, k) \in E$ and $(j, l) \in E$.
        Let moreover $i$ and $k$ have exactly one common neighbour $p$ and let it hold that for any two vertices from the set $\left\{ i, k, p \right\}$ the third
        vertex from the set is the only common neighbour of the two, i.e. the only common neighbour of $i$ and $p$ is $k$ and the only common neighbour of $k$ and $p$ is $i$.
        Let $j$ and $l$ also have exactly one common neighbour $q$ and let the same as above hold for the set of vertices $\left\{ j, l, q \right\}$.

        Then $u_{ij} u_{kl} = u_{kl} u_{ij}$.
\end{lem}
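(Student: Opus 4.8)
The plan is to show directly that the product $u_{ij}u_{kl}$ is a self-adjoint idempotent, i.e.\ a projection; since $(u_{ij}u_{kl})^{*} = u_{kl}u_{ij}$, this immediately yields $u_{ij}u_{kl} = u_{kl}u_{ij}$. Morally this is the quantum analogue of the classical observation that any automorphism sending $i \mapsto j$ and $k \mapsto l$ is forced to send the unique common neighbour $p$ of $i,k$ to the unique common neighbour $q$ of $j,l$; at the level of the generators this manifests as several ``collapse'' identities that let us freely insert $u_{pq}$.

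First I would establish, by the same kind of insertion argument underlying Lemma~\ref{lem:one_common_neighbour}, the four identities
\[
u_{ij}u_{pq}u_{kl} = u_{ij}u_{kl},\quad u_{kl}u_{pq}u_{ij} = u_{kl}u_{ij},\quad u_{ij}u_{kl}u_{pq} = u_{ij}u_{pq},\quad u_{pq}u_{ij}u_{kl} = u_{pq}u_{kl}.
\]
Each comes from inserting a resolution of the identity $\sum_{c}u_{rc} = 1$ (relation~(\ref{rel:symmetric_2})) at one of the triangle vertices $r \in \{i,k,p\}$ between the two factors. For the first identity I insert $\sum_{c}u_{pc}$ between $u_{ij}$ and $u_{kl}$: since $p \sim i$ and $p \sim k$, relations~(\ref{rel:graph_1}) and~(\ref{rel:graph_2}) force $u_{ij}u_{pc} = 0$ unless $c \sim j$ and $u_{pc}u_{kl} = 0$ unless $c \sim l$, so only common neighbours $c$ of $j$ and $l$ survive, and by hypothesis $q$ is the \emph{only} one, leaving the single term $u_{ij}u_{pq}u_{kl}$. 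The remaining three identities are proved the same way, inserting at $p$, at $k$ and at $i$ respectively and using that the unique common neighbour of $\{j,l\}$, of $\{j,q\}$ and of $\{l,q\}$ is $q$, $l$ and $j$ — exactly the triangle conditions imposed on $\{j,l,q\}$.

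With these four identities in hand, a short chain of substitutions collapses the square of $u_{ij}u_{kl}$:
\[
u_{ij}u_{kl}u_{ij}u_{kl} = u_{ij}u_{kl}u_{pq}u_{ij}u_{kl} = u_{ij}u_{kl}u_{pq}u_{kl} = u_{ij}u_{pq}u_{kl} = u_{ij}u_{kl},
\]
so $u_{ij}u_{kl}$ is idempotent. To conclude I would invoke the standard $C^{*}$-fact that a nonzero idempotent of norm at most $1$ in a $C^{*}$-algebra is automatically self-adjoint: since $u_{ij}$ and $u_{kl}$ are projections we have $\lVert u_{ij}u_{kl}\rVert \le 1$, so the idempotent $u_{ij}u_{kl}$ is a projection, whence $u_{ij}u_{kl} = (u_{ij}u_{kl})^{*} = u_{kl}u_{ij}$ (the case $u_{ij}u_{kl} = 0$ being trivial).

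I expect the main obstacle to be twofold. Combinatorially, one must verify that in each inserted sum every term genuinely vanishes except the single surviving one; this is where the full strength of the uniqueness hypotheses on \emph{both} triangles is consumed, and it is easy to overlook a case. Conceptually, the more delicate point is that the argument cannot be finished purely algebraically: the four identities only give idempotency of $u_{ij}u_{kl}$, and it is the $C^{*}$-norm (equivalently, the self-adjointness of the $u_{ij}$) that upgrades this idempotent to a genuine projection and thereby forces commutation. This is precisely why a bare application of Lemma~\ref{lem:adj_commute} does not close the argument on its own here.
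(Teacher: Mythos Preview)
Your proof is correct and takes a genuinely different route from the paper's. Both arguments rest on the same ``insert $\sum_c u_{rc}=1$ and collapse via unique common neighbours'' mechanism, but they organise it differently. The paper follows Schmidt's original scheme: it shows $u_{ij}u_{kl}u_{ia}=0$ for every $a\neq j$ with $a\sim l$ (treating $a=q$ and $a\notin\{j,q\}$ separately), obtains $u_{ij}u_{kl}=u_{ij}u_{kl}u_{ij}$ directly, and finishes with Lemma~\ref{lem:adj_commute}, whose right-hand side is visibly self-adjoint. You instead isolate four symmetric collapse identities and chain them to get $(u_{ij}u_{kl})^2=u_{ij}u_{kl}$, then invoke the $C^*$-fact that a contractive idempotent is self-adjoint. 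Your version is tidier and, interestingly, consumes strictly fewer hypotheses on the $\{i,k,p\}$ side (you only use that $\{i,k,p\}$ is a triangle, never that any of those pairs has a \emph{unique} common neighbour, whereas the paper uses that $k$ is the unique common neighbour of $i$ and $p$); the price is the external norm argument. Your closing remark is accurate for the path you chose---idempotency alone does not feed into Lemma~\ref{lem:adj_commute}---but do not read it as saying that the Lemma~\ref{lem:adj_commute} route is unavailable: the paper establishes its hypothesis $u_{ij}u_{kl}=u_{ij}u_{kl}u_{ij}$ directly and closes that way.
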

\begin{proof}
    The proof is the same as the one of Lemma~\ref{lem:one_common_neighbour} in the PhD thesis of Schmidt~\cite{schmidt_2020_1}:

        Using the relations of $C(\QBan(\Gamma))$, we get
        \[
            u_{ij} u_{kl} = u_{ij} u_{kl} \sum_{a; (l, a) \in E} u_{ia}
        .\] 
        We want to show that $u_{ij} u_{kl} u_{ia} = 0$ for $a \neq j$, $(a, l) \in E$. First, we consider $q$, the unique common neighbour of $j$ and $l$.
        We note, that for any vertex $b \neq l$ we have $(j, b) \notin  E$ or $(q, b) \notin E$, since $l$ is also the only common neighbour of $j$ and $q$.
        We deduce
        \[
            u_{ij} u_{kl} u_{iq} = u_{ij} \left( \sum_b u_{kb} \right) u_{iq} = u_{ij} 1 u_{iq} = u_{ij} u_{iq} = 0
        .\] 
        Let now $a \notin {j, q}$ and $(a, l) \in E$. 
        Then it holds that
        \[
            u_{ij} u_{kl} = u_{ij} \left( \sum_{b} u_{pb} \right) u_{kl} = u_{ij} u_{p q} u_{kl}
        \] 
        since $p$ is the only common neighbour of $i$ and $k$ while $q$ is the only common neighbour of $j$ and $l$.
        As $j$ is the only common neighbour of $l$ and $q$ and we have that $(a, l) \in E$, we deduce that $(a, q) \notin E$.
        We thus get
        \[
            0 = u_{p q} u_{i a} = u_{p q} \left( \sum_{c} u_{cl} \right) u_{ia} = u_{pq} u_{kl} u_{ia}
        \] 
        since $(i, p) \in E$ but $(a, q) \notin E$ and since $k$ is the only common neighbour of $p$ and $i$.

        Using the two equations we deduced above, we get
        \[
            u_{ij} u_{kl} u_{ia} = u_{ij} u_{pq} u_{kl} u_{ia} = 0
        .\] 
        We thus get for all $a \neq j$ that
        \[
            u_{ij} u_{kl} u_{ia} = 0
        .\] 
        Finally, we get
        \[
            u_{ij} u_{kl} = u_{ij} u_{kl} \left( \sum_a u_{ia} \right) = u_{ij} u_{kl} u_{ij}
        \] 
        and thus we get by Lemma~\ref{lem:adj_commute} that $u_{ij} u_{kl} = u_{kl} u_{ij}$.
\end{proof}

In order to make some statements and computations more compact, we introduce the following definition:
\begin{definition}
    \label{def:cn}
    Given a graph $\Gamma$ and vertices  $i, j$ of  $\Gamma$, we denote by  $\CN(i, j)$ the set of all common neighbours
    of  $i$ and  $j$, i.e. all vertices  $k$ of  $\Gamma$ such that both $i \sim k$ and  $j\sim k$ hold.
\end{definition}
The following lemma again yields an easy to check criterion for a certain product of generators to be zero. The way we used it
in this paper was mainly by using the two corollaries that follow.
\begin{lem}
        \label{lem:dist_two_one_vs_two_common_neighbours}
        \label{lem:different_numbers_common_neighbours}
        Let $\Gamma$ be a finite, undirected graph and let $(u_{ij})_{1 \leq i, j\leq n}$ be the generators of $C(\QBan(\Gamma))$.
        Let $i, j, k, l \in V$ be vertices in $\Gamma$. 

        If $ \lvert \CN(i, k) \rvert \neq  \lvert \CN(j, l) \rvert $ then we already have
        \[
            u_{ij} u_{kl} = 0
        .\] 
\end{lem}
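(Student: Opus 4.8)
The plan is to prove that if $|\CN(i,k)| \neq |\CN(j,l)|$, then $u_{ij}u_{kl} = 0$ by exploiting the relations of $C(\QBan(\Gamma))$ to produce a contradiction, or more precisely, by summing over the common-neighbour structure. The key idea is that the generators $u_{ij}$ encode a notion of ``quantum permutation'' that must preserve the combinatorial invariant given by the number of common neighbours. I would first recall the magic-unitary relations \eqref{rel:symmetric_1}, \eqref{rel:symmetric_2} together with the graph relations \eqref{rel:graph_1}, \eqref{rel:graph_2}, which together say that $u$ intertwines the adjacency matrix: $A_\Gamma u = u A_\Gamma$. The number of common neighbours of $i$ and $k$ is exactly the $(i,k)$-entry of $A_\Gamma^2$, so the statement is really about $u$ intertwining $A_\Gamma^2$ as well.

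**The central observation** I would use is that from $A_\Gamma u = u A_\Gamma$ one immediately gets $A_\Gamma^2 u = u A_\Gamma^2$ by iterating. Writing this out entrywise, $(A_\Gamma^2)_{ab} = |\CN(a,b)|$ when $a \neq b$ (and equals $\deg(a)$ when $a = b$; since $\Gamma$ is vertex-transitive the diagonal is constant, so for the off-diagonal case this is clean). The intertwining relation $A_\Gamma^2 u = u A_\Gamma^2$ expands to
\[
    \sum_{a} (A_\Gamma^2)_{ia}\, u_{al} = \sum_{b} u_{ib}\, (A_\Gamma^2)_{bl}
\]
for all $i, l$. From this matrix identity, the goal is to extract a relation forcing $u_{ij}u_{kl} = 0$ whenever the relevant common-neighbour counts disagree. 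Concretely, I would multiply the intertwining relation for $A_\Gamma^2$ on the appropriate side by a single generator and use that $u_{ij}u_{kl}$ is nonzero only when the ``quantum permutation'' sends the pair $(j,l)$-structure to the $(i,k)$-structure, so that the scalar $(A_\Gamma^2)_{ik}$ must match $(A_\Gamma^2)_{jl}$; when they differ, the product collapses to zero.

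**The cleanest route** is likely the following. Using \eqref{rel:graph_1} and \eqref{rel:graph_2}, adjacency is preserved in the sense that $u_{ij}u_{kl} = 0$ unless $i \sim k \Leftrightarrow j \sim l$. Summing the product $u_{ij}u_{kl}$ over all common neighbours gives a count: one shows $u_{ij}\left(\sum_{a \in \CN(i,k)} u_{a q}\right) = u_{ij}$ summed against the neighbours, and similarly on the other side, so that the coefficient $|\CN(i,k)|$ appears when completing the sum via \eqref{rel:symmetric_2}. Pairing this with the analogous computation on the $(j,l)$ side and comparing the two scalar multiples of $u_{ij}u_{kl}$ forces $(|\CN(i,k)| - |\CN(j,l)|)\, u_{ij}u_{kl} = 0$, hence $u_{ij}u_{kl} = 0$ when the counts differ. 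I would structure the argument by inserting resolutions of identity $\sum_a u_{ai} = 1$ at strategic points and then applying the adjacency-preservation relations to kill all but the matching terms.

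**The main obstacle** I anticipate is bookkeeping the double sum correctly so that the common-neighbour count emerges as an honest scalar coefficient rather than getting entangled with the noncommutativity of the $u_{ij}$. In particular, when I sum $\sum_a u_{ia}$ and restrict to neighbours of a fixed vertex, I must be careful that the surviving terms are precisely those indexed by $\CN$, and that the two ways of counting (from the $i,k$ side and from the $j,l$ side) genuinely produce $|\CN(i,k)|\,u_{ij}u_{kl}$ and $|\CN(j,l)|\,u_{ij}u_{kl}$ respectively. The delicate point is ensuring no cross terms survive: this relies on the fact that $u_{ij}u_{kl} \neq 0$ forces a rigid correspondence between neighbours of $\{i,k\}$ and neighbours of $\{j,l\}$, which in turn follows from the graph relations. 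Once this correspondence is pinned down, the equality of scalar coefficients is forced and the conclusion is immediate.
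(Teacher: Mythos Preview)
Your proposal is correct and your ``cleanest route'' is exactly the paper's argument: insert a resolution of identity $\sum_q u_{pq} = 1$ between $u_{ij}$ and $u_{kl}$ for each $p \in \CN(i,k)$, use the graph relations to reduce the sum to $q \in \CN(j,l)$, and then switch the order of summation to obtain $|\CN(i,k)|\cdot u_{ij}u_{kl} = |\CN(j,l)|\cdot u_{ij}u_{kl}$. One small correction: the lemma is stated for arbitrary finite undirected graphs, not only vertex-transitive ones, so your parenthetical about the diagonal of $A_\Gamma^2$ being constant is unnecessary (and in any case the diagonal plays no role, since one only needs the off-diagonal entries $(A_\Gamma^2)_{ik} = |\CN(i,k)|$).
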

\begin{proof}
        We set $a =  \lvert \CN(i, k) \rvert $ and $b =  \lvert \CN(j, l) \rvert $ and observe:
        \begin{align*}
            a \cdot u_{ij} u_{kl} &= \sum_{p \in \CN(i, k)} \underbrace{u_{ij} \sum_{q \in \CN(j, l)} u_{pq} u_{kl}}_{= u_{ij} u_{kl}} \\
                                  &= u_{ij} \sum_{p \in \CN(i, k)} \sum_{q \in \CN(j, l)} u_{pq} u_{kl}\\
                                  &= \sum_{q \in \CN(j, l)} \underbrace{u_{ij} \sum_{p \in \CN(i, k)} u_{pq} u_{kl}}_{= u_{ij} u_{kl}}\\
                                  &= b \cdot u_{ij} u_{kl}
        .\end{align*}
        Since by assumption we had $a \neq b$, we get that $u_{ij} u_{kl} = 0$.
\end{proof}

This lemma leads to the following corollaries.
\begin{cor}
    \label{cor:different_numbers_common_neighbours}
    If $i, j, k, l, p$ are vertices in  $\Gamma$ and  $ \lvert \CN(j, l) \rvert \neq  \lvert \CN(l, p)  \rvert $ then
    \[
        u_{ij} u_{kl} u_{ip} = 0
    .\] 
\end{cor}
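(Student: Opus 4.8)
The plan is to leverage the counting argument inside the proof of Lemma~\ref{lem:dist_two_one_vs_two_common_neighbours} twice, once for the left pair of columns $(j,l)$ and once for the right pair $(l,p)$, and to bridge the two through the common quantity $\lvert\CN(i,k)\rvert$. Write $a=\lvert\CN(i,k)\rvert$, $b=\lvert\CN(j,l)\rvert$ and $c=\lvert\CN(l,p)\rvert$; the hypothesis to be used is $b\neq c$.

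First I would revisit the proof of Lemma~\ref{lem:dist_two_one_vs_two_common_neighbours}. Although its statement records only the vanishing conclusion, the displayed computation there in fact establishes the stronger identity $a\cdot u_{ij}u_{kl}=b\cdot u_{ij}u_{kl}$, valid for arbitrary vertices, the factors $a$ and $b$ arising respectively from summing the inserted common neighbours over $\CN(i,k)$ and over $\CN(j,l)$ (the two orderings of the same double sum). I would apply this identity directly to the subproduct $u_{ij}u_{kl}$, and then apply it again, under the relabelling $(i,j,k,l)\mapsto(k,l,i,p)$, to the subproduct $u_{kl}u_{ip}$. The latter yields $\lvert\CN(k,i)\rvert\cdot u_{kl}u_{ip}=c\cdot u_{kl}u_{ip}$, and since $\lvert\CN(k,i)\rvert=a$ this reads $a\cdot u_{kl}u_{ip}=c\cdot u_{kl}u_{ip}$.

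Next I would reassemble the triple product. Multiplying the first identity on the right by $u_{ip}$ gives $a\cdot u_{ij}u_{kl}u_{ip}=b\cdot u_{ij}u_{kl}u_{ip}$, and multiplying the second on the left by $u_{ij}$ gives $a\cdot u_{ij}u_{kl}u_{ip}=c\cdot u_{ij}u_{kl}u_{ip}$. Chaining these two equalities through the common term $a\cdot u_{ij}u_{kl}u_{ip}$ yields $b\cdot u_{ij}u_{kl}u_{ip}=c\cdot u_{ij}u_{kl}u_{ip}$, so that $(b-c)\,u_{ij}u_{kl}u_{ip}=0$. As $b\neq c$ by hypothesis, we conclude $u_{ij}u_{kl}u_{ip}=0$, which is the claim.

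The only real subtlety is the first step. The named lemma, as stated, gives merely $u_{ij}u_{kl}=0$ when $a\neq b$, which is useless here since we have no relation between $a$ and $b$; what is genuinely needed is the intermediate equality $a\cdot u_{ij}u_{kl}=b\cdot u_{ij}u_{kl}$ read off from its proof, after which the argument is just left/right multiplication and transitivity of equality. I would also double-check that the relabelling applied to $u_{kl}u_{ip}$ is legitimate, i.e.\ that the lemma's computation makes no assumption on the vertices beyond what the common-neighbour sets encode, which is indeed the case.
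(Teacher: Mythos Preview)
Your argument is correct, but it takes a different route from the paper's. The paper simply does a case split on whether $\lvert\CN(i,k)\rvert=\lvert\CN(j,l)\rvert$: if not, Lemma~\ref{lem:different_numbers_common_neighbours} gives $u_{ij}u_{kl}=0$ directly; if so, then $\lvert\CN(i,k)\rvert=\lvert\CN(j,l)\rvert\neq\lvert\CN(l,p)\rvert$, and the same lemma applied to the pair $(k,l),(i,p)$ gives $u_{kl}u_{ip}=0$. Either way the triple product vanishes. Your approach instead extracts the unstated scalar identity $a\cdot u_{ij}u_{kl}=b\cdot u_{ij}u_{kl}$ from inside the lemma's proof and chains two instances of it through the common factor $a$, avoiding any case distinction. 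This is perfectly valid and perhaps more uniform, but it requires opening up the lemma rather than using it as a black box; the paper's version is shorter and uses only the lemma's stated conclusion.
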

\begin{proof}
    If $ \lvert\CN(i, k) \rvert \neq  \lvert \CN(j, l)\rvert $ then we have $u_{ij} u_{kl} = 0$ and thus also  $u_{ij} u_{kl} u_{ip} = 0$. Otherwise we have
    $ \lvert \CN(i, k)  \rvert =  \lvert \CN(j, l) \rvert  \neq  \lvert \CN(l, p)  \rvert $ and thus $u_{kl} u_{ip} = 0$ and thus  $u_{ij}u_{kl} u_{ip} = 0$.
\end{proof}
\begin{cor}
        \label{cor:one_triangle_one_not}
        Let $\Gamma$ be a finite, undirected graph and let $(u_{ij})_{1 \leq i, j\leq n}$ be the generators of $C(\QBan(\Gamma))$.
        If $i, j, k, l \in V$ are vertices of $\Gamma$ with $(i, k) \in E$ and $(j, l) \in E$ such that $i$ and $k$ 
        have at least one common neighbour, but $j$ and $l$ do not have a common neighbour, then
        \[
            u_{ij} u_{kl} = u_{kl} u_{ij} = 0
        .\] 
        In other words, if $i$ and $k$ are part of the same triangle but $j$ and $l$ are not, then the product of $u_{ij}$ and $u_{kl}$ is zero.
\end{cor}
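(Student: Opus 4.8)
The plan is to read this statement off directly from Lemma~\ref{lem:dist_two_one_vs_two_common_neighbours}, which already does all the work: it asserts that whenever the two pairs of vertices have a different number of common neighbours, the corresponding product of generators vanishes. So the only task is to verify that the hypothesis of that lemma is satisfied in the present situation, and then to deduce the second equality from the first.

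First I would observe that, since $i$ and $k$ have at least one common neighbour, we have $\lvert \CN(i,k) \rvert \geq 1$, whereas the assumption that $j$ and $l$ have no common neighbour means $\lvert \CN(j,l) \rvert = 0$. Hence $\lvert \CN(i,k) \rvert \neq \lvert \CN(j,l) \rvert$, and Lemma~\ref{lem:dist_two_one_vs_two_common_neighbours} immediately yields $u_{ij} u_{kl} = 0$. Note that the adjacency hypotheses $(i,k)\in E$ and $(j,l)\in E$ play no role in the argument beyond motivating the reformulation in terms of triangles; the common-neighbour count alone forces the product to vanish.

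To obtain $u_{kl} u_{ij} = 0$ as well, I would take adjoints. Since every generator of $C(\QBan(\Gamma))$ is self-adjoint by relation~(\ref{rel:symmetric_1}), we have $(u_{ij} u_{kl})^* = u_{kl}^* u_{ij}^* = u_{kl} u_{ij}$, so that $u_{kl} u_{ij} = (u_{ij} u_{kl})^* = 0$. There is no genuine obstacle here: the statement is an immediate specialisation of the previous lemma to the extreme case where one of the two common-neighbour counts is zero, the only point worth recording being the adjoint argument that converts one vanishing product into the other.
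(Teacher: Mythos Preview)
Your proof is correct and follows essentially the same approach as the paper: both invoke Lemma~\ref{lem:different_numbers_common_neighbours} after observing that $\lvert \CN(i,k)\rvert > 0$ while $\lvert \CN(j,l)\rvert = 0$. The only cosmetic difference is that you obtain $u_{kl}u_{ij}=0$ by taking adjoints, whereas the paper states both vanishings directly from the lemma (which is equally justified by applying it with the roles of the two pairs swapped).
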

\begin{proof}
        If $i$ and  $k$ are part of the same triangle, then in particular  $ \lvert \CN(i, k) \rvert > 0$, while
        $\lvert \CN(j, l) \rvert = 0$, since they are not in the same triangle.
        Applying Lemma~\ref{lem:different_numbers_common_neighbours}, we get $u_{ij} u_{kl} = 0 = u_{kl} u_{ij}$.
\end{proof}

\subsection{A strategy for deciding quantum symmetries}
In this section we want to give a short checklist of steps to take when one has a given graph $\Gamma$ and wants to decide, whether it has quantum symmetries.

\renewcommand{\labelenumii}{\arabic{enumi}.\arabic{enumii}.}
\renewcommand{\labelenumi}{\arabic{enumi}.}
\begin{enumerate}
    \item First, one should check, whether $\Gamma$ has disjoint automorphisms. If yes, then by Lemma~\ref{lem:disjoint_automorphisms} it has quantum symmetries.
    \item If $\Gamma$ does not have disjoint automorphisms, one should first try to show that it does not have quantum symmetries, i.e. that
        $C(\QBan(\Gamma))$ is commutative:
        \begin{enumerate}
            \item By Lemma~\ref{lem:distances}, it suffices to show $u_{ij} u_{kl} = u_{kl} u_{ij}$ only for vertices that fulfill $d(i, k) = d(j, l)$.
                One can thus fix a distance $m = d(i, k) = d(j, l)$ and try to show commutativity for vertices in this distance.
            \item For $m = 1$, the first thing to check is whether Lemma~\ref{lem:quadrangle} or Lemma~\ref{lem:one_common_neighbour} are 
                applicable, because these already yield commutativity for this case.
            \item In the next step, one can check whether using Lemmas~\ref{lem:choose_q_right} or \ref{lem:choose_q_middle} one can get 
                $u_{ij} u_{kl} = u_{ij} u_{kl} u_{ij}$ by showing that $u_{ij} u_{kl} u_{ip} = 0$ for $p \neq j$. This is a task that can easily be 
                delegated to a computer and can therefore be applied to all possible combinations of generators easily.
            \item If the results of Lemmas~\ref{lem:choose_q_right} and \ref{lem:choose_q_middle} only yield partial results, i.e. they only
                show $u_{ij} u_{kl} u_{ip} = 0$ for some of the vertices $p$, one can try to use Corollary~\ref{cor:different_numbers_common_neighbours} to show 
                it for the rest.
            \item If at any point one has shown $u_{ij} u_{kl} = u_{kl} u_{ij}$ for any generators, one can use Lemma~\ref{lem:get_commutation_by_automorphism}
                to propagate this to other pairs of generators.
            \item Any remaining pairs of vertices will have to be considered individually, potentially using a combination of 
                Lemmas~\ref{lem:choose_q_right}, \ref{lem:choose_q_middle}, \ref{lem:get_commutation_by_automorphism} and Corollary~\ref{cor:different_numbers_common_neighbours}.
        \end{enumerate}
\end{enumerate}
\renewcommand{\labelenumi}{(\roman{enumi})}
\renewcommand{\theenumi}{(\roman{enumi})}

\counterwithin{thm}{section}
\section{Overview and Notation}
\label{section:overview_and_notation}

On twelve vertices, there are in total $74$ vertex-transitive graphs. Since the quantum automorphism group of a graph is the same as the one 
of its complement, it suffices to only consider  $37$ of these graphs, since the rest can be obtained by taking their complements.

We will sort these graphs in five different subclasses: \hyperref[section:non_connected]{disconnected graphs},
\hyperref[section:products]{pro\-ducts of smaller graphs}, \hyperref[section:circulant_graphs]{circulant graphs}, \hyperref[section:semicirculant_graphs]{semicirculant graphs}
and \hyperref[section:special_cases]{special cases} that do not fit into any of the other subclasses.

In total, we will consider the following graphs:
\renewcommand{\arraystretch}{1.5}
\begin{table}[H]
    \centering
    \label{tab:all_vtrans_graphs}
    \begin{tabular}{c|l}
    Subclass& Graphs\\
    \hline\\
    disconnected & $6 K_2,\, 4 K_3,\,3K_4,\, 3C_4,\, 2K_6,\,2 C_6,\,$\\
                  & $ 2 K_2 \square K_3,\,2C_6(2), \,2 C_6(3)$\\
    products      & $K_6 \times K_2, \,K_3 \times K_4, \,C_6 \square K_2,$\\
                  & $C_4 \square C_3, \,K_2 \square C_6(2), \,K_2 \square C_6^+$\\
    circulant     & $C_{12}, \,K_{12}, \,C_{12}^+, \,C_{12}(2), \,C_{12}(3), \,C_{12}(4),$\\
                  & $C_{12}(5),\, C_{12}(2, 6),\, C_{12}(4, 6),\, C_{12}(3, 6),$\\
                  & $C_{12}(4, 5),\, C_{12}(5, 6)$\\
    semi-circulant& $C_{12}(5^+),\, C_{12}(3^+, 6), \, C_{12}(5^+, 6), $\\
                  & $C_{12}(2, 5^+), \,C_{12}(4, 5^+)$\\
    special cases & $\Line(\text{Cube}),\, \Line(C_6(2)), $ Icosahedron,\\
                  & $\Trunc(K_4), \, \Antip(\Trunc(K_4))$
    \end{tabular}
    \caption{Vertex transitive graphs on $12$ vertices up to complements}
\end{table}

    In the proofs that a graph has no quantum symmetries, we will often repeatedly apply Lemma~\ref{lem:choose_q_right} and Lemma~\ref{lem:choose_q_middle}. 
    In order to prevent this from taking up too much space, we will now introduce the following notation:
\begin{notation}    
    \label{notation:compact_notation_lemma_choose_q_right}
    Let $\Gamma$ be a graph and  $j$ a fixed vertex in $\Gamma$.
    If we want to apply Lemma~\ref{lem:choose_q_right} to  $j$ and 
        $l_1, l_2, \ldots$ and $q_1, q_2, \ldots$ to get that  $u_{ij} u_{kl_a} = u_{ij} u_{kl_a} \sum_{\substack{p \in P_a}}$, we will write
        the following table:
        \begin{table}[H]
            \begin{tabularx}{0.2\linewidth}{|*{4}{X|}}
                \multicolumn{4}{c}{Lemma~\ref{lem:choose_q_right}}\\
                \hline
                $j$ &$l$ &  $q$ &  $p$\\
                 \hline
                $j$&$l_1$ & $q_1$ & $P_1$\\
                   &$l_2$ & $q_2$ & $P_2$\\
                   &$\vdots$ &  $\vdots$ &  $\vdots$\\
                   \hline
            \end{tabularx}
        \end{table}

        Here, the sets $P_a$ will be the sets of vertices  $p$ such that  $d(l_a, p) = d(j, l_a)$ and  $d(p, q_a) = d(j, q_a)$.
        Often, we will use this when the sets  $P_a$ are singleton sets, because this will mean that we have  $u_{ij} u_{kl_a} = u_{ij} u_{kl_a} u_{ij}$. 
\end{notation}
\begin{notation}
        \label{notation:compact_notation_lemma_choose_q_middle}
        Let $\Gamma$ be a graph and  $j$ a fixed vertex in $\Gamma$.
        If we want to apply Lemma~\ref{lem:choose_q_middle} to $j$, $l$, $p$ and $q$, we often want to apply it in such a way, 
        that we get $u_{ij} u_{kl} u_{ip} = 0$.
        This is the case if $l$ is the unique vertex in distance $d(l, q)$ from  $q$,  $d(l, j)$ from  $j$ and  $d(l, p)$ from  $p$. 
        If that is the case, we will write the application of Lemma~\ref{lem:choose_q_middle} to $j$,  $l$, $p_1, p_2, \ldots$ and 
        $q_1, q_2, \ldots$ as the following table:
        \begin{table}[H]
            \begin{tabularx}{0.2\linewidth}{|*{4}{X|}}
                \multicolumn{4}{c}{Lemma~\ref{lem:choose_q_middle}}\\
                \hline
                $j$ &$l$ &  $p$ &  $q$\\
                 \hline
                $j$&$l$ & $p_1$ & $q_1$\\
                   & & $p_2$ & $q_2$\\
                   & &  $\vdots$ &  $\vdots$\\
                   \hline
            \end{tabularx}
        \end{table}
        A particular row of such a table containing $j', l', p', q'$ then means that 
        $u_{ij'} u_{kl'} u_{ip'} = 0$ for all  vertices $i, k \in V(\Gamma)$.
\end{notation}

To see how an application of the newly introduced notations might look in practice and how one might start working on a new graph
in general, we give a simple example:
\begin{ex}
    We consider the $5$-cycle  $C_5$. We already know, that it has no quantum symmetries, but we now want to show this using
    Lemma~\ref{lem:choose_q_middle} and Lemma~\ref{lem:choose_q_right}.
    By Corollary~\ref{cor:commutation_with_one_implies_all}, we only need to show that the generators $u_{i 1}$ commute with 
    all generators $u_{kl}$.
    Note moreover, that by Lemma~\ref{lem:adj_commute}, we only need to show that $u_{i 1} u_{kl} = u_{i 1} u_{kl} u_{i 1}$.
    We do this by using the fact from the relations of $\QBan(C_5)$ that $\sum_{r} u_{i r} = 1$ and thus
    \[
         u_{i 1} u_{kl} = u_{i 1} u_{kl} \sum_{p} u_{i p}
    .\] 
    Using additionally Lemma~\ref{lem:distances}, we get that
    \[
        u_{i 1} u_{kl} \sum_{p} u_{i p} = u_{i 1} u_{kl} \sum_{\substack{p\\ \, d(p, l) = d(i, k)}} u_{i p}
    .\] 
    Now we just need to show that $u_{i 1} u_{kl} u_{ip} = 0$ for any vertex $l$ and any $p \neq 1$ with $d(p, l) = d(i, k)$.
    Here we can assume that $d(i, k) = d(1, l)$ without loss of generality, since whenever $d(i, k) \neq d(1, l)$, we have
    $u_{i 1} u_{kl} = u_{kl} u_{i 1} = 0$ and we are already done.
    Note, that in the case of $C_5$ for any vertex $l$, only one vertex  $p \neq 1$ is in the same distance to $l$ as  $1$
    and therefore the above means that for any  $l$, we only have to show  $u_{i 1} u_{kl} u_{i p} = 0$ for a single vertex
    $p$.

    In order to show that $u_{i 1} u_{kl} u_{ip} = 0$ for the above specified vertices $l$ and  $p$, we will first use 
    Lemma~\ref{lem:choose_q_middle}. 
    To get that $u_{i 1} u_{k 2} = u_{k 2} u_{i 1}$, we will now use the lemma to show that $u_{i 1} u_{k 2} u_{i 3} = 0$. 
    Looking at the conditions for applying 
    Lemma~\ref{lem:choose_q_middle}, we thus want to find a vertex $q$ such that $d(1, q) \neq d(q, 3)$ and such that
    $2$ is the only vertex that is both in distance  $s = d(2, q)$ from  $q$ and  in distance  $1$ from both  vertices $1$ and  $3$.
    Choosing  $q := 1$ satisfies this as can be seen by looking at the graph:
    \begin{center}
        \begin{tikzpicture}[auto, node distance = 2cm and 2.5cm, on grid, 
            vertexRed/.style={circle, draw=xkcdRed!45, fill=xkcdRed!45, very thick, minimum size=5mm},
            vertexGreen/.style={circle, draw=xkcdForestGreen!45, fill=xkcdForestGreen!45, very thick, minimum size=5mm},
            vertexBlue/.style={circle, draw=xkcdRoyalBlue!45, fill=xkcdRoyalBlue!45, very thick, minimum size=5mm},
            vertexBlack/.style={circle, draw=black!45, fill=black!45, very thick, minimum size=5mm}
            ]
            \node[vertexRed] (1) at   (90 + 1*72:3cm) {$1$};
            \node[vertexGreen] (2) at (90 + 2*72:3cm) {$2$};
            \node[vertexRed] (3) at  (90 + 3*72:3cm) {$3$};
            \node[vertexBlack] (4) at   (90 + 4*72:3cm) {$4$};
            \node[vertexBlack] (5) at (90 + 5*72:3cm) {$5$};
            \foreach[evaluate ={
                \l = int(\j + 1);
            }] \j in {1,...,5}{
            \ifthenelse {\j  < 5}{\draw (\j) -- (\l);}{\draw (\j) -- (1);};
        }
        \end{tikzpicture}
    \end{center}
    We can therefore apply  Lemma~\ref{lem:choose_q_middle} to get 
    that  $u_{i 1} u_{k 2} u_{i 3} = 0$ and therefore
    \[
        u_{i 1} u_{k 2} = u_{i 1} u_{k 2} u_{i 1}
    \] 
    as desired.

    In a very similar manner, we can apply Lemma~\ref{lem:choose_q_middle} to $l = 5$. Both of these applications are
    summarised in the following table, using the notation from~\ref{notation:compact_notation_lemma_choose_q_middle}:
    \begin{table}[H]
        \begin{tabularx}{0.2\linewidth}{|*{4}{X|}}
            \multicolumn{4}{c}{Lemma~\ref{lem:choose_q_middle}}\\
            \hline
            $j$ &$l$ &  $p$ &  $q$\\
             \hline
            $1$&$2$ & $3$ & $1$\\
               & $5$&  $4$&  $1$\\
               \hline
        \end{tabularx}
    \end{table}
    This table thus already shows that the $u_{i 1}$ commute with all $u_{kl}$ where $l$ is adjacent to $1$. Using
    Lemma~\ref{lem:get_commutation_by_automorphism}, we thus get that all $u_{ij}$ and $u_{kl}$ commute whenever $j \sim l$.
    We now want to use this information to apply Lemma~\ref{lem:choose_q_right} to show that 
    $u_{i 1} u_{k 3} = u_{i 1} u_{k 3} u_{i 1}$. We thus have to find a vertex $q$ with  $d(q, 3) = 1$ such that
    there is exactly one vertex $p$ that satisfies  $d(3, p) = 2$ and $d(p, q) = d(j, q)$. Choosing  $q := 2$ only leaves the
    vertex  $p = 1$ satisfying these conditions and therefore Lemma~\ref{lem:choose_q_right} yields
     \[
        u_{i 1} u_{k 3} = u_{i 1} u_{k 3} u_{i 1}
    \] 
    and therefore also
    \[
        u_{i 1} u_{k 3} = u_{k 3} u_{i 1}
    .\] 
    In a similar manner, we can apply Lemma~\ref{lem:choose_q_right} to get  $u_{i 1} u_{k 4} = u_{i 1} u_{k 4} u_{i 1}$, and
    again, both of these applications are summarised in the following table according to Notation~\ref{notation:compact_notation_lemma_choose_q_right}:
    \begin{table}[H]
        \begin{tabularx}{0.25\linewidth}{|*{4}{X|}}
            \multicolumn{4}{c}{Lemma~\ref{lem:choose_q_middle}}\\
            \hline
            $j$ &$l$ &  $q$ &  $p$\\
             \hline
            $1$&$3$ & $2$ & $\{1\} $\\
               & $4$&  $3$&  $\{1\} $\\
               \hline
        \end{tabularx}
    \end{table}
    All in all, we see that $u_{i 1} u_{kl} = u_{kl} u_{i 1}$ for all vertices $i, k$ and  $l$ and thus by Lemma~\ref{lem:get_commutation_by_automorphism} we get  $u_{ij} u_{kl} = u_{kl} u_{ij}$ for all generators of the quantum automorphism group of $C_5$.
\end{ex}

\section{Constructions from smaller Graphs}
\label{section:constructions_from_smaller_graphs}
In this section we consider graphs that arise as constructions of smaller graphs. A lot of the time, their quantum automorphism groups can be computed in a simple manner
from the quantum automorphism groups of the smaller graphs.
\counterwithin{thm}{subsection}
\subsection{Disconnected graphs}

\label{section:non_connected}
Vertex transitive graphs that are disconnected are always disjoint copies of a single connected graph. 
For such disjoint copies of a connected graph, it was shown in~\cite{banica_bichon_2006_2} how to compute the quantum automorphism groups:
\begin{thm}[\cite{banica_bichon_2006_2}]
    If $\Gamma$ is a connected graph, then the graph consisting of  $n$ disjoint copies of  $\Gamma$ has quantum automorphism group
    \[
        \QBan(n\Gamma) = \QBan(\Gamma) \wr_* S_n^+
    .\] 
\end{thm}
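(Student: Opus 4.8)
The plan is to prove the equality of compact matrix quantum groups by exhibiting surjective $*$-homomorphisms between the two universal $C^*$-algebras in both directions and checking that, on generators, they are mutually inverse. I write the vertices of $n\Gamma$ as pairs $(a,i)$ with $a\in\{1,\dots,n\}$ labelling the copy and $i\in V(\Gamma)$, and denote the generators of $C(\QBan(n\Gamma))$ by $u_{(a,i),(b,j)}$. On the other side, let $\nu=(\nu_{ab})$ be the magic unitary generating $C(S_n^+)$ and let $v^{(1)},\dots,v^{(n)}$ be the fundamental magic unitaries of the $n$ copies of $C(\QBan(\Gamma))$ inside the free wreath product, whose fundamental corepresentation has entries $w_{(a,i),(b,j)}=v^{(a)}_{ij}\nu_{ab}$.

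For the inclusion $\QBan(\Gamma)\wr_* S_n^+\subseteq\QBan(n\Gamma)$ I would send $u_{(a,i),(b,j)}\mapsto v^{(a)}_{ij}\nu_{ab}$ and verify that the images satisfy the defining relations of $C(\QBan(n\Gamma))$. That the $w_{(a,i),(b,j)}$ form a magic unitary is built into the free wreath product: the $v^{(a)}$ are magic unitaries, $\nu$ is a magic unitary, and the free wreath relations guarantee that the relevant products are again projections with the correct row and column sums. Since the adjacency matrix of $n\Gamma$ is block diagonal, $(A_{n\Gamma})_{(a,i),(b,j)}=\delta_{ab}(A_\Gamma)_{ij}$, the commutation $A_{n\Gamma}w=wA_{n\Gamma}$ reduces, block by block, to $A_\Gamma v^{(a)}=v^{(a)}A_\Gamma$, which holds because each $v^{(a)}$ generates a copy of $\QBan(\Gamma)$, while the off-diagonal blocks vanish by the factor $\delta_{ab}$. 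This yields a well-defined surjection $C(\QBan(n\Gamma))\twoheadrightarrow C(\QBan(\Gamma)\wr_* S_n^+)$.

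For the reverse inclusion I would recover the building blocks of the free wreath product from the $u_{(a,i),(b,j)}$ by setting $\nu_{ab}:=\sum_{j\in V(\Gamma)}u_{(a,i),(b,j)}$ and $v^{(a)}_{ij}:=\sum_{b}u_{(a,i),(b,j)}$. The crucial point, and the step I expect to be the main obstacle, is the \emph{block-constancy lemma}: the definition of $\nu_{ab}$ must be independent of the choice of $i$. This is the quantum analogue of the classical fact that an automorphism permutes connected components, and it is exactly where connectedness of $\Gamma$ enters. I would prove it by first showing, for adjacent $i\sim i'$ in $\Gamma$, that $\sum_j u_{(a,i),(b,j)}=\sum_j u_{(a,i'),(b,j)}$: a neighbour of $(a,i)$ must map to a neighbour of its image, and in $n\Gamma$ every neighbour of a vertex lies in the same copy, so relations~(\ref{rel:graph_1}) and~(\ref{rel:graph_2}) force the block-projection of $(a,i')$ to agree with that of $(a,i)$. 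Connectedness of $\Gamma$ then propagates this equality along paths to all of $V(\Gamma)$, so $\nu_{ab}$ is well defined, and a parallel argument controls $v^{(a)}_{ij}$.

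Granting the block-constancy lemma, the remaining verifications are routine. One checks that $(\nu_{ab})$ is an $n\times n$ magic unitary (hence generates a copy of $S_n^+$), that for each $a$ the $(v^{(a)}_{ij})$ form a magic unitary commuting with $A_\Gamma$ (hence generate a copy of $\QBan(\Gamma)$), and that together with $\nu$ they satisfy the free wreath product relations. Using orthogonality of distinct entries in a fixed row of the magic unitary $(u_{(a,i),(b,j)})$ one obtains $v^{(a)}_{ij}\nu_{ab}=u_{(a,i),(b,j)}$, which makes the induced surjection $C(\QBan(\Gamma)\wr_* S_n^+)\twoheadrightarrow C(\QBan(n\Gamma))$ inverse to the first map on generators in both orders. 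Combining the two surjections gives $\QBan(n\Gamma)=\QBan(\Gamma)\wr_* S_n^+$ as compact matrix quantum groups.
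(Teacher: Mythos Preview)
The paper does not prove this theorem; it is quoted from \cite{banica_bichon_2006_2} and used as a black box to read off the quantum automorphism groups of the disconnected vertex-transitive graphs on $12$ vertices. There is therefore no proof in the paper to compare against.

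That said, your outline is the standard argument from the cited reference and is correct. The two maps you write down, $u_{(a,i),(b,j)}\mapsto v^{(a)}_{ij}\nu_{ab}$ in one direction and $\nu_{ab}:=\sum_j u_{(a,i),(b,j)}$, $v^{(a)}_{ij}:=\sum_b u_{(a,i),(b,j)}$ in the other, are exactly the ones used by Banica and Bichon, and you have correctly identified the block-constancy lemma (independence of $\nu_{ab}$ from $i$, proved by propagating along edges using connectedness of $\Gamma$) as the one nontrivial ingredient. One small point worth making explicit in a full write-up: to get the \emph{column} sums of $v^{(a)}$ equal to $1$ you also need the symmetric statement that $\sum_i u_{(a,i),(b,j)}$ is independent of $j$, and then a counting argument to see this equals $\nu_{ab}$; this follows by the same connectedness argument applied on the target side. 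With that in hand, the verifications that $(\nu_{ab})$ is magic, that each $v^{(a)}$ is magic and intertwines $A_\Gamma$, and that $\nu_{ab}$ commutes with the entries of $v^{(a)}$ are indeed routine.
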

We thus can compute the Quantum automorphism groups of these graphs easily.
\begin{table}[H]
    \centering
    \begin{tabular}{l|l|l}
        Graph& Automorphism Group& Quantum Automorphism Group\\
        \hline
        $6 K_2$ & $\Z_2 \wr S_6$ & $\Z_2 \wr_* S_6^+$\\
        $4 K_3$ & $S_3 \wr S_4$ & $S_3 \wr_* S_4^+$\\
        $3 K_4$ & $S_4 \wr S_3$ & $S_4^+ \wr_* S_3$\\
        $3C_4$ & $H_2 \wr S_3$ & $H_2^+ \wr_* S_3$ \\ 
        $2 K_6$ & $S_6 \wr \Z_2$ & $S_6^+ \wr_* \Z_2$\\
        $2 C_6$ & $D_6 \wr \Z_2$ & $D_6 \wr_* \Z_2$\\
        $2(K_2 \square K_3)$ & $D_{6} \wr \Z_2$ & $D_{6} \wr_* \Z_2$\\
         $2 C_6(2)$ &  $(\Z_2 \wr S_3) \wr \Z_2$ & $(\Z_2 \wr_* S_3) \wr_* \Z_2$\\
        $2 C_6(3)$ & $(S_3 \wr \Z_2) \wr \Z_2$ & $(S_3 \wr_* \Z_2) \wr_* \Z_2$
    \end{tabular}
\end{table}

\subsection{Products of smaller graphs}
\label{section:products}
For finite graphs one can define the following product operations:
 \begin{defn}
    Let $\Gamma$ and  $\Gamma'$ be finite graphs.
    \begin{enumerate}
        \item The direct product $\Gamma \times \Gamma'$ has vertex set $V(\Gamma) \times V(\Gamma')$ and the following edges:
            \[
                (i, k) \sim (j, l) \iff i \sim j \text{ and } k \sim l
            .\] 
        \item The Cartesian product $\Gamma \square \Gamma'$ has vertex set $V(\Gamma) \times V(\Gamma')$ and the following edges:
            \[
                (i, k) \sim (j, l) \iff i = j, k \sim l \text{ or } i \sim j, k = l
            .\] 
    \end{enumerate}
\end{defn}
In~\cite{banica_2005}, the following theorem about quantum automorphism groups of graph products was proven.
\begin{thm}
    Let $\Gamma$ and  $\Gamma'$ be finite connected regular graphs. If their spectra  $\{\lambda\} $ and $\{\mu\} $ do not contain $0$ and satisfy
     \[
        \{\lambda_i / \lambda_j\} \cap \{\mu_k / \mu_l\} = \{1\} 
    ,\] 
    then $C(\QBan(\Gamma \times \Gamma')) = C(\QBan(\Gamma)) \otimes C(\QBan(\Gamma'))$.

    If the spectra satisfy
    \[
        \{\lambda_i - \lambda_j\} \cap \{\mu_k - \mu_l\} = \{0\} 
    ,\] 
    then $C(\QBan(\Gamma \square \Gamma')) = C(\QBan(\Gamma)) \otimes C(\QBan(\Gamma'))$.
\end{thm}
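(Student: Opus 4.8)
The plan is to work entirely at the level of adjacency matrices, using that $C(\QBan(\cdot))$ is the universal $C^*$-algebra generated by the entries of a magic unitary $u$ subject to $Au = uA$ for the adjacency matrix $A$ of the graph in question. First I would record the two Kronecker identities that fall straight out of the edge definitions: on $\C^{V(\Gamma)}\otimes\C^{V(\Gamma')}$ one has $A_{\Gamma\times\Gamma'} = A_\Gamma\otimes A_{\Gamma'}$ for the direct product and $A_{\Gamma\square\Gamma'} = A_\Gamma\otimes I + I\otimes A_{\Gamma'}$ for the Cartesian product. The two cases of the theorem then run in parallel, differing only in this formula and in whether eigenvalues get multiplied or added.

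For the inclusion that is always available, let $u$ and $u'$ denote the fundamental corepresentations of $\QBan(\Gamma)$ and $\QBan(\Gamma')$ and set $w_{(i,k),(j,l)} := u_{ij}\otimes u'_{kl}$. I would check that $w$ is a magic unitary: its entries are projections, orthogonality inside a row or column follows from the corresponding relations for $u$ and $u'$ (a mismatch in the first coordinate is killed by the $u$-factors, a mismatch in the second by the $u'$-factors), and the row and column sums factor as $(\sum_j u_{ij})\otimes(\sum_l u'_{kl}) = 1\otimes 1$. A short computation from $uA_\Gamma = A_\Gamma u$ and $u'A_{\Gamma'} = A_{\Gamma'}u'$ shows that $w$ commutes with the relevant product adjacency matrix. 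By the universal property this produces a surjective $*$-homomorphism $\pi: C(\QBan(\text{product})) \to C(\QBan(\Gamma))\otimes C(\QBan(\Gamma'))$ sending $w_{(i,k),(j,l)}$ to $u_{ij}\otimes u'_{kl}$, and the real content of the theorem is that $\pi$ is an isomorphism.

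The spectral hypotheses enter precisely to force the fundamental corepresentation $w$ of $\QBan(\text{product})$ to commute with each factor separately. Write $P_i$, $Q_k$ for the spectral projections of $A_\Gamma$, $A_{\Gamma'}$ at the distinct eigenvalues $\lambda_i$, $\mu_k$. Since no eigenvalue vanishes, the condition $\{\lambda_i/\lambda_j\}\cap\{\mu_k/\mu_l\}=\{1\}$ says exactly that $(\lambda_i,\mu_k)\mapsto\lambda_i\mu_k$ is injective on pairs of eigenvalues, and likewise $\{\lambda_i-\lambda_j\}\cap\{\mu_k-\mu_l\}=\{0\}$ says $(\lambda_i,\mu_k)\mapsto\lambda_i+\mu_k$ is injective. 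Hence each spectral projection of the product adjacency matrix is a single tensor $P_i\otimes Q_k$, not a sum over several pairs. As $w$ commutes with the product adjacency matrix it commutes with all of its spectral projections, therefore with every $P_i\otimes Q_k$, and consequently with $A_\Gamma\otimes I = \sum_i\lambda_i(P_i\otimes I)$ and with $I\otimes A_{\Gamma'}$ individually.

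The hard part, which I expect to be the main obstacle, is to upgrade this separate commutation into an honest tensor factorization, i.e. to build an inverse of $\pi$. Concretely one must manufacture, out of $w$, two commuting magic unitaries $a$ on $V(\Gamma)$ and $b$ on $V(\Gamma')$ commuting with $A_\Gamma$ and $A_{\Gamma'}$ respectively and satisfying $w_{(i,k),(j,l)} = a_{ij}b_{kl}$; the natural candidates are marginals of $w$, but showing these are well defined and multiply correctly is exactly the point where connectedness of the factors is needed, and it is the quantum analogue of the classical fact that a product of spectrally coprime connected graphs admits no ``mixing'' automorphisms. The cleanest way to organise this is probably Tannakian: the separation of spectra should make the intertwiner category of $\QBan(\text{product})$ the tensor product of the categories of $\QBan(\Gamma)$ and $\QBan(\Gamma')$, which identifies the two quantum groups and yields injectivity of $\pi$; alternatively one splits $w$ directly using the projections $P_i\otimes I$ and $I\otimes Q_k$ produced in the previous step.
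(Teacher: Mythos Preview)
The paper does not prove this theorem at all: it is quoted verbatim from Banica's 2005 paper \cite{banica_2005} and simply applied to compute the quantum automorphism groups of a handful of graph products on $12$ vertices. There is therefore no proof in the paper to compare your proposal against.

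That said, your outline follows the standard strategy used in the original reference. The easy inclusion and the spectral-projection argument showing that $w$ commutes with $A_\Gamma\otimes I$ and $I\otimes A_{\Gamma'}$ separately are correct and are exactly how Banica proceeds. You are also right that the substantive step is the converse, and you correctly identify that connectedness is what makes the marginals well defined: in Banica's argument, regularity plus connectedness ensure that the all-ones vector spans the top eigenspace of each factor, so the projections $P_{\mathrm{top}}\otimes I$ and $I\otimes Q_{\mathrm{top}}$ are (up to scalar) averaging maps, and compressing $w$ by these produces the marginal magic unitaries $a$ and $b$ directly. Your proposal stops short of carrying this out, but the route you sketch is the right one; the Tannakian repackaging you mention is equivalent but not how the original is written.
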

This theorem yields the quantum automorphism group for a few of the product graphs already. 
\begin{table}[H]
    \centering
    \begin{tabular}{l|l|l}
        Graph& Automorphism Group& Quantum Automorphism Group\\
        \hline
        $K_6 \times K_2$& $S_6 \times \Z_2$ & $S_6^+ \times \Z_2$\\
        $K_3 \times K_4$ & $S_3 \times S_4$ & $S_3 \times S_4^+$\\
        $C_4 \square C_3$ & $H_2 \times S_3$ & $H_2^+ \times S_3$\\
        $K_2 \square C_6(3) $ & $\Z_2 \times (S_3 \wr \Z_2)$ & $\Z_2 \times (S_3 \wr_* \Z_2)$
    \end{tabular}
\end{table}
For the two product graphs $K_2 \square C_6$ and $K_2 \square C_6(2)$, the theorem does not apply, and they therefore have to be studied 
individually. 
We will first consider$K_2 \square C_6$:
\begin{prop}
    \label{prop:k2_c6}
    The graph $K_2 \square C_6$, pictured below, does not have quantum symmetry.
\end{prop}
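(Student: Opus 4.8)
The plan is to invoke the reduction for vertex-transitive graphs and then work distance by distance. The graph $K_2 \square C_6$ is the hexagonal prism: it is $3$-regular, bipartite and of diameter $4$, and from any fixed vertex the distance partition has sizes $1,3,4,3,1$, so in particular every vertex has a \emph{unique} antipode. First I would record that the prism has no disjoint automorphisms (so Lemma~\ref{lem:disjoint_automorphisms} yields nothing), fix one vertex $j_0$, and aim to show that $u_{i j_0}$ commutes with every generator $u_{kl}$; by Corollary~\ref{cor:commutation_with_one_implies_all} this already forces $\Gamma$ to have no quantum symmetries. By Lemma~\ref{lem:distances} it suffices to treat, for each distance $m=d(j_0,l)$, the pairs with $d(i,k)=m$. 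The structural warning to note at the outset is that the prism is triangle-free yet \emph{contains} quadrangles (its square faces), so neither Lemma~\ref{lem:quadrangle} nor Lemma~\ref{lem:one_common_neighbour} applies, and even the adjacent case $m=1$ must be handled with the technical lemmas.

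The crucial point is the order of attack: I would settle $m=4$ and $m=2$ first, since these require no prior commutation input. For $m=4$ the unique antipode makes the relevant sum collapse to a single term, giving $u_{i j_0}u_{kl}=u_{i j_0}u_{kl}u_{i j_0}$ at once and hence commutation by Lemma~\ref{lem:adj_commute}. For $m=2$ the pairs split into two types: two vertices in the same hexagon at hexagon-distance $2$ (one common neighbour) and two vertices in different layers at hexagon-distance $1$ (two common neighbours). Since the common-neighbour counts differ, Corollary~\ref{cor:different_numbers_common_neighbours} annihilates every mixed triple product $u_{i j_0}u_{kl}u_{ip}$, leaving in each case a single same-type monomial that one application of Lemma~\ref{lem:choose_q_middle} kills. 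This gives commutation for all $l$ at distance $2$, and propagating through Corollary~\ref{cor:commutation_with_one_implies_all} I obtain commutation of \emph{all} generators whose columns are at distance $2$.

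With the distance-$2$ commutations in hand I would return to $m=1$ and $m=3$. The subtlety is that for the rung-type neighbour (at $m=1$) and for the same-layer vertex (at $m=3$) a \emph{single} application of Lemma~\ref{lem:choose_q_right} with an auxiliary vertex $q$ at column-distance $2$ cannot isolate the target column $j_0$: the square-face symmetry keeps two of the three candidate columns equidistant from any such $q$, so the surviving sum always retains one spurious column. The remedy is to apply Lemma~\ref{lem:choose_q_right} \emph{twice}, with two different auxiliary vertices $q$, each at column-distance $2$ from $l$ so that the distance-$2$ commutations just established supply the lemma's commutation hypothesis; the two applications discard the two different spurious columns, and substituting one resulting identity into the other collapses the sum to $u_{i j_0}u_{kl}u_{i j_0}$, whence commutation by Lemma~\ref{lem:adj_commute}. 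The hexagon-type neighbour at $m=1$ and the different-layer vertices at $m=3$ are easier and need only one application. It then suffices to run these computations on one representative $l$ in each distance class and to transport the conclusion to the remaining vertices of the class by Lemma~\ref{lem:get_commutation_by_automorphism}, using the reflection of the prism that fixes $j_0$.

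Assembling the four distances shows that $u_{i j_0}$ commutes with every generator, so by Corollary~\ref{cor:commutation_with_one_implies_all} the graph has no quantum symmetries and $\QBan(K_2 \square C_6)=G_{aut}(K_2 \square C_6)=\Z_2\times D_6$. I expect the main obstacle to be exactly the $m=1$/$m=3$ deadlock: a naive single-shot use of Lemma~\ref{lem:choose_q_right} fails because of the square-face symmetry, and the argument only closes because the distance-$2$ case is resolved first and then exploited twice. The remaining effort is the finite, routine check --- best organised in the tabular notation of~\ref{notation:compact_notation_lemma_choose_q_right} and~\ref{notation:compact_notation_lemma_choose_q_middle} --- that suitable auxiliary vertices $q$ actually exist for each representative.
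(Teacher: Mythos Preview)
Your proposal is correct and follows the same overall strategy as the paper: fix a base vertex, handle distance $2$ first via the common-neighbour dichotomy together with Lemma~\ref{lem:choose_q_middle} and Corollary~\ref{cor:different_numbers_common_neighbours}, dispose of distance $4$ by the unique antipode, and then close distances $1$ and $3$ with Lemma~\ref{lem:choose_q_right}. The tactical difference lies in how the stubborn cases are finished. You treat $m=1$ and $m=3$ in parallel, feeding only the distance-$2$ commutations into Lemma~\ref{lem:choose_q_right}; as you observe, this forces the auxiliary $q$ to sit at column-distance $2$ from $l$, and for the rung neighbour and the same-layer distance-$3$ vertex no single such $q$ isolates $j_0$, so you iterate the lemma twice and intersect. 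The paper instead runs the cascade $m=2\Rightarrow m=1\Rightarrow m=3$: once $m=1$ is in hand, one may take $q$ at column-distance~$1$ from $l$ (indeed $q=10$, the antipode of $j_0$) and every distance-$3$ case collapses in a single shot. For the rung neighbour at $m=1$, the paper likewise uses one application of Lemma~\ref{lem:choose_q_right} to reduce to a single spurious monomial and then kills it by a direct insertion argument (the manoeuvre of Lemma~\ref{lem:monomial_zero}) rather than a second pass of Lemma~\ref{lem:choose_q_right}. Your double-application trick is a clean and uniform alternative; the paper's ordering is marginally more economical but less symmetric.
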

\begin{center}
    \begin{tikzpicture}[auto, node distance = 2cm and 2.5cm, on grid, vertex/.style={circle, draw=xkcdAquamarine!15, fill=xkcdAquamarine!15, very thick, minimum size=10mm}]
        \foreach[evaluate ={\e = int(\i + 6)}]
        \i in {1,...,6}{
        \node[vertex] (\i) at (60 + \i * 60:5cm) {$\i$};
        \node[vertex] (\e) at (60 + \i * 60:3cm) {$\e$};
    }
        \foreach[evaluate ={
            \l = int(\j + 1);
            \a = int(\j + 6);
            \b = int(\j + 7);
            \c = int(\j + 6);
        }] \j in {1,...,6}{
        \ifthenelse {\j  < 6}{\draw (\j) -- (\l);}{\draw (\j) -- (1);};
        \draw (\j) -- (\a);
        \ifthenelse{\a < 12}{\draw (\a) -- (\b);}{\draw(\a) -- (7);};
    }
    \end{tikzpicture}
\end{center}

\begin{proof}
    We will show that $u_{i 1} u_{kl} = u_{kl} u_{i 1}$ for all vertices $i, k, l$ of $K_2 \square C_6$. Then the vertex transitivity
    will imply the commutativity of all generators. 

    We begin with distance $2$. There are $4$ vertices in distance  $2$ from  $1$: $3, 5, 8$ and  $12$.
    First considering vertex $3$, we see that
     \[
    u_{i 1} u_{k 3} = u_{i 1} u_{k 3} (u_{i 1} + u_{i 5} + u_{i 8} + u_{i 10})
    .\] 
    We will now apply Lemma~\ref{lem:choose_q_middle} to $1$ and  $3$ using the notation introduced in~\ref{notation:compact_notation_lemma_choose_q_middle}:
        \begin{table}[H]
            \begin{tabularx}{0.2\linewidth}{|*{4}{X|}}
                \multicolumn{4}{c}{Lemma~\ref{lem:choose_q_middle}}\\
                \hline
                $j$ &$l$ &  $p$ &  $q$\\
                 \hline
                $1$&$3$ & $5$ & $2$\\
                   & & $8$ & $6$\\
                   \hline
            \end{tabularx}
        \end{table}
        We thus get that $u_{i 1} u_{k 3} u_{i 5} = 0 = u_{i 1} u_{k 3} u_{i 8}$. 
        Recall now, that by Definition~\ref{def:cn}, we denote by $\CN(i, j)$ the set of common neighbours of the vertices  $i$ and  $j$.
        We can now see that $\lvert\CN(1, 3)\rvert = 1$, but we have $\lvert\CN(3, 10)\rvert = 2$, and we thus get with 
        Corollary~\ref{cor:different_numbers_common_neighbours} that $u_{i 1} u_{k 3} u_{i 10} = 0$.
        We thus get $u_{i 1} u_{k 3} = u_{k 3} u_{i 1}$.

        Next, we consider vertex $8$ and apply Lemma~\ref{lem:choose_q_middle}:
        \begin{table}[H]
             \begin{tabularx}{0.2\linewidth}{|*{4}{X|}}
                \multicolumn{4}{c}{Lemma~\ref{lem:choose_q_middle}}\\
                \hline
                $j$ &$l$ &  $p$ &  $q$\\
                 \hline
                $1$&$8$ & $3$ & $4$\\
                   & & $12$ & $2$\\
                   \hline
            \end{tabularx}
        \end{table}
        Again, only $u_{i 1} u_{k 8} u_{i 10}$ is the missing term, but seeing that
        $\lvert \CN(1, 8)\rvert = 2 \neq \lvert \CN(8, 10) = 3$  we get again by Corollary~\ref{cor:different_numbers_common_neighbours}
        that $u_{i 1} u_{k 8} u_{i 10} = 0$ and thus  $u_{i 1} u_{k 8} = u_{k 8} u_{i 1}$.

        We observe, that mirroring on the line that goes through  $1, 7, 10 $ and  $4$ is an automorphism
        of $K_2 \square C_6$ that keeps $1$ fixed and maps  $3$ to  $5$ and  $8$ to  $12$, we get by
         Lemma~\ref{lem:get_commutation_by_automorphism} that $u_{i 1} u_{k 5} = u_{k 5} u_{i 1}$
         and also $u_{i 1} u_{k 12} = u_{k 12} u_{i 1}$. We thus have for all vertices  $j, l$ with 
         $d(j, l) = 2$ that  $u_{ij} u_{kl} = u_{kl} u_{ij}$.

         For distance $1$, we apply Lemma~\ref{lem:choose_q_right} to all neighbours of the vertex  $1$: 
        \begin{table}[H]
             \begin{tabularx}{0.225\linewidth}{|*{3}{X|}l|}
                \multicolumn{4}{c}{Lemma~\ref{lem:choose_q_right}}\\
                \hline
                $j$ &$l$ &  $q$ &  $p$\\
                 \hline
                $1$&$2$ & $6$ & $\{1\} $\\
                   &$6$& $2$ & $\{1\} $\\
                   &$7$ & $2$ &  $\{1, 8\} $\\
                   \hline
            \end{tabularx}
        \end{table}
        We see that $u_{i 1} u_{k 2} = u_{k 2} u_{i 1}$ and $u_{i 1} u_{k 6} = u_{k 6} u_{i 1}$.
        For $l = 7$, we moreover see that
       \begin{align*}
           u_{i 1} u_{k 7} = u_{i 1} u_{k 7} \left( u_{i 1} + u_{i 8} \right).
        \end{align*}
        Looking at $u_{i 1} u_{k 7} u_{i 8}$, we see that
        \begin{alignat*}{2}
             u_{i 1} u_{k 7} u_{i 8} &= 
             u_{i 1} u_{k 7} \sum_{\substack{r; d(r, i) = 1\\ d(r, k) = 2}} u_{r 9} u_{i 8}&\\
                     &= u_{i 1}  \sum_{\substack{r; d(r, i) = 1\\ d(r, k) = 2}} u_{r 9} u_{k 7} u_{i 8}
                     &\text{ since  } d(7, 9) = 2\\
                     &= 0 &\text{ since } d(1, 9) = 3 \neq d(r, i) = 1
        .\end{alignat*}

        For distance $3$, it is enough to only apply Lemma~\ref{lem:choose_q_right}:
        \begin{table}[H]
             \begin{tabularx}{0.225\linewidth}{|*{3}{X|}l|}
                \multicolumn{4}{c}{Lemma~\ref{lem:choose_q_right}}\\
                \hline
                $j$ &$l$ &  $q$ &  $p$\\
                 \hline
                $1$&$4$ & $10$ & $\{1\} $\\
                   &$9$& $10$ & $\{1\} $\\
                   &$11$ & $10$ &  $\{1\} $\\
                   \hline
            \end{tabularx}
        \end{table}
        
        Lastly, since $10$ is the only vertex in distance $4$ to  $1$, and thus also vice versa, we get
         \[
             u_{i 1} u_{k 10} = u_{i 1} u_{k 10} \sum_{r; d(r, 10) = 4} u_{i r} = u_{i 1} u_{k 10} u_{i 1}
        .\] 
        From this we get $u_{i 1} u_{k 10} = u_{k 10} u_{i 1}$ and therefore
        $C(\QBan(K_2 \square C_6))$ is commutative.
\end{proof}
A lot of the proofs that a graph does not have quantum symmetries are similar to the one above, repeatedly applying
Lemma~\ref{lem:choose_q_middle} and Lemma~\ref{lem:choose_q_right} and then using these results together with some
properties of the specific graph to get the commutation of the rest of the generators.
For this reason, we give the rest of such proofs in the Appendix~\ref{appendix:omitted-proofs}.

\begin{prop}
    The graph $K_2 \square C_6(2)$ has quantum symmetries.
\end{prop}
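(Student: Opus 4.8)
The plan is to exhibit two non-trivial \emph{disjoint} automorphisms of $K_2 \square C_6(2)$ and then invoke Lemma~\ref{lem:disjoint_automorphisms}, which immediately yields quantum symmetries. The crucial preliminary observation is that $C_6(2)$ is the octahedron (the cocktail party graph $K_{2,2,2}$): on the vertex set $\Z_6$ the adjacency $i \sim j \iff i - j \in \{\pm 1, \pm 2\}$ means that the only non-adjacent distinct pairs are the antipodal pairs $\{i, i+3\}$. Consequently $i$ and $i+3$ are \emph{twins}, sharing the neighbourhood $\Z_6 \setminus \{i, i+3\}$, so the transposition $\tau_i$ that swaps $i$ with $i+3$ and fixes all other vertices is an automorphism of $C_6(2)$ with support the single pair $\{i, i+3\}$.

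First I would lift these twin-swaps diagonally to the product. Writing the vertices of $K_2 \square C_6(2)$ as pairs $(\epsilon, j)$ with $\epsilon \in \{0,1\}$ and $j \in \Z_6$, define $\hat{\tau}_i(\epsilon, j) := (\epsilon, \tau_i(j))$, i.e. apply $\tau_i$ simultaneously in both copies of the octahedron. Since $\tau_i$ preserves the intra-copy octahedron edges and leaves the index coordinate acting purely within each layer, the map $\hat{\tau}_i$ also preserves the rung edges $(0,j) \sim (1,j)$ coming from the $K_2$-factor (each rung is sent to the rung over $\tau_i(j)$). Hence $\hat{\tau}_i$ is an automorphism of $K_2 \square C_6(2)$ with support the four vertices $\{(\epsilon, i), (\epsilon, i+3) : \epsilon \in \{0,1\}\}$.

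Then I would simply take two distinct antipodal pairs, for instance $\phi := \hat{\tau}_0$ and $\psi := \hat{\tau}_1$. Their supports $\{(\epsilon,0), (\epsilon,3) : \epsilon\}$ and $\{(\epsilon,1), (\epsilon,4) : \epsilon\}$ are disjoint, and both maps are clearly non-trivial, so $\phi$ and $\psi$ are non-trivial disjoint automorphisms. By Lemma~\ref{lem:disjoint_automorphisms} the graph $K_2 \square C_6(2)$ therefore has quantum symmetries, and its quantum automorphism group is infinite-dimensional.

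I do not expect a genuinely hard step here: the only point requiring care is checking that $\hat{\tau}_i$ is an automorphism of the \emph{product} and not merely of a single octahedron, namely that swapping a twin pair in both layers is compatible with the matching edges joining the two copies — which holds precisely because the swap is applied diagonally and hence carries each rung to a rung. The real content is structural recognition: once one notices that $C_6(2)$ is the octahedron, antipodal twins and therefore small-support automorphisms become available, and the disjointness of two distinct twin-swaps is immediate. The result then follows from the disjoint-automorphisms criterion rather than from any direct computation in $C(\QBan(K_2 \square C_6(2)))$.
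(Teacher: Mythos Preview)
Your proposal is correct and takes essentially the same approach as the paper: the paper also exhibits the two diagonal twin-swaps $([1,1]\,[1,4])([2,1]\,[2,4])$ and $([1,2]\,[1,5])([2,2]\,[2,5])$ and invokes Lemma~\ref{lem:disjoint_automorphisms}. Your version is in fact more informative, since you explain \emph{why} these maps are automorphisms (antipodal twins in the octahedron, compatibility with the rung edges under the diagonal lift), whereas the paper simply asserts the automorphisms without justification.
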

\begin{proof}
    We have the following automorphisms of $K_2 \square C_6(2)$:
    \begin{align*}
        ([1, 1] [1, 4])([2, 1] [2, 4]) \in \Aut(K_2 \square C_6(2))\\
        ([1, 2] [1, 5])([2, 2] [2, 5]) \in \Aut(K_2 \square C_6(2))
    .\end{align*}
    These are non-trivial and disjoint, and thus, by Lemma~\ref{lem:disjoint_automorphisms}, 
    $K_2 \square C_6(2)$ has quantum symmetries.
\end{proof}
\begin{table}[H]
    \centering
    \begin{tabular}{l | l | l}
        Graph & Automorphism Group & Quantum Automorphism Group\\
        \hline
        $K_2 \square C_6 $ & $\Z_2 \times D_6$ & $\Z_2 \times D_6$\\ 
        $K_2 \square C_6(2) $ & $\Z_2 \times (\Z_2 \wr S_3)$& ?
    \end{tabular}
\end{table}

\section{Circulant and Semi-circulant Graphs}
\label{section:circulant_and_semi_circulant_graphs}
Two related families of vertex-transitive graphs are circulant and semi-circulant graphs. Roughly speaking they arise when taking a cycle-graph and adding
edges between certain vertices.

Prior results yield the non-existence of quantum symmetries for a few of these graphs, however most of them have to be considered individually.
\subsection{Circulant graphs}

\begin{defn}
    For $1 < k_1 < \dots < k_r \le  [\frac{n}{2}]$, we define the graph $C_n(k_1, \dots, k_r)$, the \emph{circulant graph} on $n$ vertices with chords in distances $k_1, \dots, k_r$, as the graph obtained by drawing the $n-$cycle $C_n$ and then connecting all
    pairs of vertices in distance $k_i$, for any $i$. In other words, the vertex set is $V = \{1, \dots, n\}$ and for $i, j \in V$ we have
    \[
        i \sim j \Leftrightarrow |i - j| \mod \, n \in \{1, k_1, \dots, k_r\}
    .\] 
\end{defn}

\label{section:circulant_graphs}
For some circulant graphs, Theorem 3.1 in~\cite{banica_bichon_2006_1} shows that they do not have quantum symmetries.
\begin{thm}[\cite{banica_bichon_2006_1}]
    \label{thm:injective_f}
    Given a circulant graph $C_n(k_1, \ldots, k_r)$ and putting $k_0 := 1$, we define the function $f: \{1, 2, \ldots, [\frac{n}{2}]\} \to \R$ as follows:
    \[
        f(s) = \sum_{i = 0}^{r} \cos\left( \frac{2 k_i s \pi}{n} \right)
    .\] 
    If $n \neq 4$ and the function $f$ is injective, then the graph  $C_n(k_1, \ldots, k_r)$ does not have quantum symmetries.
\end{thm}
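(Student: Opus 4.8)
The plan is to exploit the circulant (Cayley-graph) structure via a Fourier/spectral analysis of the fundamental corepresentation $u$ of $\QBan(\Gamma)$, where $\Gamma = C_n(k_1,\dots,k_r)$. Label the vertices by $\Z_n$, write $\omega = e^{2\pi i/n}$ and $k_0 := 1$, and note that $A := A_\Gamma$ is a circulant matrix whose eigenvectors are the characters $\xi_s = (\omega^{sj})_{j\in\Z_n}$, with eigenvalue $\lambda_s = \sum_{i=0}^r(\omega^{k_i s}+\omega^{-k_i s}) = 2f(s)$. Since $f(s) < f(0) = r+1$ for $1 \le s \le [n/2]$ (equality would force $k_0 s = s \equiv 0 \bmod n$), injectivity of $f$ makes $\lambda_0,\dots,\lambda_{[n/2]}$ pairwise distinct, so the eigenspace attached to each $\lambda_s$ with $1\le s < n/2$ is \emph{exactly} $\operatorname{span}\{\xi_s,\xi_{-s}\}$. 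From $Au=uA$ one gets $A(u\xi_s) = \lambda_s(u\xi_s)$, hence $u\xi_s$ lies in that eigenspace, and I may write $(u\xi_s)_i = \sum_j \omega^{sj}u_{ij} = \omega^{si}a_s + \omega^{-si}b_s$ for suitable $a_s,b_s \in C(\QBan(\Gamma))$.

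Next I would extract algebraic relations from the magic-unitary axioms applied to $w_i := (u\xi_1)_i = \omega^i a + \omega^{-i}b$, where $a:=a_1,\ b:=b_1$. Row-orthogonality $u_{ij}u_{ik}=\delta_{jk}u_{ij}$ together with $\sum_j u_{ij}=1$ shows each $w_i$ is a unitary. Expanding $w_iw_i^* = w_i^*w_i = 1$ and collecting the coefficients of the functions $i\mapsto 1,\ \omega^{2i},\ \omega^{-2i}$ gives $aa^*+bb^* = a^*a+b^*b = 1$ and $ab^*=ba^*=a^*b=b^*a=0$. This coefficient comparison is legitimate exactly when $0,2,-2$ are pairwise distinct modulo $n$, which fails iff $n\mid 4$; \emph{this is precisely where the hypothesis $n\neq 4$ enters}. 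Comparing $w_i^2 = (u\xi_1)_i^2 = (u\xi_2)_i$ the same way — here using that injectivity of $f$ forces $(u\xi_2)_i$ to carry only the frequencies $\pm 2$ — yields the additional relation $ab+ba=0$.

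From here I would push to commutativity. The relations $a^*b = b^*a = 0$ and $aa^*+bb^*=1$ give $pb=0$ and $qa=0$ for the projections $p:=aa^*$, $q:=bb^*=1-p$, so $a,b$ are partial isometries ($aa^*a=a$, $bb^*b=b$); multiplying $ab+ba=0$ on the right by $b^*$ and using $ab^*=0$ gives $abb^*=0$, i.e. $a=ap$, whence $ab = a(pb) = 0$ and therefore $ba=0$ as well. Now $w_i^n = (u\xi_1)_i^n = (u\xi_n)_i = (u\xi_0)_i = 1$, and since $ab=ba=0$ only the pure powers survive, giving $a^n+b^n=1$ and, by comparing $w_i^{n-1}=w_i^*$, the identities $a^{n-1}=a^*$ and $b^{n-1}=b^*$; thus $a^n = aa^* = a^*a$, so $a$ (and likewise $b$) is normal. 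Since all mixed products of $a,b,a^*,b^*$ vanish and $a,b$ are normal, they generate a commutative $C^*$-algebra; finally $(u\xi_s)_i = \omega^{si}a^s + \omega^{-si}b^s$ for every $s$, so Fourier inversion $nu_{ij} = \sum_s \omega^{-sj}(u\xi_s)_i$ expresses each $u_{ij}$ as a polynomial in $a,b$. Hence $C(\QBan(\Gamma))$ is commutative and $\Gamma$ has no quantum symmetries.

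The main obstacle is the bookkeeping of exponent ``wrap-around'': every coefficient-comparison step is valid only when the relevant residues ($0,\pm2$, and later $\pm1,\pm(n-1)$) are distinct modulo $n$, and one must verify that the single degenerate case thereby excluded is genuinely $n=4$, while the one-dimensional eigenspaces at $s=0$ and (for even $n$) $s=n/2$ are handled separately. I expect the delicate point to be the passage $ab+ba=0 \Rightarrow ab=ba=0$ together with the upgrade from partial isometries to \emph{normal} elements via $w_i^n=1$: a naive $M_2$-type model satisfies all the unitarity relations yet has $ab\neq 0$, so it is exactly the relation $ab+ba=0$ (coming from the second eigenvector) and the order-$n$ condition that rule such models out.
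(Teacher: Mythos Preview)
The paper does not prove this theorem; it is quoted from \cite{banica_bichon_2006_1} and applied as a black box. Your argument is a correct reconstruction of the original Banica--Bichon spectral proof: the circulant adjacency matrix is diagonalised by the characters $\xi_s$, injectivity of $f$ forces the nontrivial eigenspaces to be exactly two-dimensional, and the Fourier-transformed row $w_i=(u\xi_1)_i=\omega^{i}a+\omega^{-i}b$ then carries all the information. The chain of deductions --- unitarity of $w_i$ giving the orthogonality relations $aa^*+bb^*=a^*a+b^*b=1$ and $ab^*=ba^*=a^*b=b^*a=0$; the identity $w_i^2=(u\xi_2)_i$ giving $ab+ba=0$; the bootstrap $abb^*=0\Rightarrow a=ap\Rightarrow ab=0$; and finally $w_i^{n-1}=w_i^*$ giving normality of $a$ and $b$ --- is sound, and your diagnosis that the coefficient comparisons collapse only at $n=4$ (where $2\equiv -2\bmod n$) is exactly right. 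One minor observation: your argument actually consumes the eigenspace hypothesis only at $s=1$ and $s=2$, so full injectivity of $f$ on $\{1,\dots,[n/2]\}$ is slightly more than the proof needs --- but that is already how the theorem is stated, and the original proof has the same feature.
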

\begin{prop}
    The graphs $C_{12}$, $C_{12}(3)$ and $C_{12}(6)$ do not have quantum symmetries.
\end{prop}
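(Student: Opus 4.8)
The plan is to apply the injectivity criterion of Banica and Bichon, Theorem~\ref{thm:injective_f}, to each of the three graphs in turn. All three are circulant graphs on $n = 12$ vertices and $12 \neq 4$, so it suffices in each case to write down the associated function $f \colon \{1, \dots, 6\} \to \R$ and to verify that it is injective; the theorem then immediately yields that the graph has no quantum symmetries. The only preparatory step is to read off the chord data correctly: for $C_{12}$ there are no chords, so $r = 0$ and only $k_0 = 1$ contributes; for $C_{12}(3)$ we have $k_0 = 1, k_1 = 3$; and for $C_{12}(6)$ we have $k_0 = 1, k_1 = 6$ (here the distance-$6$ edges form the antipodal perfect matching, but the theorem applies regardless).

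Next I would evaluate $f$ on the six arguments $s \in \{1, 2, 3, 4, 5, 6\}$ in each case. For $C_{12}$ one has $f(s) = \cos(s\pi/6)$, whose arguments all lie in $[0, \pi]$, so $f$ is strictly decreasing and hence injective. For $C_{12}(3)$ one has $f(s) = \cos(s\pi/6) + \cos(s\pi/2)$, and for $C_{12}(6)$ one has $f(s) = \cos(s\pi/6) + (-1)^s$. In both of the latter cases I would simply tabulate the six values, using the exact evaluations $\cos(\pi/6) = \sqrt{3}/2$, $\cos(\pi/3) = 1/2$, $\cos(\pi/2) = 0$, and so on, and read off that all six are pairwise distinct.

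Finally, having verified injectivity of $f$ in all three cases, the conclusion follows at once from Theorem~\ref{thm:injective_f}.

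As for the main obstacle: conceptually there is none, since this is a direct verification through a known criterion. The only care required is bookkeeping—correctly identifying the parameters $k_i$ (in particular not forgetting the $k_0 = 1$ term coming from the underlying cycle) and evaluating the cosines at the right multiples of $\pi$. The mildest subtlety arises for $C_{12}(6)$, where the chord length equals $[n/2]$, so the corresponding summand is $\cos(2 \cdot 6 \cdot s\pi/12) = \cos(s\pi) = (-1)^s$ and only takes the values $\pm 1$; one should check that adding this $\pm 1$ to $\cos(s\pi/6)$ does not accidentally create a collision, which a glance at the resulting table confirms.
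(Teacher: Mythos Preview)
Your proposal is correct and follows essentially the same approach as the paper: both apply Theorem~\ref{thm:injective_f} and verify injectivity of the associated function $f$ on $\{1,\dots,6\}$ for each of the three graphs. The only cosmetic difference is that the paper tabulates decimal approximations of the six values, whereas you work with exact cosine values and additionally observe monotonicity for $C_{12}$.
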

\begin{proof}
    We apply Theorem~\ref{thm:injective_f}, and thus need to compute the values of the associated function $f$ on the set  $\{1, \ldots, 6\} $:
    \begin{table}[H]
        \centering
        \begin{tabular}{l| l l l l l l}
            & $1$&  $2$&  $3$& $4$& $5$&  $6$\\
            \hline
            $C_{12}$ & $0.87$&$0.5$& $0$&$-0.5$& $-0.87$& $-1$   \\
            $C_{12}(3)$&$0.87$& $-0.5$& $-1.23$& $0.5$& $-0.87$& $-2$    \\
            $C_{12}(6)$& $-0.13$& $1.5$& $-1$& $0.5$ &$-1.87$& $0$
        \end{tabular}
    \end{table}
    Here the rounding error is less than $0.01$, and thus for all three graphs the function is injective.
\end{proof}

For some other graphs, we see that they have disjoint automorphisms, and therefore 
they do have quantum symmetries:
\begin{prop}
    The graphs $K_{12}$, $C_{12}(5)$, $C_{12}(4, 5)$ and $C_{12}(5, 6)$ have quantum symmetries.
\end{prop}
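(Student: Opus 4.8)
The plan is to apply Lemma~\ref{lem:disjoint_automorphisms} in each of the four cases: it is enough to produce two non-trivial automorphisms with disjoint support, after which quantum symmetries follow at once. Throughout I would identify the vertex set $\{1,\dots,12\}$ with $\Z_{12}$, so that $12$ plays the role of $0$, and write $S \subseteq \Z_{12}$ for the connection set of the circulant graph.

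For $K_{12}$ this is immediate, since every permutation of the vertices is an automorphism; the two disjoint transpositions $(1\,2)$ and $(3\,4)$ already suffice (alternatively one simply recalls $\QBan(K_{12}) = S_{12}^+$, which is non-commutative).

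For the two graphs $C_{12}(5)$ and $C_{12}(5,6)$ I would hunt for \emph{twin} vertices, i.e.\ vertices with the same neighbourhood, since swapping a pair of twins while fixing everything else is an automorphism. Concretely, the transposition $(v\,v+6)$ is an automorphism exactly when $S$ and $S+6$ coincide away from the differences $\{0,6\}$. This holds for $C_{12}(5)$, where $S = \{1,5,7,11\} = S+6$, and for $C_{12}(5,6)$, where $S = \{1,5,6,7,11\}$ and $S+6 = \{0,1,5,7,11\}$ agree off $\{0,6\}$. Hence in each case every antipodal transposition $(v\,v+6)$ is an automorphism, and the two disjoint ones $(1\,7)$ and $(2\,8)$, with supports $\{1,7\}$ and $\{2,8\}$, give exactly what Lemma~\ref{lem:disjoint_automorphisms} needs.

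The remaining case $C_{12}(4,5)$, with $S = \{1,4,5,7,8,11\}$, is the real obstacle: a quick check over all $d \neq 0$ shows that $S$ and $S+d$ never agree away from $\{0,d\}$, so there are \emph{no} twin pairs at all and the disjoint automorphisms cannot be transpositions. The idea I would use instead is the multiplier structure of $\Aut(C_{12}(4,5))$. Multiplication by $7$ preserves the connection set, since $7\cdot S = S$, so $\phi_1 \colon x \mapsto 7x$ is an automorphism; one checks that it fixes every even vertex and acts as $(1\,7)(3\,9)(5\,11)$ on the odd ones. Composing $\phi_1$ with the rotation by $6$ produces a second automorphism $\phi_2 \colon x \mapsto 7x+6$, which fixes every odd vertex and acts as $(2\,8)(4\,10)(6\,12)$ on the even ones. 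The supports of $\phi_1$ and $\phi_2$ are the odd and the even vertices respectively, hence disjoint, and Lemma~\ref{lem:disjoint_automorphisms} again yields quantum symmetries. The crux of the whole proof lies precisely here: recognising that the two parity classes form a block system and that a multiplier, rather than any swap of individual vertices, is what supplies an automorphism supported on a single block.
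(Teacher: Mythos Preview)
Your proof is correct and follows essentially the same approach as the paper: both apply Lemma~\ref{lem:disjoint_automorphisms} by exhibiting disjoint automorphisms, and for $C_{12}(4,5)$ you arrive at exactly the pair $(1\,7)(3\,9)(5\,11)$ and $(2\,8)(4\,10)(6\,12)$ that the paper lists. The only differences are cosmetic---the paper uses $(1\,7)$ and $(4\,10)$ rather than $(1\,7)$ and $(2\,8)$ for $C_{12}(5)$ and $C_{12}(5,6)$, and it simply tabulates the automorphisms without your explanation via twin vertices and the multiplier $x\mapsto 7x$; your added derivation is a nice touch but not a different method.
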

\begin{proof}
    It is easy to see that the complete graph on $n$ vertices has quantum automorphism group  $S_n^+$ and thus, for 
     $n \ge 4$ does have quantum symmetries. 
     For the other graphs, we can check that they have disjoint automorphisms and thus, by Lemma~\ref{lem:disjoint_automorphisms}
     they have quantum symmetries.
     \begin{table}[H]
         \begin{tabular}{l | l | l}
          Graph & Automorphism $1$&  Automorphism $2$\\
          \hline 
          $C_{12}(5)$ & $(1\; 7)$& $(4\; 10)$ \\
          $C_{12}(4, 5)$ & $(1\; 7)(3\; 9)(5\; 11)$ &  $(2\; 8)(4\; 10)(6\; 12)$\\
          $C_{12}(5, 6)$ & $(1\; 7)$ &  $(4\; 10)$
         \end{tabular}
     \end{table}
\end{proof}
For $C_{12}(4, 5)$, we managed to compute the actual quantum automorphism group. We give the computation in 
Section~\ref{section:computation_qaut_groups}.

\begin{prop}
    The graphs $C_{12}(2)$, $C_{12}(4)$, $C_{12}(2, 6)$, $C_{12}(3, 6)$ and $C_{12}(4, 6)$ do not have 
    quantum symmetries
\end{prop}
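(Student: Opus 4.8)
The plan is to prove that each of these five graphs has commutative $C(\QBan(\Gamma))$, so that by the definition of quantum symmetries the graph has none. These five graphs are singled out precisely because the criterion of Theorem~\ref{thm:injective_f} fails for them: in each case the associated function $f$ takes a repeated value on $\{1,\dots,6\}$ (for instance $f(2)=f(6)=0$ both for $C_{12}(2)$ and for $C_{12}(4)$), so injectivity of $f$ cannot be invoked and a direct computation is forced. Since all five graphs are vertex-transitive, I would first apply Corollary~\ref{cor:commutation_with_one_implies_all} to reduce the whole problem to showing that the generators $u_{i1}$ commute with every generator $u_{kl}$. By Lemma~\ref{lem:distances} only pairs with $d(1,l)=d(i,k)$ need to be treated, and by Lemma~\ref{lem:adj_commute} it is then enough to establish $u_{i1}u_{kl}=u_{i1}u_{kl}u_{i1}$, i.e. to show that the residual monomials $u_{i1}u_{kl}u_{ip}$ vanish for every $p\neq 1$ lying at the same distance from $l$ as $1$ is.

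The computation is then organised layer by layer around the vertex $1$. A quick inspection of the distance structure shows that $C_{12}(2)$ and $C_{12}(4)$ have diameter $3$, so distances $1,2,3$ must be handled, whereas the three graphs carrying the antipodal chord, $C_{12}(2,6)$, $C_{12}(3,6)$ and $C_{12}(4,6)$, all have diameter $2$, so only distances $1$ and $2$ occur. For each fixed $l$ and each candidate $p$ I would search for a witness vertex $q$ so that Lemma~\ref{lem:choose_q_middle} forces $u_{i1}u_{kl}u_{ip}=0$ (the table notation of~\ref{notation:compact_notation_lemma_choose_q_middle} is designed exactly to record these choices), supplementing this with Lemma~\ref{lem:choose_q_right} to reduce $u_{i1}u_{kl}$ to a shorter sum of terms whenever a full annihilation is not immediate. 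Any term that the $q$-witnesses do not remove I would dispose of with the common-neighbour corollaries: Corollary~\ref{cor:different_numbers_common_neighbours} kills every $u_{i1}u_{kl}u_{ip}$ for which $\lvert\CN(1,l)\rvert\neq\lvert\CN(l,p)\rvert$, while Corollary~\ref{cor:one_triangle_one_not} settles the adjacent pairs in which one pair sits in a triangle and the other does not. Finally, the reflection in $D_{12}$ fixing the vertex $1$ (and its antipode $7$) lets me transport every commutation relation already proved to its mirror image via Lemma~\ref{lem:get_commutation_by_automorphism}, which roughly halves the explicit casework.

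The main obstacle is the distance-$2$ layer, and in particular its interaction with the antipodal chord in the three $(\cdot,6)$ graphs. Because these graphs are circulant, each distance layer contains several vertices sharing the same distance profile and the same common-neighbour counts relative to $l$, so the cheap invariants (distance and $\lvert\CN(\cdot,\cdot)\rvert$) frequently fail to certify that the crossing monomial $u_{i1}u_{kl}u_{ip}$ with $p\neq 1$ vanishes; one is then forced to exhibit an explicit $q$ that uniquely pins $l$ down among the vertices at the prescribed distances from $j$, $p$ and $q$, as required by Lemma~\ref{lem:choose_q_middle}. The antipodal vertex $7=1+6$, which is adjacent to $1$ in all three $(\cdot,6)$ graphs, is the most delicate point, since it produces pairs of vertices that are locally indistinguishable by distance and neighbour counts; the monomials involving it will have to be eliminated individually, typically by combining a Lemma~\ref{lem:choose_q_right} reduction with a carefully chosen $q$.

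By contrast, I expect the outermost layer to be comparatively easy. For $C_{12}(4)$ the distance-$3$ layer is the single vertex $7$, so $u_{i1}u_{k7}=u_{i1}u_{k7}u_{i1}$ is immediate because $7$ is the unique vertex at distance $3$ from $1$, and for $C_{12}(2)$ the distance-$3$ layer $\{6,7,8\}$ is small enough to be finished off by the same $q$-witness technique together with the reflection symmetry. Assembling the distance-$1$, distance-$2$ and (where present) distance-$3$ layers then yields $u_{i1}u_{kl}=u_{kl}u_{i1}$ for all $i,k,l$, and Corollary~\ref{cor:commutation_with_one_implies_all} upgrades this to commutativity of all generators, proving that none of the five graphs has quantum symmetries.
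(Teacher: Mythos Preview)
Your plan is correct and essentially identical to the paper's approach: reduce via Corollary~\ref{cor:commutation_with_one_implies_all} to commutation of $u_{i1}$ with all $u_{kl}$, organise by distance layers, and kill each residual monomial $u_{i1}u_{kl}u_{ip}$ using Lemmas~\ref{lem:choose_q_middle}, \ref{lem:choose_q_right}, \ref{lem:monomial_zero}, the common-neighbour count of Corollary~\ref{cor:different_numbers_common_neighbours}, and transport by the reflection (and other automorphisms) via Lemma~\ref{lem:get_commutation_by_automorphism}. The paper carries this programme out explicitly in Appendix~\ref{appendix:omitted-proofs} with full tables of $q$-witnesses for each of the five graphs, including the delicate bootstrapping steps you anticipate for the antipodal-chord cases; your structural observations (diameter $3$ for $C_{12}(2)$ and $C_{12}(4)$ with a singleton distance-$3$ layer for the latter, diameter $2$ for the three $(\cdot,6)$ graphs) are all correct.
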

The proofs of the above statement are very similar to the one of Proposition~\ref{prop:k2_c6}, but for completeness they are given in the Appendix~\ref{appendix:omitted-proofs}.

Summarising, we see that of the twelve circulant graphs on  $12$ points,  four have quantum symmetries 
and eight do not.
\begin{table}[H]
    \centering
    \begin{tabular}{l|l|l}
     Graph& Automorphism Group& Quantum Automorphism Group\\
        \hline
    $C_{12}$ & $D_{12}$ & $D_{12}$\\
     $C_{12}(3)$ & $D_{12}$ & $D_{12}$\\
     $C_{12}(6)$ & $D_{12}$ & $D_{12}$\\
     $K_{12}$ & $S_{12}$ & $S_{12}^+$\\
     $C_{12}(5)$ & $\mathcal{A}$&?\\
     $C_{12}(4, 5)$ & $H_2 \times S_3$ & $H_2^+ \times S_3$\\
     $C_{12}(5, 6)$ &$\mathcal{A}$& ?\\
     $C_{12}(2)$& $D_{12}$ & $D_{12}$\\
     $C_{12}(4)$& $D_{12}$ & $D_{12}$\\
     $C_{12}(2, 6)$&$D_{12}$&$D_{12}$\\
     $C_{12}(3, 6)$&$D_{12}$&$D_{12}$\\
     $C_{12}(4, 6)$&$D_{12}$&$D_{12}$\\
    \end{tabular}
\end{table}
Here we call
\[
    \mathcal{A} :=(\Z_2 \times ((((\Z_2 \times \Z_2 \times \Z_2 \times \Z_2)\rtimes \Z_2)\rtimes \Z_3)\rtimes \Z_2)) \rtimes \Z_2.
\]

\subsection{Semi-circulant graphs}
\label{section:semicirculant_graphs}
\begin{defn}
    For $1 < k_1 < \dots < k_r \le  [\frac{n}{2}]$, we define the \emph{semi-circulant} graph $C_n(k_1, \dots, k_r, l_1^+, \dots, l_s^+)$
    as the graph that is constructed by taking the circulant graph $C_n(k_1, \dots, k_r)$ and adding 
    edges between $i, j \in V$ if $i$ is even, $i < j$ and $j - i \in \{l_1^+, \dots, l_s^+\}$.
\end{defn}
As was the case with the circulant graphs, we see that some semi-circulant graphs have disjoint automorphisms
and therefore quantum symmetries.
\begin{prop}
    The graphs $C_{12}(5^+)$, $C_{12}(3^+, 6)$ and $C_{12}(5^+, 6)$ have quantum symmetries.
\end{prop}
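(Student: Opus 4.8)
The plan is to invoke Lemma~\ref{lem:disjoint_automorphisms}: for each of the three graphs it suffices to exhibit two non-trivial automorphisms whose supports are disjoint, since then the graph automatically has quantum symmetries. This mirrors the earlier propositions (e.g. for $K_2 \square C_6(2)$ and for $C_{12}(5)$, $C_{12}(4,5)$, $C_{12}(5,6)$), where the whole argument reduces to writing down the two permutations and observing that they move disjoint sets of vertices. So the real content is locating such a pair in each graph and checking that the permutations preserve adjacency.

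First I would unwind the adjacency of each semi-circulant graph. For $C_{12}(5^+)$ the extra $5^+$-chords form a perfect matching $\{2,7\},\{4,9\},\{6,11\},\{8,1\},\{10,3\},\{12,5\}$ added to the $12$-cycle, yielding a cubic graph; for $C_{12}(5^+,6)$ one adds the six diameters $\{v,v+6\}$, and for $C_{12}(3^+,6)$ one adds the $3^+$-chords together with the diameters. Since these graphs are vertex-transitive with automorphism groups $H_3$ and $H_2 \times S_3$ (not dihedral), the naive dihedral reflections of the cycle are \emph{not} automorphisms, so the candidate involutions must mix cycle edges with the $+$-chords. A systematic way to produce them is to first record the obvious rotational automorphisms (for instance $v \mapsto v+4$ for $C_{12}(5^+)$ and $v \mapsto v+2$ for $C_{12}(3^+,6)$, which one checks permute the chord set), then hunt inside the stabiliser of a single vertex for an involution with small support, and finally conjugate it by a suitable rotation to slide its support onto a disjoint block of vertices. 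Carrying this out, for $C_{12}(5^+)$ I would take
\[
    (4\,10)(5\,11) \qquad\text{and}\qquad (2\,8)(3\,9),
\]
with disjoint supports $\{4,5,10,11\}$ and $\{2,3,8,9\}$; the second is the conjugate of the first by $v\mapsto v+4$, so once the first is verified the second is automatically an automorphism. For $C_{12}(3^+,6)$ the analogous pair is $(7\,12)(3\,8)(4\,11)$ and $(1\,6)(2\,9)(5\,10)$, again conjugate under $v\mapsto v+2$ and supported on the complementary vertex sets $\{3,4,7,8,11,12\}$ and $\{1,2,5,6,9,10\}$.

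The main obstacle is the adjacency check for the first involution in each graph, i.e. confirming that swapping those few vertices sends every incident edge (cycle edge, $+$-chord, or diameter) to an edge. This is the step where the non-dihedral nature of the symmetry group bites, and it has to be done by direct inspection of the neighbour lists; the conjugate partner then needs no separate verification. A useful economy is that $C_{12}(5^+,6)$ can reuse the very same pair as $C_{12}(5^+)$: the only diameter edges meeting the supports $\{4,5,10,11\}$ and $\{2,3,8,9\}$ are $\{4,10\},\{5,11\}$ and $\{2,8\},\{3,9\}$, each lying entirely inside one support and each fixed setwise by the corresponding involution, so adding the diameters does not destroy these automorphisms. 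Thus all three graphs possess disjoint automorphisms and, by Lemma~\ref{lem:disjoint_automorphisms}, have quantum symmetries.
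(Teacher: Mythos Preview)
Your approach is correct and mirrors the paper's: exhibit two non-trivial disjoint automorphisms for each graph and invoke Lemma~\ref{lem:disjoint_automorphisms}. The paper's own proof is just a table listing one pair of disjoint automorphisms per graph (for $C_{12}(5^+)$ it uses $(1\;7)(2\;8)$ and $(3\;9)(4\;10)$, for $C_{12}(3^+,6)$ it uses $(2\;7)(3\;10)(6\;11)$ and $(1\;8)(4\;9)(5\;12)$, and for $C_{12}(5^+,6)$ again $(1\;7)(2\;8)$ and $(3\;9)(4\;10)$); your pairs differ from these only by the shift $v\mapsto v+1$, reflecting the parity convention in the definition of the $l^+$-chords, and your observation that the $C_{12}(5^+)$ pair survives the addition of the diameters is exactly how the paper handles $C_{12}(5^+,6)$ as well.
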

\begin{proof}
    The graphs have the following disjoint automorphisms and thus have quantum symmetries by Lemma~\ref{lem:disjoint_automorphisms}.
     \begin{table}[H]
         \begin{tabular}{l | l | l}
          Graph & Automorphism $1$&  Automorphism $2$\\
          \hline 
          $C_{12}(5^+)$ & $(1\; 7)(2\;8)$& $(3\;9)(4\; 10)$ \\
          $C_{12}(3^+, 6)$ & $(2\; 7)(3\; 10)(6\; 11)$ &  $(1\; 8)(4\; 9)(5\; 12)$\\
          $C_{12}(5^+, 6)$ & $(1\; 7)(2\; 8)$ &  $(3\; 9)(4\; 10)$
         \end{tabular}
     \end{table}
\end{proof}
For $C_{12}(3^+, 6)$, we managed to compute the actual quantum automorphism group. Again, we give the proof
in Section~\ref{section:computation_qaut_groups}.
\begin{prop}
    The graphs $C_{12}(2, 5^+)$ and $C_{12}(4, 5^+)$ do not have quantum symmetries.
\end{prop}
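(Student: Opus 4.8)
The plan is to show that $C(\QBan(\Gamma))$ is commutative for each of the two graphs $\Gamma \in \{C_{12}(2,5^+), C_{12}(4,5^+)\}$, following the strategy of Proposition~\ref{prop:k2_c6}. Both graphs are $5$-regular, and a direct inspection of the neighbourhoods shows that each has diameter $2$: from any vertex there are $5$ vertices at distance $1$ and the remaining $6$ at distance $2$. Their automorphism group $D_6$ has order $12$ and acts regularly (the stabiliser of a vertex is trivial); in particular every non-trivial automorphism is fixed-point-free, so there are no two disjoint non-trivial automorphisms and Lemma~\ref{lem:disjoint_automorphisms} cannot be used. Fixing the vertex $1$, Corollary~\ref{cor:commutation_with_one_implies_all} reduces the whole problem to showing that $u_{i1}$ commutes with every generator $u_{kl}$. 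By Lemma~\ref{lem:distances} only the distances $m = d(1,l) \in \{1,2\}$ have to be treated, and by Lemma~\ref{lem:adj_commute} it suffices to establish $u_{i1} u_{kl} = u_{i1} u_{kl} u_{i1}$, i.e. $u_{i1} u_{kl} u_{ip} = 0$ for every $p \neq 1$ with $d(p,l) = m$ (the remaining $p$ being killed by Lemma~\ref{lem:distances}).

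I would begin with distance $2$, since Lemma~\ref{lem:choose_q_middle} needs no prior commutations, only the geometric condition that the target vertex be the unique one at prescribed distances from a well-chosen witness $q$. For each of the six vertices $l$ at distance $2$ from $1$ and each admissible $p$, I would record a witness $q$ in a table as in Notation~\ref{notation:compact_notation_lemma_choose_q_middle}, falling back on the common-neighbour count: whenever $\lvert\CN(1,l)\rvert \neq \lvert\CN(l,p)\rvert$ Corollary~\ref{cor:different_numbers_common_neighbours} gives $u_{i1}u_{kl}u_{ip} = 0$ outright. For the distance-$1$ neighbours I would then use Lemma~\ref{lem:choose_q_right} (collected in a table as in Notation~\ref{notation:compact_notation_lemma_choose_q_right}), feeding in the adjacency commutations produced along the way; here the fact that the five edges at $1$ fall into different triangle-types (for instance $\{1,8\}$ in $C_{12}(2,5^+)$ lies in no triangle, whereas $\{1,2\}$ lies in two) lets Corollary~\ref{cor:one_triangle_one_not} dispatch several triples immediately. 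Wherever a commutation $u_{a1}u_{bl} = u_{bl}u_{a1}$ has been proved for all $a,b$, Lemma~\ref{lem:get_commutation_by_automorphism} together with the rotation $\rho\colon i \mapsto i+2 \pmod{12}$ and the reflections $i \mapsto c-i \pmod{12}$ ($c$ odd) generating $D_6$ transports it to the whole $D_6$-orbit of the pair $(1,l)$, cutting down the remaining work.

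The main obstacle I expect is the same one visible in the proof of Proposition~\ref{prop:k2_c6}: a handful of triples $(1,l,p)$ for which neither Lemma~\ref{lem:choose_q_middle}/\ref{lem:choose_q_right} nor the common-neighbour count closes the case, because $\lvert\CN(1,l)\rvert = \lvert\CN(l,p)\rvert$ and no single witness $q$ isolates $l$. For these I would argue by hand, rewriting $u_{i1}u_{kl}u_{ip}$ via $\sum_r u_{rq} = 1$ for a carefully chosen $q$ and then using a distance mismatch $d(1,\cdot) \neq d(r,\cdot)$ to force the product to vanish, exactly as in the computation of $u_{i1}u_{k7}u_{i8} = 0$ for $K_2 \square C_6$. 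The bookkeeping is routine but lengthy, and since the two graphs have genuinely different edge- and triangle-structures (chords at distance $2$ versus $4$), the tables of witnesses and common-neighbour counts must be produced separately for each; this is why, as with the analogous circulant cases, the full argument is naturally relegated to the appendix.
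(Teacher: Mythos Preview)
Your outline is essentially the paper's approach: fix the vertex $1$, invoke Corollary~\ref{cor:commutation_with_one_implies_all}, and eliminate all triples $u_{i1}u_{kl}u_{ip}$ via Lemmas~\ref{lem:choose_q_middle}, \ref{lem:choose_q_right}, \ref{lem:monomial_zero} and automorphism transport (Lemma~\ref{lem:get_commutation_by_automorphism}). Two small corrections. First, your running example is off: in $C_{12}(2,5^+)$ the neighbours of $1$ are $\{2,3,6,11,12\}$, so $\{1,8\}$ is not an edge; the $5^+$-edges attach $1$ to $6$, not to $8$. Second, the paper does not start with distance $2$; for both graphs it first applies Lemma~\ref{lem:choose_q_middle} at distance $1$ to obtain a seed of commutations, transports these along the rotations $v\mapsto v+2$ and suitable reflections, and only then attacks the remaining distance-$1$ and distance-$2$ cases with Lemma~\ref{lem:monomial_zero} and Lemma~\ref{lem:choose_q_right}. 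Your proposed order (distance $2$ first) would also work, but you should be aware that the common-neighbour trick and Corollary~\ref{cor:one_triangle_one_not} play almost no role in the paper's actual arguments for these two graphs; the heavy lifting is done by Lemma~\ref{lem:monomial_zero} once a few commutations have been bootstrapped.
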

The proofs of the above proposition are again very similar to the proof of Proposition~\ref{prop:k2_c6}, but for completeness they are given in Appendix~\ref{appendix:omitted-proofs}.

All in all, the following table summarises the results for semi-circulant graphs:
\begin{table}[H]
    \centering
    \begin{tabular}{l|l|l}
     Graph& Automorphism Group& Quantum Automorphism Group\\
        \hline
     $C_{12}(5^+)$ & $\Z_2 \times S_4$ & ?\\
     $C_{12}(3^+, 6)$ & $H_2 \times S_3$ & $H_2^+ \times S_3$\\
     $C_{12}(5^+, 6)$ & $\Z_2 \times S_4$ & ?\\
     $C_{12}(2, 5^+)$&$D_6$& $D_6$\\
     $C_{12}(4, 5^+)$&$D_6$& $D_6$
    \end{tabular}
\end{table}

\section{Special cases}
\label{section:special_cases}
The following five graphs do not fall in any of the previous subclasses of graphs and therefore have to be considered individually.

\subsection{The Icosahedron}
It was shown by Schmidt in his PhD thesis that the Icosahedron does not have quantum symmetries.
\begin{prop}[\cite{schmidt_2020_1}] 
    The Icosahedron does not have quantum symmetries.
\end{prop}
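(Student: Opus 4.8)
The plan is to show that $C(\QBan(\text{Icosahedron}))$ is commutative. Since the icosahedron is vertex-transitive, Corollary~\ref{cor:commutation_with_one_implies_all} reduces the task to fixing a single vertex, say $1$, and proving that $u_{i1}$ commutes with every generator $u_{kl}$. The icosahedron is $5$-regular of diameter $3$, and the distance shells around $1$ consist of the vertex itself, its $5$ neighbours, a further $5$ vertices at distance $2$, and a single antipodal vertex at distance $3$. By Lemma~\ref{lem:distances} we only need commutation of $u_{i1}$ and $u_{kl}$ when $d(1,l)\in\{1,2,3\}$ (with $d(i,k)=d(1,l)$), so I would treat the three shells separately. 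Moreover the icosahedron is distance-transitive, so the stabiliser of $1$ acts transitively on each shell; hence via Lemma~\ref{lem:get_commutation_by_automorphism} it suffices to handle one representative $l$ in the distance-$1$ shell and one in the distance-$2$ shell, the rest following by Corollary~\ref{cor:commutation_with_one_implies_all}.

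The distance-$3$ shell is immediate: there is a unique antipode $\bar 1$ of $1$, and since $1$ is in turn the unique vertex at distance $3$ from $\bar 1$, the relations give $u_{i1}u_{k\bar1}=u_{i1}u_{k\bar1}\sum_{r;\,d(r,\bar1)=3}u_{ir}=u_{i1}u_{k\bar1}u_{i1}$, whence $u_{i1}$ and $u_{k\bar1}$ commute by Lemma~\ref{lem:adj_commute}.

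For the distance-$1$ and distance-$2$ shells my plan follows the standard checklist: fix the representative $l$, and by Lemma~\ref{lem:adj_commute} it is enough to prove $u_{i1}u_{kl}u_{ip}=0$ for every $p\neq1$ with $d(p,l)=d(1,l)$. Expanding $u_{i1}u_{kl}=u_{i1}u_{kl}\sum_p u_{ip}$ and discarding the terms forced to vanish by Lemma~\ref{lem:distances} leaves only these finitely many $p$ (four in each shell). For each such $p$ I would choose an auxiliary vertex $q$ by inspecting the explicit adjacency structure and apply Lemma~\ref{lem:choose_q_middle} (or Lemma~\ref{lem:choose_q_right}) to force the product to zero, falling back on Corollary~\ref{cor:different_numbers_common_neighbours} for any stubborn term. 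Once $u_{i1}u_{kl}=u_{i1}u_{kl}u_{i1}$ holds for the representative $l$, Lemma~\ref{lem:adj_commute} gives commutation and Lemma~\ref{lem:get_commutation_by_automorphism} propagates it across the shell.

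The main obstacle is precisely these two shells. Because the icosahedron is assembled from triangles, any two adjacent vertices share exactly two common neighbours, and two vertices at distance $2$ likewise share two; consequently the quick criteria (Lemmas~\ref{lem:quadrangle}, \ref{lem:one_common_neighbour} and \ref{lem:one_common_neighbour_gen}) do not apply, and the common-neighbour count of Corollary~\ref{cor:different_numbers_common_neighbours} cannot separate the dangerous terms, since all the relevant pairs have two common neighbours. One is therefore forced to pick $q$ in Lemma~\ref{lem:choose_q_middle} individually for each $p$ and to check by hand (or by computer, as the paper's strategy anticipates) that $l$ is the unique vertex realising the prescribed triple of distances to $1$, $p$ and $q$. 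I expect the distance-$2$ shell to be the most delicate, since there the ambient distances are larger and the choice of $q$ has less room to manoeuvre, so verifying the required uniqueness is the crux of the argument.
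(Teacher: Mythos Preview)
The paper does not contain a proof of this proposition at all: it simply records the result as due to Schmidt and cites his PhD thesis~\cite{schmidt_2020_1}. There is therefore nothing in the paper to compare your argument against.

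That said, your proposal is a plan rather than a proof. The distance-$3$ case is indeed immediate, and the reduction via vertex-transitivity and distance-transitivity is correct. But for the distance-$1$ and distance-$2$ shells you have only described the strategy (apply Lemma~\ref{lem:choose_q_middle} for suitable $q$) without exhibiting any of the choices of $q$ or verifying that they work. You correctly identify that Lemmas~\ref{lem:quadrangle}, \ref{lem:one_common_neighbour}, \ref{lem:one_common_neighbour_gen} and Corollary~\ref{cor:different_numbers_common_neighbours} are all unavailable here because the common-neighbour counts are uniformly $2$; this is exactly why the icosahedron is harder than most of the other graphs in the paper, and why the actual case analysis (which you have deferred) is the entire content of the proof. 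Until those $q$'s are written down and checked, there is no argument.
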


\subsection{The line graph of $C_6(2)$}
\begin{center}
\begin{tikzpicture}[auto, node distance = 2cm and 2.5cm, on grid, main_node/.style={circle, draw=xkcdAquamarine!15, fill=xkcdAquamarine!15, very thick, minimum size=5mm}]

\node[main_node] (0) at (-0.0842264184643855, -1.1148341614241533) {$1$};
\node[main_node] (1) at (-2.409026107411536, -3.4296008369929822) {$2$};
\node[main_node] (2) at (2.3238704226204305, -3.3420940576863107) {$3$};
\node[main_node] (3) at (4.617715805876051, 0.8098425976657726) {$4$};
\node[main_node] (4) at (1.488602891214901, -0.16917538619353145) {$5$};
\node[main_node] (5) at (-4.857142857142858, 0.6284915720060709) {$6$};
\node[main_node] (6) at (-1.6799853615464624, -0.23164268245813968) {$7$};
\node[main_node] (7) at (-0.15736902349282111, 2.5278328648716952) {$8$};
\node[main_node] (8) at (-1.7171365977648367, 1.5909835052045331) {$9$};
\node[main_node] (9) at (1.43593481414364, 1.6507081029838953) {$10$};
\node[main_node] (10) at (2.1688084622224926, 4.857142857142858) {$11$};
\node[main_node] (11) at (-2.5613385882061754, 4.769178207050151) {$12$};

 \path[draw, thick]
(0) edge node {} (1) 
(0) edge node {} (2) 
(0) edge node {} (3) 
(0) edge node {} (5) 
(0) edge node {} (8) 
(0) edge node {} (9) 
(1) edge node {} (2) 
(1) edge node {} (4) 
(1) edge node {} (5) 
(1) edge node {} (6) 
(1) edge node {} (8) 
(2) edge node {} (3) 
(2) edge node {} (4) 
(2) edge node {} (6) 
(2) edge node {} (9) 
(3) edge node {} (4) 
(3) edge node {} (7) 
(3) edge node {} (9) 
(3) edge node {} (10) 
(4) edge node {} (6) 
(4) edge node {} (7) 
(4) edge node {} (10) 
(5) edge node {} (6) 
(5) edge node {} (7) 
(5) edge node {} (8) 
(5) edge node {} (11) 
(6) edge node {} (7) 
(6) edge node {} (11) 
(7) edge node {} (10) 
(7) edge node {} (11) 
(8) edge node {} (9) 
(8) edge node {} (10) 
(8) edge node {} (11) 
(9) edge node {} (10) 
(9) edge node {} (11) 
(10) edge node {} (11) 
;

\end{tikzpicture}
\end{center}
\begin{prop}
    The graph $L(C_6(2))$ does not have quantum symmetries.
\end{prop}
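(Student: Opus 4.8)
The plan is to follow the general strategy of Section~\ref{section:useful_lemmas} while exploiting the fact that $L(C_6(2))$ is a line graph, hence locally very structured. First I would record the metric data read off from the picture: $L(C_6(2))$ is $6$-regular of diameter $2$, the neighbours of vertex $1$ are $\{2,3,4,6,9,10\}$, and the remaining vertices $\{5,7,8,11,12\}$ all lie at distance $2$ from $1$. Since the graph is vertex-transitive, Corollary~\ref{cor:commutation_with_one_implies_all} reduces the whole problem to proving that $u_{i1}$ commutes with every generator $u_{kl}$. By Lemma~\ref{lem:distances} it then suffices to treat $l$ with $d(1,l)=d(i,k)\in\{1,2\}$ separately, and by Lemma~\ref{lem:adj_commute} it is enough to show $u_{i1}u_{kl}u_{ip}=0$ for every $p\neq 1$ with $d(p,l)=d(i,k)$.

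The key structural observation is that, as the line graph of the octahedron $C_6(2)=K_{2,2,2}$, each of the six octahedron vertices contributes a $K_4$ of mutually adjacent edges, so $L(C_6(2))$ is covered by six copies of $K_4$ and is dense with triangles and quadrangles. Consequently the cheap criteria Lemma~\ref{lem:quadrangle} and Lemma~\ref{lem:one_common_neighbour} do not apply. What does apply is a common-neighbour count: for adjacent vertices the size of $\CN(\cdot,\cdot)$ is either $2$ or $3$ (for instance $\lvert\CN(1,4)\rvert=2$ while $\lvert\CN(1,2)\rvert=3$), depending on whether the two underlying octahedron edges span an antipodal pair, and this discrepancy feeds directly into Corollary~\ref{cor:different_numbers_common_neighbours} and Corollary~\ref{cor:one_triangle_one_not} to annihilate many of the triple products $u_{i1}u_{kl}u_{ip}$.

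Concretely, I would handle the distance-$2$ vertices first, using Lemma~\ref{lem:choose_q_middle} with a well-chosen auxiliary vertex $q$ (recorded in the compact tables of Notation~\ref{notation:compact_notation_lemma_choose_q_middle}) to kill most summands $u_{i1}u_{kl}u_{ip}$, and clearing the leftover terms with the common-neighbour mismatch of Corollary~\ref{cor:different_numbers_common_neighbours}. For the distance-$1$ vertices I would apply Lemma~\ref{lem:choose_q_right} analogously, again mopping up residual terms by comparing $\lvert\CN(1,l)\rvert$ with $\lvert\CN(l,p)\rvert$. At each stage, once commutation of $u_{i1}$ with some $u_{kl}$ is established, Lemma~\ref{lem:get_commutation_by_automorphism} propagates it across the orbit; since $\Aut(L(C_6(2)))=H_3$ acts vertex-transitively, this reduces the analysis to the orbit representatives of the vertex stabiliser among the distance-$1$ and distance-$2$ vertices.

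The main obstacle I anticipate is bookkeeping rather than conceptual: for each target pair $(1,l)$ one must find a single vertex $q$ making either Lemma~\ref{lem:choose_q_right} or Lemma~\ref{lem:choose_q_middle} collapse the sum to the desired $u_{i1}u_{kl}u_{i1}$, and for the residual terms verify the exact common-neighbour counts. Because of the abundance of quadrangles there is no uniform $q$ valid everywhere, so the delicate part is checking that the distance and common-neighbour conditions hold for every leftover $p$. Once all such triple products vanish, Lemma~\ref{lem:adj_commute} gives $u_{i1}u_{kl}=u_{kl}u_{i1}$ and Corollary~\ref{cor:commutation_with_one_implies_all} yields commutativity of $C(\QBan(L(C_6(2))))$, proving that $L(C_6(2))$ has no quantum symmetries.
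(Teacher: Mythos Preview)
Your plan is correct and is essentially the paper's approach: reduce via Corollary~\ref{cor:commutation_with_one_implies_all} to showing that $u_{i1}$ commutes with every $u_{kl}$, handle the distance-$2$ vertices (and in fact also $l=4,6$) purely with Lemma~\ref{lem:choose_q_middle}, then use the stabiliser of $1$ in $\Aut(L(C_6(2)))$ to reduce the remaining distance-$1$ vertices $\{2,3,9,10\}$ to the single representative $l=2$, which is finished by one application of Lemma~\ref{lem:choose_q_right} (with $q=4$) followed by Lemma~\ref{lem:choose_q_middle}. The only divergence is that the paper never needs the common-neighbour corollaries you hold in reserve --- suitable choices of $q$ make Lemmas~\ref{lem:choose_q_right} and~\ref{lem:choose_q_middle} sufficient on their own.
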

\begin{proof}
    The commutation of $u_{i 1}$ with  $u_{kl}$ for  $l \in \{4, 5, 6, 7, 8, 11, 12\} $ can be seen by
    just applying Lemma~\ref{lem:choose_q_middle}:
    \begin{table}[H]
        \begin{tabularx}{0.2\linewidth}[t]{|*{4}{X|}}
            \multicolumn{4}{c}{Lemma~\ref{lem:choose_q_middle}}\\
            \hline
            $j$ &$l$ &  $p$ &  $q$\\
             \hline
            $1$&$4$ & $3$ & $9$\\
               & & $5$ & $7$\\
               && $8$&  $2$\\
               &&$10$& $2$\\
               && $11$& $5$\\
               &$5$ & $6$ & $3$\\
               & & $9$ & $12$\\
               & &$10$&$6$\\
               &&$12$& $1$\\
               \hline
        \end{tabularx}
        \quad \quad
        \begin{tabularx}{0.2\linewidth}[t]{|*{4}{X|}}
            \multicolumn{4}{c}{Lemma~\ref{lem:choose_q_middle}}\\
            \hline
            $j$ &$l$ &  $p$ &  $q$\\
             \hline
               $1$&$6$& $2$& $10$\\
               & & $7$& $5$\\
               && $8$& $2$\\
               && $9$& $3$\\
               && $12$& $7$\\
               & $7$& $4$& $2$\\
               & &$9$& $4$\\
               && $10$& $11$\\
               && $11$& $1$\\
               \hline
        \end{tabularx}
        \quad \quad
        \begin{tabularx}{0.2\linewidth}[t]{|*{4}{X|}}
            \multicolumn{4}{c}{Lemma~\ref{lem:choose_q_middle}}\\
            \hline
            $j$ &$l$ &  $p$ &  $q$\\
             \hline
               $1$&$8$& $2$& $10$\\
               && $3$& $9$\\
               && $9$& $3$\\
               &&$10$& $2$\\
               &$11$& $2$& $7$\\
               && $3$& $6$\\
               && $6$& $3$\\
               &&$7$& $1$\\
               \hline
        \end{tabularx}
        \quad\quad
        \begin{tabularx}{0.2\linewidth}[t]{|*{4}{X|}}
            \multicolumn{4}{c}{Lemma~\ref{lem:choose_q_middle}}\\
            \hline
            $j$ &$l$ &  $p$ &  $q$\\
             \hline
               $1$&$12$& $2$& $4$\\
               && $3$& $5$\\
               && $4$& $2$\\
               && $5$& $1$\\
               \hline
        \end{tabularx}
    \end{table}
    The only vertices $l$ for which we still need to show commutativity of  $u_{i 1}$ and $u_{kl}$
    are thus  $l \in \{2, 3, 9, 10\} $. However, since we have the following three automorphisms
    in $G_{aut}(L(C_6(2)))$, we only need to show it for one such $l$ and the rest follows 
    immediately by Lemma~\ref{lem:get_commutation_by_automorphism}:
     \begin{align*}
         \phi &= (2\;3)(4\;6)(5\;7)(9\;10)(11\;12)\\
         \sigma &= (2\;9)(3\;10)(5\;11)(7\;12)\\
         \tau &= (2\;10)(3\;9)(4\;6)(5\;12)(7\;11)
    .\end{align*}
    We will therefore now show commutation of $u_{ i 1 }$ and  $u_{k 2}$. We do this, by first applying
    Lemma~\ref{lem:choose_q_right} and then using Lemma~\ref{lem:choose_q_middle} to see that
    the remaining monomials vanish:
    \begin{table}[H]
        \begin{tabularx}{0.26\linewidth}[t]{|*{3}{X|}l|}
            \multicolumn{4}{c}{Lemma~\ref{lem:choose_q_right}}\\
            \hline
            $j$ &$l$ &  $q$ &  $p$\\
             \hline
              $1$ &$2$& $4$ & $\{1, 3, 5\} $\\
               \hline
        \end{tabularx}
        \quad \quad
        \begin{tabularx}{0.2\linewidth}[t]{|*{4}{X|}}
            \multicolumn{4}{c}{Lemma~\ref{lem:choose_q_middle}}\\
            \hline
            $j$ &$l$ &  $p$ &  $q$\\
             \hline
            $1$&$2$ & $3$ & $6$\\
               && $5$ & $6$\\
               \hline
        \end{tabularx}
    \end{table}
    All in all we thus get that $\QBan(L(C_6(2))) = G_{aut}(L(C_6(2)))$.
\end{proof}
\subsection{The truncated tetraeder} 
We want to show, that the Quantum Automorphism group of $Trunc(K_4)$ is commutative, i. e. $G_{aut}(Trunc(K_4)) = \QBan(Trunc(K_4))$.

\begin{thm}
        \label{thm:TruncK4}
        The truncated tetraeder, $\Gamma = Trunc(K_4)$, shown below, does not have quantum symmetry, i. e. $\QBan(\Gamma)$ is commutative.
\end{thm}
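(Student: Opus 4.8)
The plan is to follow the checklist of Section~\ref{section:useful_lemmas}. Since $\Trunc(K_4)$ is vertex-transitive, Corollary~\ref{cor:commutation_with_one_implies_all} reduces the claim to showing that $u_{i1}$ commutes with every generator $u_{kl}$, and by Lemma~\ref{lem:distances} only the pairs with $d(1,l)=d(i,k)=:m$ can fail to be orthogonal, so it suffices to treat each distance $m$ separately. Before starting I would record the combinatorics of the graph. It is $3$-regular of diameter $3$, and from any vertex there are $3,4,4$ further vertices at distances $1,2,3$ respectively. Its edges come in two kinds: \emph{triangle edges}, each lying in exactly one of the four triangles produced by the truncation, and \emph{connecting edges} joining two different triangles, which lie in no triangle. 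A direct inspection shows that any two vertices have at most one common neighbour, i.e.\ $\Trunc(K_4)$ is quadrangle-free; it is also worth noting that every nontrivial element of $\Aut(\Trunc(K_4))=S_4$ moves all but at most two of the twelve vertices, so the graph has no disjoint automorphisms, in line with the claim.

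The distance-one case is then immediate: being quadrangle-free, Lemma~\ref{lem:quadrangle} gives $u_{ij}u_{kl}=u_{kl}u_{ij}$ for all $i\sim k$ and $j\sim l$. (One could instead use Corollary~\ref{cor:one_triangle_one_not} to kill the mixed products of a triangle edge with a connecting edge and Lemma~\ref{lem:one_common_neighbour_gen} for the two triangle edges, but the quadrangle-free argument is cleaner.) It remains to handle $m=2$ and $m=3$.

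For a fixed $l$ at distance $m$ from $1$, the relation $\sum_r u_{ir}=1$ together with Lemma~\ref{lem:distances} gives $u_{i1}u_{kl}=u_{i1}u_{kl}\sum_{r:\,d(l,r)=m}u_{ir}$, so it is enough to show $u_{i1}u_{kl}u_{ip}=0$ for every $p\neq 1$ with $d(l,p)=m$; then $u_{i1}u_{kl}=u_{i1}u_{kl}u_{i1}$ and Lemma~\ref{lem:adj_commute} yields commutation. Each such monomial I would try to annihilate by choosing an auxiliary vertex $q$ for which $l$ is the unique vertex at the prescribed triple of distances and applying Lemma~\ref{lem:choose_q_middle}, recorded in the compact table notation of~\ref{notation:compact_notation_lemma_choose_q_middle}; where this does not apply directly I would use Lemma~\ref{lem:choose_q_right} instead (its commutation hypothesis is now available, since picking $q\sim l$ needs only the distance-one commutations already obtained) and clean up any residual terms individually. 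The casework is roughly halved by symmetry: the stabiliser of the vertex $1$ in $S_4$ is the $\Z_2$ swapping the two triangles not meeting $1$, so at each of the distances $2$ and $3$ the four target vertices $l$ split into two orbits of size two; I would verify the vanishing for one representative of each orbit and propagate to its partner with Lemma~\ref{lem:get_commutation_by_automorphism}.

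The main obstacle is precisely this distance-two and distance-three bookkeeping: for every wrong $p$ one must exhibit a vertex $q$ meeting the uniqueness hypothesis of Lemma~\ref{lem:choose_q_middle}, and because $\Trunc(K_4)$ is vertex-transitive but not distance-transitive these conditions cannot be read off a single intersection array and must be checked configuration by configuration. This verification is purely combinatorial and can be delegated to a computer; once all the relevant monomials are shown to vanish, $C(\QBan(\Trunc(K_4)))$ is commutative and the graph has no quantum symmetry, as claimed.
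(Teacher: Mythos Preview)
Your strategy is correct and would succeed; in fact a direct check shows that Lemma~\ref{lem:choose_q_middle} alone disposes of every monomial $u_{i1}u_{kl}u_{ip}$ at distances $2$ and $3$, so the fallback to Lemma~\ref{lem:choose_q_right} is never even needed. The paper follows the same three-step template (quadrangle-free for $m=1$, then Lemmas~\ref{lem:choose_q_right} and~\ref{lem:choose_q_middle} for $m=2,3$) but argues structurally rather than by fixing a base vertex. It exploits the decomposition of $\Trunc(K_4)$ into four triangles, attaching to each vertex $v$ its triangle $T(v)$, the unique adjacent triangle $AT(v)$, and the two non-adjacent triangles $\mathcal{NT}(v)$. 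At distance $2$ it proves commutation only for $l\in AT(j)$ (taking $q$ to be the vertex on the connecting edge, so that $j$ is the unique neighbour of $q$ at distance $2$ from $l$) and then observes that if $l\notin AT(j)$ one has $j\in AT(l)$, so the roles of $j$ and $l$ swap. At distance $3$ it first applies Lemma~\ref{lem:choose_q_right} with $t=2$---using the just-established distance-$2$ commutation---to reduce to at most one residual monomial, which it then kills with Lemma~\ref{lem:choose_q_middle} via a choice of $q$ in $AT(j)$ and a careful argument that $d(q,l)\neq d(q,y)$ for the competing vertex $y$.

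Your route trades this structural insight for a uniform mechanical check in the style of the appendix proofs for the circulant graphs; it is shorter to execute but opaque as to \emph{why} it works. The paper's version is coordinate-free and makes visible that the argument is really about how the four truncation triangles are linked, at the cost of being longer to write out.
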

\begin{center}
        \begin{tikzpicture}[scale=0.5, auto, on grid, vertex/.style={circle, draw=xkcdAquamarine!15, fill=xkcdAquamarine!15, very thick, minimum size=3mm}]
            \footnotesize
            \node[vertex] (A) at (1, 1) {1};
            \node[vertex] (B) at (10, 1)  {2};
            \node[vertex] (C) [below left =of A] {3};
            \node[vertex] (D) [below =of A] {4};
            \node[vertex] (E) [below =of B] {5};
            \node[vertex] (F) [below right =of B] {6};
            \node[vertex] (G) [below right =of D] {7};
            \node[vertex] (H) [below left=of E] {8};
            \node[vertex] (I) [below =of $(G)!0.5!(H)$] {9};
            \node[vertex] (J) [below =of I] {10};
            \node[vertex] (K) [below left =of J] {11}; 
            \node[vertex] (L) [below right =of J] {12}; 

            \draw (A) -- (B);
            \draw (A) -- (C);
            \draw (A) -- (D);
            \draw (B) -- (E);
            \draw (B) -- (F);
            \draw (C) -- (D);
            \draw (C) -- (K);
            \draw (D) -- (G);
            \draw (E) -- (F);
            \draw (E) -- (H);
            \draw (F) -- (L);
            \draw (G) -- (H);
            \draw (G) -- (I); 
            \draw (H) -- (I);
            \draw (I) -- (J);
            \draw (J) -- (K);
            \draw (J) -- (L); 
            \draw (K) -- (L); 

        \end{tikzpicture}
\end{center}
Before we prove the above theorem, we want to collect some properties of $\Gamma$.
\begin{propr}
        \noindent
        \begin{itemize}
                \item $\Gamma$ has diameter $3$, i.e. the largest distance between two vertices is $3$.
                \item $\Gamma$ is $3$-regular, i.e. every vertex has exactly $3$ neighbours.
                \item $\Gamma$ consists of $4$ triangles.
                \item Each vertex of $\Gamma$ is connected exactly to two vertices in the same triangle and one vertex in a different triangle.
        \end{itemize}
\end{propr}
We now want to introduce some notation to make talking about the different triangles easier.
\begin{defn}
        \leavevmode
        \begin{enumerate}
                \item We denote by $\mathcal{T}$ the set of the four triangles that make up $\Gamma$.
                \item Let $v \in V(\Gamma)$. We denote by $T(v)$ the triangle containing $v$. 
                \item Let $v \in V(\Gamma)$. We denote by $AT(v)$ the unique triangle that is \emph{adjacent} to $v$, i.e. that is connected to $v$ by an edge but is different from $T(v)$.
                \item Let $v \in V(\Gamma)$. We denote by $\mathcal{NT}(v) := \mathcal{T} \backslash \left\{ T(v), AT(v) \right\}$ the set of \emph{non-adjacent triangles} of $v$.
        \end{enumerate}
\end{defn}
\begin{lem}
        Let $v, w \in V(\Gamma)$ be two vertices. If they are in the same triangle, then their adjacent triangles are distinct:
        \[
            T(v) = T(w) \Longrightarrow AT(v) \neq AT(w)
        .\] 
\end{lem}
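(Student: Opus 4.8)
The plan is to reformulate the claim as an injectivity statement. Fix a triangle $T \in \mathcal{T}$ and consider the map $v \mapsto AT(v)$ defined on the three vertices of $T$. Since $\Gamma$ is $3$-regular and, by the collected properties, each vertex has exactly one neighbour outside its own triangle, this map is well defined and lands in the three-element set $\mathcal{T} \setminus \{T\}$. The lemma asserts precisely that distinct vertices $v, w$ of $T$ have distinct images, i.e.\ that this map is injective; because its domain and codomain both have exactly three elements, injectivity is in fact equivalent to surjectivity, so one simultaneously learns that the three external edges leaving $T$ reach all three remaining triangles.

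The key input I would establish first is that between any two distinct triangles of $\Gamma$ there is exactly one edge. Conceptually this is immediate from the construction $\Gamma = \Trunc(K_4)$: the four triangles correspond to the four vertices of $K_4$, and the edges of $\Gamma$ joining different triangles correspond bijectively to the edges of $K_4$; as $K_4$ is complete and simple, there is exactly one such edge for each of the $\binom{4}{2} = 6$ pairs of triangles. If one prefers a self-contained verification, the same fact can be read off the labelled picture: the four triangles are $\{1,3,4\}$, $\{2,5,6\}$, $\{7,8,9\}$ and $\{10,11,12\}$, and the six edges joining distinct triangles, namely $1$--$2$, $4$--$7$, $3$--$11$, $5$--$8$, $6$--$12$ and $9$--$10$, realise each of the six pairs exactly once.

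With this in hand the proof concludes quickly. Suppose, for contradiction, that $v \neq w$ are vertices of the same triangle $T$ with $AT(v) = AT(w) = T'$. Each of $v$ and $w$ has a unique neighbour outside $T$, and by definition of $AT$ these neighbours lie in $T'$; this produces two edges between $T$ and $T'$, which are distinct because they have the distinct endpoints $v$ and $w$ inside $T$. This contradicts the fact that exactly one edge joins $T$ and $T'$, and hence $AT(v) \neq AT(w)$.

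I expect the only real obstacle to be the preliminary step, namely pinning down rigorously that each pair of triangles is joined by exactly one edge. Once that is justified---either abstractly from the truncation, or concretely from the incidence data in the figure---the remainder is a one-line uniqueness (pigeonhole) argument that uses nothing beyond the already-recorded regularity and triangle-decomposition properties of $\Gamma$.
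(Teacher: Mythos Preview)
Your proof is correct. The paper actually states this lemma without proof, treating it as an immediate consequence of the structure of $\Trunc(K_4)$, so there is nothing to compare against; your argument supplies a clean justification. Your key observation---that each of the six pairs of triangles is joined by exactly one edge, which follows directly from the truncation construction (the four triangles correspond to the vertices of $K_4$ and the cross-triangle edges to its edges)---is exactly the right structural fact, and the contradiction you derive from it is sound.
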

\begin{lem}
        \label{lem:d2AT}
        Let $v, w \in V(\Gamma)$ be two vertices with $d(v, w) = 2$. Then it holds that either $v$ is in the adjacent triangle of $w$ or vice versa:
        \[
            d(v, w) = 2 \Longrightarrow v \in AT(w) \lor w \in AT(v)
        .\] 
\end{lem}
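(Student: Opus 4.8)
The plan is to argue by a direct case analysis on a common neighbour. Since $d(v,w)=2$, there exists a vertex $z$ adjacent to both $v$ and $w$, and I will show that the only configurations in which such a $z$ can occur already force $v \in AT(w)$ or $w \in AT(v)$.

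First I would record the elementary consequence of the Properties that each vertex has precisely three neighbours, two of which lie in its own triangle and one of which, its \emph{external} neighbour, lies in its adjacent triangle. I would write $v'$ for the external neighbour of $v$ and note that the assignment $v \mapsto v'$ is a matching: since edges are undirected and each vertex has a unique external neighbour, $z = v'$ already implies $v = z'$. I would also observe at the outset that $T(v)=T(w)$ is impossible here, because distinct vertices of a common triangle are adjacent and hence at distance $1$, not $2$; so we may assume $T(v) \neq T(w)$.

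Next I would classify the common neighbour $z$ according to how it is adjacent to $v$ and to $w$. Adjacency of $z$ to $v$ means either that $z$ is a triangle-mate of $v$ (so $z \in T(v)$) or that $z = v'$, and likewise for $w$. This yields four combinations. The combination in which $z$ is a triangle-mate of both is impossible, since it would put $z \in T(v) \cap T(w) = \emptyset$. The combination $z = v'$ and $z = w'$ forces $v = z' = w$ by the matching property, contradicting $v \neq w$. The two remaining combinations are exactly the ones that give the conclusion: if $z$ is a triangle-mate of $v$ and $z = w'$, then $z \in T(v)$ and $z \in AT(w)$, whence $AT(w) = T(v)$ and so $v \in AT(w)$; symmetrically, if $z = v'$ and $z$ is a triangle-mate of $w$, then $w \in AT(v)$.

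The argument is mostly bookkeeping, so I expect no deep obstacle; the one point requiring care is the exclusion of the case $z = v' = w'$, for which I must invoke that the external edges form a matching rather than merely that each vertex has one external neighbour. Having ruled out the two impossible combinations, the common neighbour $z$ guaranteed by $d(v,w)=2$ must fall into one of the two remaining cases, which is precisely the disjunction $v \in AT(w) \lor w \in AT(v)$.
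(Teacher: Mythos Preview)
Your proof is correct and is essentially the same argument as the paper's: both analyze the middle vertex $z$ of a length-$2$ path from $w$ to $v$, using that each vertex has two triangle-mates and one external neighbour. The paper phrases this as tracing a shortest path (assuming $v\notin AT(w)$ and concluding the first step must stay in $T(w)$), while you organize it as an explicit four-case split on how $z$ is adjacent to each of $v$ and $w$; your version is slightly more careful in making the matching property $(v')'=v$ explicit, but the content is the same.
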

\begin{proof}
        If $v$ is not in $AT(w)$, then in order to reach $v$ from $w$, one step has to be made inside $T(w)$. Since $d(v, w) = 2$, the next step, which is the first step out of $T(w)$, will have to 
        reach $v$ already. Thus, by the definition of $AT(v)$, the triangle we just left is $AT(v)$. Since it is also $T(w)$ we get $w \in AT(v)$.
        The proof can be visualized with the following sketch:
        \begin{center}
		\tiny
                \begin{tikzpicture}[node distance = 1cm, auto, on grid, semithick, state/.style={circle, draw=black, fill=black, text=black, inner sep = 0pt, minimum size = 3pt}]
                        \node[state, label=above: $v$] (1) {};
                        \node[state] (2) [right=of 1] {};
                        \node[state] (3) [below=of 1] {};
			\node[state] (4) [left=of 1]{};
			\node[state, label=below:$\vdots$, label=right: $w_2$] (5) [below=of 4]{};
			\node[state, label=left:$\cdots$, label=above: $w_1$] (6) [left=of 4]{};
			\node[state, label=right: $\cdots$, label=above: $w_4$] (7) [right=of 2]{};
			\node[state, label=below: $\vdots$, label=right: $w_3$] (8) [below=of 3]{};
			\draw (1) -- (2);
			\draw (1) -- (3);
			\draw (3) -- (2);
			\draw (1) -- (4);
			\draw (4) -- (5);
			\draw (4) -- (6);
			\draw (5) -- (6);
			\draw (2) -- (7);
			\draw (3) -- (8); 
                \end{tikzpicture}
        \end{center}

\end{proof}
\begin{lem}
        \label{lem:d3NT}
        Let $v \in V(\Gamma)$ be a vertex. All vertices $w \in V(\Gamma)$ that fulfill $d(v, w) = 3$ are in the triangles in $\mathcal{NT}(v) = \mathcal{T} \backslash \left\{ T(v), AT(v) \right\}$.
\end{lem}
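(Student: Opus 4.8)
The plan is to prove the statement in the convenient equivalent form: rather than analysing the distance-$3$ vertices directly, I would show that every vertex lying in $T(v) \cup AT(v)$ is within distance $2$ of $v$. Since the four triangles partition $V(\Gamma)$ into four blocks of three vertices each, and $\mathcal{NT}(v) = \mathcal{T} \setminus \{T(v), AT(v)\}$ is exactly the complement of $T(v) \cup AT(v)$, this immediately forces any vertex at distance precisely $3$ to lie in one of the two triangles of $\mathcal{NT}(v)$.

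First I would handle $T(v)$. The vertex $v$ itself is at distance $0$, and by the collected properties each vertex is joined to the two other vertices of its own triangle, so the remaining two vertices of $T(v)$ are neighbours of $v$ and sit at distance $1$. Hence every vertex of $T(v)$ lies within distance $1$ of $v$.

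Next I would handle $AT(v)$. By definition $AT(v)$ is joined to $v$ by a single edge; write $v'$ for the unique vertex of $AT(v)$ adjacent to $v$, so that $d(v, v') = 1$. The two remaining vertices of $AT(v)$ share the triangle $AT(v)$ with $v'$, hence are neighbours of $v'$, and therefore lie at distance at most $2$ from $v$ via the length-$2$ path through $v'$. Thus every vertex of $AT(v)$ is within distance $2$ of $v$.

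Combining the two cases, all six vertices of $T(v) \cup AT(v)$ are at distance at most $2$ from $v$, so no vertex at distance $3$ can lie there; the remaining six vertices are precisely those in the triangles of $\mathcal{NT}(v)$, which proves the claim. The argument is essentially bookkeeping over the four-triangle partition, so I do not expect a genuine obstacle; the only point requiring care is the uniqueness of the external neighbour $v'$, which ensures that the whole of $AT(v)$ is reached in two steps through a single intermediate vertex. This uniqueness is guaranteed by the property that each vertex has exactly one neighbour outside its own triangle, and one could alternatively package the distance-$2$ bound via Lemma~\ref{lem:d2AT}.
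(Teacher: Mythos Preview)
Your proof is correct and follows essentially the same approach as the paper: both argue by contrapositive, showing that any vertex in $T(v)$ is at distance at most $1$ and any vertex in $AT(v)$ is at distance at most $2$ (via the unique external neighbour), so that distance-$3$ vertices must lie in the remaining triangles. Your write-up is slightly more explicit about the partition bookkeeping, but the core argument is identical.
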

\begin{proof}
        Indeed, if $w$ was in $T(v)$, then the distance of $w$ to $v$ would be $1$. On the other hand if $w \in AT(v)$ holds, then either 
	$d(v, w) = 1$, if $w$ is the unique vertex in $AT(v)$ that shares an edge with $v$, or $d(v, w) = 2$, as $w$ is connected to the vertex in $AT(v)$ that shares an edge with $v$.
\end{proof}

We now come to the proof of Theorem~\ref{thm:TruncK4}.
\begin{proof}
        Let $(u_{ij})_{1 \leq i, j \leq n}$ be the generators of $C(\QBan(\Gamma))$. We want to show that $u_{ij} u_{kl} = u_{kl} u_{ij}$ for 
        all $i, j, k, l \in V(\Gamma)$. By Lemma~\ref{lem:distances} it suffices to show the statement for $i, j, k, l$ such that $d(i, k) = d(j, l)$.
        \textbf{\emph{Step 1:}} $d(i, k) = d(j, l) = 1$: 
         Since $\Gamma$ does not contain any quadrangles, by Lemma~\ref{lem:quadrangle} we have
        $\QBan(\Gamma) = \QBic(\Gamma)$ and therefore $u_{ij} u_{kl} = u_{kl} u_{ij}$ for all $d(i, k) = d(j, l) = 1$. 

        \textbf{\emph{Step 2:}} $d(i, k) = d(j, l) = 2$. Let us fix $j \in V(\Gamma)$. Then there are four vertices at distance $2$ from $j$: two of them are in $AT(j)$, reached by following
        the edge into $AT(j)$ and then following either edge within the triangle, and one is in each of the triangles in $\mathcal{NT}(j)$, reached by following either edge within $T(j)$ and then taking the
        single edge that leads out of $T(j)$. Let us sketch the situation:
        \begin{center}
		\tiny
                \begin{tikzpicture}[node distance = 1cm, auto, on grid, semithick, state/.style={circle, draw=black, fill=black, text=black, inner sep = 0pt, minimum size = 3pt}]
                        \node[state, label=above: $j$] (1) {};
                        \node[state] (2) [right=of 1] {};
                        \node[state] (3) [below=of 1] {};
			\node[state] (4) [left=of 1]{};
			\node[state, label=below:$\vdots$, label=right: $l_2$] (5) [below=of 4]{};
			\node[state, label=left:$\cdots$, label=above: $l_1$] (6) [left=of 4]{};
			\node[state, label=right: $\cdots$, label=above: $l_4$] (7) [right=of 2]{};
			\node[state, label=below: $\vdots$, label=right: $l_3$] (8) [below=of 3]{};
			\draw (1) -- (2);
			\draw (1) -- (3);
			\draw (3) -- (2);
			\draw (1) -- (4);
			\draw (4) -- (5);
			\draw (4) -- (6);
			\draw (5) -- (6);
			\draw (2) -- (7);
			\draw (3) -- (8); 
                \end{tikzpicture}
        \end{center}
	In this situation, $l_1$ and $l_2$ are the vertices at distance $2$ that are in $AT(j)$ and $l_3$ and $l_4$ are each in one of the triangles of $\mathcal{NT}(j)$. Note, that while 
	$l_3$ and $l_4$ are not in $AT(j)$, it holds that $j \in AT(l_3)$ and $j \in AT(l_4)$ by Lemma~\ref{lem:d2AT}.
	Let now $l$ be one of $l_1$ and $l_2$, i.e. $l$ is a vertex such that $d(j, l) = 2$ and $l \in AT(j)$. We will show, that for all $i, k \in V(\Gamma)$ with $d(i, k) = 2$ the following holds:
	\[
		u_{ij} u_{kl} = u_{ij} u_{kl} u_{ij}.
	\]
	Then, by Lemma~\ref{lem:adj_commute}, we have $u_{ij} u_{kl} = u_{kl} u_{ij}$.
	We want to use Lemma~\ref{lem:choose_q_right}, with distance $m = 2$ and choosing $s = t = 1$, which we can do since $\QBan(\Gamma) = \QBic(\Gamma)$, we want to find a vertex $q$ for which
	it holds that $d(j, q) = 1$ and $d(q, l) = 1$. If we choose $q$ such that the only vertex $p$ fulfilling $d(p, l) = 2$ and $d(p, q) = 1$ is $j$, then we get the desired result. 
	Let us again take a look at the situation we have.
	\begin{center}
	\tiny
		\begin{tikzpicture}[node distance = 1cm, auto, on grid, semithick, state/.style={circle, draw=black, fill=black, text=black, inner sep = 0pt, minimum size = 3pt}]
			\node[state, label=above: $j$] (1) {};
			\node[state, label=right: $\cdots$] (2) [right=of 1] {};
			\node[state, label=below: $\vdots$] (3) [below=of 1] {};
			\node[state, label=above: $q$] (4) [left=of 1]{};
			\node[state, label=below:$\vdots$, label=right: $l$] (5) [below=of 4]{};
			\node[state, label=left:$\cdots$] (6) [left=of 4]{};
			\draw (1) -- (2);
			\draw (1) -- (3);
			\draw (3) -- (2);
			\draw (1) -- (4);
			\draw (4) -- (5);
			\draw (4) -- (6);
			\draw (5) -- (6);
		\end{tikzpicture}
	\end{center} 
	Choosing $q$ as indicated in the sketch above, we see that there are three vertices adjacent to $q$, one of which is $l$ and one is $j$. The third vertex is in the same triangle as
	$l$ and thus is at distance $1$ from $l$. Therefore, the only vertex adjacent to $q$ and at distance $2$ from $l$ is in fact $j$. Lemma~\ref{lem:choose_q_right} yields
	\[ 
            u_{ij} u_{kl} = u_{ij} u_{kl} \sum_{\substack{p; \, d(p, l) = 2\\ (p, q) \in E(\Gamma)}} u_{ip} = u_{ij} u_{kl} u_{ij}.
	\]
	Thus, for all $i, j, k, l \in V(\Gamma)$ with $d(i, k) = d(j, l) = 2$ and $l \in AT(j)$, $u_{ij}$ and $u_{kl}$ commute. As noted above, it holds that if $l \notin AT(j)$, we have
	$j \in AT(l)$. Therefore, we get that $u_{ij}$ and $u_{kl}$ commute for all $i, j, k, l$ with $d(i, k) = d(i, j) = 2$.

	\textbf{\emph{Step 3:}} $d(i, k) = d(j, l) = 3$. Fixing again $j \in V(\Gamma)$, we know by Lemma~\ref{lem:d3NT} that all vertices $l$ with $d(j, l) = 3$ are in the triangles in $\mathcal{NT}(j)$, in other words
	$l \notin \left\{T(j), AT(j)\right\}$. Note moreover, that $\mathcal{NT}(j) = \left\{ AT(s), AT(t)\right\}$ if $T(j) = \left\{j, s, t \right\}$. Indeed, there are $4$ triangles in total, and since $T(j) = T(s)$ implies
	$AT(j) \neq AT(s)$, we can write the set of all triangles as $\mathcal{T} = \left\{ T(j), AT(j), AT(s), AT(k)\right\}$. 
	Let now $l$ be a fixed vertex at distance $3$ from $j$ and let $v \in T(j)$ be the vertex that satisfies $l \in AT(v)$. We are thus in the situation sketched below.
	\begin{center}
	\tiny
		\begin{tikzpicture}[node distance = 1cm, auto, on grid, semithick, state/.style={circle, draw=black, fill=black, text=black, inner sep = 0pt, minimum size = 3pt}]
			\node[state] (1) {};
			\node[state, label=right: $\cdots$, label=above: $l$] (2) [right=of 1] {};
			\node[state, label=below: $\vdots$] (3) [below=of 1] {};
			\node[state, label=above: $v$] (4) [left=of 1]{};
			\node[state, label=below:$\vdots$, label=right: $w$] (5) [below=of 4]{};
			\node[state, label=left:$\cdots$, label=above: $j$] (6) [left=of 4]{};
			\draw (1) -- (2);
			\draw (1) -- (3);
			\draw (3) -- (2);
			\draw (1) -- (4);
			\draw (4) -- (5);
			\draw (4) -- (6);
			\draw (5) -- (6);
		\end{tikzpicture}
	\end{center} 
    In this situation, we can use Lemma~\ref{lem:choose_q_right} with $s = 1$, $t = 2$, putting $q := v$, to get for all $i, k \in V(\Gamma)$, $d(i, k) = 3$ that
	\[
            u_{ij} u_{kl} = u_{ij} u_{kl} \sum_{\substack{p; \, d(p, l) = 3\\ (p, q) \in E(\Gamma)}} u_{ip}
        .\]
     Note that we can apply Lemma~\ref{lem:choose_q_right} here as we have shown above, that
	$u_{xy}$ and $u_{x^\prime y^\prime}$ commute with $d(x, x^\prime) = d(y, y^\prime) = 2$ and thus in particular for all $a$ with $d(a, k) = 2$ it holds that $u_{kl}$ and $u_{aq}$ commute.
     If we now have $d(w, l) \neq 3$ then we are already done, since we then get by the above application of Lemma~\ref{lem:choose_q_right}, that $u_{ij} u_{kl} = u_{ij} u_{kl} u_{ij}$, since then 
     $j$ is the only vertex connected to $q$ and at distance $3$ to $l$. Thus $u_{ij}$ and $u_{kl}$ commute by Lemma~\ref{lem:adj_commute}.

    If however $d(w, l) = 3$, then both $j$ and $w$ are connected to $q$ and at distance $3$ to $l$ and Lemma~\ref{lem:choose_q_right} thus yields
    \[
        u_{ij} u_{kl} = u_{ij} u_{kl} \left( u_{ij} + u_{iw} \right)
    .\] 
    It remains to show, that if $d(w, l) = 3$ we have $u_{ij} u_{kl} u_{iw} = 0$ for all $d(i, k) = 3$, since then $u_{ij} u_{kl} = u_{ij} u_{kl} u_{ij}$ and thus $u_{ij}$ and $u_{kl}$ commute by Lemma~\ref{lem:adj_commute}.
    We want to apply Lemma~\ref{lem:choose_q_middle} to $i, j, k, l$ and $w$ in order to show this. We thus have $m = 3$. The lemma is applicable, since we have $w \neq j$ and $d(w, l) = 3$. We want to find a vertex 
    $q$ and a distance $s$, such that $d(q, l) = s$ and $d(j, q) \neq d(q, w)$ and that moreover fulfills that $l$ is the only vertex satisfying $d(q, l) = s$, $d(l, j) = 3$ and $d(l, p) = 3$. If we find such a $q$ and $s$
    we are done, since then, by Lemma~\ref{lem:choose_q_middle}, $u_{ij} u_{kl} u_{iw} = 0$.

    We put $q$ as the unique vertex in $AT(j)$ that is adjacent to $j$ and $s := d(q, l)$. We are thus in the situation sketched below:
	\begin{center}
	\tiny
		\begin{tikzpicture}[node distance = 1cm, auto, on grid, semithick, state/.style={circle, draw=black, fill=black, text=black, inner sep = 0pt, minimum size = 3pt}]
			\node[state] (1) {};
			\node[state, label=right: $\cdots$, label=above: $l$] (2) [right=of 1] {};
			\node[state, label=below: $\vdots$] (3) [below=of 1] {};
			\node[state, label=above: $v$] (4) [left=of 1]{};
			\node[state, label=below:$\vdots$, label=right: $w$] (5) [below=of 4]{};
			\node[state, label=above: $j$] (6) [left=of 4]{};
            \node[state, label=above: $q$] (7) [left=of 6] {};
            \node[state, label=left: $\cdots$, ] (8) [left=of 7] {};
            \node[state, label=below: $\vdots$] (9) [below=of 7] {};
            \draw (6) -- (7);
            \draw (7) -- (8);
            \draw (7) -- (9);
            \draw (8) -- (9);
			\draw (1) -- (2);
			\draw (1) -- (3);
			\draw (3) -- (2);
			\draw (1) -- (4);
			\draw (4) -- (5);
			\draw (4) -- (6);
			\draw (5) -- (6);
		\end{tikzpicture}
	\end{center} 
    Then obviously $s = d(q, l)$ holds. Moreover, $1 = d(j, q) \neq d(w, q) = 2$ holds, since $T(q) \in \mathcal{NT}(w)$.
    It remains to show, that for any vertex $x \in V(\Gamma)$ the following holds:
    \[
            d(x, q) = s \mbox{ and } d(x, j) = 3 \mbox{ and } d(x, w) = 3 \Longrightarrow t = l.
    \]
    Let therefore such a vertex $x$ be given. We claim that $x \in AT(v)$.

    Indeed, if $x \notin AT(v)$, then either $x$ would have to be in $T(v)$ or $T(x)$ would have to be in $\mathcal{NT}(v)$. But if $x \in T(v) = T(j)$, then $d(x, j) \leq 1$ which 
    is a contradiction to $d(x, j) = 3$. 
    If on the other hand we have $T(x) \in \mathcal{NT}(v) = \left\{ AT(j), AT(w) \right\}$, then we have either $x \in AT(j)$ or $x \in AT(w)$. If $x \in AT(j)$, then $d(x, j) \leq 2$ as was argued above, which is again a contradiction
    to $d(x, j) = 3$. If $x \in AT(w)$ we have by the same argument that $d(x, w) \leq 2$, which is a contradiction to $d(x, w) = 3$.

    We thus have $x \in AT(v) = T(l)$. Let us look at another sketch of the situation, in order to name the relevant vertices.
	\begin{center}
	\tiny
		\begin{tikzpicture}[node distance = 1cm, auto, on grid, semithick, state/.style={circle, draw=black, fill=black, text=black, inner sep = 0pt, minimum size = 3pt}]
            \node[state, label=above: $r$] (1) {};
			\node[state, label=right: $\cdots$, label=above: $l$] (2) [right=of 1] {};
            \node[state, label=below: $\vdots$, label=left: $y$] (3) [below=of 1] {};
			\node[state, label=above: $v$] (4) [left=of 1]{};
			\node[state, label=below:$\vdots$, label=right: $w$] (5) [below=of 4]{};
			\node[state, label=above: $j$] (6) [left=of 4]{};
            \node[state, label=above: $q$, label=left: $\cdots$] (7) [left=of 6] {};
            \draw (6) -- (7);
			\draw (1) -- (2);
			\draw (1) -- (3);
			\draw (3) -- (2);
			\draw (1) -- (4);
			\draw (4) -- (5);
			\draw (4) -- (6);
			\draw (5) -- (6);
		\end{tikzpicture}
	\end{center} 
    We know so far, that $x \in \left\{ r, y, l \right\}$. Since however $d(j, r) = d(w, r) = 2 \neq 3$ we can narrow it down to $x \in \left\{ y, l \right\}$.
    It remains to show that $d(q, l) \neq d(q, y)$, since then we have shown that $x = l$.
    In the sketch above, we see that $T(q) \in \mathcal{NT}(r)$ and thus we have either $q \in AT(l)$ or $q \in AT(y)$. 
    We will show the claim for $q \in AT(l)$, the other situation can be shown similarly.
    If $q$ is in $AT(l)$, then we have $d(q, l) \leq 2$, and since the unique vertex connected to $q$ that is in a different triangle is already $j$ we even know $d(q, l) = 2$. 
    We will now show, that $d(q, y) = 3$. Since $\Gamma$ has diameter $3$, we already know, that $d(q, y) \leq 3$.
    Since a shortest path from $l$ to $q$ is of length $2$, any path from $y$ to $q$ via $l$ will have at least length $3$ and there is also a path of length $3$ via $l$.
    Moreover, any path from $y$ to $q$ via $r$ will be at least of length $4$, as can be seen in the sketch above. 
    The last possibility to reach $q$ from $y$ is thus by first traversing $AT(y)$. But since $q \notin AT(y)$, which is the case as $q \in AT(l)$, any such path will consist of first taking the 
    edge from $y$ to $AT(y)$, then taking one edge within $AT(y)$ and then leaving $AT(y)$ via a third edge, and thus such a path is at least of length $3$ again.
    We thus conclude $d(q, l) = 2 \neq 3 = d(q, y)$.

    Therefore, we can apply Lemma~\ref{lem:choose_q_middle} and get 
    \[
            u_{ij} \left( \sum_{\substack{x; \, d(x, j) = d(x, w) = 3 \\ d(q, x) = d(q, l)}} u_{kx} \right) u_{iw} = u_{ij} u_{kl} u_{iw} = 0.
    \]
    We thus conclude that $u_{ij}$ and $u_{kl}$ commute for any $i, j, k, l \in V(\Gamma)$ such that $d(i,k) = d(j, l) = 3$.

    Since the diameter of $\Gamma$ is $3$, we now know that all generators of $\QBan(\Gamma)$ commute and thus $\QBan(\Gamma) = G_{aut}(\Gamma)$.

\end{proof}

\subsection{The distance 3 graph of the truncated tetraeder}
In this section, we will show that the distance $3$ graph of the truncated tetraeder, $Antip(Trunc(K_4))$, does not have quantum symmetry.
\begin{thm}
        \label{thm:antip_trunc_k4}
        The distance $3$ graph of the truncated tetraeder, $\Gamma = Antip(Trunc(K_4))$, shown below, does not have quantum symmetry, i.e. $\QBan(\Gamma)$ is commutative.
\end{thm}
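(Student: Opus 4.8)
The plan is to show that all generators of $C(\QBan(\Gamma))$ commute and then invoke vertex-transitivity. Since $\Gamma=\Antip(\Trunc(K_4))$ is a distance graph of a vertex-transitive graph it is itself vertex-transitive, so by Corollary~\ref{cor:commutation_with_one_implies_all} it suffices to fix one vertex $j_0$ and prove that $u_{ij_0}$ commutes with every $u_{kl}$, and by Lemma~\ref{lem:distances} only pairs with $d(i,k)=d(j_0,l)$ matter. The first thing I would do is unwind the structure of $\Gamma$. Labelling each vertex of $\Trunc(K_4)$ by the ordered pair $(X,Y)$ recording its triangle $X=T(\cdot)$ and its adjacent triangle $Y=AT(\cdot)$, a short computation using Lemma~\ref{lem:d3NT} shows that $(X,Y)\sim_\Gamma (Z,W)$ exactly when $Z\notin\{X,Y\}$ and $W\neq X$. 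From this I would read off that $\Gamma$ is $4$-regular of diameter $2$, and that it decomposes into four triangles (the arcs sharing a common head) together with three \emph{induced} $4$-cycles (the arcs on a complementary pair of $2$-subsets of $\{A,B,C,D\}$); every vertex lies in exactly one triangle and one $4$-cycle, and its four neighbours split into two ``triangle'' neighbours and two ``square'' neighbours. I would also record the common-neighbour counts: a triangle edge has exactly one common neighbour, a square edge none.

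The proof then splits by $m=d(i,k)=d(j_0,l)\in\{1,2\}$. For $m=1$ I would first treat the triangle edges: each triangle is a head-triangle satisfying the mutual-unique-common-neighbour hypothesis of Lemma~\ref{lem:one_common_neighbour_gen}, which therefore gives commutation whenever both $\{i,k\}$ and $\{j_0,l\}$ are triangle edges. Together with Lemma~\ref{lem:different_numbers_common_neighbours} (which annihilates the mixed triangle/square products, the common-neighbour counts being $1$ versus $0$) and Lemma~\ref{lem:distances}, this yields $u_{aj}u_{bl}=u_{bl}u_{aj}$ for \emph{all} $a,b$ at every triangle edge $\{j,l\}$, which I would then spread over all triangle edges via Lemma~\ref{lem:get_commutation_by_automorphism}.

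The heart of the argument is $m=2$. Here I would fix $j_0=(A,B)$, note that its stabiliser in $G_{aut}(\Gamma)=S_4$ is only $\{e,(C\,D)\}$, and so treat one representative in each of the four orbits of distance-$2$ targets, namely $(A,C)$, $(B,A)$, $(B,C)$, $(C,A)$, propagating afterwards by $(C\,D)$ through Lemma~\ref{lem:get_commutation_by_automorphism}. For each such $l$ I would expand $u_{i(A,B)}u_{kl}=u_{i(A,B)}u_{kl}\sum_{p:\,d(p,l)=2}u_{ip}$ and kill every term with $p\neq(A,B)$, so as to reach $u_{i(A,B)}u_{kl}=u_{i(A,B)}u_{kl}u_{i(A,B)}$ and then apply Lemma~\ref{lem:adj_commute}. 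Most terms vanish at once from Corollary~\ref{cor:different_numbers_common_neighbours}, because $\lvert\CN(l,p)\rvert\neq\lvert\CN((A,B),l)\rvert$; for the handful of surviving terms (where these counts coincide) I would use Lemma~\ref{lem:monomial_zero} with $q$ chosen to be a triangle-neighbour of $l$, so that the commutation $u_{kl}u_{rq}=u_{rq}u_{kl}$ it requires is precisely a triangle-edge commutation already established in the $m=1$ step.

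Finally I would return to the square edges at $m=1$, the case that the elementary adjacency lemmas cannot reach, since the three $4$-cycles are induced quadrangles. For $j_0=(A,B)$, $l=(C,D)$ and a square edge $i\sim k$, the expansion leaves a single problematic monomial $u_{i(A,B)}u_{k(C,D)}u_{i(B,A)}$, which I would annihilate with Lemma~\ref{lem:monomial_zero} using $q=(C,B)$: since $d((C,D),(C,B))=2$, the commutation of $u_{k(C,D)}$ with $u_{r(C,B)}$ needed there is a distance-$2$ commutation supplied by the previous step. The main obstacle is exactly this coupling of the two distances: the induced quadrangles force the square edges to borrow commutations from distance $2$, so the steps must be ordered triangle edges $\to$ distance $2$ $\to$ square edges, and the small vertex-stabiliser means the distance-$2$ analysis has to be carried out case by case rather than collapsed by symmetry.
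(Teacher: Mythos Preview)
Your overall strategy matches the paper's: exploit the decomposition into four triangles, use Lemma~\ref{lem:one_common_neighbour_gen} for triangle edges, Lemma~\ref{lem:different_numbers_common_neighbours} for the mixed case, then handle distance~$2$ and return to the non-triangle (``square'') edges using distance-$2$ commutations. Your $(X,Y)$ labelling and edge decomposition are correct and make the bookkeeping pleasant.

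There is, however, a concrete gap in your distance-$2$ recipe. For $j_0=(A,B)$, $l=(C,A)$, $p=(A,D)$ one has $|\CN(j_0,l)|=|\CN(l,p)|=1$, so Corollary~\ref{cor:different_numbers_common_neighbours} does not apply; and the triangle-neighbours of $l$ are $(B,A)$ and $(D,A)$, both at distance~$2$ from $j_0$ \emph{and} from $p$, so Lemma~\ref{lem:monomial_zero} with a triangle-neighbour $q$ cannot separate them. (The involution $(B\,D)$ fixes $l$ and swaps $j_0\leftrightarrow p$, which forces this equidistance.) Your stated ordering ``triangle edges $\to$ distance~$2$ $\to$ square edges'' therefore does not close as written. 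The paper avoids this by interleaving: it introduces an invariant $Q(v,w)\in\{0,2,4\}$ for distance-$2$ pairs (the number of triangles meeting the unique quadrangle through $v,w$, with $Q=0$ when there is no quadrangle), first disposes of the cases $Q\in\{2,4\}$, then uses those to get the square-edge commutations, and only \emph{then} treats the $Q=0$ case (your problematic pair) using the now-available square-edge commutation $u_{s,t'}u_{kl}=u_{kl}u_{s,t'}$ with $t'$ the unique common neighbour of $j$ and $l$.

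You can repair your route without reordering by invoking Lemma~\ref{lem:choose_q_middle} once: with $q=(D,B)$ one gets
\[
u_{i(A,B)}\bigl(u_{k(A,C)}+u_{k(C,A)}\bigr)u_{i(A,D)}=0,
\]
and since you have already shown $u_{i(A,B)}u_{k(A,C)}u_{i(A,D)}=0$ in the $l=(A,C)$ case, the $(C,A)$ term follows. Either add this (and its $(C\,D)$-image) to your sketch, or adopt the paper's ordering triangle edges $\to$ distance~$2$ with $Q>0$ $\to$ square edges $\to$ distance~$2$ with $Q=0$.
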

\begin{center}
        \begin{tikzpicture}[auto, node distance = 2cm and 1.5cm, on grid, vertex/.style={circle, draw=xkcdAquamarine!15, fill=xkcdAquamarine!15, very thick, minimum size=5mm}]
            \node[vertex] (A) at (135:7cm) {1};
            \node[vertex] (B) at (45:7cm)  {2};
            \node[vertex] (K) at (225:7cm) {11}; 
            \node[vertex] (L) at (315:7cm) {12}; 
            \node[vertex] (D) at (22.5 + 45:3cm) {4};
            \node[vertex] (C) at (22.5 + 90:3cm) {3};
            \node[vertex] (E) at (22.5 + 135:3cm) {5};
            \node[vertex] (G) at (22.5 + 180:3cm) {7};
            \node[vertex] (I) at (22.5 + 225:3cm) {9};
            \node[vertex] (J) at (22.5 + 270:3cm) {10};
            \node[vertex] (H) at (22.5 + 315:3cm) {8};
            \node[vertex] (F) at (22.5 + 360:3cm) {6};

            \draw (A) -- (B);
            \draw (A) -- (C);
            \draw (A) -- (K);
            \draw (A) -- (E);
            \draw (B) -- (D);
            \draw (B) -- (F);
            \draw (B) -- (L);
            \draw (C) -- (E);
            \draw (C) -- (D);
            \draw (C) -- (J);
            \draw (D) -- (F);
            \draw (D) -- (I);
            \draw (E) -- (G);
            \draw (E) -- (H);
            \draw (F) -- (G);
            \draw (F) -- (H);
            \draw (G) -- (I);
            \draw (G) -- (K); 
            \draw (H) -- (J);
            \draw (H) -- (L);
            \draw (I) -- (J);
            \draw (I) -- (K);
            \draw (J) -- (L);
            \draw (K) -- (L); 

        \end{tikzpicture}
\end{center}
We collect some properties about the graph before proving the theorem.
\begin{propr}
        \label{propr:antip_trunc_k4}
        \leavevmode
        \begin{itemize}
                \item $\Gamma$ has diameter $2$.
                \item $\Gamma$ is $4$-regular.
                \item The graph is made up of $4$ disjoint triangles, i.e. for each vertex $v \in V$, it holds 
                        that there are two other vertices $w$ and $x$, such that $\left\{ v, w, x \right\}$ forms
                        a triangle and for all other vertices $y \in V \backslash \left\{v, w, x\right\}$ it holds that
                        if $(v, y) \in E$ then $v$ and $y$ have no common neighbour.
                \item Every vertex $v$ is  also part of three distinct quadrangles. On the one hand, 
                        it is part of one containing vertices from all $4$ triangles and 
                        on the other hand it is part of two triangles each containing $2$ vertices from 
                        $T(v)$ and $2$ vertices from another triangle. Both of the latter vertices share a
                        triangle however.
                \item All vertices that are not connected have either one or two common neighbours.
                \item For every pair of vertices that are in the same triangle, there is a unique quadrangle containing the same two vertices.
                \item For every pair of vertices that are in distance $2$, there is a unique quadrangle containing the same two vertices.
                \item For every pair of adjacent vertices that do not share a triangle, there are $2$ quadrangles containing both vertices.
        \end{itemize}
\end{propr}
\begin{defn}
        \label{def:antip_quadrangles_triangles}
        \leavevmode
        \begin{enumerate}
                \item If $v \in V$ is a vertex, we denote the unique triangle containing $v$ as $T(v)$.
                \item If $v, w \in V$ and  $d(v, w) = 2$, we want to count the number of distinct triangles
                        connected by a possible quadrangle containing $v$ and $w$. For this, we first note, that
                        if a quadrangle $\mathcal{Q}(v, w)$ containing $v$ and $w$ exists, then it is unique, i.e. there are no 
                        two distinct quadrangles containing both $v$ and $w$, for $d(v, w) = 2$, as was noted in the Properties~\ref{propr:antip_trunc_k4} above.
                        We now put for an existing quadrangle $\mathcal{Q}(v, w)$ consisting of the vertices 
                        $\left\{ v, w, x, y \right\}$ the set of triangles connected by this quadrangle as
                        $\mathcal{T}(\mathcal{Q}(v, w)) := \left\{ T(v), T(w), T(x), T(y) \right\}$.
                        With this, we define
                        \[
                        Q(v, w) = \begin{cases}
                                        0, \text{ if } v \text{ and } w \text{ do not share a quadrangle}\\
                                        |\mathcal{T}(\mathcal{Q}(v, w))| \text{ otherwise.}
                                  \end{cases}
                        \] 
                        Note, that by the properties of $\Gamma$ listed above, we have that 
                        $Q(v, w) \in \left\{ 0, 2, 4 \right\}$ for all vertices $v$ and $w$ at distance $2$.
        \end{enumerate}
        
\end{defn}
\begin{lem}
        \label{lem:antip_trunc_k4_q0}
        Let $v$ and $w$ be two vertices in $\Gamma$ such that $d(v, w) = 2$ and $Q(v, w) = 0$. Then the unique common neighbour $p$ of
        $v$ and $w$ fulfills that we have either $p \in T(v)$ or $p \in T(w)$.
\end{lem}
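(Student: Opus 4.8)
The plan is to argue by contradiction: I will show that a common neighbour lying outside both $T(v)$ and $T(w)$ would force the existence of a \emph{second} common neighbour, hence a quadrangle through $v$ and $w$, contradicting $Q(v,w) = 0$.

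First I would record why the common neighbour is genuinely unique, since the statement already speaks of \emph{the} common neighbour $p$. As $d(v,w) = 2$, the vertices $v$ and $w$ are non-adjacent, so by Properties~\ref{propr:antip_trunc_k4} they have either one or two common neighbours. If they had two distinct common neighbours $p_1$ and $p_2$, then $v \sim p_1 \sim w \sim p_2 \sim v$ would be a $4$-cycle, i.e.\ a quadrangle containing $v$ and $w$ as opposite vertices, which would give $Q(v,w) \neq 0$. Hence the hypothesis $Q(v,w) = 0$ forces exactly one common neighbour $p$.

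Next I would suppose, towards a contradiction, that $p \notin T(v) \cup T(w)$. Then $T(p)$, $T(v)$ and $T(w)$ are three pairwise distinct triangles, and the edges $pv$ and $pw$ both join $p$ to vertices lying outside its own triangle $T(p)$. Since $\Gamma$ is $4$-regular and each vertex has exactly its two triangle-mates as neighbours inside its triangle, every vertex has precisely two edges leaving its triangle; therefore $v$ and $w$ are exactly the two \emph{external} neighbours of $p$. Now I invoke the quadrangle structure from Properties~\ref{propr:antip_trunc_k4}: the vertex $p$ lies in a quadrangle $\mathcal{Q}$ whose four vertices are distributed one per triangle across all four triangles. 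In such a $4$-cycle the two neighbours of $p$ within $\mathcal{Q}$ lie in triangles different from $T(p)$, so they are external neighbours of $p$; as $p$ has only the two external neighbours $v$ and $w$, these must be precisely the two neighbours of $p$ in $\mathcal{Q}$. The remaining vertex $q$ of $\mathcal{Q}$, namely the one opposite $p$, then lies in the fourth triangle and is adjacent to both $v$ and $w$. Thus $q$ is a common neighbour of $v$ and $w$ with $q \neq p$, contradicting uniqueness, and so $p \in T(v) \cup T(w)$.

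The hard part will be the middle step: pinning down that the two edges $pv$ and $pw$ exhaust the external edges of $p$, and that the all-four-triangles quadrangle through $p$ is forced to use exactly those two edges at $p$. A point to be careful about is that the relevant quadrangle is the one meeting all four triangles, not one of the two quadrangles of $p$ that meet only two triangles, since only the former has \emph{both} of $p$'s quadrangle-neighbours external. Once these structural facts are established from the $4$-regularity and the disjoint-triangle decomposition, the production of the second common neighbour $q$ is immediate.
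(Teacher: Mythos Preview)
Your proof is correct and follows essentially the same approach as the paper: both arguments hinge on the fact that the two external (out-of-triangle) edges at any vertex are precisely its edges in the unique quadrangle meeting all four triangles, so that two consecutive external steps force $v$ and $w$ onto a common quadrangle, contradicting $Q(v,w)=0$. The only cosmetic difference is perspective---the paper traces this quadrangle starting from $v$, whereas you trace it from $p$---but the underlying mechanism is identical.
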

\begin{proof}
        The fact that $p$ exists comes from the fact, that $d(v, w) = 2$ and therefore, there is a path of length $2$ from $v$ to $w$.
        Since however there is no quadrangle containing both $v$ and $w$, there can not be two common neighbours.

        The fact, that $p \in T(v)$ or $p \in T(w)$ holds, comes from the fact that each vertex has $4$
        neighbours, two of which are in the same triangle as the vertex itself, and the other two are in two distinct different triangles.
        Thus, if we take one edge incident to $v$ that leads to a triangle different from $T(v)$, the vertex we reach has only one neighbour 
        apart from $v$ that is in a different triangle than itself. Therefore, there is only one option to take another edge to a completely new triangle.
        Repeating this twice more, we again reach $v$, and since each time there was only one choice, we have traversed the unique quadrangle containing 
        $v$ and connecting all $4$ triangles. Since $v$ and $w$ do not share a quadrangle however, none of the vertices we traversed was $w$, and 
        thus on the path from $v$ to $w$, at least one of the two steps has to be made within the same triangle.
\end{proof}
\begin{lem}
        \label{lem:antip_q0_unique}
        Let $v \neq w$ be two vertices in $\Gamma$ such that $T(v) = T(w)$, in particular we have $v \sim w$.
        There is at most one vertex $s \in V$ such that $d(v, s) = 2$, $Q(v, s) = 0$ and $w \sim s$ hold.
\end{lem}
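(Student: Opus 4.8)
The plan is to show that any vertex $s$ meeting all three conditions must be one of the two neighbours of $w$ lying outside the common triangle $T(v) = T(w)$, and then to eliminate at least one of these two candidates, leaving at most one.

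First I would record the structural facts I need from Properties~\ref{propr:antip_trunc_k4}: every vertex lies in a unique triangle and is $4$-regular, so it has exactly two neighbours inside its triangle and exactly two outside; moreover an edge joining vertices of two different triangles lies in no triangle, and two distinct vertices of one triangle are always adjacent. Applying this to $w$, whose inside-triangle neighbours are $v$ and the third triangle vertex $z$, I get two external neighbours $s_1, s_2$, and these lie in two \emph{distinct} triangles $T_1 \neq T_2$: if they shared a triangle they would be adjacent, making $s_2$ a common neighbour of $w$ and $s_1$, contradicting that $(w,s_1)$ is an inter-triangle edge. Since any candidate $s$ satisfies $w \sim s$, we have $s \in \{v, z, s_1, s_2\}$, and $d(v,s) = 2$ rules out $v$ (distance $0$) and $z$ (distance $1$, as $v \sim z$). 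Hence the only possible candidates are $s_1$ and $s_2$.

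Next I would use that $v$ itself has exactly two external neighbours in two distinct triangles, so it fails to connect to exactly one of the three triangles other than $T(v)$. In particular $v$ cannot miss both $T_1$ and $T_2$, so it connects to at least one of them; say $v$ has a neighbour $e \in T_1$. I then claim $s_1 \in T_1$ cannot satisfy all three conditions. If $s_1 = e$, then $d(v, s_1) = 1 \neq 2$. Otherwise $e$ and $s_1$ are distinct vertices of $T_1$, hence adjacent, so $v - e - s_1 - w - v$ is a genuine quadrangle through $v$ and $s_1$: the four vertices are distinct because $e, s_1 \in T_1$ while $v, w \in T(v) \neq T_1$. By Definition~\ref{def:antip_quadrangles_triangles} this forces $Q(v, s_1) \neq 0$. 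Either way $s_1$ is eliminated, so at most $s_2$ remains, which proves the statement.

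The main obstacle — and the step I would be most careful to justify — is the pigeonhole claim that $v$ connects to at least one of the two triangles $T_1, T_2$ hosting $w$'s external neighbours, together with the implication that ``$v$ connects to $T(s)$'' produces a second common neighbour of $v$ and $s$ besides $w$. Both hinge entirely on the two structural facts that each vertex has precisely two external neighbours in two different triangles (so it misses exactly one external triangle) and that distinct vertices of one triangle are adjacent. Once these are stated explicitly from Properties~\ref{propr:antip_trunc_k4}, the rest of the argument is a short counting step, so I would foreground them at the start of the proof rather than rederiving them inline.
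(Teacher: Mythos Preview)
Your proposal is correct and follows essentially the same approach as the paper's proof: both identify the two external neighbours of $w$ as the only candidates, use pigeonhole on the three triangles outside $T(v)$ to see that $v$ reaches the triangle of at least one candidate, and eliminate that candidate via the resulting quadrangle. Your write-up is in fact slightly more careful (you explicitly handle the case $s_1 = e$ and justify why $s_1, s_2$ lie in distinct triangles), but the underlying argument is identical.
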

\begin{proof}
        Observe, that both $v$ and $w$ have $4$ neighbours, as $\Gamma$ is $4$-regular, and $2$ of these neighbours are in the same triangle.
        Thus, each of $v$ and $w$ have two neighbours in triangles that are distinct from $T(v) = T(w)$. We note, that as soon as $v$ has 
        a neighbour $t$ in the same triangle as the neighbour $s$ of $w$, we have that $\left\{ v, w, s, t \right\}$ forms a quadrangle, as
        $s \sim t$ follows from the fact that they share a triangle. Note moreover, that $s = t$ can not be the case, since otherwise
        $\left\{ v, w, s \right\}$ would form a triangle, which contradicts $T(s) \neq T(v)$. Therefore, we have that $d(v, t) = 2$ and 
        $Q(v, t) > 0$.

        If we have on the other hand that as soon as a neighbour $s$ of $w$ is in a triangle, in which $v$ does not have a neighbour, then
        $d(v, s) = 2$ and $Q(v, s) = 0$.

        However, as there are only $3$ triangles in $\Gamma$ distinct from $T(v)$, and as both $v$ and $w$ have each $2$ neighbours in distinct
        triangles, at least one pair of neighbours has to be in the same triangle. Therefore, at most one neighbour $s$ of $v$ fulfills $d(v, s) = 2$ and
        $Q(v, s) = 0$.
\end{proof}

\begin{proof}{of Theorem~\ref{thm:antip_trunc_k4}}
        Let $(u_{ij})_{1 \leq i, j \leq n}$ be the generators of $C(\QBan(\Gamma))$. We will prove that $u_{ij} u_{kl} = u_{kl} u_{ij}$ in several steps. Recall, that by Lemma~\ref{lem:distances}, we only need to consider the case where $d(i, k) = d(j, l)$, since otherwise the product is already
        $0$ and thus the generators commute.
       \begin{pfsteps}
       \item \label{step:dist1_1} Let first $d(i, k) = d(j, l) = 1$. Let now $i$ and $k$ be in the same triangle and let also $j$ and $l$ share a triangle. Then, by the Properties~\ref{propr:antip_trunc_k4} of $\Gamma$, $i, j, k$ and $l$ fulfill the premise of Lemma~\ref{lem:one_common_neighbour_gen}.
        We thus get $u_{ij} u_{kl} = u_{kl} u_{ij}$ by Lemma~\ref{lem:one_common_neighbour_gen}.
        
        If we have that one of the pairs of vertices $\left\{ i, k \right\}$ or $\left\{ j, l \right\}$ share a triangle and the other does not, we are in the premise of Corollary~\ref{cor:one_triangle_one_not} and therefore we get $u_{ij} u_{kl} = u_{kl} u_{ij} = 0$ for these choices of $i, j, k$ and $l$.

        For $d(i, k) = d(j, l) = 1$, we now still need to show the case where neither $i$ and $k$ nor $j$ and $l$ share a triangle. However, in order to show the claim for this case, we will first need to consider the case of distance $2$.

        Next, we will consider vertices, such that $d(i, k) = d(j, l) = 2$. Here we will make a case distinction on $Q(i, k)$ and $Q(j, l)$ as defined above in Definition~\ref{def:antip_quadrangles_triangles}.

        \item \label{step:dist2_Q2_4} We first consider the cases where $Q(i, k) \neq Q(j, l)$. In these cases, we will prove $u_{ij} u_{kl} = 0$.
         Let therefore $Q(i, k) = 2$, $Q(j, l) = 4$. Let $q$ be a common neighbour of $j$ and $l$. Then $T(q) \notin \left\{ T(j), T(l) \right\}$ 
        since $Q(j, l) = 4$ and $q$ lies in the quadrangle connecting $j$ and $l$. For any common neighbour $s$ of $i$ and $k$ however, 
        we have either $s \in T(i)$ or $s \in T(k)$, since $s$ is in the quadrangle containing $i$ and $k$ and as $Q(i, k) = 2$ there 
        are vertices from only two distinct triangles in this quadrangle, which are $T(i)$ and $T(k)$. Since $i$ and $k$ have exactly $2$ common neighbours, we call them $s_1$ and $s_2$,
        where $s_1 \in T(i)$ and $s_2 \in T(k)$. We thus get
        \begin{align}
                \label{eqn_1_antip}
            u_{ij} u_{kl} = u_{ij} \sum_{s \in V} u_{sq} u_{kl} = u_{ij} \sum_{\substack{s\in V\\s \sim i\\ s \sim k}} u_{sq} u_{kl}
             = u_{ij} u_{s_1q} u_{kl} + 
             u_{ij} u_{s_2q} u_{kl}
        .\end{align}
        Since however, by Step~\ref{step:dist1_1}, we know that $u_{ab} u_{cd} = 0$ if $T(a) = T(c)$ but $T(b) \neq T(d)$ or vice versa, we get
        \[
            u_{ij} u_{s_1q} = 0
        \] 
        since $q \notin T(j)$ and
        \[
            u_{s_2q} u_{kl} = 0
        \] 
        since $q \notin T(l)$. This together with \ref{eqn_1_antip} yields
        \[
            u_{ij} u_{kl} = 0
        .\] 
        The same argument can be used to show that $u_{ij} u_{kl} = 0$ if $Q(i, k) = 4$ and $Q(j, l) = 2$.

        \item \label{step:dist2_Q0_4} Let now $Q(i, k) = 0$ and $Q(j, l) = 4$. Since $Q(i, k) = 0$, there is no quadrangle containing both $i$ and $k$. 
        Then $i$ and $k$ have exactly one common
        neighbour, let us call it $p$. By Lemma~\ref{lem:antip_trunc_k4_q0}, we have, that either $p \in T(i)$ or $p \in T(k)$. 
        Let now $q$ be one of the common neighbours of $j$ and $l$. As was argued in Step~\ref{step:dist2_Q2_4}, we have that $T(q) \notin \left\{ T(j), T(l) \right\}$.
        We get
        \[
            u_{ij} u_{kl} = u_{ij} \sum_{s \in V} u_{sq} u_{kl} = u_{ij} u_{pq} u_{kl} 
        \] 
        as $q$ is a common neighbour of $j$ and $l$ and $p$ is the only common neighbour of $i$ and $k$.
        If now $q \in T(i)$, we get by Step~\ref{step:dist1_1} that 
        \[
            u_{ij} u_{pq} = 0
        \] 
        as $p \notin T(j)$ and similarly, if $q \in T(k)$ we get
        \[
            u_{pq} u_{kl} = 0
        .\] 
        Thus, we get
        \[
            u_{ij} u_{kl} = u_{ij} u_{pq} u_{kl} = 0
        .\] 
        We can show similarly, that $u_{ij} u_{kl} = 0$ if $Q(i, k) = 4$ and $Q(j, l) = 0$.

        \item \label{step:dist2_Q0_2} Let $Q(i, k) = 0$ and $Q(j, l) = 2$ and let $p$ be the unique common neighbour of $i$ and $k$.
        We argue as in Step~\ref{step:dist2_Q0_4} that either $p \in T(i)$ or $p \in T(k)$ holds. 
        We know moreover that $j$ and $l$ have two common neighbours, let us call them $q_1$ and $q_2$. We assume without loss of generality, that
        $q_1 \in T(j)$ and $q_2 \in T(l)$. 
        We know by Step~\ref{step:dist1_1}, that $p \in T(i)$ implies $u_{ij} u_{p q_1} = u_{p q_1} u_{ij}$, while $p \in T(k)$ implies
        $u_{kl} u_{p q_2} = u_{p q_2} u_{kl}$. In both cases, we can apply Lemma~\ref{lem:dist_two_one_vs_two_common_neighbours} to get that
        $u_{ij} u_{kl} = 0$. 
        Similarly, we get that $u_{ij} u_{kl} = 0$ if $Q(i, k) = 2$ and $Q(j, l) = 0$ holds.

        All in all, we see that \[u_{ij} u_{kl} = 0 = u_{kl} u_{ij}\] holds whenever we have $Q(i, k) \neq Q(j, l)$.

        \item \label{step:dist2_Q4_4} We now consider the three cases for $Q(i, k) = Q(j, l)$.
        Let first $Q(i, k) = Q(j, l) = 4$. Let now $p \in V \backslash \left\{ j\right\}$ be a vertex that is distinct from $j$.
        We want to show that for any such $p$, we have
        \[
            u_{ij} u_{kl} u_{ip} = 0
        \] 
        since then we get
        \[
            u_{ij} u_{kl} = u_{ij} u_{kl} \sum_{v \in V} u_{iv} = u_{ij} u_{kl} u_{ij}
        \] 
        and then, by Lemma~\ref{lem:adj_commute}, $u_{ij}$ and $u_{kl}$ commute.
        First, we note, that if $p = l$, we have $u_{kl} u_{il} = 0$, since $k \neq i$.
        Next, if $d(l, p) = 1$, we have $u_{kl} u_{ip} = 0$, since $d(k, i) = 2$, and then the statement follows from Lemma~\ref{lem:distances}.
        The last case is thus $d(l, p) = 2$. We note, that by the Properties~\ref{propr:antip_trunc_k4} of $\Gamma$, $l$ is part of exactly
        one quadrangle connecting all $4$ triangles, i.e. there is only one vertex $v$, such that $d(l, v) = 2$ and $Q(l, v) = 4$. 
        Since $j$ already fulfills both of these properties, we know that $Q(l, p) \neq 4$. Therefore, we have $Q(k, i) \neq Q(l, p)$ and 
        thus $u_{kl} u_{ip} = 0$.

        \item \label{step:dist2_Q2_2} We now consider $Q(i, k) = Q(j, l) = 2$. 
        Let $p$ be again a vertex in $V \backslash\left\{ j \right\}$. As above, we want to show
        \[
            u_{ij} u_{kl} u_{ip} = 0
        .\] 
        Again, we already get $u_{kl} u_{ip} = 0$ if $d(l, p) \in \left\{ 0, 1 \right\}$.
        Let therefore $d(l, p) = 2$. If now $Q(l, p) \neq 2$, we get $u_{kl} u_{ip} = 0$ by Steps~\ref{step:dist2_Q2_4} and \ref{step:dist2_Q0_2}, as $Q(i, k) = 2$. 
        We thus consider $Q(l, p) = 2$.
        Let now $q$ be a common neighbour of $i$ and $k$ such that $q \in T(k)$ and let $s$ and $t$ be the common neighbours of 
        $j$ and $l$, such that $s \in T(j)$ and $t \in T(l)$. In particular, we have that $q \notin T(i)$.
        We denote by $\mathcal{Q}(j, l)$ the unique quadrangle containing $j$ and $l$ and by $\mathcal{Q}(l, p)$ the unique quadrangle containing
        $l$ and $p$. We now argue, that $d(t, p) \neq 1$.
        If $t$ and $p$ were connected, there would be a quadrangle consisting of vertices $l \sim t \sim p \sim v \sim l$, with the fourth vertex being a common neighbour $v \neq t$
        of $l$ and $p$. Such a vertex $v$ exists since $Q(l, p) = 2$. Since we know by the Properties~\ref{propr:antip_trunc_k4} of $\Gamma$, that two vertices in the same
        triangle only share exactly one quadrangle, we thus get a contradiction, since we already have $t \in \mathcal{Q}(j, l) = l \sim t \sim j \sim s \sim l$.
        Therefore, $d(t, p) \neq 1$.
        We compute
        \[
            u_{ij} u_{kl} u_{ip} = u_{ij} \sum_{v \in V} u_{qv} u_{kl} u_{ip} = u_{ij} \sum_{\substack{v \in V\\ v \sim j\\ v\sim l}} u_{qv} u_{kl} u_{ip}
            = \underbrace{u_{ij} u_{qs} u_{kl} u_{ip}}_{\substack{=0 \text{ since } q \notin T(i)\\ \text{but } s \in T(j)}} + u_{ij} u_{qt} u_{kl} u_{ip}
        .\] 
        Since $q \in T(k)$ and $t \in T(l)$ holds however, we get by Step~\ref{step:dist1_1}, that $u_{qt}$ and $u_{kl}$ commute, and we can 
        continue the above calculation:
        \[
            u_{ij} u_{qt} u_{kl} u_{ip} = u_{ij} u_{kl} u_{qt} u_{ip}
        .\] 
        However, $q$ is a common neighbour of $i$ and $k$, which means we have $d(i, q) = 1$, but as we argued above $d(t, p) \neq 1$, and 
        thus, by Lemma~\ref{lem:distances}, we have $u_{qt} u_{ip} = 0$ and thus
        \[
            u_{ij} u_{kl} u_{ip} = u_{ij} u_{kl} u_{qt} u_{ip} = 0
        .\] 
        Therefore, we get $u_{ij} u_{kl} = u_{kl} u_{ij}$ by Lemma~\ref{lem:adj_commute}.

        \item \label{step:dist1_2dist_triangles} Before we can go on to the case where $Q(i, k) = Q(j, l) = 0$, we need to consider the last case of $d(i, k) = d(j, l) = 1$.
        Let therefore now be $d(i, k) = d(j, l) = 1$ and $T(i) \neq T(k)$ and $T(j) \neq T(l)$. 
        We will now show, that $u_{ij} u_{kl} u_{ip} = 0$ for $p \neq j$. Observe, that for $d(l, p) \neq 1$, we already get the statement.
        Moreover, if $T(p) = T(l)$, then we have $u_{kl} u_{ip} = 0$ by Step~\ref{step:dist1_1}, since $T(k) \neq T(i)$ and we are done.

        Let therefore $T(p) \neq T(l)$.
        We observe, that there is a 
        quadrangle, containing both $l$ and $j$, that connects $2$ triangles. 
        We denote by $q$ the vertex in this quadrangle, that is 
        in the same triangle as $t$. Then we have in particular $d(q, l) = 2$ and $Q(q, l) = 2$. 
        Then, we have $d(p, q) \neq 1$: Assume $p \sim q$. Then $j \sim q \sim p \sim l \sim j$ would form a quadrangle. 
        However, if $p$ were in $T(j)$, then $p$ would be a common neighbour of $j$ and $l$. Since such a common neighbour does not exist, we conclude
        $T(p) \neq T(j)$.
        Since we moreover have $T(j) = T(q)$, but $T(j) \neq T(l)$, $T(l) \neq T(p)$ and $T(j) \neq T(p)$, this quadrangle would connect $3$ distinct
        triangles. Such a quadrangle does not exist in $\Gamma$ however, and we conclude $d(p, q) \neq 1$.
        We can now compute
        \[
            u_{ij} u_{kl} u_{ip} = u_{ij} \sum_{\substack{v \in V\\v \sim i\\ d(k, v) = 2\\ Q(k, v)= 2}} u_{vq} u_{kl} u_{ip} 
            = u_{ij} u_{kl} \underbrace{\sum_{\substack{v \in V\\v \sim i\\ d(k, v) = 2\\ Q(k, v)= 2}} u_{vq} u_{ip}}_{\substack{= 0 \text{ since }\\
            d(i, v) = 1 \text{ but }\\ d(q, p) = 2}} = 0
        .\] 
        Here, the sum commutes with $u_{kl}$ by Step~\ref{step:dist2_Q2_2}, as $d(k, s) = 2 = d(q, l)$ and $Q(k, s) = 2 = Q(q, l)$ for all summands holds.
        We conclude 
        \[
            u_{ij} u_{kl} = u_{ij} u_{kl} u_{ij}
        \] 
        and therefore $u_{ij}$ and $u_{kl}$ commute by Lemma~\ref{lem:adj_commute}.

        \item \label{step:dist2_Q0_0} Let now $Q(i, k) = Q(j, l) = 0$.
        We show again that 
        \[
            u_{ij} u_{kl} u_{ip} = 0
        \] 
        for all vertices $p \neq j$. As above, we already get that $u_{kl} u_{ip} = 0$, whenever $d(l, p) \neq d(i, k) = 2$ or when
        $d(l, p) = 2$ but $Q(l, p) \neq Q(i, k) = 0$. We thus now consider $p \in V \backslash\left\{ j \right\}$ such that $d(l, p) = 2$ and $Q(l, p) = 0$.
        We will denote by $s$ the unique neighbour of $i$ and $k$, and by $t$ the unique neighbour of $l$ and $p$.
        We compute
        \[
            u_{ij} u_{kl} u_{ip} = u_{ij} u_{kl} \sum_{v \in V} u_{vt} u_{ip} = u_{ij} u_{kl} u_{st} u_{ip}
        .\] 
        We moreover denote by $t'$ the unique neighbour of $j$ and $l$. Then in particular $t \neq t'$, since $p \neq j$. Similarly to above, we get
        \[
            u_{ij} u_{kl} u_{st} u_{ip} = u_{ij} u_{st'} u_{kl} u_{st} u_{ip}
        .\] 
        Since however $d(s, k) = 1 = d(l, t')$, we know that $u_{s t'}$ and $u_{kl}$ commute, and thus
        \[
                u_{ij} u_{st'} u_{kl} u_{st} u_{ip} = u_{ij} u_{kl} u_{st'} u_{st} u_{ip} = 0
        \] 
        as $t \neq t'$.
        We thus have $u_{ij} u_{kl} u_{ip} = 0$ for $p \neq j$ and thus 
        \[
            u_{ij} u_{kl} = u_{ij} u_{kl} u_{ij}
        \] 
        and by Lemma~\ref{lem:adj_commute}, $u_{ij}$ and $u_{kl}$ commute.
       \end{pfsteps} 
\end{proof}

\subsection{The cuboctahedral graph}
In this section, we show that the cuboctahedral graph, which is the line graph of the cube, has no quantum symmetries. By $\Gamma$ we will denote the cuboctahedral graph throughout the entire section,
even if it is sometimes not explicitly stated.

\begin{thm}
        \label{thm:cuboct}
        Let $\Gamma$ be the cuboctahedral graph as shown below. It has no quantum symmetries, i.e. $\QBan(\Gamma)$ is commutative.
\end{thm}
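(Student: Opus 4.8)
The plan is to show that every pair of generators of $C(\QBan(\Gamma))$ commutes, so that $\QBan(\Gamma) = G_{aut}(\Gamma)$ is commutative. Since $\Gamma$ is vertex-transitive, Corollary~\ref{cor:commutation_with_one_implies_all} reduces this to fixing a single vertex $j_0$ and showing, for each distance $m$, that $u_{i j_0}$ commutes with $u_{kl}$ whenever $d(j_0, l) = m$; and by Lemma~\ref{lem:distances} only pairs with $d(i,k) = d(j_0,l) = m$ need to be treated. Before starting I would record the structure of $\Gamma$, as in the previous special-case proofs: $\Gamma$ is $4$-regular of diameter $3$; each vertex lies in exactly two triangles and two quadrangular faces, so its neighbourhood is a disjoint union of two edges; two adjacent vertices have exactly one common neighbour, the apex of the unique triangle on that edge; each vertex $v$ has a unique vertex at distance $3$ (its antipode); and the six vertices at distance $2$ from $v$ split into the two \emph{square-diagonal} vertices, the opposite corner in each of the two faces through $v$, which satisfy $\lvert\CN(v,\cdot)\rvert = 2$, together with four further vertices with $\lvert\CN(v,\cdot)\rvert = 1$.

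The case $m=1$ is immediate: since adjacent vertices have exactly one common neighbour, Lemma~\ref{lem:one_common_neighbour} gives $u_{ij}u_{kl} = u_{kl}u_{ij}$ for all $i\sim k$, $j\sim l$. (In particular $\QBan(\Gamma) = \QBic(\Gamma)$, which makes the $s=t=1$ form of Lemma~\ref{lem:choose_q_right} available should it be needed.) The case $m=3$ is also short: if $d(i,k) = d(j_0,l) = 3$, then $l$ is the antipode of $j_0$, so $j_0$ is the unique vertex at distance $3$ from $l$, and using $\sum_r u_{ir} = 1$ together with Lemma~\ref{lem:distances},
\[
    u_{i j_0} u_{kl} = u_{i j_0} u_{kl} \sum_{r;\, d(r,l) = 3} u_{ir} = u_{i j_0} u_{kl} u_{i j_0},
\]
whence commutation by Lemma~\ref{lem:adj_commute}.

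The substance lies in the case $m=2$. Here I would first split by common-neighbour count: if $\lvert\CN(i,k)\rvert \neq \lvert\CN(j_0,l)\rvert$ then $u_{i j_0} u_{kl} = 0$ by Lemma~\ref{lem:dist_two_one_vs_two_common_neighbours}, leaving only the subcases $\lvert\CN(i,k)\rvert = \lvert\CN(j_0,l)\rvert \in \{1,2\}$. In each subcase I would reduce $u_{i j_0} u_{kl} = u_{i j_0} u_{kl} \sum_p u_{ip}$ using Lemma~\ref{lem:distances} and Corollary~\ref{cor:different_numbers_common_neighbours}, so that $p$ ranges only over the vertices at distance $2$ from $l$ whose common-neighbour count with $l$ matches that of $\{j_0,l\}$; the aim is then to kill every term with $p \neq j_0$ via Lemma~\ref{lem:choose_q_middle}, obtaining $u_{i j_0} u_{kl} = u_{i j_0} u_{kl} u_{i j_0}$ and hence commutation by Lemma~\ref{lem:adj_commute}. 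For a square-diagonal $l$ the only surviving $p \neq j_0$ is the antipode of $j_0$, disposed of by a single application of Lemma~\ref{lem:choose_q_middle} with $q$ a common neighbour of $j_0$ and $l$. For a one-common-neighbour $l$ there are three surviving $p \neq j_0$, each requiring its own auxiliary vertex $q$. Finally, since the stabiliser of $j_0$ in $G_{aut}(\Gamma)$ acts transitively on the two square-diagonal vertices and transitively on the four one-common-neighbour vertices, Lemma~\ref{lem:get_commutation_by_automorphism} propagates commutation from one representative $l$ of each type to all six vertices at distance $2$ from $j_0$, completing the case through Corollary~\ref{cor:commutation_with_one_implies_all}.

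I expect the main obstacle to be the one-common-neighbour subcase at distance $2$: there the reduction leaves several monomials $u_{i j_0} u_{kl} u_{ip}$, and the difficulty is to locate, for each offending $p$, a vertex $q$ meeting the hypotheses of Lemma~\ref{lem:choose_q_middle}, that is, with $d(q,l)$ separating $l$ from the remaining vertices equidistant from $j_0$, $p$ and $q$, and with $d(j_0, q) \neq d(q, p)$. This is a finite but delicate search through the distance pattern of $\Gamma$, best organised in a table in the notation of~\ref{notation:compact_notation_lemma_choose_q_middle}; the symmetry of $\Gamma$ should keep the number of genuinely distinct searches small.
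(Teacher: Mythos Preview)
Your proposal is correct, and the cases $m=1$ and $m=3$ match the paper exactly. Your case split at $m=2$ by $\lvert\CN\rvert \in \{1,2\}$ is in fact the same dichotomy the paper uses, phrased differently: the four vertices $l$ at distance $2$ from $j_0$ with $\lvert\CN(j_0,l)\rvert = 1$ are precisely the neighbours of the antipode $a(j_0)$, and the two with $\lvert\CN(j_0,l)\rvert = 2$ are precisely those with $d(a(j_0),l) = 2$. The paper splits on $d(a(j_0),l)$ instead.

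The main difference is in how the one-common-neighbour subcase is dispatched. You plan to reduce the sum $\sum_p u_{ip}$ by common-neighbour count and then kill each of the three surviving $p$'s individually via Lemma~\ref{lem:choose_q_middle}; this works (I checked that suitable $q$'s exist), but it is the case you flag as the delicate one. The paper sidesteps this entirely with a single application of Lemma~\ref{lem:choose_q_right}: taking $q := a(j_0)$ with $s = d(j_0,a(j_0)) = 3$ and $t = d(a(j_0),l) = 1$, the sum over $p$ with $d(p,l) = 2$ and $d(p,a(j_0)) = 3$ collapses to $p = j_0$ alone, since $j_0$ is the unique vertex at distance $3$ from its antipode. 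This uses only the $m=1$ commutation already established, so the hypothesis of Lemma~\ref{lem:choose_q_right} is met. For the two-common-neighbour subcase, the paper applies Lemma~\ref{lem:choose_q_right} with $q$ the shared triangle vertex, reducing to a single extra term $u_{ij_0}u_{kl}u_{ip}$, and then observes that this $p$ satisfies $d(a(p),l) = 1$, so $u_{kl}$ and $u_{ip}$ already commute by the first subcase, giving $u_{ij_0}u_{kl}u_{ip} = u_{ij_0}u_{ip}u_{kl} = 0$; your direct use of Lemma~\ref{lem:choose_q_middle} here is comparable in length. In short: your argument is sound, but the paper's use of the antipode as $q$ in Lemma~\ref{lem:choose_q_right} turns your hardest subcase into the easiest one.
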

\begin{center}
        \begin{tikzpicture}[auto, node distance = 2cm and 1.5cm, on grid, vertex/.style={circle, draw=xkcdAquamarine!15, fill=xkcdAquamarine!15, very thick, minimum size=5mm}]
            \node[vertex] (A) at (1, 12) {1};
            \node[vertex] (B) at (12, 12)  {2};
            \node[vertex] (K) at (1, 1) {11}; 
            \node[vertex] (L) at (12, 1) {12}; 
            \node[vertex] (C) [below =of $(A)!0.5!(B)$] {3};
            \node[vertex] (D) [below left=of C] {4};
            \node[vertex] (E) [below right=of C] {5};
            \node[vertex] (F) [right =of $(A)!0.5!(K)$] {6};
            \node[vertex] (G) [left =of $(B)!0.5!(L)$] {7};
            \node[vertex] (H) [below =of D] {8};
            \node[vertex] (I) [below =of E] {9};
            \node[vertex] (J) [above =of $(K)!0.5!(L)$] {10};

            \draw (A) -- (B);
            \draw (A) -- (C);
            \draw (A) -- (K);
            \draw (A) -- (F);
            \draw (B) -- (C);
            \draw (B) -- (G);
            \draw (B) -- (L);
            \draw (C) -- (D);
            \draw (C) -- (E);
            \draw (D) -- (F);
            \draw (D) -- (H);
            \draw (D) -- (E);
            \draw (E) -- (G);
            \draw (E) -- (I);
            \draw (F) -- (H);
            \draw (F) -- (K);
            \draw (G) -- (I);
            \draw (G) -- (L); 
            \draw (H) -- (I);
            \draw (H) -- (J);
            \draw (I) -- (J);
            \draw (J) -- (K);
            \draw (J) -- (L); 
            \draw (K) -- (L); 
        \end{tikzpicture}
\end{center}
Before proving the statement, we will again first collect some properties of the graph in question.
\begin{propr}
        \noindent
        \begin{itemize}
                \item $\Gamma$ has diameter $3$, i.e. the largest distance between two vertices is $3$.
                \item $\Gamma$ is $4$-regular, i.e. every vertex has exactly $4$ neighbours.
                \item $\Gamma$ contains $8$ triangles.
                \item Each vertex of $\Gamma$ is contained in exactly two triangles and every pair of triangles overlaps in at most one vertex.
        \end{itemize}
\end{propr}

\begin{lem}
        \label{lem:cuboct:one_common_neighbour}
        Let $\Gamma = (V, E)$ be the cuboctahedral graph. Then all vertices that are adjacent have exactly one common neighbour.
\end{lem}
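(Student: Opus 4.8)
The plan is to translate the statement about common neighbours into a statement about triangles, and then to settle it by a double-counting argument using the properties of $\Gamma$ collected just above. Observe first that, since all edges are undirected, a vertex $p$ is a common neighbour of two adjacent vertices $i \sim k$ precisely when $\{i, k, p\}$ spans a triangle; hence $\lvert \CN(i, k)\rvert$ equals the number of triangles containing the edge $\{i, k\}$. So it suffices to prove that every edge of $\Gamma$ lies in exactly one triangle.

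To count, I would first note that $\Gamma$ is $4$-regular on $12$ vertices, so it has $12 \cdot 4 / 2 = 24$ edges, and that it contains exactly $8$ triangles. Counting incidences between edges and triangles in two ways, each triangle contributes its $3$ edges, giving $8 \cdot 3 = 24$ edge-triangle incidences in total.

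The key structural input is the property that every pair of distinct triangles of $\Gamma$ overlaps in at most one vertex: were some edge $\{i, k\}$ contained in two distinct triangles, those two triangles would share the two vertices $i$ and $k$, contradicting this property. Thus each edge lies in at most one triangle. Combining this with the incidence count, $24$ incidences distributed over $24$ edges with at most one per edge forces exactly one triangle through every edge. By the first paragraph this gives $\lvert \CN(i, k)\rvert = 1$ for all adjacent $i, k$, as claimed.

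This argument is essentially immediate once the properties are in hand, so there is no real obstacle; the only point to verify carefully is the reduction step that no edge meets two triangles, which follows directly from the overlap property. Alternatively, one could argue via the identification $\Gamma = \Line(\text{Cube})$: adjacent vertices of $\Gamma$ correspond to two cube-edges $e, f$ sharing a cube-vertex $v$, and since the cube is triangle-free their other endpoints are non-adjacent, so the only edge meeting both $e$ and $f$ is the third cube-edge at $v$, again yielding a single common neighbour.
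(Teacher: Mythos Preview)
Your proof is correct and takes a genuinely different route from the paper's argument. The paper works directly with the description $\Gamma = \Line(\text{Cube})$: two adjacent vertices of $\Gamma$ are cube-edges $d, e$ sharing a cube-vertex $v$; the third cube-edge at $v$ (using $3$-regularity of the cube) gives one common neighbour, and any further common neighbour would force a triangle in the cube, which has none. Your main argument is instead a double count using the listed structural properties: $24$ edges, $8$ triangles, hence $24$ edge--triangle incidences, together with the fact that distinct triangles overlap in at most one vertex so no edge lies in two triangles. Your approach has the advantage of being a clean formal consequence of the properties already stated just above the lemma, while the paper's approach is more self-contained in that it derives the claim from the definition of $\Gamma$ rather than from those properties. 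You also sketch the line-graph argument at the end, which is precisely the paper's proof.
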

\begin{proof}
       $\Gamma$ is the line graph of the cube $C = (V' , E')$, i.e. the vertices of $\Gamma$ are the edges of $C$ and two vertices of $\Gamma$ are connected, if they share a vertex as edges of $C$.
       Let now $d, e \in V = E'$ be adjacent vertices in $\Gamma$, i.e. there is a vertex $v \in V'$ of $C$, such that $v \in d$ and $v \in e$. Since the cube is $3$-regular, there is
       exactly one other edge, let us call it $f$, that contains $v$. Then $f$ is a common neighbour of $d$ and $e$ in $\Gamma$.

       It remains to be shown, that $d$ and $e$ have no other common neighbour. 
       For this, let $g \in E' \backslash \left\{ d, e, f \right\}$ be another edge of $C$ and let us assume that $g$ is a common neighbour of $d$ and $e$ in $\Gamma$. It holds that $v \notin g$ since
       $C$ is $3$-regular and the three edges containing $v$ are already $d$, $e$ and $f$. Let us therefore fix $g = (x, y)$ for $x, y \in V' \backslash \left\{ v \right\}$.
       Since we have $(g, d) \in E$ and $(g, e) \in E$ by assumption, there is a vertex in $g$ for both $d$ and $e$ that they share with $g$. Since $(d, e) \in E$ we have $d \neq e$ and thus the 
       vertices they share with $g$ are distinct. Thus, without loss of generality, we have $d = (v, x)$ and $e = (v, y)$. But then $\left\{ x, y, v \right\}$ form a triangle. Since the cube
       does not contain any triangles, we have a contradiction, and thus $g$ can not be a common neighbour of $d$ and $e$.
\end{proof}
\begin{lem}
        \label{lem:cuboct:dist_3_unique}
        For each vertex $v$ of $\Gamma$, there is exactly one vertex $w$ of $\Gamma$, such that $d(v, w) = 3$. We denote the map that maps $v$ to $w$ by $a$.
\end{lem}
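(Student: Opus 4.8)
The plan is to work with the identification $\Gamma = L(C)$ of $\Gamma$ as the line graph of the cube $C$, exactly as in the proof of Lemma~\ref{lem:cuboct:one_common_neighbour}, so that vertices of $\Gamma$ are edges of $C$ and adjacency means sharing a cube-vertex. The first and only nontrivial step is a general distance characterization in a line graph: for two distinct edges $e_1,e_2$ of $C$ I would show that $d_\Gamma(e_1,e_2)\le 2$ holds if and only if some endpoint of $e_1$ lies at $C$-distance at most $1$ from some endpoint of $e_2$. For the forward direction I split a path $e_1,f,e_2$ of length at most $2$ into the case where the connecting edge $f$ meets $e_1$ and $e_2$ in the same cube-vertex (then $e_1$ and $e_2$ already share that vertex) and the case where it meets them in two distinct vertices (then those are the two endpoints of $f$, hence adjacent in $C$); the converse is the same case analysis read backwards, using that an adjacent pair $x\sim y$ with $x\in e_1$ and $y\in e_2$ either forces $e_1\cap e_2\neq\varnothing$ or yields the connecting edge $f=\{x,y\}$.

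Granting this, I would fix a vertex $v$ of $\Gamma$, i.e.\ an edge $e_1=\{p,q\}$ of $C$, and combine the characterization with the fact that $\Gamma$ has diameter $3$: an edge $e_2$ satisfies $d_\Gamma(e_1,e_2)=3$ if and only if \emph{every} endpoint of $e_2$ is at $C$-distance at least $2$ from both $p$ and $q$, that is, lies outside the closed neighbourhood $N_C[p]\cup N_C[q]$. Here I would use that $C$ is $3$-regular and triangle-free, both already invoked in Lemma~\ref{lem:cuboct:one_common_neighbour}: then $N_C[p]\cap N_C[q]=\{p,q\}$, so $\lvert N_C[p]\cup N_C[q]\rvert = 4+4-2 = 6$, leaving exactly two cube-vertices $r,s$ outside it.

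Uniqueness then falls out immediately: any edge $e_2$ with $d_\Gamma(e_1,e_2)=3$ must have both of its endpoints among $\{r,s\}$, and since an edge has two distinct endpoints this forces $e_2=\{r,s\}$, so there is at most one vertex at distance $3$ from $v$. Existence is supplied by the diameter-$3$ property together with vertex-transitivity of $\Gamma$: some pair of vertices is at distance $3$, and transitivity transports this to every vertex, so each $v$ has at least one vertex at distance $3$. This makes the assignment $v\mapsto w$ well defined, which is exactly what is needed to introduce the map $a$; concretely $r$ and $s$ are the cube-antipodes of $q$ and $p$, so that $a$ is the automorphism of $\Gamma$ induced by the antipodal map of the cube. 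The only real care required is in the line-graph distance characterization and the bookkeeping of its $x=y$ versus $x\neq y$ cases; the remaining steps are routine counting.
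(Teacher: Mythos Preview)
Your argument is correct. The distance characterisation $d_\Gamma(e_1,e_2)\le 2 \Leftrightarrow \exists\,x\in e_1,\,y\in e_2:\ d_C(x,y)\le 1$ holds for the reasons you give, the counting $\lvert N_C[p]\cup N_C[q]\rvert = 6$ is valid since the cube is $3$-regular and triangle-free, and the existence part is handled cleanly by combining the diameter-$3$ property with vertex-transitivity.

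Your approach is genuinely different from the paper's. The paper simply exhibits the six antipodal pairs by drawing six copies of the cuboctahedral graph with the two distance-$3$ vertices highlighted; it is a proof by inspection with no structural argument at all. Your proof, by contrast, works entirely through the line-graph identification $\Gamma = L(C)$ and reduces the question to elementary counting in the cube. What this buys you is a conceptual explanation of \emph{why} the antipodal vertex is unique (it is the edge between the two cube-vertices antipodal to the endpoints of the given edge), and the argument would transfer to the line graph of any bipartite $3$-regular graph of girth at least $4$ once the diameter is known. The paper's verification is shorter to read but carries no such information; your route also dovetails nicely with the later use of the map $a$, since you identify it explicitly as the automorphism of $\Gamma$ induced by the antipodal involution of the cube.
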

\begin{proof}
        We give the proof by giving illustrations of the $6$ pairs of vertices in distance $3$.
\begin{center}
        \begin{tikzpicture}[node distance = 0.875cm, auto, on grid, semithick, vertex/.style={circle, draw=black, fill=black, text=black, inner sep = 0pt, minimum size = 4pt}, pair/.style={circle, draw=red, fill=red, text=black, inner sep = 0pt, minimum size = 4pt}]
            \node[pair] (A) at (1, 5) {};
            \node[vertex] (B) at (5, 5)  {};
            \node[vertex] (K) at (1, 1) {}; 
            \node[vertex] (L) at (5, 1) {}; 
            \node[vertex] (C) [below =of $(A)!0.5!(B)$] {};
            \node[vertex] (D) [below left=of C] {};
            \node[vertex] (E) [below right=of C] {};
            \node[vertex] (F) [right =of $(A)!0.5!(K)$] {};
            \node[vertex] (G) [left =of $(B)!0.5!(L)$] {};
            \node[vertex] (H) [below =of D] {};
            \node[pair] (I) [below =of E] {};
            \node[vertex] (J) [above =of $(K)!0.5!(L)$] {};

            \draw (A) -- (B);
            \draw (A) -- (C);
            \draw (A) -- (K);
            \draw (A) -- (F);
            \draw (B) -- (C);
            \draw (B) -- (G);
            \draw (B) -- (L);
            \draw (C) -- (D);
            \draw (C) -- (E);
            \draw (D) -- (F);
            \draw (D) -- (H);
            \draw (D) -- (E);
            \draw (E) -- (G);
            \draw (E) -- (I);
            \draw (F) -- (H);
            \draw (F) -- (K);
            \draw (G) -- (I);
            \draw (G) -- (L); 
            \draw (H) -- (I);
            \draw (H) -- (J);
            \draw (I) -- (J);
            \draw (J) -- (K);
            \draw (J) -- (L); 
            \draw (K) -- (L); 
        \end{tikzpicture}
        \begin{tikzpicture}[node distance = 0.875cm, auto, on grid, semithick, vertex/.style={circle, draw=black, fill=black, text=black, inner sep = 0pt, minimum size = 4pt}, pair/.style={circle, draw=red, fill=red, text=black, inner sep = 0pt, minimum size = 4pt}]
            \node[vertex] (A) at (1, 5) {};
            \node[pair] (B) at (5, 5)  {};
            \node[vertex] (K) at (1, 1) {}; 
            \node[vertex] (L) at (5, 1) {}; 
            \node[vertex] (C) [below =of $(A)!0.5!(B)$] {};
            \node[vertex] (D) [below left=of C] {};
            \node[vertex] (E) [below right=of C] {};
            \node[vertex] (F) [right =of $(A)!0.5!(K)$] {};
            \node[vertex] (G) [left =of $(B)!0.5!(L)$] {};
            \node[pair] (H) [below =of D] {};
            \node[vertex] (I) [below =of E] {};
            \node[vertex] (J) [above =of $(K)!0.5!(L)$] {};

            \draw (A) -- (B);
            \draw (A) -- (C);
            \draw (A) -- (K);
            \draw (A) -- (F);
            \draw (B) -- (C);
            \draw (B) -- (G);
            \draw (B) -- (L);
            \draw (C) -- (D);
            \draw (C) -- (E);
            \draw (D) -- (F);
            \draw (D) -- (H);
            \draw (D) -- (E);
            \draw (E) -- (G);
            \draw (E) -- (I);
            \draw (F) -- (H);
            \draw (F) -- (K);
            \draw (G) -- (I);
            \draw (G) -- (L); 
            \draw (H) -- (I);
            \draw (H) -- (J);
            \draw (I) -- (J);
            \draw (J) -- (K);
            \draw (J) -- (L); 
            \draw (K) -- (L); 
        \end{tikzpicture}
        \begin{tikzpicture}[node distance = 0.875cm, auto, on grid, semithick, vertex/.style={circle, draw=black, fill=black, text=black, inner sep = 0pt, minimum size = 4pt}, pair/.style={circle, draw=red, fill=red, text=black, inner sep = 0pt, minimum size = 4pt}]
            \node[vertex] (A) at (1, 5) {};
            \node[vertex] (B) at (5, 5)  {};
            \node[pair] (K) at (1, 1) {}; 
            \node[vertex] (L) at (5, 1) {}; 
            \node[vertex] (C) [below =of $(A)!0.5!(B)$] {};
            \node[vertex] (D) [below left=of C] {};
            \node[pair] (E) [below right=of C] {};
            \node[vertex] (F) [right =of $(A)!0.5!(K)$] {};
            \node[vertex] (G) [left =of $(B)!0.5!(L)$] {};
            \node[vertex] (H) [below =of D] {};
            \node[vertex] (I) [below =of E] {};
            \node[vertex] (J) [above =of $(K)!0.5!(L)$] {};

            \draw (A) -- (B);
            \draw (A) -- (C);
            \draw (A) -- (K);
            \draw (A) -- (F);
            \draw (B) -- (C);
            \draw (B) -- (G);
            \draw (B) -- (L);
            \draw (C) -- (D);
            \draw (C) -- (E);
            \draw (D) -- (F);
            \draw (D) -- (H);
            \draw (D) -- (E);
            \draw (E) -- (G);
            \draw (E) -- (I);
            \draw (F) -- (H);
            \draw (F) -- (K);
            \draw (G) -- (I);
            \draw (G) -- (L); 
            \draw (H) -- (I);
            \draw (H) -- (J);
            \draw (I) -- (J);
            \draw (J) -- (K);
            \draw (J) -- (L); 
            \draw (K) -- (L); 
        \end{tikzpicture}
        \\
        \begin{tikzpicture}[node distance = 0.875cm, auto, on grid, semithick, vertex/.style={circle, draw=black, fill=black, text=black, inner sep = 0pt, minimum size = 4pt}, pair/.style={circle, draw=red, fill=red, text=black, inner sep = 0pt, minimum size = 4pt}]
            \node[vertex] (A) at (1, 5) {};
            \node[vertex] (B) at (5, 5)  {};
            \node[vertex] (K) at (1, 1) {}; 
            \node[pair] (L) at (5, 1) {}; 
            \node[vertex] (C) [below =of $(A)!0.5!(B)$] {};
            \node[pair] (D) [below left=of C] {};
            \node[vertex] (E) [below right=of C] {};
            \node[vertex] (F) [right =of $(A)!0.5!(K)$] {};
            \node[vertex] (G) [left =of $(B)!0.5!(L)$] {};
            \node[vertex] (H) [below =of D] {};
            \node[vertex] (I) [below =of E] {};
            \node[vertex] (J) [above =of $(K)!0.5!(L)$] {};

            \draw (A) -- (B);
            \draw (A) -- (C);
            \draw (A) -- (K);
            \draw (A) -- (F);
            \draw (B) -- (C);
            \draw (B) -- (G);
            \draw (B) -- (L);
            \draw (C) -- (D);
            \draw (C) -- (E);
            \draw (D) -- (F);
            \draw (D) -- (H);
            \draw (D) -- (E);
            \draw (E) -- (G);
            \draw (E) -- (I);
            \draw (F) -- (H);
            \draw (F) -- (K);
            \draw (G) -- (I);
            \draw (G) -- (L); 
            \draw (H) -- (I);
            \draw (H) -- (J);
            \draw (I) -- (J);
            \draw (J) -- (K);
            \draw (J) -- (L); 
            \draw (K) -- (L); 
        \end{tikzpicture}
        \begin{tikzpicture}[node distance = 0.875cm, auto, on grid, semithick, vertex/.style={circle, draw=black, fill=black, text=black, inner sep = 0pt, minimum size = 4pt}, pair/.style={circle, draw=red, fill=red, text=black, inner sep = 0pt, minimum size = 4pt}]
            \node[vertex] (A) at (1, 5) {};
            \node[vertex] (B) at (5, 5)  {};
            \node[vertex] (K) at (1, 1) {}; 
            \node[vertex] (L) at (5, 1) {}; 
            \node[pair] (C) [below =of $(A)!0.5!(B)$] {};
            \node[vertex] (D) [below left=of C] {};
            \node[vertex] (E) [below right=of C] {};
            \node[vertex] (F) [right =of $(A)!0.5!(K)$] {};
            \node[vertex] (G) [left =of $(B)!0.5!(L)$] {};
            \node[vertex] (H) [below =of D] {};
            \node[vertex] (I) [below =of E] {};
            \node[pair] (J) [above =of $(K)!0.5!(L)$] {};

            \draw (A) -- (B);
            \draw (A) -- (C);
            \draw (A) -- (K);
            \draw (A) -- (F);
            \draw (B) -- (C);
            \draw (B) -- (G);
            \draw (B) -- (L);
            \draw (C) -- (D);
            \draw (C) -- (E);
            \draw (D) -- (F);
            \draw (D) -- (H);
            \draw (D) -- (E);
            \draw (E) -- (G);
            \draw (E) -- (I);
            \draw (F) -- (H);
            \draw (F) -- (K);
            \draw (G) -- (I);
            \draw (G) -- (L); 
            \draw (H) -- (I);
            \draw (H) -- (J);
            \draw (I) -- (J);
            \draw (J) -- (K);
            \draw (J) -- (L); 
            \draw (K) -- (L); 
        \end{tikzpicture}
        \begin{tikzpicture}[node distance = 0.875cm, auto, on grid, semithick, vertex/.style={circle, draw=black, fill=black, text=black, inner sep = 0pt, minimum size = 4pt}, pair/.style={circle, draw=red, fill=red, text=black, inner sep = 0pt, minimum size = 4pt}]
            \node[vertex] (A) at (1, 5) {};
            \node[vertex] (B) at (5, 5)  {};
            \node[vertex] (K) at (1, 1) {}; 
            \node[vertex] (L) at (5, 1) {}; 
            \node[vertex] (C) [below =of $(A)!0.5!(B)$] {};
            \node[vertex] (D) [below left=of C] {};
            \node[vertex] (E) [below right=of C] {};
            \node[pair] (F) [right =of $(A)!0.5!(K)$] {};
            \node[pair] (G) [left =of $(B)!0.5!(L)$] {};
            \node[vertex] (H) [below =of D] {};
            \node[vertex] (I) [below =of E] {};
            \node[vertex] (J) [above =of $(K)!0.5!(L)$] {};

            \draw (A) -- (B);
            \draw (A) -- (C);
            \draw (A) -- (K);
            \draw (A) -- (F);
            \draw (B) -- (C);
            \draw (B) -- (G);
            \draw (B) -- (L);
            \draw (C) -- (D);
            \draw (C) -- (E);
            \draw (D) -- (F);
            \draw (D) -- (H);
            \draw (D) -- (E);
            \draw (E) -- (G);
            \draw (E) -- (I);
            \draw (F) -- (H);
            \draw (F) -- (K);
            \draw (G) -- (I);
            \draw (G) -- (L); 
            \draw (H) -- (I);
            \draw (H) -- (J);
            \draw (I) -- (J);
            \draw (J) -- (K);
            \draw (J) -- (L); 
            \draw (K) -- (L); 
        \end{tikzpicture}
\end{center}
\end{proof}

\begin{lem}
        \label{lem:cuboct:dist2}
        Let $v, w \in V$ be two vertices of $\Gamma$. If $d(v, w) = 2$ then there exists exactly one vertex $x \in V$ that shares a triangle with both $v$ and $w$.
\end{lem}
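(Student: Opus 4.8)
The plan is to work entirely in the line-graph description of $\Gamma$ recalled in the proof of Lemma~\ref{lem:cuboct:one_common_neighbour}, where $\Gamma = \Line(C)$ for the cube $C = (V', E')$: the vertices of $\Gamma$ are the edges of $C$, and two of them are adjacent in $\Gamma$ precisely when they meet in a common vertex of $C$. Since $C$ is $3$-regular and triangle-free, the triangles of $\Gamma$ are exactly the stars $T_p := \{\,e \in E' : p \in e\,\}$ indexed by the cube-vertices $p \in V'$; consequently two vertices of $\Gamma$ share a triangle if and only if, viewed as edges of $C$, they share an endpoint, i.e.\ if and only if they are adjacent in $\Gamma$. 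Under this dictionary the claim becomes purely combinatorial: if $v = \{a,b\}$ and $w = \{c,d\}$ are edges of $C$ with $d(v,w) = 2$, then there is exactly one edge $x$ of $C$ meeting both $v$ and $w$.

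First I would unwind what $d(v,w) = 2$ says about $C$. Non-adjacency of $v$ and $w$ in $\Gamma$ means they are vertex-disjoint edges of $C$, so $a,b,c,d$ are four distinct cube-vertices, and distance exactly $2$ guarantees at least one cube-edge joining $\{a,b\}$ to $\{c,d\}$, which is the existence of $x$. Any such $x$ uses one endpoint of $v$ and one of $w$, hence lies in the four-element set $\{\{a,c\},\{a,d\},\{b,c\},\{b,d\}\}$. Now I would invoke bipartiteness of the cube: colouring $V' = \{0,1\}^3$ by coordinate parity, the endpoints $a,b$ of an edge have opposite colours, and likewise $c,d$; a short parity count then shows that exactly two of the four candidate pairs join opposite colours and can possibly be cube-edges, the other two being forced non-edges. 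This already reduces the connectors to at most two.

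The uniqueness is the step I expect to be the main obstacle, and it is genuinely where the geometry of the cuboctahedron enters. The danger is the square faces: if $v$ and $w$ are the two diagonal corners of a square face of $C$, then both surviving candidates are genuine cube-edges and one obtains \emph{two} shared-triangle vertices rather than one, so the essential content of the proof is to control this coplanar configuration. I would classify the mutual positions of two vertex-disjoint edges of the cube under $\Aut(C)$ — coplanar (spanning a common face) versus skew — and determine, for each, both the $\Gamma$-distance and the number of connecting edges; the skew position gives $d(v,w) = 2$ with a unique connector, while the coplanar position is the delicate one. Because $\Gamma$ has only $12$ vertices, I would ultimately pin this down by fixing $v$ via vertex-transitivity and inspecting the vertices $w$ at distance $2$ from $v$, computing $\lvert \CN(v,w) \rvert$ in each case; this finite check is what decides the uniqueness, and getting the coplanar square-face case right is the heart of the argument.
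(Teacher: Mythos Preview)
Your line-graph analysis is more careful than the paper's own argument, and your instinct about the coplanar configuration is exactly right --- so right, in fact, that the lemma as stated is \emph{false}. Take $v=1$ and $w=4$ in the paper's labelling: $d(1,4)=2$, yet both $3$ and $6$ are common neighbours, and each shares a triangle with $1$ (via $\{1,2,3\}$ resp.\ $\{1,6,11\}$) and with $4$ (via $\{3,4,5\}$ resp.\ $\{4,6,8\}$). In your cube description this is precisely two opposite edges of a common face, where both colour-compatible connectors are genuine cube-edges; the finite check you propose would return two such $x$, not one. (Your phrase ``diagonal corners of a square face'' is slightly off: $v$ and $w$ are edges, and the bad configuration is \emph{parallel opposite sides} of a face, not diagonals.) The paper's one-sentence uniqueness justification --- that triangles overlap in at most one vertex and each vertex lies in exactly two triangles --- does not exclude this configuration either.

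What rescues Theorem~\ref{thm:cuboct} is that Step~2.2 of its proof only needs the \emph{existence} of such an $x$ (called $q$ there): for any common neighbour $q$ of $j$ and $l$, two of $q$'s four neighbours lie in the triangle at $q$ containing $l$, while $j$ and one further vertex $p$ form the other triangle at $q$, and the subsequent argument goes through for either choice of $q$. So the lemma should simply be weakened to existence, which both the paper's first sentence and your shortest-path/connector argument establish without difficulty. Your plan would have proved that correct version; the only gap is that you expected the finite check to confirm uniqueness, when in fact it refutes it.
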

\begin{proof}
        Consider a shortest path from $v$ to $w$ and name the vertex that is passed by $x$.
        Since all edges that contain vertex $v$ only lead to vertices that share a triangle with $v$, $x$ shares a triangle with $v$. By the same argument, $x$ and $w$ share a triangle.
        The uniqueness of $x$ is due to the fact that all triangles only overlap in exactly one vertex and that each vertex lies in exactly two triangles.
\end{proof}
\begin{lem}
        \label{lem:cuboct:dist2_dist3}
        Let $v \in V$ be a vertex in $\Gamma$. There are exactly two vertices that have distance $2$ to both $v$ and $a(v)$. Moreover, if $w$ is one of these vertices, the other one is $a(w)$.
\end{lem}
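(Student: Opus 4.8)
The plan is to proceed in two stages: first count the vertices at distance $2$ from both $v$ and $a(v)$, obtaining exactly two, and then identify these two as an antipodal pair. Throughout I will use that $a$ is an involution: since $d(v, a(v)) = 3$ and $d$ is symmetric, $v$ is a vertex at distance $3$ from $a(v)$, and by the uniqueness in Lemma~\ref{lem:cuboct:dist_3_unique} it is the only one, so $a(a(v)) = v$.

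For the count I would analyse the distance partition of $V$ around $v$. As $\Gamma$ is $4$-regular, $v$ has $4$ neighbours; by Lemma~\ref{lem:cuboct:dist_3_unique} the vertex $a(v)$ is the unique vertex at distance $3$; and the diameter is $3$. Since $|V| = 12$, exactly $12 - 1 - 4 - 1 = 6$ vertices lie at distance $2$ from $v$. The four neighbours of $a(v)$ all lie in this shell: a neighbour $u$ of $a(v)$ has $d(v,u) \ge d(v, a(v)) - 1 = 2$ by the triangle inequality, while $u \ne a(v)$ forces $d(v,u) \ne 3$, so $d(v,u) = 2$; moreover these four are distinct from the neighbours of $v$, as a common neighbour of $v$ and $a(v)$ would give $d(v, a(v)) \le 2$. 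Hence the remaining $6 - 4 = 2$ distance-$2$ vertices $w_1, w_2$ are not adjacent to $a(v)$. Each $w_i$ is different from $a(v)$, from its neighbours, and from $v = a(a(v))$, so its distance to $a(v)$ is not $0$, $1$, or $3$, and therefore equals $2$. This yields exactly two vertices at distance $2$ from both $v$ and $a(v)$.

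To show $w_1$ and $w_2$ are antipodal I would use that $a$ is a graph automorphism of $\Gamma$. This can be established through the line-graph picture already used in Lemma~\ref{lem:cuboct:one_common_neighbour}: writing $\Gamma = L(C)$ for the cube $C$, the central symmetry $x \mapsto \mathbf 1 - x$ is an automorphism of $C$ and hence induces an automorphism $\hat\sigma$ of $\Gamma$ sending an edge $\{x,y\}$ to $\{\mathbf 1 - x, \mathbf 1 - y\}$. A direct check shows $\{x,y\}$ and its image share no cube-vertex and have no common neighbour in $\Gamma$, so they are at distance $3$; by uniqueness of the distance-$3$ vertex, $\hat\sigma = a$, so $a \in G_{aut}(\Gamma)$.

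With $a$ an automorphism the conclusion follows quickly: applying the distance-preserving map $a$ to $w_1$ and using $a(a(v)) = v$ gives $d(a(v), a(w_1)) = d(v, w_1) = 2$ and $d(v, a(w_1)) = d(a(v), w_1) = 2$, so $a(w_1)$ is again at distance $2$ from both $v$ and $a(v)$. Thus $a(w_1) \in \{w_1, w_2\}$, and since $d(w_1, a(w_1)) = 3 \neq 0$ we get $a(w_1) = w_2$. I expect this last stage to be the crux: the counting is routine, whereas ruling out that $w_1$ and $w_2$ sit at distance $1$ or $2$ from each other seems to need the global antipodal symmetry, so the real work lies in justifying that $a$ is an automorphism.
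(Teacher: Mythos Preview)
Your argument is correct. The counting in your first stage matches the paper's proof almost exactly: the paper also observes that $N(v)$ and $N(a(v))$ are disjoint $4$-sets, so removing $v$, $a(v)$, and these eight neighbours from the twelve vertices leaves precisely two vertices $w_1, w_2$, which are then the ones at distance $2$ from both $v$ and $a(v)$.

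Where you diverge is in the second claim. The paper simply writes ``one can check by hand that $d(w_1,w_2)=3$ for all choices of $v$'' and leaves it as a finite verification across the six antipodal pairs. You instead prove that $a$ is a graph automorphism---via the central symmetry of the cube pushed to its line graph---and then argue that $a$ must permute the two-element set $\{w_1,w_2\}$ without fixed points. This is a genuinely different and more conceptual route: it explains \emph{why} the two exceptional vertices form an antipodal pair rather than merely confirming it, and it would scale to analogous situations where an enumeration would be tedious. The paper's version has the virtue of brevity, since the six cases are already displayed pictorially in the proof of Lemma~\ref{lem:cuboct:dist_3_unique}.
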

\begin{proof}
       We denote by $N(v)$ the neighbourhood of $v$, i.e. all vertices that are adjacent to $v$. 
       Note, that $\#(N(v) \cup N(a(v))) = 8$, i.e. $8$ vertices are either adjacent to $v$ or to $a(v)$. This is due to the fact that $\Gamma$ has degree $4$ and thus all vertices have $4$ neighbours and moreover
       $N(v) \cap N(a(v)) = \left\{  \right\}$. This holds, since if there was a vertex adjacent to both $v$ and $a(v)$, this would yield a path from $v$ to $a(v)$ of length $2$, which is a contradiction to $d(v, a(v)) = 3$.
       We thus have 
       \[
        V \backslash (N(v) \cup N(a(v)) \cup\left\{ v, a(v) \right\}) = \left\{ w, x \right\}
       \] 
       for some vertices $w, x$. One can check by hand, that $d(w, x) = 3$ for all choices of $v$.
\end{proof}

We now come to the proof of Theorem~\ref{thm:cuboct}.
\begin{proof}{of Theorem~\ref{thm:cuboct}}
        Let $(u_{ij})_{1 \leq i, j \leq n}$ be the generators of $C(\QBan(\Gamma))$.
        We consider again vertices $i, j, k, l$ in increasing order of $m = d(i, k) = d(j, l)$. Since $\Gamma$ has diameter $3$, $m$ is maximally $3$.

        \textbf{\emph{Step 1:}} $m = 1$. Since we know by Lemma~\ref{lem:cuboct:one_common_neighbour} that in $\Gamma$ all adjacent vertices have exactly one common neighbour, we can 
        conclude by Lemma~\ref{lem:one_common_neighbour} that $\QBan(\Gamma) = \QBic(\Gamma)$ and therefore $u_{ij} u_{kl} = u_{kl} u_{ij}$.

        \textbf{\emph{Step 2:}} $m = 2$. We consider two cases:

         \emph{Step 2.1:} $d(a(j), l) = 1$. 
         If the unique vertex at distance $3$ from $j$ is adjacent to $l$, we can apply Lemma~\ref{lem:choose_q_right} by putting $q:= a(j)$ and get
         \[
         u_{ij} u_{kl} = u_{ij} u_{kl} \sum_{\substack{p; d(l, p) = 2\\d(a(j), p) = 3}} u_{ip} = u_{ij} u_{kl} u_{ij}
         \] 
         since $j$ is the only vertex at distance $3$ to $a(j)$. We can apply the lemma, since we have $\QBan(\Gamma) = \QBic(\Gamma)$ by \emph{Step 1}.
         We thus can apply Lemma~\ref{lem:adj_commute} to see that $u_{ij}$ and $u_{kl}$ commute.

         \emph{Step 2.2:} $d(a(j), l) = 2$. 
         We apply again Lemma~\ref{lem:choose_q_right} and this time we put $q$ as the unique vertex that shares a triangle with both $j$ and $l$, which exists by Lemma~\ref{lem:cuboct:dist2}.
         Let us give a sketch of the situation.
         \begin{center}
            \tiny
                \begin{tikzpicture}[node distance = 1cm, auto, on grid, semithick, state/.style={circle, draw=black, fill=black, text=black, inner sep = 0pt, minimum size = 3pt}]
                    \node[state,label=above: $l$] (1) {};
                    \node[state] (2) [below right=of 1] {};
                    \node[state, label=above: $q$] (3) [above right=of 2] {};
                    \node[state, label=below: $p$] (4) [below right=of 3] {};
                    \node[state, label=above: $j$] (5) [above right=of 4] {};
                    \draw (1) -- (2);
                    \draw (1) -- (3);
                    \draw (3) -- (2);
                    \draw (3) -- (4);
                    \draw (4) -- (5);
                    \draw (5) -- (3);
                \end{tikzpicture}
            \end{center} 

         This yields
         \[
         u_{ij} u_{kl} = u_{ij} u_{kl} \sum_{\substack{p; d(p, l) = 2\\ (p, q) \in E}} u_{ip}
         .\] 
         The vertex $q$ has $4$ neighbours, two of which are in the same triangle as $l$ and thus have distance smaller that $2$ to $l$. The remaining vertices are $j$ and $p$ and we thus get
         \[
         u_{ij} u_{kl} = u_{ij} u_{kl} (u_{ij} +  u_{ip})
         .\] 
         We now want to show, that $d(a(p), l) = 1$. For this, first note, that by Lemma~\ref{lem:cuboct:dist2_dist3} there are exactly $2$ vertices that have distance $2$ to both $j$ and $a(j)$ and
         since $l$ is one of those, the other one is $a(l)$. Moreover, this yields that the only vertices at distance $2$ to both $l$ and $a(l)$ are in turn $j$ and $a(j)$. Therefore we get
         \begin{align}
                 \label{eqn_1}
                 x \notin \left\{ j, a(j), l, a(l) \right\} \text{ it holds that } d(x, l) = 1\text{ or }d(x, a(l)) = 1.
         \end{align}
         Since we have that $d(p, j) = 1$, and thus $p \neq j$, and $d(p, l) = 2$ we thus get $d(p, a(l)) = 1$. But then we get $d(a(p), a(l)) > 1$, since otherwise there would be a path from $p$ to $a(p)$
         of length $2$ via $a(l)$, which can not be. But then we get, again applying~\ref{eqn_1}, that $d(a(p), l) = 1$.
         Now, we can use our results from \emph{Step 2.1} to see that $u_{kl}$ and $u_{ip}$ commute. This yields
         \[
         u_{ij} u_{kl} u_{ip} = u_{ij} u_{ip} u_{kl} = 0
         \] 
         since $j \neq p$. Therefore 
         \[
         u_{ij} u_{kl} = u_{ij} u_{kl} u_{ij}
         \] 
         and again by Lemma~\ref{lem:adj_commute}, $u_{ij}$ and $u_{kl}$ commute.
         
         \textbf{\emph{Step 3:}} $m = 3$. If $d(i, k) = d(j, l) = 3$ holds, we get with Lemma~\ref{lem:distances} that
         \[
                 u_{ij} u_{kl} = u_{ij} u_{kl} \sum_{p \in V} u_{ip} = u_{ij} u_{kl} \sum_{p; d(l, p) = 3} u_{ip}
         .\] 
         But since the only vertex at distance $3$ from $l$ is $j$ this yields
         \[
                 u_{ij} u_{kl} = u_{ij} u_{kl} u_{ij}
         \] 
         and by Lemma~\ref{lem:adj_commute}, $u_{ij}$ and $u_{kl}$ commute.
\end{proof}
 
\counterwithin{thm}{section}
\section{Computation of quantum automorphism groups}
\label{section:computation_of_quantum_automorphism_groups}
\label{section:computation_qaut_groups}
In this section, we give the computation of the quantum automorphism groups of $C_{12}(4, 5)$ and $C_{12}(3^+, 6)$.
For these computations, we used in some places a noncommutative Groebner basis implementation in the 
\texttt{OSCAR}~\cite{oscar} package in the \texttt{julia} programming language~\cite{julia}.

\begin{prop}
    The quantum automorphism group of $C_{12}(4, 5)$ is $H_2^+ \times S_3$.
\end{prop}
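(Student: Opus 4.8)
The plan is to extract the combinatorial skeleton of $\Gamma := C_{12}(4,5)$ and then establish the two inclusions $H_2^+ \times S_3 \subseteq \QBan(\Gamma)$ and $\QBan(\Gamma) \subseteq H_2^+ \times S_3$ separately. First I would record the structural features of $\Gamma$. Partitioning the vertices by residue modulo $3$ into $A = \{1,4,7,10\}$, $B = \{2,5,8,11\}$ and $C = \{3,6,9,12\}$, one checks that every edge (at distance $1$, $4$ or $5$) joins two different classes, so each class is an independent set of size $4$ and $\Gamma$ is tripartite, $6$-regular and of diameter $2$. A short computation of common neighbours gives a dichotomy: two vertices at distance $2$ in the same class have exactly $4$ common neighbours, whereas two vertices at distance $2$ in different classes have exactly $2$. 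Moreover each vertex has a unique vertex at distance $2$ in each of the two other classes, which I will call its \emph{partners}.

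For the inclusion $\QBan(\Gamma) \subseteq H_2^+ \times S_3$ I would proceed in three stages. First, let $\Gamma'$ be the auxiliary graph in which $i \sim j$ iff $d_\Gamma(i,j) = 2$ and $\lvert \CN(i,j)\rvert = 4$; by the dichotomy above $\Gamma'$ is exactly $3K_4$ (each class a $K_4$, no edges between classes). Since the adjacency matrix of $\Gamma'$ is cut out from $A_\Gamma$ by conditions on distance and on the number of common neighbours, Lemma~\ref{lem:distances} and Lemma~\ref{lem:different_numbers_common_neighbours} show that the fundamental magic unitary $u$ of $\QBan(\Gamma)$ also commutes with the adjacency matrix of $\Gamma'$, so $\QBan(\Gamma) \subseteq \QBan(3K_4) = S_4^+ \wr_* S_3$; in particular $u$ respects the partition $\{A,B,C\}$, with an $S_3$-part permuting the three classes and a $4 \times 4$ magic sub-unitary within each class. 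Second, I would cut the within-class $S_4^+$ down to $H_2^+$: inside a class the \emph{antipodal} pairs ($\{1,7\}$ and $\{4,10\}$ in $A$, and likewise in $B$ and $C$) are detected graph-theoretically, as two vertices of a class are antipodal precisely when their partners in another class are adjacent. As this refined perfect matching is preserved by $u$, each within-class sub-unitary is forced to be a magic unitary of $C_4$, hence to generate a quotient of $H_2^+ = \QBan(C_4)$. Third, the partner bijections between classes identify the three within-class actions, collapsing the free wreath product to a single diagonal $H_2^+$ commuting with the $S_3$ permuting the classes, which yields $\QBan(\Gamma) \subseteq H_2^+ \times S_3$.

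For the reverse inclusion I would exhibit an explicit coaction. Writing $v$ for the $4 \times 4$ magic unitary generating $H_2^+ = \QBan(C_4)$ on the $C_4$-structure carried by each class, and $t$ for the $3 \times 3$ permutation matrix of $S_3$ acting on $\{A,B,C\}$, I would form the $12 \times 12$ Kronecker product $w = v \otimes t$, after aligning the $C_4$-labellings of the three classes through the partner maps, and verify directly that $w$ commutes with $A_\Gamma$. This is consistent with the two disjoint automorphisms already exhibited, which are exactly the two diagonal reflections of the aligned $C_4$'s. The universal property then gives a surjection $C(\QBan(\Gamma)) \twoheadrightarrow C(H_2^+) \otimes C(S_3) = C(H_2^+ \times S_3)$, that is $H_2^+ \times S_3 \subseteq \QBan(\Gamma)$. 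Combining the two inclusions proves the equality.

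The main obstacle is the third stage of the upper bound, the synchronisation turning $S_4^+ \wr_* S_3$ into the \emph{direct} product $H_2^+ \times S_3$: one must show the inter-class edges force the within-class actions to coincide and to commute with the block permutation, and that no further hidden relations survive. This is precisely where I expect to need the noncommutative Groebner basis computation in \texttt{OSCAR} to confirm that the resulting presentation is exactly $C(H_2^+) \otimes C(S_3)$, neither larger nor carrying extra relations. A secondary technical point is checking in the lower bound that $w$ genuinely commutes with $A_\Gamma$ even though $\Gamma$ is not literally a Cartesian or tensor product of $C_4$ and $K_3$, which is why the partner-alignment of the three $C_4$-labellings must be chosen carefully.
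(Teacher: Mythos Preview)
Your approach is essentially the same as the paper's: both identify the vertex set with $\{1,2,3,4\}\times\{1,2,3\}$ via the residue-mod-$3$ partition, construct explicit $*$-homomorphisms in both directions, and defer the hard direction to a noncommutative Groebner basis computation in \texttt{OSCAR}. The paper is more direct---it simply writes down $\hat u_{(i_1,i_2)(j_1,j_2)} = v_{i_1 j_1}\,w_{i_2 j_2}$ and, in the other direction, $\hat v_{ij} = \sum_b u_{(i,1)(j,b)}$ and $\hat w_{kl} = \sum_a u_{(1,k)(a,l)}$, and verifies the relevant relations---whereas you take the more conceptual route through $\QBan(3K_4)=S_4^+\wr_* S_3$ first, but the content is the same.

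One concrete error in your second stage: the characterisation of antipodal pairs via partners is wrong as stated. The partner of $(a,b)$ in another class $c$ is $(\bar a,c)$ with $\bar a$ the $C_4$-antipode of $a$; hence the partners of two vertices from the same class always lie in a common class and are therefore \emph{never} adjacent in $\Gamma$, irrespective of whether the original pair is antipodal. More generally, every same-class pair has exactly four common neighbours, two in each of the other classes, so no simple local invariant isolates the $C_4$ structure within a class. This is precisely what the paper's Groebner basis computation handles: the key identity it extracts is $u_{(i,a)(k,b)} = u_{(i',a)(k',b)}$ whenever $i\not\sim_{C_4} i'$ and $k\not\sim_{C_4} k'$, and this single relation simultaneously cuts each within-class $S_4^+$ down to $H_2^+$ \emph{and} synchronises the three copies. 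In other words, your second and third stages collapse into one, and it is exactly the computational step you already anticipated needing \texttt{OSCAR} for.
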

\begin{proof}
    For the purpose of this proof, we will call $C_{12}(4, 5) =: \Gamma$.
    We first give an illustration of the graph that highlights the action of the automorphism group on its vertices, which will help
    in constructing the isomorphism between $G_{aut}^+(\Gamma)$ and $H_2^+ \times S_3$.
    We also label the vertices of this graph differently than before to make it easier to talk about the construction of the 
    $*$-isomorphism between the two quantum groups. Note that the colors in this illustration are only to make it easier to 
    see the action of the automorphism group on $\Gamma$ and are not an actual graph coloring.
    \begin{center}
        \begin{tikzpicture}[auto, node distance = 2cm and 2.5cm, on grid, 
            vertexRed/.style={circle, draw=xkcdRed!45, fill=xkcdRed!45, very thick, minimum size=9mm},
            vertexGreen/.style={circle, draw=xkcdForestGreen!45, fill=xkcdForestGreen!45, very thick, minimum size=9mm},
            vertexBlue/.style={circle, draw=xkcdRoyalBlue!45, fill=xkcdRoyalBlue!45, very thick, minimum size=9mm}
            ]
            \node[vertexRed] (1) at (75 + 1*30:5cm) {$(1, 1)$};
            \node[vertexGreen] (2) at (75 + 2*30:5cm) {$(4, 2)$};
            \node[vertexBlue] (3) at (75 + 3*30:5cm) {$(3, 3)$};
            \node[vertexRed] (4) at (75 + 4*30:5cm) {$(2, 1)$};
            \node[vertexGreen] (5) at (75 + 5*30:5cm) {$(1, 2)$};
            \node[vertexBlue] (6) at (75 + 6*30:5cm) {$(4, 3)$};
            \node[vertexRed] (7) at (75 + 7*30:5cm) {$(3, 1)$};
            \node[vertexGreen] (8) at (75 + 8*30:5cm) {$(2, 2)$};
            \node[vertexBlue] (9) at (75 + 9*30:5cm) {$(1, 3)$};
            \node[vertexRed] (10) at (75 + 10*30:5cm) {$(4, 1)$};
            \node[vertexGreen] (11) at (75 + 11*30:5cm) {$(3, 2)$};
            \node[vertexBlue] (12) at (75 + 12*30:5cm) {$(2, 3)$};

            \foreach[evaluate ={
                \l = int(\j + 1);
                \a = int(\j + 5);
                \b = int(\j - 7);
                \c = int(\j + 4);
                \d = int(\j - 8);
            }] \j in {1,...,12}{
            \ifthenelse {\j  < 12}{\draw (\j) -- (\l);}{\draw (\j) -- (1);};
            \ifthenelse {\j  < 8}{\draw (\j) -- (\a);}{\draw (\j) -- (\b)};
            \ifthenelse {\j  < 9 }{\draw (\j) -- (\c);}{\draw (\j) -- (\d);};
        }
        \end{tikzpicture}
    \end{center}

    The universal $C^*-$algebra defining $H_2^+ \times S_3$ is as follows:
    \begin{align*}
        C(H_2^+ \times S_3) = C^*(& v_{i j}, w_{kl}, 1 \le i, j \le 4, 1 \le k,l \le 3\\
                                  & \sum_{a = 1}^4 v_{i a} = \sum_{a=1}^4 v_{aj} = 1 \; \forall 1 \le i, j \le 4 \\
                                  & \sum_{b = 1}^3 w_{k b} = \sum_{b=1}^3 w_{bl} = 1 \; \forall 1 \le k, l \le 3 \\
                                  & v_{ij}^2 = v_{ij} = v_{ij}^*\\
                                  & w_{ij}^2  = w_{ij} = w_{ij}^*\\
    & A_{C_4} v = v A_{C_4}\\
                                  & v_{ij} w_{kl} = w_{kl} v_{ij}
        ) 
    .\end{align*}
    Here, $A_{C_4}$ is the adjacency matrix of the $4$-cycle  $C_4$ and $v$ is the matrix with entries $v_{ij}$ for $1 \le i, j \le 4$.
    We use the fact that $G_{aut}^+(C_4) = H_2^+$, which was shown in~\cite{schmidt_weber_2018}.

    We now claim:
    \begin{enumerate}
        \item \label{c12_4_5:claim1} The matrix $\hat{u}$ with entries from $C(H_2^+ \times S_3)$ defined by
            \[
                \hat{u}_{(i_1, i_2)(j_1, j_2)} = v_{i_1 j_1} w_{i_2 j_2}
            \] 
            satisfies the relations of $C(G_{aut}^+(\Gamma))$.
        \item \label{c12_4_5:claim2} The elements $\hat{v}_{ij}$ and $\hat{w}_{kl}$ in $C(G_{aut}^+(\Gamma))$ defined by
            \begin{align*}
                \hat{v}_{ij} &= u_{(i, 1)(j, 1)} + u_{(i, 1)(j, 2)} + u_{(i, 1)(j, 3)}\\
                \hat{w}_{kl} &= u_{(1, k)(1, l)} + u_{(1, k)(2, l)} + u_{(1, k)(3, l)} + u_{(1, k)(4, l)}
            \end{align*}
            satisfy the relations of $C(H_2^+ \times S_3)$.
    \end{enumerate}
    Let us first prove \ref{c12_4_5:claim1}:

    For this, observe the following: We have
    \begin{align*}
        &(i_1, i_2) \sim (k_1, k_2)\text{ in } \Gamma \\
        \Longleftrightarrow \;& i_2 \neq k_2 \land (i_1 = k_1 \lor i_1 \sim_{C_4} k_1)
    \end{align*}
    Let now $(i_1, i_2)\sim (k_1, k_2)$ and $(j_1, j_2) \not \sim (l_1, l_2)$, i.e. for $(i_1, i_2)$ and $(k_1, k_2)$ the above 
    observation holds, while for $(j_1, j_2)$ and $(l_1, l_2)$ we have:
    \begin{align*}
        j_2 = l_2 \lor (j_1 \neq l_1 \land j_1 \not \sim_{C_4} l_1)
    .\end{align*}
    In order to show that indeed $\hat{u}_{(i_1,i_2)(j_1, j_2)} \hat{u}_{(k_1, k_2)(l_1, l_2)} = 0$, we make the following case distinction:
    \begin{itemize}
        \item Case $j_2 = l_2$:\\ Since we already know $i_2 \neq k_2$, we get $w_{i_2 j_2} w_{k_2 j_2} = 0$ and thus\\
            $\hat{u}_{(i_1,i_2)(j_1, j_2)} \hat{u}_{(k_1, k_2)(l_1, l_2)} = v_{i_1 j_1} w_{i_2 j_2} v_{k_1 l_1} w_{k_2 j_2}
            =v_{i_1 j_1} w_{i_2 j_2}w_{k_2 j_2} v_{k_1 l_1} = 0$
        \item Case $j_2 \neq l_2$:
            \begin{itemize}
                \item Case $i_1 = k_1$:\\ We get $v_{i_1 j_1} v_{i_1 l_1} = 0$ and thus 
                    $\hat{u}_{(i_1,i_2)(j_1, j_2)} \hat{u}_{(k_1, k_2)(l_1, l_2)} = 0$
                    \item Case $i_1 \sim_{C_4} k_1$ :\\ Since $j_1 \not \sim_{C_4} l_1$, we get $v_{i_1 j_1} v_{k_1 l_1} = 0$ 
                    and thus $\hat{u}_{(i_1,i_2)(j_1, j_2)} \hat{u}_{(k_1, k_2)(l_1, l_2)} = 0$
            \end{itemize}
    \end{itemize}
    It is moreover easy to see, that the relations of $H_2^+ \times S_3$ yield that all rows and columns of $\hat{u}$ 
    sum up to $1$ and that the entries of  $\hat{u}$ are projections, as they are just products of commuting projections.
    From the universal property of $C(G_{aut}^+(\Gamma))$ we thus get a $*$-homomorphism
     \begin{align*}
         \phi: C(G_{aut}^+(\Gamma)) &\to C(H_2^+ \times S_3)\\
         u_{(i_1, i_2)(j_1, j_2)} &\mapsto \hat{u}_{(i_1, i_2)(j_1, j_2)} = v_{i_1 j_1}w_{i_2j_2}
    .\end{align*} 
    It is also easy to see that all the generators of $C(H_2^+ \times S_3)$ are in the image of $\phi$:
    for example, $v_{i_1j_1}$ is the image of 
    \[
        u_{(i_1, 1)(j_1, 1)} + u_{(i_1, 1)(j_1, 2)} + u_{(i_1, 1)(j_1, 3)}
    \]
    since
    \[
        w_{11}+ w_{12} + w_{13} = 1
    ,\] 
    and a similar argument holds for $w_{i_2 j_2}$.
    Therefore, $\phi$ is surjective.

    We will next prove \ref{c12_4_5:claim2}:

    We first note that using the Groebner basis for the ideal generated by the relations of $C(G_{aut}^+(\Gamma))$, we see that
    \begin{align}
        \label{c12_4_5:gb_result}
        u_{(i, a)(k,b)} = u_{(i', a)(k', b)} \text{ for  }i \not\sim_{C_4} i' \text{ and } k \not\sim_{C_4} k'
    .\end{align} 
    Let $i \not \sim_{C_4} k$ and $j \sim_{C_4} l$ be given. We then have $(i, 1) \not \sim (k, 1)$ but
    $(j, a) \sim (l, b)$ whenever $a \neq b$. Therefore, $u_{(i, 1)(j, a)} u_{(k, 1)(l, b)} = 0$ whenever $a \neq b$ and thus
    in the product
    \begin{align*}
        \hat{v}_{ij}\hat{v}_{kl} = (u_{(i, 1)(j, 1)} + u_{(i, 1)(j, 2)} + u_{(i, 1)(j, 3)})(u_{(k, 1)(l, 1)} + u_{(k, 1)(l, 2)} + u_{(k, 1)(l, 3)})
    \end{align*}
    we only keep those products, where $a = b$ and are left with
    \begin{align*}
        \hat{v}_{ij}\hat{v}_{kl} = u_{(i, 1)(j, 1)}u_{(k, 1)(l, 1)} + u_{(i, 1)(j, 2)} u_{(k, 1)(l, 2)} + u_{(i, 1)(j, 3)} u_{(k, 1)(l, 3)}
    .\end{align*}
    By \ref{c12_4_5:gb_result}, we know that $u_{(k, 1)(l, a)} = u_{(i, 1)(l', a)}$, where $l' \not \sim_{C_4} l$. Since $j \sim_{C_4} l$, we
    know in particular that $l' \neq j$ and get $u_{(i, 1)(j, a)} u_{(i, 1)(l', a)} = 0$ and thus
    \[
        \hat{v}_{ij}\hat{v}_{kl} = 0
    .\] 
    Let now $i \sim_{C_4} k$ and $j \not \sim_{C_4} l$ be given. Using a similar trick to above, we see that 
    $u_{(i, 1)(j, a)} u_{(k, 1)(l, a)} = 0$, only this time using the fact that $u_{(k, 1)(l, a)} = u_{(k', 1)(j, a)}$, and get 
    that all summands of that form in $\hat{v}_{ij}\hat{v}_{kl}$ disappear to get
    \begin{align*}
        \hat{v}_{ij}\hat{v}_{kl} = &u_{(i, 1)(j, 1)}u_{(k, 1)(l, 2)} + u_{(i, 1)(j, 1)} u_{(k, 1)(l, 3)}\\
                                + &u_{(i, 1)(j, 2)}u_{(k, 1)(l, 1)} + u_{(i, 1)(j, 2)} u_{(k, 1)(l, 3)}\\
                                +& u_{(i, 1)(j, 3)}u_{(k, 1)(l, 1)} + u_{(i, 1)(j, 2)} u_{(k, 1)(l, 2)}
    .\end{align*}
    Using again \ref{c12_4_5:gb_result} to see that $u_{(k, 1)(l, a)} = u_{(k', 1)(j, a)}$, we get
    \begin{align*}
        \hat{v}_{ij}\hat{v}_{kl} = &u_{(i, 1)(j, 1)}u_{(k', 1)(j, 2)} + u_{(i, 1)(j, 1)} u_{(k', 1)(j, 3)}\\
                                 + &u_{(i, 1)(j, 2)}u_{(k', 1)(j, 1)} + u_{(i, 1)(j, 2)} u_{(k', 1)(j, 3)}\\
                                 +& u_{(i, 1)(j, 3)}u_{(k', 1)(j, 1)} + u_{(i, 1)(j, 2)} u_{(k', 1)(j, 2)}
    .\end{align*}
    However, we know, that still $(i, 1) \not \sim (k', 1)$ holds but we also have $(j, a) \sim (j, b)$ for  $a \neq b$.
    Therefore, we have $u_{(i, 1)(j, a)} u_{(k', 1)(j, b)} = 0$ and get
    \[
        \hat{v}_{ij} \hat{v}_{kl} = 0
    .\] 
    It is easy to see, that the $\hat{v}_{(ij)}$ and the $\hat{w}_{kl}$ are self-adjoint, as they are just sums of self-adjoint elements.
    Moreover, since $u_{(i_1, i_2)(j_1, j_2)} u_{(i_1, i_2)(l_1, l_2)} = 0$ for $(j_1, j_2) \neq (l_1, l_2)$, seeing 
    $\hat{v}_{ij}^2 = \hat{v}_{ij}$ and $\hat{w}_{kl}^2 = \hat{w}_{kl}$ is also straightforward.

    It just remains to be shown that $\sum_{a} \hat{v}_{ia} = \sum_{a} \hat{v}_{aj} = 1 = \sum_{b} \hat{w}_{kb} = \sum_{b} \hat{w}_{bl}$.
    This is easy to see for $\sum_{a} \hat{v}_{ia}$ and $\sum_{b} \hat{w}_{bl}$, since these sums are just sums over one row of
    $u$.
    For $\sum_a \hat{aj} = 1$ and $\sum_b \hat{w}_{kb}$, the Groebner basis yields that the sums are equal to $1$.
    All in all, we get again by the universal property of $C(H_2^+ \times S_3)$ a $*$-homomorphism
     \begin{align*}
         \phi' : C(H_2^+ \times S_3) &\to C(G_{aut}^+(\Gamma))\\
         v_{ij} &\mapsto \hat{v}_{ij}\\
         w_{ij} & \mapsto \hat{w}_{ij}
    .\end{align*}
    By \ref{c12_4_5:gb_result}, we see that all elements of $u$ are hit by  $\phi'$ and in particular, we have that  $\phi'$ is 
    the inverse of  $\phi$. We thus get, that  $C(H_2^+ \times S_3)$ and $C(G_{aut}^+(\Gamma))$ are $*$-isomorphic.
\end{proof}

\begin{prop}
    The quantum automorphism group of $C_{12}(3^+, 6)$ is $H_2^+ \times S_3$.
\end{prop}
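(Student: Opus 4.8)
As in the previous proposition, the plan is to exhibit a $*$-isomorphism $C(\QBan(\Gamma)) \cong C(H_2^+ \times S_3)$ for $\Gamma = C_{12}(3^+, 6)$ by constructing two mutually inverse surjective $*$-homomorphisms $\phi$ and $\phi'$, using $H_2^+ = \QBan(C_4)$ and $S_3 = S_3^+ = \QBan(K_3)$. Unlike the product graphs treated earlier, $\Gamma$ is not a Cartesian or direct product to which the spectral criterion applies, so the quantum group has to be computed by hand. The first step is to relabel the twelve vertices as pairs $(i_1, i_2)$ with $1 \le i_1 \le 4$ and $1 \le i_2 \le 3$ so that the three induced $4$-cycles of $\Gamma$ become the layers $i_2 = 1, 2, 3$, the first coordinate runs along each such $C_4$, and the edges between distinct layers join antipodal positions. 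In these coordinates the adjacency becomes
\[
    (i_1, i_2) \sim (k_1, k_2) \iff \big(i_2 = k_2 \text{ and } i_1 \sim_{C_4} k_1\big) \text{ or } \big(i_2 \neq k_2 \text{ and } k_1 = i_1 + 2\big),
\]
which is exactly the pattern encoded by $A_\Gamma = A_{C_4} \otimes I_3 + P \otimes (J_3 - I_3)$, where $P$ is the antipodal (distance-$2$) matrix of $C_4$ and $J_3$ the all-ones matrix.

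For the forward map I would set $\hat u_{(i_1, i_2)(j_1, j_2)} := v_{i_1 j_1} w_{i_2 j_2}$ and check that $\hat u$ satisfies the defining relations of $C(\QBan(\Gamma))$; the universal property then gives $\phi \colon C(\QBan(\Gamma)) \to C(H_2^+ \times S_3)$. The magic-unitary relations and the row/column sums are immediate since $v, w$ are magic unitaries with commuting entries. For the graph relations one distinguishes, given $(i_1, i_2) \sim (k_1, k_2)$ and $(j_1, j_2) \not\sim (l_1, l_2)$, the two cases of the adjacency above: in the same-layer case ($i_2 = k_2$, $i_1 \sim_{C_4} k_1$) the factor $w_{i_2 j_2} w_{k_2 l_2}$ vanishes as soon as $j_2 \neq l_2$ (same row of $w$), and otherwise $v_{i_1 j_1} v_{k_1 l_1} = 0$ by the $C_4$-graph relation~(\ref{rel:graph_1}) for $v$; in the cross-layer case ($i_2 \neq k_2$, $k_1 = i_1 + 2$) the factor vanishes when $j_2 = l_2$ (same column of $w$), and otherwise $v_{i_1 j_1} v_{k_1 l_1} = 0$ because $i_1, k_1$ are at $C_4$-distance $2$ while $j_1, l_1$ are not, so that Lemma~\ref{lem:distances} applies inside $\QBan(C_4)$. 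Surjectivity of $\phi$ follows as in the previous proposition, since the row sums of $w$ and $v$ recover each $v_{ij}$ and each $w_{kl}$ as the image of a suitable sum of the $u$'s.

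The reverse map is the delicate part. Following the previous proposition I would define
\[
    \hat v_{ij} := \sum_{a=1}^{3} u_{(i, 1)(j, a)}, \qquad \hat w_{kl} := \sum_{b=1}^{4} u_{(1, k)(b, l)}
\]
in $C(\QBan(\Gamma))$ and verify that they satisfy the relations of $C(H_2^+ \times S_3)$: self-adjointness, idempotency, and one family of row sums are formal, but the commutation $A_{C_4} \hat v = \hat v A_{C_4}$, the relation $\hat v_{ij} \hat w_{kl} = \hat w_{kl} \hat v_{ij}$, and the remaining column sums $\sum_a \hat v_{aj} = 1 = \sum_b \hat w_{kb}$ rest on identities among the generators $u$ that are not apparent from the relations themselves. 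Exactly as for $C_{12}(4,5)$, I would compute a noncommutative Groebner basis of the ideal of relations of $C(\QBan(\Gamma))$ in OSCAR and read off the needed coincidences of generators (the analogue of the identity~(\ref{c12_4_5:gb_result})); these then make all the verifications routine and yield $\phi' \colon C(H_2^+ \times S_3) \to C(\QBan(\Gamma))$. A final check on generators shows that $\phi$ and $\phi'$ are mutually inverse.

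The main obstacle I anticipate is this reverse direction, and specifically the Groebner-basis input it needs: as with $C_{12}(4,5)$, proving the column-sum and commutation relations for $\hat v, \hat w$ requires knowing enough relations among the $u_{ij}$, and these are only accessible through the (potentially expensive) OSCAR computation. By contrast, the forward map is a finite case analysis, the only new feature compared with the previous proposition being the appearance of the antipodal case, which is handled by the distance lemma rather than by a graph relation. Fixing the relabeling so that the cross-layer edges are genuinely antipodal is a small additional bookkeeping step, but it is precisely what makes the forward case analysis close.
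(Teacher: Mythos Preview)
Your proposal is correct and follows essentially the same approach as the paper: the same relabeling into $\{1,\dots,4\}\times\{1,\dots,3\}$, the same adjacency description (your ``$k_1=i_1+2$'' is exactly the paper's ``$i_1\neq k_1$ and $i_1\not\sim_{C_4}k_1$''), the same forward map $\hat u_{(i_1,i_2)(j_1,j_2)}=v_{i_1j_1}w_{i_2j_2}$ with the same case split, and the same definitions of $\hat v,\hat w$ for the reverse map. The one difference worth noting is what the Groebner basis actually provides: for this graph the paper extracts not an analogue of~(\ref{c12_4_5:gb_result}) but the single identity $\hat v_{i_1j_1}\hat w_{i_2j_2}=u_{(i_1,i_2)(j_1,j_2)}=\hat w_{i_2j_2}\hat v_{i_1j_1}$, which immediately gives the commutation of $\hat v$ with $\hat w$ and reduces the $C_4$-relation for $\hat v$ to the graph relations of $\Gamma$; since you already flag the Groebner step as the piece to be computed rather than guessed, this would surface on running it.
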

\begin{proof}
    For this proof, we will write $C_{12}(3^+, 6) =: \Gamma$.
    We will give a picture of the graph in question with vertices colored in such a way, that the action of the classical
    automorphism group $H_2 \times S_3$ is visible:

    \begin{center}
        \begin{tikzpicture}[auto, node distance = 2cm and 2.5cm, on grid, 
            vertexRed/.style={circle, draw=xkcdRed!45, fill=xkcdRed!45, very thick, minimum size=9mm},
            vertexGreen/.style={circle, draw=xkcdForestGreen!45, fill=xkcdForestGreen!45, very thick, minimum size=9mm},
            vertexBlue/.style={circle, draw=xkcdRoyalBlue!45, fill=xkcdRoyalBlue!45, very thick, minimum size=9mm}
            ]
            \node[vertexRed] (1) at (75 + 1*30:5cm) {$(1, 1)$};
            \node[vertexRed] (2) at (75 + 2*30:5cm) {$(4, 1)$};
            \node[vertexBlue] (3) at (75 + 3*30:5cm) {$(2, 2)$};
            \node[vertexBlue] (4) at (75 + 4*30:5cm) {$(3, 2)$};
            \node[vertexGreen] (5) at (75 + 5*30:5cm) {$(1, 3)$};
            \node[vertexGreen] (6) at (75 + 6*30:5cm) {$(4, 3)$};
            \node[vertexRed] (7) at (75 + 7*30:5cm) {$(2, 1)$};
            \node[vertexRed] (8) at (75 + 8*30:5cm) {$(3, 1)$};
            \node[vertexBlue] (9) at (75 + 9*30:5cm) {$(1, 2)$};
            \node[vertexBlue] (10) at (75 + 10*30:5cm) {$(4, 2)$};
            \node[vertexGreen] (11) at (75 + 11*30:5cm) {$(2, 3)$};
            \node[vertexGreen] (12) at (75 + 12*30:5cm) {$(3, 3)$};

            \foreach[evaluate ={
                \l = int(\j + 1);
                \a = int(\j + 6);
                \c = int(\j + 3);
                \d = int(\j - 9);
            }] \j in {1,...,12}{
            \ifthenelse {\j  < 12}{\draw (\j) -- (\l);}{\draw (\j) -- (1);};
            \ifthenelse {\j  < 7}{\draw (\j) -- (\a);}{};
            \ifthenelse {\j  < 10 }{\ifthenelse{\isodd{\j}}{\draw (\j) -- (\c);}{}}{\ifthenelse{\isodd{\j}}{\draw (\j) -- (\d);}{}};
        }
        \end{tikzpicture}
    \end{center}

    Recall from above that he universal $C^*-$algebra defining $H_2^+ \times S_3$ is as follows:
    \begin{align*}
        C(H_2^+ \times S_3) = C^*(& v_{i j}, w_{kl}, 1 \le i, j \le 4, 1 \le k,l \le 3\\
                                  & \sum_{a = 1}^4 v_{i a} = \sum_{a=1}^4 v_{aj} = 1 \; \forall 1 \le i, j \le 4 \\
                                  & \sum_{b = 1}^3 w_{k b} = \sum_{b=1}^3 w_{bl} = 1 \; \forall 1 \le k, l \le 3 \\
                                  & v_{ij}^2 = v_{ij} = v_{ij}^*\\
                                  & w_{ij}^2  = w_{ij} = w_{ij}^*\\
    & A_{C_4} v = v A_{C_4}\\
                                  & v_{ij} w_{kl} = w_{kl} v_{ij}
        ) 
    .\end{align*}
    Here, $A_{C_4}$ is the adjacency matrix of the $4$-cycle  $C_4$ and $v$ is the matrix with entries $v_{ij}$ for $1 \le i, j \le 4$.

    We now claim:
    \begin{enumerate}
        \item \label{c12_3plus_6:claim1} The matrix $\hat{u}$ with entries from $C(H_2^+ \times S_3)$ defined by
            \[
                \hat{u}_{(i_1, i_2)(j_1, j_2)} = v_{i_1 j_1} w_{i_2 j_2}
            \] 
            satisfies the relations of $C(G_{aut}^+(\Gamma))$.
        \item \label{c12_3plus_6:claim2} The elements $\hat{v}_{ij}$ and $\hat{w}_{kl}$ in $C(G_{aut}^+(\Gamma))$ defined by
            \begin{align*}
                \hat{v}_{ij} &= u_{(i, 1)(j, 1)} + u_{(i, 1)(j, 2)} + u_{(i, 1)(j, 3)}\\
                \hat{w}_{kl} &= u_{(1, k)(1, l)} + u_{(1, k)(2, l)} + u_{(1, k)(3, l)} + u_{(1, k)(4, l)}
            \end{align*}
            satisfy the relations of $C(H_2^+ \times S_3)$.
    \end{enumerate}
    Let us first prove \ref{c12_3plus_6:claim1}:

    For this, observe the following: We have
    \begin{align*}
        &(i_1, i_2) \sim (k_1, k_2)\text{ in } \Gamma \\
        \Longleftrightarrow \;&(i_2 = k_2\, \land \,i_1 \sim_{C_4} k_1 ) \lor (i_2 \neq k_2 \,\land\, i_1 \neq k_1\, \land\, i_1 \not \sim_{C_4} k_1)
    \end{align*}
    Let now $(i_1, i_2)\sim (k_1, k_2)$ and $(j_1, j_2) \not \sim (l_1, l_2)$, i.e. for $(i_1, i_2)$ and $(k_1, k_2)$ the above 
    observation holds, while for $(j_1, j_2)$ and $(l_1, l_2)$ we have:
    \begin{align*}
        (j_2 \neq l_2 \lor j_1 \not \sim_{C_4} l_1) \land (j_2 = l_2 \lor j_1 = l_1 \lor j_1 \sim_{C_4} l_1)
    .\end{align*}
    In order to show that $\hat{u}_{(i_1, i_2)(j_1, j_2)} u_{(k_1, k_2)(l_1, l_2)} = 0$, we will make the following case distinction:
    \begin{itemize}
        \item Case $i_2 = k_2$:
            \begin{itemize}
                \item Case $j_2 \neq  l_2$: \\
                    We have $\hat{u}_{(i_1, i_2)(j_1,j_2)}\hat{u}_{(k_1, k_2)(l_1, l_2)} = 
                    v_{i_1 j_1}w_{i_2 j_2} v_{k_1 l_1} w_{i_2 l_2} = v_{i_1 j_1}w_{i_2j_2}w_{i_2l_2} v_{k_1l_1}$ \\
                    but $w_{i_2j_2} w_{i_2l_2} = 0$ since $j_2 \neq l_2$ and thus $\hat{u}_{(i_1, i_2)(j_1, j_2)} u_{(k_1, k_2)(l_1, l_2)} = 0$.
                \item Case $j_2 = l_2$:\\ We have $i_1 \sim_{C_4} k_1$ but $j_1 \not \sim_{C_4} l_1$  and thus
                    $v_{i_1 j_1}v_{k_1l_1} = 0$ and therefore also $\hat{u}_{(i_1, i_2)(j_1, j_2)} u_{(k_1, k_2)(l_1, l_2)} = 0$.
            \end{itemize}
        \item Case $i_2 \neq k_2$:
            \begin{itemize}
                \item Case $j_2 = l_2$:\\ We have $w_{i_2j_2} w_{k_2j_2} = 0$ since $i_2 \neq k_2$
                    and thus \\$\hat{u}_{(i_1, i_2)(j_1, j_2)} u_{(k_1, k_2)(l_1, l_2)} = 0$.
                \item Case $j_2 \neq l_2, j_1 = l_1$:\\
                    We have $v_{i_1 j_1} v_{k_1 j_1} = 0$ since $i_1 \neq  k_1$ and thus $\hat{u}_{(i_1, i_2)(j_1, j_2)} u_{(k_1, k_2)(l_1, l_2)} = 0$.
                \item Case $j_2 \neq l_2, j_1 \sim_{C_4} l_1$:\\
                    We have $v_{i_1 j_1} v_{k_1l_1} = 0$ since $i_1 \not\sim_{C_4} k_1$ but $j_1 \sim_{C_4} l_1$ and thus \\$\hat{u}_{(i_1, i_2)(j_1, j_2)} u_{(k_1, k_2)(l_1, l_2)} = 0$.
            \end{itemize}
    \end{itemize}
    A similar computation will show, that $\hat{u}_{(i_1, i_2)(j_1, j_2)} u_{(k_1, k_2)(l_1, l_2)} = 0$
    if $(i_1, i_2) \not \sim (k_1, k_2)$ and $(j_1, j_2) \sim (l_1, l_2)$ holds.

    It is immediate to see, that $\hat{u}_{(i_1, i_2)(j_1, j_2)}$ is a projection again and that the rows and columns 
    of $\hat{u}$ sum up to one from the relations of $C(H_2^+ \times S_3)$.
    We thus have that $\hat{u}$ satisfies the relations of $C(G_{aut}^+(\Gamma))$ and from the universal property of 
    $C(G_{aut}^+(\Gamma))$ we get a $*$-homomorphism
    \begin{align*}
        \phi : C(G_{aut}^+(\Gamma)) &\to C(H_2^+ \times S_3)\\
        u_{(i_1, i_2)(j_1, j_2)} &\mapsto \hat{u}_{(i_1, i_2)(j_1, j_2)} = v_{i_1 j_1} w_{i_2 j_2}.
    \end{align*} 
    It is also easy to see that all the generators of $C(H_2^+ \times S_3)$ are in the image of $\phi$:
    for example, $v_{i_1j_1}$ is the image of 
    \[
        u_{(i_1, 1)(j_1, 1)} + u_{(i_1, 1)(j_1, 2)} + u_{(i_1, 1)(j_1, 3)}
    \]
    since
    \[
        w_{11}+ w_{12} + w_{13} = 1
    ,\] 
    and a similar argument holds for $w_{i_2 j_2}$.
    Therefore, $\phi$ is surjective.

    We will now prove \ref{c12_3plus_6:claim2}.
    Firstly, using the Groebner basis for the ideal generated by the relations of $C(G_{aut}^+(\Gamma))$, we can see that
    \begin{align}
        \label{c12_3plus_6:gb_comp}
        \hat{v}_{i_1j_1} \hat{w}_{i_2j_2} = u_{(i_1, i_2)(j_1, j_2)} = \hat{w}_{i_2j_2} \hat{v}_{i_1 j_1} 
    .\end{align} 
    In particular, we see that all the elements $\hat{v}_{ij}$ commute with all the $\hat{w}_{kl}$ as desired.
    It is moreover easy to see, that $\hat{v}_{ij}^* = \hat{v}_{ij}$ and $\hat{w}_{kl}^* = \hat{w}_{kl}$
    since all the entries of $u$ are already selfadjoint.

    Next, we compute 
    \begin{align*}
        \hat{v}_{ij}^2 =& (u_{(i, 1)(j, 1)} + u_{(i, 1)(j, 2)} + u_{(i, 1)(j, 3)})^2\\
                    =& u_{(i, 1)(j, 1)} + \underbrace{u_{(i, 1)(j, 1)} u_{(i, 1)(j, 2)} + u_{(i, 1)(j, 1)}u_{(i, 1)(j, 3)}}_{=0} +\\
                     & u_{(i, 1)(j, 2)} + \underbrace{u_{(i, 1)(j, 2)} u_{(i, 1)(j, 1)} + u_{(i, 1)(j, 2)}u_{(i, 1)(j, 3)}}_{=0} +\\
                     &  u_{(i, 1)(j, 3)} + \underbrace{u_{(i, 1)(j, 3)} u_{(i, 1)(j, 1)} + u_{(i, 1)(j, 3)}u_{(i, 1)(j, 2)}}_{=0}\\
                    =& \hat{v}_{ij}
    .\end{align*}
    A similar computation can be done to see that $\hat{w}_{kl}^2 = \hat{w}_{kl}$, and thus all elements $\hat{v}_{ij}$ and
    $\hat{w}_{kl}$ are projections.

    Next, it is easy to see that 
    \begin{align*}
        \sum_{a=1}^4 \hat{v}_{ia} &= \sum_{a=1}^4 \sum_{j=1}^3 u_{(i, 1)(a, j)} = 1\\
        \sum_{b=1}^3 \hat{w}_{kb} &= \sum_{b = 1}^3 \sum_{l=1}^4 u_{(k, 1)(b, l)} = 1
    .\end{align*} 
    Using again the Groebner basis, we see that also
    \begin{align*}
        \sum_{a=1}^4 \hat{v}_{ai} = 1 = \sum_{b=1}^3 \hat{w_{kb}}
    \end{align*}
    holds.

    Lastly, we now need to check, that $\hat{v}$ satisfies 
    \[
        A_{C_4} v = v A_{C_4}
    ,\]
    in other words, we need to show that 
    \[
        \hat{v}_{ij} \hat{v}_{kl} = 0 \text{ if } i \sim_{C_4} k, \; j \not\sim_{C_4} l \text{ or }i \not \sim_{C_4} k\; j \sim_{C_4} l
    .\]
    Let therefore $i \sim_{C_4} k$ and $j \not\sim_{C_4} l$ be given.
    Using the identity in~\ref{c12_3plus_6:gb_comp}, we write
    \begin{align*}
        \hat{v}_{ij} = \hat{v}_{ij}\hat{w}_{1, 1} + \hat{v}_{ij}\hat{w}_{1, 2} + \hat{v}_{ij} \hat{w}_{1, 3}
    \end{align*}
    and compute
    \begin{align*}
        \hat{v}_{ij}\hat{v}_{kl} = &(\hat{v}_{ij}\hat{w}_{1, 1} + \hat{v}_{ij}\hat{w}_{1, 2} + \hat{v}_{ij} \hat{w}_{1, 3})
        (\hat{v}_{kl}\hat{w}_{1, 1} + \hat{v}_{kl}\hat{w}_{1, 2} + \hat{v}_{kl} \hat{w}_{1, 3})\\
        = &\hat{v}_{ij} \hat{w}_{1, 1} \hat{v}_{kl}\hat{w}_{1, 1} + \hat{v}_{ij} \hat{w}_{1, 1} \hat{v}_{kl} \hat{w}_{1, 2}+
           \hat{v}_{ij} \hat{w}_{1, 1} \hat{v}_{kl}\hat{w}_{1, 3} + \\
          &\hat{v}_{ij} \hat{w}_{1, 2} \hat{v}_{kl} \hat{w}_{1, 1} + \hat{v}_{ij} \hat{w}_{1, 2}\hat{v}_{kl}\hat{w}_{1, 2}
          + \hat{v}_{ij}\hat{w}_{1, 2} \hat{v}_{kl}\hat{w}_{1, 3}+\\
          &\hat{v}_{ij} \hat{w}_{1, 3} \hat{v}_{kl} \hat{w}_{1, 1} + \hat{v}_{ij} \hat{w}_{1, 3}\hat{v}_{kl}\hat{w}_{1, 2}
          + \hat{v}_{ij}\hat{w}_{1, 3} \hat{v}_{kl}\hat{w}_{1, 3}
    .\end{align*}
    We have by~\ref{c12_3plus_6:gb_comp}, that $\hat{v}_{kl}$ and $\hat{w}_{ab}$ commute. 
    Moreover, we have seen above that
    $\hat{w}_{ab}$ and $\hat{w}_{ac}$ are projections that sum up to one and thus $\hat{w}_{ab} \hat{w}_{ac} = 0$ for $b \neq c$.
    We thus get that any of the products in the above sum where both
    $\hat{w}_{1b}$ and $\hat{w}_{1c}$ appear for $b \neq c$ are $0$ and we are only left with
     \begin{align*}
         \hat{v}_{ij} \hat{v}_{kl} =& \hat{v}_{ij}\hat{w}_{1, 1} \hat{v}_{kl}\hat{w}_{1, 1} +
                                   \hat{v}_{ij}\hat{w}_{2, 2} \hat{v}_{kl}\hat{w}_{2, 2} +
                                   \hat{v}_{ij}\hat{w}_{3, 3} \hat{v}_{kl}\hat{w}_{3, 3}
    .\end{align*}
    Using again~\ref{c12_3plus_6:gb_comp}, we rewrite this to 
    \begin{align*}
        \hat{v}_{ij} \hat{v}_{kl} = & u_{(i, 1)(j, 1)} u_{(k, 1)(l, 1)}
        + u_{(i, 1)(j, 2)} u_{(k, 1)(l, 2)} + u_{(i, 1)(j, 3)}u_{(k, 1)(l, 3)}
    .\end{align*}
    However, since by assumption $i \sim_{C_4} k$ but $j \not \sim_{C_4} l$, we have $(i, 1) \sim (k, 1)$
    but $(j, a)\not \sim (l, a)$ for any  $1 \le a \le 3$. We thus get 
    \[
        u_{(i, 1)(j, a)} u_{(k, 1)(l, a)} = 0
    \] 
    and thus
    \[
        \hat{v}_{ij} \hat{v}_{kl} = 0
    .\] 
    A similar computation yields the same result whenever $i \not \sim_{C_4} k$ and $j \sim_{C_4} l$ holds.
    All in all, the universal property of $C(H_2^+ \times S_3)$ yields again a $*$-homomorphism
    \begin{align*}
        \phi' : C(H_2^+ \times S_3) &\to C(G_{aut}^+(\Gamma)\\
        v_{i_1 j_1}w_{i_2 j_2} &\mapsto u_{(i_1, i_2)(j_1, j_2)}
    .\end{align*}
    It is easy to see, that all generators $u_{(i_1, i_2)(j_1, j_2)}$ of $C(G_{aut}^+(\Gamma))$ are in the image and thus
    $\phi'$ is surjective. We note moreover, that  $\phi'$ is exactly the inverse of  $\phi$ and we thus get
    that $H_2^+ \times S_3$ and $G_{aut}^+(\Gamma)$ are $*$-isomorphic.
  \end{proof}

\printbibliography
\newpage
\appendix
\section{Omitted proofs}
\counterwithin{thm}{section}
\label{appendix:omitted-proofs}
In this section we collect again all propositions that have not been proven above.
\begin{prop}
    The graph $C_{12}(2)$ does not have quantum symmetries.
\end{prop}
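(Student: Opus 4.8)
The plan is to mirror the proof of Proposition~\ref{prop:k2_c6}. As $C_{12}(2)$ is vertex-transitive, Corollary~\ref{cor:commutation_with_one_implies_all} reduces everything to showing that $u_{i1}$ commutes with every $u_{kl}$, and by Lemma~\ref{lem:adj_commute} it is enough to prove $u_{i1}u_{kl}=u_{i1}u_{kl}u_{i1}$. From vertex $1$ the neighbours are $\{2,3,11,12\}$, the vertices at distance $2$ are $\{4,5,9,10\}$, and those at distance $3$ are $\{6,7,8\}$, so $\Gamma$ has diameter $3$. I would use the reflection $s\colon i\mapsto 2-i \bmod 12$, which is an automorphism fixing $1$ and $7$ and acting as $(2\,12)(3\,11)(4\,10)(5\,9)(6\,8)$; by Lemma~\ref{lem:get_commutation_by_automorphism} it then suffices to treat one representative per distance class, say $l\in\{2,3\}$, $l\in\{4,5\}$ and $l\in\{6,7\}$. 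For a fixed such $l$ at distance $m$ from $1$, Lemma~\ref{lem:distances} allows me to assume $d(i,k)=d(1,l)=m$ (otherwise the product already vanishes) and then gives $u_{i1}u_{kl}=u_{i1}u_{kl}\sum_{p:\,d(l,p)=m}u_{ip}$, so the task is to kill every summand with $p\neq 1$.

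At distances $1$ and $2$ the argument is driven by common-neighbour counts: two vertices joined by a distance-$1$ chord have two common neighbours, whereas a distance-$2$ chord gives only one, with similar small counts at distance $2$. Whenever $\lvert\CN(1,l)\rvert\neq\lvert\CN(l,p)\rvert$, Corollary~\ref{cor:different_numbers_common_neighbours} gives $u_{i1}u_{kl}u_{ip}=0$ outright, and this clears all but exactly one residual $p$ in each case ($p=3$ for $l=2$, $p=5$ for $l=3$, $p=7$ for $l=4$, $p=9$ for $l=5$). Each residual is then handled by Lemma~\ref{lem:choose_q_middle}: in every instance $l$ is forced to be the unique vertex realising the three prescribed distances, so a single auxiliary vertex (respectively $q=12,2,5,3$) yields the missing identity $u_{i1}u_{kl}u_{ip}=0$.

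The main obstacle is the distance-$3$ case. There all relevant pairs have no common neighbour, so Corollary~\ref{cor:different_numbers_common_neighbours} is useless, and Lemma~\ref{lem:choose_q_middle} cannot isolate the target either: for $l=7$ the two surviving vertices $2$ and $12$ are swapped by $s$ while $1$ and $7$ are fixed, and this symmetry blocks any single $q$ from separating both $2$ and $12$ from $1$ simultaneously. The fix I would use is to apply Lemma~\ref{lem:choose_q_right} twice, one application per offending $p$. Since commutation at distances $1$ and $2$ is already in hand, the lemma applies for any $q$ with $d(q,7)\in\{1,2\}$: taking $q=9$ gives $u_{i1}u_{k7}=u_{i1}u_{k7}(u_{i1}+u_{i12})$ and taking $q=5$ gives $u_{i1}u_{k7}=u_{i1}u_{k7}(u_{i1}+u_{i2})$. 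Right-multiplying the first by $u_{i2}$ and the second by $u_{i12}$, and using $u_{ip}u_{ip'}=0$ for $p\neq p'$, produces $u_{i1}u_{k7}u_{i2}=0=u_{i1}u_{k7}u_{i12}$; together with Lemma~\ref{lem:distances} this gives $u_{i1}u_{k7}=u_{i1}u_{k7}u_{i1}$. The case $l=6$ is settled the same way with $q=4$ and $q=5$.

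Collecting the three distance classes and propagating each established commutation across the reflection $s$ via Lemma~\ref{lem:get_commutation_by_automorphism} shows that $u_{i1}$ commutes with all generators. Corollary~\ref{cor:commutation_with_one_implies_all} then gives that $C(\QBan(C_{12}(2)))$ is commutative, so $C_{12}(2)$ has no quantum symmetries.
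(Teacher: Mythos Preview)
Your proof is correct. The overall architecture matches the paper's (reduce to $j=1$, treat each distance class, then invoke Corollary~\ref{cor:commutation_with_one_implies_all}), but the tools you deploy at each step are genuinely different.

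For distances $1$ and $2$ the paper works purely with the tables of Lemma~\ref{lem:choose_q_middle} (distance~$1$) and Lemma~\ref{lem:choose_q_right} (distance~$2$), treating all eight vertices $l\in\{2,3,11,12,4,5,9,10\}$ explicitly. You instead exploit the reflection fixing $1$ to halve the work and then let Corollary~\ref{cor:different_numbers_common_neighbours} do most of the killing, leaving a single residual $p$ per case that one application of Lemma~\ref{lem:choose_q_middle} finishes. This is a cleaner, more structural argument: the common-neighbour counts encode exactly why most monomials vanish, rather than hiding this in a table of $q$-choices.

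At distance $3$ the paper applies Lemma~\ref{lem:choose_q_right} once (e.g.\ $q=5$ for $l=7$, giving $\{1,2\}$) and then invokes Lemma~\ref{lem:monomial_zero} with a distance-$2$ auxiliary vertex to remove the stray term. Your device of applying Lemma~\ref{lem:choose_q_right} \emph{twice} with different $q$'s and then right-multiplying one identity by the extra term of the other is an elegant alternative that avoids Lemma~\ref{lem:monomial_zero} altogether and only needs the distance-$1$ commutation already in hand. (As a minor remark, for $l=6$ the single choice $q=5$ already yields the singleton $\{1\}$, so the double application is unnecessary there; but this is harmless.)
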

\begin{proof}
    We will again show the commutation of $u_{i 1}$ with all other other generators of $\QBan(C_{12}(2))$.

    First, we consider the vertices in distance $1$. These are the vertices $2, 3, 11$ and  $12$.
    Since we can write  $u_{i 1} u_{k l} = u_{i 1} u_{k l} \sum_{\substack{p \sim 1}} u_{i p} $ for a vertex $l \sim 1$, we see that
    \begin{align*}
        u_{i 1} u_{k 2} = u_{i 1} u_{k 2}\left( u_{i 1} + u_{i 3} + u_{i 4} + u_{i 12}\right)\\
        u_{i 1} u_{k 3} = u_{i 1} u_{k 3}\left( u_{i 1} + u_{i 2} + u_{i 4} + u_{i 5}\right)\\
        u_{i 1} u_{k 11} = u_{i 1} u_{k 11} \left( u_{i 1} + u_{i 9} + u_{i 10} + u_{i 12} \right) \\
        u_{i 1} u_{k 12} = u_{i 1} u_{k 12} \left( u_{i 1} + u_{i 2} + u_{i 10} + u_{i 11} \right)
    .\end{align*}
    Applying Lemma~\ref{lem:choose_q_middle} to these vertices, we see that on the right side, only the term with $u_{i 1}$ multiplied 
    from the right does not vanish, and we thus get commutation of $u_{i 1}$ and $u_{k l}$ for $l \sim 1$ by Lemma~\ref{lem:adj_commute}.
    \begin{table}[H]
         \begin{tabularx}{0.2\linewidth}{|*{4}{X|}}
            \multicolumn{4}{c}{Lemma~\ref{lem:choose_q_middle}}\\
            \hline
            $j$ &$l$ &  $p$ &  $q$\\
             \hline
            $1$&$2$ & $3$ & $12$\\
               & & $4$ & $12$\\
               & & $12$ & $10$\\
               &$3$& $2$& $11$\\
               && $4$& $12$\\
               && $5$& $12$\\
               \hline
        \end{tabularx}
        \quad \quad
        \begin{tabularx}{0.2\linewidth}{|*{4}{X|}}
            \multicolumn{4}{c}{Lemma~\ref{lem:choose_q_middle}}\\
            \hline
            $j$ &$l$ &  $p$ &  $q$\\
             \hline
               $1$&$11$& $9$& $12$\\
               && $10$& $9$\\
               && $12$& $10$\\
               &$12$& $2$& $11$\\
               && $10$& $9$\\
               && $11$& $11$\\
               \hline
        \end{tabularx}
    \end{table}
    Next, we consider vertices in distance $2$, which are the vertices $4, 5, 9$ and  $10$. For these, 
    we can apply Lemma~\ref{lem:choose_q_right}, since we already know that for all vertices $j \sim l$, that
    $u_{ij} u_{kl} = u_{kl} u_{ij}$:
        \begin{table}[H]
             \begin{tabularx}{0.225\linewidth}{|*{3}{X|}l|}
                \multicolumn{4}{c}{Lemma~\ref{lem:choose_q_right}}\\
                \hline
                $j$ &$l$ &  $q$ &  $p$\\
                 \hline
                $1$&$4$ & $3$ & $\{1\} $\\
                   &$5$& $6$ & $\{1\} $\\
                   &$9$& $8$ & $\{1\} $\\
                   &$10$ & $11$ &  $\{1\} $\\
                   \hline
            \end{tabularx}
        \end{table}
        In distance $3$, there are only three vertices,  $6, 7$ and  $8$. Applying Lemma~\ref{lem:choose_q_right}
        to them leads to the following table:
        \begin{table}[H]
             \begin{tabularx}{0.225\linewidth}{|*{3}{X|}l|}
                \multicolumn{4}{c}{Lemma~\ref{lem:choose_q_right}}\\
                \hline
                $j$ &$l$ &  $q$ &  $p$\\
                 \hline
                $1$&$6$ & $5$ & $\{1\} $\\
                   &$7$& $5$ & $\{1, 2\} $\\
                   &$8$& $6$ & $\{1\} $\\
                   \hline
            \end{tabularx}
        \end{table}
        We thus have to consider $u_{i 1} u_{k 7}$ individually. 
        By the above table, we know that
         \begin{align*}
             u_{i 1} u_{k 7} &= u_{i 1} u_{k 7} \left( u_{i 1} + u_{i 2} \right)
        .\end{align*}
        Applying Lemma~\ref{lem:monomial_zero} with $q := 4$ yields that
        \[
            u_{i 1} u_{k 7} u_{i 2} = 0
        \] 
        and we thus get
        \[
            u_{i 1} u_{k 7} = u_{i 1} u_{k 7} u_{i 1} = u_{k 7} u_{i 1}
        .\]
        Lemma~\ref{lem:monomial_zero} is applicable since $d(7, 4) = 2$ and we have shown above that for any vertices 
        in distance  $2$, the generators commute.

        All in all, we get that all generators commute, and thus  $\QBan(C_{12}(2)) = G_{aut}(C_{12}(2))$.
\end{proof}
\begin{prop}
    The graph $C_{12}(4)$ does not have quantum symmetries.
\end{prop}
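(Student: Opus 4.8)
The plan is to show that $C(\QBan(C_{12}(4)))$ is commutative. Writing $\Gamma = C_{12}(4)$, the vertex $1$ has neighbours $\{2,5,9,12\}$ (the cycle-neighbours $2,12$ and the chord-neighbours $5,9$), the vertices at distance $2$ from $1$ are $\{3,4,6,8,10,11\}$, and $7$ is the unique vertex at distance $3$ from $1$. Since $\Gamma$ is vertex-transitive, by Corollary~\ref{cor:commutation_with_one_implies_all} it suffices to prove that $u_{i1}$ commutes with every $u_{kl}$, and by Lemma~\ref{lem:distances} I would organise this by the distance $m = d(1,l) \in \{1,2,3\}$. The one structural feature to keep in mind is that $\Aut(\Gamma) = D_{12}$, so the stabiliser of $1$ is only $\{\mathrm{id},s\}$ with $s = (2\;12)(3\;11)(4\;10)(5\;9)(6\;8)$; in particular the cycle- and chord-neighbours of $1$ lie in different orbits and must be treated separately.

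For $m=1$ I would handle the two representatives $l=2$ and $l=5$. Using $\sum_p u_{ip}=1$ and Lemma~\ref{lem:distances} one gets $u_{i1}u_{k2} = u_{i1}u_{k2}(u_{i1}+u_{i3}+u_{i6}+u_{i10})$ and $u_{i1}u_{k5}=u_{i1}u_{k5}(u_{i1}+u_{i4}+u_{i6}+u_{i9})$. The monomials with $p=6,10$ in the first line and $p=4,6$ in the second vanish by Corollary~\ref{cor:different_numbers_common_neighbours}, since the relevant common-neighbour counts differ ($|\CN(1,2)|=0$ while $|\CN(2,6)|=|\CN(2,10)|=1$, and $|\CN(1,5)|=1$ while $|\CN(5,4)|=|\CN(5,6)|=0$). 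The two remaining monomials $u_{i1}u_{k2}u_{i3}$ and $u_{i1}u_{k5}u_{i9}$ vanish by Lemma~\ref{lem:choose_q_middle} with $q=1$, using that $2$ is the unique common neighbour of $1,3$ and $5$ the unique common neighbour of $1,9$. This gives commutation of $u_{i1}$ with $u_{k2}$ and $u_{k5}$ by Lemma~\ref{lem:adj_commute}; applying Lemma~\ref{lem:get_commutation_by_automorphism} along $s$ (which fixes $1$ and sends $2\mapsto 12$, $5\mapsto 9$) yields the cases $l=12,9$, and Corollary~\ref{cor:commutation_with_one_implies_all} then upgrades this to commutation of all generators at distance $1$.

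For $m=2$ I would feed this global distance-$1$ commutation into Lemma~\ref{lem:choose_q_right} (taken with $t=1$, which is legitimate precisely because distance-$1$ commutation is now available). For the four distance-$2$ vertices adjacent to the antipode $7$, namely $l\in\{3,6,8,11\}$, the slick choice is $q=7$, $s=d(1,7)=3$: since $1$ is the only vertex at distance $3$ from $7$, the lemma collapses the sum to $u_{i1}u_{kl}=u_{i1}u_{kl}u_{i1}$ at once. For the representative $l=4$ (and then $l=10$ by $s$) one instead takes $q=3$, $s=2$, which reduces the sum to the two terms $p\in\{1,6\}$, and the stray term $u_{i1}u_{k4}u_{i6}$ dies by Corollary~\ref{cor:different_numbers_common_neighbours} since $|\CN(1,4)|=2\neq 1=|\CN(4,6)|$. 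In each case Lemma~\ref{lem:adj_commute} gives commutation, and Corollary~\ref{cor:commutation_with_one_implies_all} propagates it to all distance-$2$ pairs. Finally, for $m=3$ the vertex $7$ is the unique vertex at distance $3$ from $1$, so $u_{i1}u_{k7}=u_{i1}u_{k7}u_{i1}$ immediately. As $\Gamma$ has diameter $3$, all generators commute and $\QBan(\Gamma)=G_{aut}(\Gamma)=D_{12}$.

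The main obstacle is not any single hard computation but the loss of symmetry: because $\Aut(\Gamma)$ is only $D_{12}$ with an order-$2$ vertex stabiliser, the cycle edges and the chord edges are genuinely inequivalent, and the adjacent-pair common-neighbour counts are inhomogeneous ($0$ on cycle edges, $1$ on chord edges). Consequently Corollary~\ref{cor:different_numbers_common_neighbours} disposes of only part of the monomials, and one must supply the remaining vanishings by hand, via Lemma~\ref{lem:choose_q_middle} with $q=1$ at distance $1$ and via the antipodal choice $q=7$ in Lemma~\ref{lem:choose_q_right} at distance $2$. The step requiring genuine care is verifying the uniqueness hypotheses of these two lemmas, i.e.\ that the collapsed index sets are singletons, which is exactly where the distance data of $\Gamma$ enters.
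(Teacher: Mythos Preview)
Your argument is correct and follows the same overall strategy as the paper: fix $j=1$ by vertex-transitivity, treat each distance $m\in\{1,2,3\}$ separately, and feed the distance-$1$ commutation into Lemma~\ref{lem:choose_q_right} with the antipodal choice $q=7$ for most of the distance-$2$ cases. Your version is marginally more economical—you use the reflection $s$ to halve the distance-$1$ casework and invoke Corollary~\ref{cor:different_numbers_common_neighbours} for several monomials where the paper instead applies Lemma~\ref{lem:choose_q_middle} (at distance~$1$) or Lemma~\ref{lem:monomial_zero} (for the stray terms at $l=4,10$)—but these are cosmetic variations, not a different route.
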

\begin{proof}
    We first note, that for each vertex there is only a single vertex in distance $3$, and thus for vertices $i, j, k, l$ with
     $d(i, k) = 3 = d(j, l)$ we have
      \[
     u_{ij} u_{kl} = u_{ij} u_{kl} u_{ij} = u_{kl} u_{ij}
     .\] 
    Next, we will see that $u_{i 1}$ commutes with all $u_{kl}$ where $d(l, 1) = 1$ by applying Lemma~\ref{lem:choose_q_middle}:
    \begin{table}[H]
         \begin{tabularx}{0.2\linewidth}{|*{4}{X|}}
            \multicolumn{4}{c}{Lemma~\ref{lem:choose_q_middle}}\\
            \hline
            $j$ &$l$ &  $p$ &  $q$\\
             \hline
            $1$&$2$ & $3$ & $1$\\
               & & $6$ & $9$\\
               & & $10$ & $5$\\
               &$5$& $4$& $8$\\
               && $6$& $9$\\
               && $9$& $1$\\
               \hline
        \end{tabularx}
        \quad \quad
        \begin{tabularx}{0.2\linewidth}{|*{4}{X|}}
            \multicolumn{4}{c}{Lemma~\ref{lem:choose_q_middle}}\\
            \hline
            $j$ &$l$ &  $p$ &  $q$\\
             \hline
               $1$&$9$& $5$& $1$\\
               && $8$& $4$\\
               && $10$& $5$\\
               &$12$& $4$& $8$\\
               && $8$& $4$\\
               && $11$& $1$\\
               \hline
        \end{tabularx}
    \end{table}
    For $d(l, 1) = 2$, we can apply Lemma~\ref{lem:choose_q_right} and get the following:
        \begin{table}[H]
             \begin{tabularx}{0.225\linewidth}{|*{3}{X|}l|}
                \multicolumn{4}{c}{Lemma~\ref{lem:choose_q_right}}\\
                \hline
                $j$ &$l$ &  $q$ &  $p$\\
                 \hline
                $1$&$3$ & $7$ & $\{1\} $\\
                   &$4$& $3$ & $\{1, 6\} $\\
                   &$6$& $7$ & $\{1\} $\\
                   &$8$& $7$ & $\{1\} $\\
                   &$10$& $2$ & $\{1, 3\} $\\
                   &$11$& $7$ & $\{1\} $\\
                   \hline
            \end{tabularx}
        \end{table}
    We thus already get commutation for all vertices except for $l = 4$ and  $l = 10$.
    Applying Lemma~\ref{lem:monomial_zero} with $q := 10$ to  $1$ and  $4$ yields 
    $u_{i 1} u_{k 4} u_{i 6} = 0$ and thus we have commutation of $u_{i 1}$ and $u_{k 4}$. Similarly, Lemma~\ref{lem:monomial_zero} with $q:= 4$ applied
    to  $1$ and  $10$ yields commutation of  $u_{i 1}$ and $u_{k 10}$.

    All in all, we get that all generators commute and thus $\QBan(C_{12}(4)) = G_{Aut}(C_{12}(4))$.
\end{proof}
\begin{prop}
    The graph $C_{12}(2, 6)$ does not have quantum symmetries.
\end{prop}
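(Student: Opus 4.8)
The plan is to follow the same strategy as in Proposition~\ref{prop:k2_c6} and the other circulant cases: show that $C(\QBan(C_{12}(2,6)))$ is commutative. First I would record the distance structure from the vertex $1$. Writing $N(1) = \{2,3,7,11,12\}$ (the circular-distance $1$, $2$ and $6$ chords), a short check shows that every remaining vertex $\{4,5,6,8,9,10\}$ lies at distance $2$ from $1$, so $C_{12}(2,6)$ has diameter $2$. By Corollary~\ref{cor:commutation_with_one_implies_all} together with vertex-transitivity it suffices to prove that $u_{i1}$ commutes with every generator $u_{kl}$, and by Lemma~\ref{lem:distances} only the cases $d(1,l) \in \{1,2\}$ have to be treated. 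Moreover the reflection $\sigma = (2\,12)(3\,11)(4\,10)(5\,9)(6\,8) \in D_{12} = G_{aut}(C_{12}(2,6))$ fixes $1$ and $7$, so by Lemma~\ref{lem:get_commutation_by_automorphism} it is enough to handle the representatives $l \in \{2,3,7\}$ at distance $1$ and $l \in \{4,5,6\}$ at distance $2$; the remaining cases follow by applying $\sigma$.

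For distance $1$ I would fix a neighbour $l$ of $1$ and use $\sum_p u_{ip} = 1$ together with Lemma~\ref{lem:distances} to write $u_{i1}u_{kl} = u_{i1}u_{kl}\sum_{p;\,d(l,p)=1} u_{ip}$, then aim to show $u_{i1}u_{kl}u_{ip} = 0$ for every neighbour $p \neq 1$ of $l$. Each such vanishing I would produce by a single application of Lemma~\ref{lem:choose_q_middle}, collected in a table in the notation of~\ref{notation:compact_notation_lemma_choose_q_middle}; for instance, for $l = 2$, $p = 3$ the vertex $q = 4$ works, since $d(1,4) = 2 \neq 1 = d(4,3)$ and $2$ is the only vertex at distance $1$ from each of $1$, $3$ and $4$. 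Where a stray $p$ is not caught by Lemma~\ref{lem:choose_q_middle}, I would fall back on Corollary~\ref{cor:different_numbers_common_neighbours}, comparing $\lvert\CN(1,l)\rvert$ with $\lvert\CN(l,p)\rvert$, exactly as in Proposition~\ref{prop:k2_c6}. Once $u_{i1}u_{kl} = u_{i1}u_{kl}u_{i1}$ is established, Lemma~\ref{lem:adj_commute} gives commutation, and propagating along $\sigma$ yields $u_{ij}u_{kl} = u_{kl}u_{ij}$ for all $j \sim l$.

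With distance-$1$ commutation in hand, I would treat distance $2$ via Lemma~\ref{lem:choose_q_right} with $m = 2$ and $t = 1$: for each $l \in \{4,5,6\}$ I would pick a neighbour $q$ of $l$ so that $1$ is the unique vertex $p$ with $d(l,p) = 2$ and $d(p,q) = d(1,q)$, giving $u_{i1}u_{kl} = u_{i1}u_{kl}u_{i1}$ and hence commutation by Lemma~\ref{lem:adj_commute}; the hypothesis $u_{kl}u_{aq} = u_{aq}u_{kl}$ for $d(a,k) = 1$ is precisely the distance-$1$ commutation already proved. Residual terms where the set $P$ from~\ref{notation:compact_notation_lemma_choose_q_right} is not a singleton I would dispatch individually with Lemma~\ref{lem:monomial_zero} (choosing a witness $q'$ at unequal distances) or again with Corollary~\ref{cor:different_numbers_common_neighbours}. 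Since the diameter is $2$, this exhausts all cases, so all generators commute and $\QBan(C_{12}(2,6)) = G_{aut}(C_{12}(2,6)) = D_{12}$.

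The main obstacle I anticipate is not any single step but the bookkeeping caused by the distance-$6$ chord: because adjacent vertices here have two common neighbours (e.g. $\CN(1,2) = \{3,12\}$), the quadrangle and single-common-neighbour shortcuts (Lemmas~\ref{lem:quadrangle} and~\ref{lem:one_common_neighbour}) are unavailable, and the larger distance-$2$ shell (six vertices) makes it more likely that some applications of Lemmas~\ref{lem:choose_q_middle} and~\ref{lem:choose_q_right} leave a non-singleton $P$. Verifying that a suitable witness vertex $q$ exists in each of those residual cases --- and that the common-neighbour counts genuinely differ when invoking Corollary~\ref{cor:different_numbers_common_neighbours} --- is the delicate part, though it is entirely mechanical and well suited to the computer-assisted checking described in the strategy section.
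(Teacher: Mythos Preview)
Your overall strategy matches the paper's, and your distance-$1$ phase via Lemma~\ref{lem:choose_q_middle} goes through as you describe. The genuine gap is at distance~$2$ with $l=5$ and the residual $p=9$. The pair $(1,9)$ sits symmetrically around~$5$: every neighbour of~$5$ (namely $3,4,6,7,11$) is equidistant from $1$ and from $9$, so Lemma~\ref{lem:monomial_zero} with a distance-$1$ witness $q'$ cannot separate them; and $\CN(1,5)=\CN(5,9)=\{3,7,11\}$, so Corollary~\ref{cor:different_numbers_common_neighbours} also fails. This particular term $u_{i1}u_{k5}u_{i9}$ is therefore not ``entirely mechanical'' --- the toolkit you list is provably insufficient for it. The paper flags exactly this obstruction.

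The paper's fix is a short bootstrapping step you did not anticipate. One first establishes $u_{i1}u_{k4}=u_{k4}u_{i1}$ (which your methods do handle), then applies Lemma~\ref{lem:get_commutation_by_automorphism} with the reflection $\phi_1=(1\,5)(2\,4)(6\,12)(7\,11)(8\,10)$ through the axis $3$--$9$ --- \emph{not} your reflection $\sigma$ through $1$--$7$ --- to obtain commutation of $u_{*,5}$ with $u_{*,2}$. Now $q=2$ becomes a valid witness in Lemma~\ref{lem:monomial_zero} even though $d(5,2)=2$: one has $d(1,2)=1\neq 2=d(9,2)$, and the commutation hypothesis on $u_{k5}u_{r2}$ holds by what was just proved. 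The symmetric case $l=9$, $p=5$ uses the reflection through $5$--$11$ in the same way. Once you insert this bootstrapping, your argument is complete and essentially coincides with the paper's.
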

\begin{proof}
    As was the case with $C_{12}(4)$, we can show commutation for all $l$ with  $d(l, 1) = 1$ by applying Lemma~\ref{lem:choose_q_middle}:
    \begin{table}[H]
        \begin{tabularx}{0.2\linewidth}[t]{|*{4}{X|}}
            \multicolumn{4}{c}{Lemma~\ref{lem:choose_q_middle}}\\
            \hline
            $j$ &$l$ &  $p$ &  $q$\\
             \hline
            $1$&$2$ & $3$ & $1$\\
               & & $4$ & $5$\\
               & & $8$ & $3$\\
               & & $12$ & $3$\\
               &$3$& $2$& $4$\\
               && $4$& $5$\\
               && $5$& $2$\\
               && $9$& $2$\\
               \hline
        \end{tabularx}
        \quad \quad
        \begin{tabularx}{0.2\linewidth}[t]{|*{4}{X|}}
            \multicolumn{4}{c}{Lemma~\ref{lem:choose_q_middle}}\\
            \hline
            $j$ &$l$ &  $p$ &  $q$\\
             \hline
               $1$&$7$& $5$& $6$\\
               && $6$& $2$\\
               && $8$& $3$\\
               &&$9$& $8$\\
               &$11$& $5$& $12$\\
               && $9$& $10$\\
               && $10$& $2$\\
               &&$12$& $3$\\
               \hline
        \end{tabularx}
        \quad \quad
        \begin{tabularx}{0.2\linewidth}[t]{|*{4}{X|}}
            \multicolumn{4}{c}{Lemma~\ref{lem:choose_q_middle}}\\
            \hline
            $j$ &$l$ &  $p$ &  $q$\\
             \hline
               $1$&$12$& $2$& $4$\\
               && $6$& $2$\\
               && $10$& $2$\\
               &&$11$& $1$\\
               \hline
        \end{tabularx}
    \end{table}
    Using Lemma~\ref{lem:choose_q_middle} is also sufficient for vertices $4, 6, 8$ and  $10$:
    \begin{table}[H]
        \begin{tabularx}{0.2\linewidth}[t]{|*{4}{X|}}
            \multicolumn{4}{c}{Lemma~\ref{lem:choose_q_middle}}\\
            \hline
            $j$ &$l$ &  $p$ &  $q$\\
             \hline
            $1$&$4$ & $7$ & $2$\\
               & & $8$ & $10$\\
               & & $9$ & $2$\\
               & & $11$ & $3$\\
               & & $12$ & $7$\\
               &$6$& $2$& $11$\\
               && $3$& $5$\\
               && $9$& $8$\\
               && $10$& $3$\\
               && $11$& $2$\\
               \hline
        \end{tabularx}
        \quad \quad
        \begin{tabularx}{0.2\linewidth}[t]{|*{4}{X|}}
            \multicolumn{4}{c}{Lemma~\ref{lem:choose_q_middle}}\\
            \hline
            $j$ &$l$ &  $p$ &  $q$\\
             \hline
               $1$&$8$& $3$& $4$\\
               && $4$& $6$\\
               && $5$& $2$\\
               &&$11$& $5$\\
               &&$12$& $3$\\
               &$10$& $2$& $7$\\
               && $3$& $7$\\
               && $5$& $4$\\
               &&$6$& $3$\\
               &&$7$& $2$\\
               \hline
        \end{tabularx}
    \end{table}
    The only remaining vertices are then $5$ and  $9$. 
    We thus have to consider 
    \[
        u_{i 1} u_{k 5} \quad \text{ and } \quad u_{i 1} u_{k 9}
    .\] 
    Multiplying both of these with $1 = \sum_{r \in V(C_{12}(2, 6))}$, we get
    \begin{align*}
        u_{i 1} u_{k 5} &= u_{i 1} u_{k 5} \left( u_{i 1} + u_{i 2} + u_{i 8} + u_{i 9} + u_{i 10} + u_{i 12} \right) \\
        u_{i 1} u_{k 9} &= u_{i 1} u_{k 9} \left( u_{i 1} + u_{i 2} + u_{i 4} + u_{i 5} + u_{i 6} + u_{i 12} \right) 
    .\end{align*}
    Applying Lemma~\ref{lem:choose_q_middle} simplifies this:
    \begin{table}[H]
        \begin{tabularx}{0.2\linewidth}[t]{|*{4}{X|}}
            \multicolumn{4}{c}{Lemma~\ref{lem:choose_q_middle}}\\
            \hline
            $j$ &$l$ &  $p$ &  $q$\\
             \hline
            $1$&$5$ & $2$ & $8$\\
               & & $8$ & $10$\\
               & & $10$ & $3$\\
               & & $12$ & $10$\\
               \hline
        \end{tabularx}
        \quad \quad
        \begin{tabularx}{0.2\linewidth}[t]{|*{4}{X|}}
            \multicolumn{4}{c}{Lemma~\ref{lem:choose_q_middle}}\\
            \hline
            $j$ &$l$ &  $p$ &  $q$\\
             \hline
               $1$&$9$& $2$& $4$\\
               && $4$& $6$\\
               && $6$& $3$\\
               &&$12$& $6$\\
               \hline
        \end{tabularx}
    \end{table}
    to
    \begin{align*}
        u_{i 1} u_{k 5} &= u_{i 1} u_{k 5} \left( u_{i 1} + u_{i 9}\right) \\
        u_{i 1} u_{k 9} &= u_{i 1} u_{k 9} \left( u_{i 1} + u_{i 5}\right) 
    .\end{align*}
    We would now like to apply Lemma~\ref{lem:monomial_zero} for the last step. However so far, we only know
    that for any vertex $q$ with  $d(q, 5) = 1$ the generators $u_{k 5}$ and  $u_{r q}$ commute. But for all
    of these vertices, we have $d(q, 1) = d(q, 9)$, and thus the lemma is not applicable. A similar situation 
    holds for any  $q$ with  $d(q, 9) = 1$.

    Consider the following two automorphisms of $C_{12}(2, 6)$:
    \begin{align*}
        \phi_1 &= (1 , 5)\; (2, 4)\; (6, 12)\;(7, 11)\;(8, 10)\\
        \phi_2 &= (1, 9)\;(2, 8)\;(3, 7)\;(4, 6)\;(10, 12)
    .\end{align*}
    It holds that $\phi_1(1) = 5$ and $\phi_1(4) = 2$. Since we already know by the above that 
    $u_{i 1} u_{k 4} = u_{k 4} u_{i 1}$ for any  $i, k \in V(C_{12}(2, 6))$, we get by
    Lemma~\ref{lem:get_commutation_by_automorphism} that also $u_{i 5} u_{k 2}$ commute for any  $i, k$.
    We thus can apply Lemma~\ref{lem:monomial_zero} with  $q := 2$ to  $1, 5$ and  $9$ and get
     \[
         u_{i 1} u_{k 5} u_{i 9} = 0
    ,\]
    and therefore $u_{i 1}$ and  $u_{k 5}$ commute.

    Similarly, we observe that  $\phi_2(1)= 9$ and  $\phi_2(6) = 4$. Again, we already know that $u_{i 1}$
    and  $u_{k 6}$ commute for any  $i, k$ and thus by Lemma~\ref{lem:get_commutation_by_automorphism}
    so do  $u_{i 9}$ and  $u_{k 4}$.
    Then applying Lemma~\ref{lem:monomial_zero} with  $q:= 4$ to  $1, 9$ and $5$, we get that 
    $u_{i 1}$ and  $u_{k 9}$ commute.

    We thus showed that  $u_{i 1}$ commutes with every other generator, and since  $C_{12}(2, 6)$ is 
    vertex-transitive this is enough to show that $\QBan(C_{12}(2, 6))$ is commutative.
\end{proof}
\begin{prop}
    The graph $C_{12}(3, 6)$ does not have quantum symmetries.
\end{prop}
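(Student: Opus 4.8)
The plan is to mirror Proposition~\ref{prop:k2_c6} and the other circulant cases: the goal is to show that $C(\QBan(C_{12}(3,6)))$ is commutative, and by vertex-transitivity together with Corollary~\ref{cor:commutation_with_one_implies_all} it suffices to prove that $u_{i1}$ commutes with every generator $u_{kl}$. First I would record the structure of $\Gamma=C_{12}(3,6)$: vertex $1$ is adjacent to $\{2,12\}$ (the cycle edges), to $\{4,10\}$ (the length-$3$ chords) and to $\{7\}$ (the diameter chord), so $\Gamma$ is $5$-regular, and a direct check shows it has diameter $2$ with $\{3,5,6,8,9,11\}$ the six vertices at distance $2$ from $1$. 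Two further features guide the argument: the residue classes $\{1,4,7,10\}$, $\{2,5,8,11\}$, $\{3,6,9,12\}$ modulo $3$ each span a $K_4$ (via the chords of length $3$ and $6$), and the reflection $\rho\colon i\mapsto 2-i \bmod 12$ is an automorphism fixing $1$ and $7$ and interchanging $2\leftrightarrow12$, $4\leftrightarrow10$, $3\leftrightarrow11$, $5\leftrightarrow9$, $6\leftrightarrow8$. By Lemma~\ref{lem:distances} only the distances $m=d(i,k)=d(1,l)\in\{1,2\}$ need be treated.

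The crucial point is the \emph{order} in which the distances are handled. I would first treat the cycle edges, $l\in\{2,12\}$, using only the self-contained Lemma~\ref{lem:choose_q_middle}: writing $u_{i1}u_{k2}=u_{i1}u_{k2}\sum_{p\sim2}u_{ip}$, the terms $p\in\{5,8,11\}$ vanish by single applications of the table version, so that by Lemma~\ref{lem:adj_commute} one obtains commutation as soon as the remaining term $u_{i1}u_{k2}u_{i3}$ is killed. Once $l=2$ is settled, rotation by $1$ together with Lemma~\ref{lem:get_commutation_by_automorphism} propagates commutation to every cycle edge $(a,a+1)$. With all cycle-edge commutations in hand I would then do distance $2$: for the representatives $l\in\{3,5,6\}$ (modulo $\rho$) I would apply Lemma~\ref{lem:choose_q_right} with $q$ a \emph{cycle}-neighbour of $l$, so that its hypothesis $u_{kl}u_{aq}=u_{aq}u_{kl}$ is exactly a cycle-edge commutation, and then clean up the surviving terms with Lemma~\ref{lem:choose_q_middle}, propagating by $\rho$ and by rotations. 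Finally, the chord edges $l\in\{4,7,10\}$ reduce to single monomials $u_{i1}u_{kl}u_{ip}$ which I would annihilate with Lemma~\ref{lem:monomial_zero}, now choosing $q$ with $d(l,q)=2$ so that its commutation hypothesis is a distance-$2$ statement already established in the previous step.

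The hard part will be the family of \emph{symmetric} monomials in which a reflection interchanges precisely the two vertices a distance argument would need to separate; the basic example is $u_{i1}u_{k2}u_{i3}$, where the reflection $i\mapsto 4-i \bmod 12$ fixes $2$ and swaps $1\leftrightarrow3$, so $d(1,q)=d(3,q)$ for every $q$ isolating $2$ and the table version of Lemma~\ref{lem:choose_q_middle} cannot apply. The device I expect to use is the \emph{full} summed form of Lemma~\ref{lem:choose_q_middle}: taking $q=6$ (so $s=d(6,2)=2$ and $d(1,6)=2\neq1=d(6,3)$) the admissible $t$ are exactly $\{2,4\}$, both lying in the $K_4$ on $\{1,4,7,10\}$, and the lemma produces the relation $u_{i1}(u_{k2}+u_{k4})u_{i3}=0$; since $u_{i1}u_{k4}u_{i3}=0$ follows from a clean table application (with $q=7$), one concludes $u_{i1}u_{k2}u_{i3}=0$. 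An analogous relation-then-reduce step, together with the deferral of the chord edges until distance $2$ is available, is what I expect to carry the symmetric cases $l=4,7,10$, exactly in the spirit of the automorphism bootstrap at the end of the $C_{12}(2,6)$ proof. Once all generators are shown to commute, $\QBan(C_{12}(3,6))=G_{aut}(C_{12}(3,6))=D_{12}$, so the graph has no quantum symmetries.
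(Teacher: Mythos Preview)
Your plan is correct and carries through, but it is organised differently from the paper's proof. The paper \emph{begins} at distance~$2$: it first shows, purely with the single-vertex form of Lemma~\ref{lem:choose_q_middle}, that $u_{i1}$ commutes with $u_{k5}$ (five clean table entries), propagates this to $u_{k9}$ by the reflection $\rho$, and then uses Lemma~\ref{lem:monomial_zero} with rotated copies of the pair $(1,5)$ to kill the stubborn monomials at distance~$1$; the remaining distance-$2$ cases $l\in\{3,6\}$ finish with one application each of Lemma~\ref{lem:choose_q_right} (now legitimate because all distance-$1$ commutations hold) followed by Lemma~\ref{lem:choose_q_middle}. You instead begin at distance~$1$ with the cycle edge $l=2$, and your device for the symmetric monomial $u_{i1}u_{k2}u_{i3}$ is genuinely different: rather than importing a prior distance-$2$ commutation into Lemma~\ref{lem:monomial_zero} as the paper does, you use the \emph{summed} conclusion of Lemma~\ref{lem:choose_q_middle} with $q=6$ to produce the relation $u_{i1}(u_{k2}+u_{k4})u_{i3}=0$ and then subtract off the companion term $u_{i1}u_{k4}u_{i3}$ via a separate table entry. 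This is a nice self-contained trick that avoids the bootstrap, at the cost of a slightly longer distance-$2$ phase afterwards (your choice of $q$ as a cycle neighbour of $l$ in Lemma~\ref{lem:choose_q_right} leaves more residual $p$'s to clean up than the paper's choices do, but they all fall to Lemma~\ref{lem:choose_q_middle} as you say).

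One small slip to fix: in your summed-form step the admissible $t$ are indeed $\{2,4\}$ (the common neighbours of $1$ and $3$ at distance $2$ from $6$, the third common neighbour $12$ being adjacent to $6$), but they do \emph{not} ``both lie in the $K_4$ on $\{1,4,7,10\}$'' since $2$ is not in that clique. The observation is irrelevant to the argument, so simply delete it. Also, ``reduce to single monomials'' is a slight overstatement for $l\in\{4,7\}$: two residual terms survive in each case ($p\in\{7,10\}$ for $l=4$ and $p\in\{4,10\}$ for $l=7$), but your recipe via Lemma~\ref{lem:monomial_zero} with a distance-$2$ witness $q$ handles all of them exactly as the paper does (e.g.\ $q=8,12$ for $l=4$ and $q=3,11$ for $l=7$).
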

\begin{proof}
     We first note, that the following permutation is an automorphism of $C_{12}(3, 6)$:
    \[
        \phi := (2, 12)\;(3, 11)\;(4, 10)\;(5, 9)\;(6, 8)
    .\] 
    This is the mirroring on the edge that goes through $1$ and  $7$. In particular, we have that
    $\phi(1) = 1$, and thus it suffices to show that  $u_{i 1}$ and  $u_{k l}$ commute for 
    $l \in \{2, 3, 4, 5, 6, 7\} $, since the commutation with the rest of the generators then 
    follows from Lemma~\ref{lem:get_commutation_by_automorphism} with $\phi$.

    We will now show, that $u_{i 1}$ commutes with $u_{k 5}$. 
    To see this, we apply Lemma~\ref{lem:choose_q_middle}:
    \begin{table}[H]
        \begin{tabularx}{0.2\linewidth}[t]{|*{4}{X|}}
            \multicolumn{4}{c}{Lemma~\ref{lem:choose_q_middle}}\\
            \hline
            $j$ &$l$ &  $p$ &  $q$\\
             \hline
            $1$&$5$ & $3$ & $6$\\
               & & $7$ & $12$\\
               & & $9$ & $4$\\
               & & $10$ & $9$\\
               && $12$& $4$\\
               \hline
        \end{tabularx}
    \end{table}
    It then immediately follows that $u_{i 1}$ also commutes with $u_{k 9}$ by 
    Lemma~\ref{lem:get_commutation_by_automorphism} with $\phi$.

    Next, we consider all vertices $l$ such that $d(l, 1) = 1$, i.e. $l \in \{2, 4, 7\} $.
    We again apply Lemma~\ref{lem:choose_q_middle}:
    \begin{table}[H]
        \begin{tabularx}{0.2\linewidth}[t]{|*{4}{X|}}
            \multicolumn{4}{c}{Lemma~\ref{lem:choose_q_middle}}\\
            \hline
            $j$ &$l$ &  $p$ &  $q$\\
             \hline
            $1$&$2$ & $5$ & $7$\\
               & & $8$ & $4$\\
               & & $11$ & $5$\\
               &$4$ & $3$ & $7$\\
               && $5$& $7$\\
               &$7$&$6$&$3$\\
               && $8$& $4$\\
               \hline
        \end{tabularx}
    \end{table}
    We get that $u_{i 1} u_{k 2} u_{i 3} = 0$ by applying Lemma~\ref{lem:monomial_zero} with $q := 6$. It is applicable
    since applying Lemma~\ref{lem:get_commutation_by_automorphism} with the automorphism $v \mapsto v + 1 \mod 12$
    yields that $u_{k 2}$ and $u_{r 6}$ commute, since we already have commutation of $u_{k 1}$ and $u_{r 5}$. 
    We thus have commutation of $u_{i 1}$ and $u_{k 2}$.

    For $l = 4$, we still need to show that  $u_{i 1} u_{k 4} u_{i 7} = 0$ and $u_{i 1} u_{k 4} u_{i 10} = 0$. 
    We have commutation of $u_{k 4}$ with both $u_{r 8}$ and $u_{r 12}$ by Lemma~\ref{lem:get_commutation_by_automorphism}
    with the automorphism $v \mapsto v + 3 \mod 12$, since we have commutation of $u_{k 1}$ with both $u_{r 5}$ and $u_{r 9}$.
    We can thus apply Lemma~\ref{lem:monomial_zero} to both monomials, once with $q := 8$ and  once with  $q := 12$,
    and get that $u_{i 1}$ and $u_{k 5}$ commute.

    For $l = 7$, we need to show that  $u_{i 1} u_{k 7} u_{i 4} = 0$ and $u_{i 1} u_{k 7} u_{i 10} = 0$.
    Again, similar to above, we have commutation of $u_{k 7}$ with both $u_{r 3}$ and $u_{r 11}$ by
    Lemma~\ref{lem:get_commutation_by_automorphism} with the automorphism $v \mapsto v + 6$.
    We can then again apply Lemma~\ref{lem:monomial_zero} once with  $q := 3$ and once with  $q := 11$ and get that
     $u_{i 1}$ and $u_{k 7}$ commute. 
     
     We now have commutation of all generators $u_{i 1}$ and $u_{kl}$ where $d(l, 1) = 1$
     and thus we also have commutation of all generators $u_{ij}$ and $u_{kl}$ where $d(i, k) = d(j, l) = 1$.

     Only missing are now the vertices $3$ and  $6$. For these, applying first Lemma~\ref{lem:choose_q_right}
     and then  Lemma~\ref{lem:choose_q_middle} shows the commutation of  $u_{i 1}$ with $u_{k 3}$ and $u_{k 6}$ :
    \begin{table}[H]
        \begin{tabularx}{0.26\linewidth}[t]{|*{3}{X|}l|}
            \multicolumn{4}{c}{Lemma~\ref{lem:choose_q_right}}\\
            \hline
            $j$ &$l$ &  $q$ &  $p$\\
             \hline
            $1$&$3$ & $12$ & $\{1, 11\} $\\
               &$6$& $5$ & $\{1, 10\} $\\
               \hline
        \end{tabularx}
        \quad \quad
        \begin{tabularx}{0.2\linewidth}[t]{|*{4}{X|}}
            \multicolumn{4}{c}{Lemma~\ref{lem:choose_q_middle}}\\
            \hline
            $j$ &$l$ &  $p$ &  $q$\\
             \hline
            $1$&$3$ & $11$ & $4$\\
               &$6$ & $10$ & $2$\\
               \hline
        \end{tabularx}
    \end{table}
    We thus have that all generators commute and we get $\QBan(C_{12}(3, 6)) = G_{aut}(C_{12}(3, 6))$.

\end{proof}
\begin{prop}
    The graph $C_{12}(4, 6)$ does not have quantum symmetries.
\end{prop}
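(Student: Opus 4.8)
The plan is to show directly that $C(\QBan(\Gamma))$ is commutative, where $\Gamma=C_{12}(4,6)$, i.e. $i\sim j \iff |i-j|\bmod 12\in\{1,4,6\}$. The first thing I would record is the combinatorial skeleton: $\Gamma$ is $5$-regular of diameter $2$, and the $4$-chords split the twelve vertices into the four triangles $\{1,5,9\},\{2,6,10\},\{3,7,11\},\{4,8,12\}$, each vertex lying in exactly one of them. An edge inside a triangle has exactly one common neighbour (the third triangle vertex), whereas the remaining edges (the $\pm1$- and the $6$-chords) have none; at distance $2$ the three $D_{12}$-orbits of pairs are separated by their common-neighbour counts, with $|\CN(1,3)|=3$, $|\CN(1,4)|=2$ and $|\CN(1,6)|=4$. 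By vertex-transitivity and Corollary~\ref{cor:commutation_with_one_implies_all} it suffices to commute $u_{i1}$ with every $u_{kl}$, and since the reflection $v\mapsto 2-v$ fixes $1$ and $7$ and acts as $(2\;12)(3\;11)(4\;10)(5\;9)(6\;8)$, Lemma~\ref{lem:get_commutation_by_automorphism} lets me restrict to $l\in\{2,3,4,5,6,7\}$.

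First I would settle the triangle edge $l=5$. If the row pair $(i,k)$ is itself a triangle edge, then $\{1,5,9\}$ and the triangle through $i,k$ both satisfy the hypotheses of Lemma~\ref{lem:one_common_neighbour_gen}, giving $u_{i1}u_{k5}=u_{k5}u_{i1}$; otherwise $(i,k)$ is a chord with $|\CN(i,k)|=0\neq1=|\CN(1,5)|$, so $u_{i1}u_{k5}=0$ by Lemma~\ref{lem:different_numbers_common_neighbours}. Hence the column pair $(1,5)$ commutes for \emph{all} $i,k$, and transporting along its $D_{12}$-orbit via Lemma~\ref{lem:get_commutation_by_automorphism} yields full commutation at every triangle edge, unconditionally.

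Next I would handle the distance-$2$ vertices $l\in\{3,4,6\}$. Writing $u_{i1}u_{kl}=u_{i1}u_{kl}\sum_{d(l,p)=2}u_{ip}$, Corollary~\ref{cor:different_numbers_common_neighbours} annihilates $u_{i1}u_{kl}u_{ip}$ whenever $|\CN(l,p)|\neq|\CN(1,l)|$, which in each case leaves a single residual $p$ (namely $p=5,7,11$ for $l=3,4,6$). For that residual I would invoke Lemma~\ref{lem:monomial_zero} with $q$ a triangle-neighbour of $l$ (e.g. $q=7,8,10$ respectively): one checks $d(p,q)\neq d(1,q)$, and the commutation $u_{kl}u_{rq}$ demanded by the lemma holds because $(l,q)$ is a triangle edge, already cleared. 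Thus $u_{i1}u_{kl}=u_{i1}u_{kl}u_{i1}$ and Lemma~\ref{lem:adj_commute} gives commutation, so all distance-$2$ pairs commute.

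Finally I would return to the chord neighbours $l\in\{2,7\}$. The same expansion together with Corollary~\ref{cor:different_numbers_common_neighbours} reduces each to two residual monomials (for $l=2$ the vertices $p=3,8$; for $l=7$ the vertices $p=6,8$). These are exactly the monomials that Lemma~\ref{lem:choose_q_middle} cannot close, because the only $q$ isolating $l$ among the relevant common neighbours forces $d(1,q)=d(p,q)$, violating the lemma's hypothesis. Instead I would kill them with Lemma~\ref{lem:monomial_zero} taking $q$ at distance $2$ from $l$ (for example $q=5$ when $l=2$), so that $(l,q)$ is a distance-$2$ pair whose commutation was just established; then Lemma~\ref{lem:adj_commute} finishes the chords and, all six orbits being done, $\QBan(\Gamma)=G_{aut}(\Gamma)$. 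The main obstacle is precisely this ordering: since neither Lemma~\ref{lem:choose_q_middle} nor Lemma~\ref{lem:choose_q_right} with $s=t=1$ isolates the residual $p$, one must secure the triangle edges unconditionally, use them to clear distance $2$, and only then use distance $2$ to clear the remaining distance-$1$ chords; any other choice of $q$ makes the commutation required by Lemma~\ref{lem:monomial_zero} depend circularly on the very generators still being treated.
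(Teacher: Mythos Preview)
Your argument is correct and follows the same overall scaffold as the paper's proof: reduce via the reflection fixing $1$, settle $l=5$ first, then the distance-$2$ cases $l\in\{3,4,6\}$, and only afterwards the remaining chord neighbours $l\in\{2,7\}$. The difference is tactical. The paper handles $l=5$ and the distance-$2$ vertices by explicit Lemma~\ref{lem:choose_q_middle} tables together with several automorphism transports and ad-hoc applications of Lemma~\ref{lem:monomial_zero}; for $l\in\{2,7\}$ it first applies Lemma~\ref{lem:choose_q_right} and then again uses rotations to manufacture the commutations needed. Your route replaces almost all of that bookkeeping by the common-neighbour invariants: Lemma~\ref{lem:one_common_neighbour_gen} plus Lemma~\ref{lem:different_numbers_common_neighbours} dispatch the triangle edge $(1,5)$ directly, and Corollary~\ref{cor:different_numbers_common_neighbours} immediately prunes each distance-$2$ case to a single residual $p$, which you then kill with Lemma~\ref{lem:monomial_zero} using a triangle-edge $q$. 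This is cleaner and makes the ordering constraint you highlight (triangles $\Rightarrow$ distance $2$ $\Rightarrow$ remaining chords) transparent rather than buried in tables.

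One small point of presentation: your phrase ``for example $q=5$ when $l=2$'' is fine for $l=2$ (there $q=5$ indeed kills both residuals $p=3$ and $p=8$), but for $l=7$ no single distance-$2$ vertex $q$ works for both residuals simultaneously, since $d(6,q)=d(1,q)$ and $d(8,q)=d(1,q)$ never fail together. You need, say, $q=9$ (or $q=10$) for $p=6$ and $q=4$ (or $q=5$) for $p=8$. This does not affect the validity of your method, only the wording; it would be worth stating explicitly that the residual monomials for $l=7$ require two different choices of $q$.
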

\begin{proof}
    Again we note that we have the following automorphism of $C_{12}(4, 6)$:
    \[
        \phi := (2, 12)\;(3, 11)\;(4, 10)\;(5, 9)\;(6, 8)
    ,\] 
    i.e. the mirroring on the edge that goes through $1$ and  $7$. 
    As was the case for $C_{12}(3, 6)$, it suffices to show that  $u_{i 1}$ and  $u_{k l}$ commute for 
    $l \in \{2, 3, 4, 5, 6, 7\} $, since the commutation with the rest of the generators then 
    follows from Lemma~\ref{lem:get_commutation_by_automorphism} with $\phi$.

    We now first consider $u_{i 1}$ and $u_{k 5}$. Applying Lemma~\ref{lem:choose_q_middle} already yields their commutation:
    \begin{table}[H]
        \begin{tabularx}{0.2\linewidth}[t]{|*{4}{X|}}
            \multicolumn{4}{c}{Lemma~\ref{lem:choose_q_middle}}\\
            \hline
            $j$ &$l$ &  $p$ &  $q$\\
             \hline
            $1$&$5$ & $4$ & $8$\\
               & & $6$ & $9$\\
               & & $9$ & $1$\\
               & & $11$ & $9$\\
               \hline
        \end{tabularx}
    \end{table}
    Using this, we can show commutation for those vertices $l$ that satisfy  $d(1, l) = 2$, i.e.  $l \in \{3, 4, 6\} $.
    We again apply Lemma~\ref{lem:choose_q_middle} to these vertices to reduce the number of terms we have
    to consider:
    \begin{table}[H]
        \begin{tabularx}{0.2\linewidth}[t]{|*{4}{X|}}
            \multicolumn{4}{c}{Lemma~\ref{lem:choose_q_middle}}\\
            \hline
            $j$ &$l$ &  $p$ &  $q$\\
             \hline
            $1$&$3$ & $10$ & $11$\\
               & & $12$ & $6$\\
               &$4$ & $2$ & $7$\\
               & & $7$ & $2$\\
               & &$9$&$7$\\
               &&  $11$& $2$\\
               &$6$& $3$& $4$\\
               && $4$& $2$\\
               && $9$& $2$\\
               \hline
        \end{tabularx}
    \end{table}
    Starting with vertex $3$, we see that 
     \[
         u_{i 1} u_{k 3} = u_{i 1} u_{k 3}\left( u_{i 1} + u_{i 5} + u_{i 6} + u_{i 8} \right) 
    .\] 
    We note, that the automorphism that rotates $1$ onto  $3$ maps  $5$ to  $7$ and we thus get
    $u_{i 3} u_{k 7} = u_{k 7} u_{i 3}$ for any  $i, k \in V(C_{12}(4, 6))$ by 
    Lemma~\ref{lem:get_commutation_by_automorphism}. We can thus apply
    Lemma~\ref{lem:monomial_zero} with $q:= 7$ to  $1, 3$ and  $5$, since  $7 \sim 1$ but  $5 \not \sim 7$
    and get  $u_{i1} u_{k 3} u_{i 5} = 0$.

    Recall, that by Definition~\ref{def:cn}, we denote by $\CN(i, j)$ the set of common neighbours of the vertices  $i$ and  $j$.
    We now get by Lemma~\ref{lem:different_numbers_common_neighbours} that 
    $u_{i 1} u_{k 3} u_{i 6} = 0$ and that  $u_{i 1} u_{k 3} u_{i 8} = 0$, since 
    $\lvert \CN(1, 3)\rvert = 3$, $\lvert \CN(3, 6)\rvert =2$ and  $\lvert \CN(3, 8)\rvert = 4$.
    We thus have commutation of $u_{i 1}$ and  $u_{k 3}$.

    Looking at vertex  $4$, we see that we have
    \[
      u_{i 1} u_{k 4} = u_{i 1} u_{k 4} \left( u_{i 1} + u_{i 6} \right) 
    .\] 
    We can apply Lemma~\ref{lem:different_numbers_common_neighbours} to see that 
    $u_{i 1} u_{k 4} u_{ i 6 } = 0$, as  $\lvert\CN(1, 4)\rvert = 2$ and $\lvert \CN(4, 6)\rvert = 6$.
    By this, we also get commutation of $u_{i 1}$ and  $u_{k 4}$. 

    Lastly, looking at vertex $6$, we see
    \[
    u_{i 1} u_{k 6} = u_{i 1} u_{k 6} \left( u_{i 1} + u_{i 8} u_{i 9} + u_{i 11} \right) 
    .\] 
    We will now show, that $u_{k 6}$ commutes with $u_{i 8}$ and $u_{i 9}$.
    If this is the case, then we get
    \begin{align*}
        u_{i 1} u_{k 6} u_{i 8} &= u_{i 1} u_{i 8} u_{k 6} = 0\\
        u_{i 1} u_{k 6} u_{i 9} &= u_{i 1} u_{i 9} u_{k 6} = 0
    .\end{align*} 

    Consider the automorphism
    \[
    \sigma = (1\; 6)(2\;5)(3\;4)(7\;12)(8\;11)(9\;10)
    .\] 
    Since $\sigma$ maps  $1$ to  $6$, we can translate the question of whether  $u_{k 6}$ commutes with 
    other generators to the question whether $u_{v 1}$ commutes with other generators, for any vertex  $v$ 
    by using Lemma~\ref{lem:get_commutation_by_automorphism}.
    If we thus want to know, whether $u _{k 6}$ and $u _{i 8}$ commute, we can instead ask, whether
    $u _{v 1}$ and $u _{w 11}$ commute.
    Since we already know that $u_{v 1}$ and $u_{w 3}$ commute for any vertices $v, w$, and since
    the mirroring automorphism  $\phi$ from above maps  $1$ to  $1$ and  $3$ to  $11$, we get that
    $u_{v 1}$ and $u_{w 11}$ commute for any $v, w$, again by Lemma~\ref{lem:get_commutation_by_automorphism}.
    Therefore, also $u_{k 6}$ and $u_{i 8}$ commute.

    Next, to see that $u_{k 6}$ and $u_{i 9}$ commute, we need to show that $u_{v 1}$ and $u_{w 10}$ commute.
    But we already know that $u_{v 1}$ and $u_{w 4}$ commute and since $\phi(4) = 10$, we get commutation
    of  $u_{v 1}$ and $u_{w 10}$ and thus also of $u_{k 6}$ and $u_{i 9}$.

    Lastly, we can apply Lemma~\ref{lem:monomial_zero} with $q:= 2$ to see that
     \[
        u_{i 1} u_{k 6} u_{i 11} = 0
    .\] 
    To apply this lemma, we need that $u_{k 6}$ commutes with $u_{r 2}$ for any vertex $r$. 
    This can easily be seen by using Lemma~\ref{lem:get_commutation_by_automorphism} with the 
    automorphism $\tau'(v) := v + 1 \mod 12$, as this maps $1$ to  $2$ and  $5$ to  $6$. 

    We thus have that $u_{i 1}$ and $u_{k 6}$ commutes and thus have commutation of all $u_{i j}$ and 
    $u_{kl}$ where $d(i, k) = d(j, l) = 2$.

    We still need to show commutation of $u_{i 1}$ with $u_{k 2}$ and with $u_{k 7}$.
    Applying Lemma~\ref{lem:choose_q_right} with  $q:= 4$ to  $1$ and  $2$ yields
     \[
         u_{i 1} u_{k 2} = u_{i 1} u_{k 2} \left( u_{i 1} + u_{i 6} \right) 
    .\] 
    Since we have already seen above that $u_{k 2}$ and $u_{i 6}$ commute we get
    \[
        u_{i 1} u_{k 2} u_{i 6} = u_{i 1} u_{i 6} u_{k 2} = 0
    \] 
    and thus get commutation of $u_{i 1}$ and $u_{k 2}$.

    Applying Lemma~\ref{lem:choose_q_right} with $q:= 4$ to  $1$ and  $7$ yields
     \[
          u_{i 1} u_{k 7} = u_{i 1} u_{k 7} \left( u_{i 1} + u_{i 6} + u_{i 11} \right) 
    .\] 
    Since there is an automorphism that maps $1$ to  $6$ and  $2$ to  $7$, namely 
    $v \mapsto v + 5 \mod 12$, we can use Lemma~\ref{lem:get_commutation_by_automorphism} to 
    get that $u_{k 7}$ and $u_{i 6}$ commute and thus
    \[
        u_{i 1} u_{k 7} u_{i 6} = 0
    .\] 
    Similarly, the automorphism $v \mapsto v - 2 \mod 12$ maps $1$ to $11$ and  $7$ to  $5$, and
    we thus get with Lemma~\ref{lem:get_commutation_by_automorphism} that  $u_{k 7}$ and $u_{i 11}$ 
    commute and thus
    \[
        u_{i 1} u_{k 7} u_{i 11} = 0
    .\] 
    All in all we get that $u_{i 1}$ and $u_{k 7}$ commute and with that, all of the generators commute
    and we get  $\QBan(C_{12}(4, 6)) = G_{aut}(C_{12}(4, 6))$.
\end{proof}
\begin{prop}
    The graph $C_{12}(2, 5^+)$ does not have quantum symmetries.
\end{prop}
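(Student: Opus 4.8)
The plan is to show that $C(\QBan(C_{12}(2,5^+)))$ is commutative by proving that every pair of generators commutes; since the graph is vertex-transitive, Corollary~\ref{cor:commutation_with_one_implies_all} reduces this to showing that $u_{i1}$ commutes with every generator $u_{kl}$. First I would record the structure of $\Gamma := C_{12}(2,5^+)$: each vertex $i$ is adjacent to $i\pm 1$ and $i\pm 2$ together with its partner under the perfect matching $\{\,i,\ i+5\,\}$ over even $i$ (indices mod $12$), so $\Gamma$ is $5$-regular with $N(1)=\{2,3,8,11,12\}$. A direct check shows that the six remaining vertices $\{4,5,6,7,9,10\}$ all lie at distance $2$ from $1$, so $\Gamma$ has diameter $2$ and, by Lemma~\ref{lem:distances}, only $m := d(i,k)=d(j,l)\in\{1,2\}$ must be treated. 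Note that $\Aut(\Gamma)=D_6$ acts regularly, so no nontrivial automorphism fixes a vertex; hence there are no disjoint automorphisms and the proof must establish commutativity directly rather than invoke Lemma~\ref{lem:disjoint_automorphisms}.

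For $m=1$, neither Lemma~\ref{lem:quadrangle} nor Lemma~\ref{lem:one_common_neighbour} applies (for instance $1\text{-}2\text{-}4\text{-}3\text{-}1$ is a quadrangle, and the adjacent vertices $1,2$ have the two common neighbours $3,12$). Instead, for each neighbour $l\in\{2,3,8,11,12\}$ I would expand $u_{i1}u_{kl}=u_{i1}u_{kl}\sum_{p\sim l}u_{ip}$ and kill every term with $p\neq 1$ using Lemma~\ref{lem:choose_q_middle}, collected in a table in the notation of~\ref{notation:compact_notation_lemma_choose_q_middle}; for the few terms where $l$ cannot be isolated by a single $q$ (for example $u_{i1}u_{k8}u_{i10}$, where $1$ and $8$ have no common neighbour while $8$ and $10$ have one) I would instead invoke Corollary~\ref{cor:different_numbers_common_neighbours}. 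This gives $u_{i1}u_{kl}=u_{i1}u_{kl}u_{i1}$, hence commutation by Lemma~\ref{lem:adj_commute}, and through Corollary~\ref{cor:commutation_with_one_implies_all} commutation for all distance-$1$ pairs, so that $\QBan(\Gamma)=\QBic(\Gamma)$.

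With $\QBan(\Gamma)=\QBic(\Gamma)$ in hand I can apply Lemma~\ref{lem:choose_q_right} with $s=t=1$ to the distance-$2$ vertices: taking $q$ to be a common neighbour of $1$ and $l$ yields $u_{i1}u_{kl}=u_{i1}u_{kl}\sum_{p;\,d(p,l)=2,\,p\sim q}u_{ip}$. For most $l$ the residual terms $p\neq 1$ are removed either by Corollary~\ref{cor:different_numbers_common_neighbours} (when $\lvert\CN(1,l)\rvert\neq\lvert\CN(l,p)\rvert$) or by a further application of Lemma~\ref{lem:choose_q_middle} when the middle vertex can still be isolated; in particular the \emph{special} pair $(1,10)$, for which $\lvert\CN(1,10)\rvert=4$, is settled cleanly. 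The hard part will be the genuinely symmetric \emph{twin} terms such as $u_{i1}u_{k5}u_{i10}$ and $u_{i1}u_{k6}u_{i10}$, where $\lvert\CN(1,l)\rvert=\lvert\CN(l,10)\rvert=2$ and where $1$ and $10$ carry identical distance data to all relevant vertices, so that both Corollary~\ref{cor:different_numbers_common_neighbours} and Lemma~\ref{lem:choose_q_middle} fail to separate them.

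To break this symmetry I would first establish commutation for all distance-$2$ pairs with $\lvert\CN\rvert=4$ — these form a single $\Aut(\Gamma)$-orbit, namely the pairs $\{v,v+9\}$ — by treating the representative $(1,10)$ as above and then propagating to the whole orbit via Lemma~\ref{lem:get_commutation_by_automorphism}, e.g.\ to $(5,2)$ by $v\mapsto v+4$ and to $(6,9)$ by $v\mapsto 7-v$. Each twin term can then be annihilated by Lemma~\ref{lem:monomial_zero}: for $u_{i1}u_{k5}u_{i10}$ I would take $q=2$, which distinguishes $1$ and $10$ (since $d(1,2)=1\neq 2=d(10,2)$) and for which the required commutation of $u_{k5}$ with $u_{r2}$ is exactly the already-propagated special pair $(5,2)$; for $u_{i1}u_{k6}u_{i10}$ the analogous choice is $q=9$ via $(6,9)$, and when the needed auxiliary pair happens to be an edge (as for $l=7$, where $(7,2)$ is a matching edge) the distance-$1$ commutation already suffices. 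Once all six distance-$2$ vertices are handled, Corollary~\ref{cor:commutation_with_one_implies_all} yields commutativity of all generators, so $\QBan(\Gamma)=G_{aut}(\Gamma)=D_6$ and $\Gamma$ has no quantum symmetries.
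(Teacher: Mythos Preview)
Your proposal is correct and follows essentially the same strategy as the paper's proof: reduce via Corollary~\ref{cor:commutation_with_one_implies_all} to commutation of $u_{i1}$ with all generators, dispatch the distance-$1$ cases with Lemma~\ref{lem:choose_q_middle} (supplemented where needed by a common-neighbour count), and then treat distance~$2$ via Lemma~\ref{lem:choose_q_right} together with an automorphism-propagated auxiliary commutation fed into Lemma~\ref{lem:monomial_zero} to break the stubborn symmetric ``twin'' terms. The only tactical difference is the choice of bootstrap: you first settle the $\lvert\CN\rvert=4$ distance-$2$ pair $(1,10)$ and propagate its orbit to unlock the twins, whereas the paper first isolates one easy distance-$1$ pair and propagates from there; both routes work.

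One cosmetic point worth flagging: your reading of the $5^{+}$ matching makes $N(1)=\{2,3,8,11,12\}$, while the paper's convention gives $N(1)=\{2,3,6,11,12\}$. The two labelings are related by the reflection $v\mapsto 2-v$ of $C_{12}(2)$ and yield isomorphic graphs, so your argument is mathematically valid, but your explicit vertex choices will not line up with the paper's tables.
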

\begin{proof}
    We begin by showing that $u_{i 1}$ commutes with those generators $u_{k l}$ where $d(l, 1) = 1$, i.e.  $l \in \{2, 3, 6, 11, 12\} $.
    As a first step, we apply Lemma~\ref{lem:choose_q_middle} to these vertices:
    \begin{table}[H]
        \begin{tabularx}{0.2\linewidth}[t]{|*{4}{X|}}
            \multicolumn{4}{c}{Lemma~\ref{lem:choose_q_middle}}\\
            \hline
            $j$ &$l$ &  $p$ &  $q$\\
             \hline
            $1$&$2$ & $3$ & $1$\\
               & & $9$ & $3$\\
               && $12$&  $3$\\
               &$3$ & $2$ & $4$\\
               & & $5$ & $2$\\
               & &$8$&$2$\\
               \hline
        \end{tabularx}
        \quad \quad
        \begin{tabularx}{0.2\linewidth}[t]{|*{4}{X|}}
            \multicolumn{4}{c}{Lemma~\ref{lem:choose_q_middle}}\\
            \hline
            $j$ &$l$ &  $p$ &  $q$\\
             \hline
               $1$&$6$& $5$& $2$\\
               && $7$& $2$\\
               && $8$& $2$\\
               &$11$& $9$& $3$\\
               && $10$& $2$\\
               && $12$& $3$\\
               \hline
        \end{tabularx}
        \quad \quad
        \begin{tabularx}{0.2\linewidth}[t]{|*{4}{X|}}
            \multicolumn{4}{c}{Lemma~\ref{lem:choose_q_middle}}\\
            \hline
            $j$ &$l$ &  $p$ &  $q$\\
             \hline
               $1$&$12$& $2$& $4$\\
               && $7$& $2$\\
               && $10$& $2$\\
               && $11$& $1$\\
               \hline
        \end{tabularx}
    \end{table}
    From the above, we already see that $u_{i 1}$ and $u_{k 12}$ commute. By Lemma~\ref{lem:get_commutation_by_automorphism}
    we get commutation of all $u_{i j'}$ and $u_{k l'}$ where $j'$ is uneven and  $l' = j' - 1 \mod 12$, using the 
    automorphism  $v \mapsto v + 2 \mod 12$ repeatedly for the lemma.

    Next, we note that for the commutation of  $u_{i 1}$ and $u_{k 2}$, the only thing missing is that
    \[
        u_{i 1} u_{k 2} u_{i 4} = 0
    .\] 
    This can be seen using Lemma~\ref{lem:monomial_zero} with $q:=3$ and the fact that $u_{k 2}$ and $u_{r 3}$ commute by the above.

    Similarly, we only need
    \[
        u_{i 1} u_{k 3} u_{i 4} = 0
    \] 
    to see that $u_{i 1}$ and $u_{k 3}$ commute and can see that again with Lemma~\ref{lem:monomial_zero} with $q := 2$.

    In order to see that  $u_{i 1}$ and $u_{k 6}$ commute, we need that
    \[
        u_{i 1} u_{k 6} u_{i 4} = 0
    .\] 
    By Lemma~\ref{lem:get_commutation_by_automorphism} with the automorphism
    \[
        \phi = (1 \; 6)(2\;5)(3\;4)(7\;12)(8\;11)(9\;10)
    \] 
    we get that $u_{k 6}$ and $u_{i 4}$ commute, since $\phi(1) = 6$ and  $\phi(3) = 4$ and we have already shown commutation
    of  $u_{k 1}$ and $u_{i 3}$.
    We thus get 
    \[
        u_{i 1} u_{k 6} u_{i 4} = u_{i 1}u_{i 4} u_{k 6} = 0
    \] 
    as desired.

    For the commutation of $u_{i 1}$ and $u_{k 11}$ we need
    \[
        u_{i 1} u_{k 11} u_{i 4} = 0
    .\] 
    Since we already know that $u_{k 11}$ and $u_{r 10}$ commute, we can apply Lemma~\ref{lem:monomial_zero} with $q := 10$
    to achieve this.
    We thus have commutation of all generators  $u_{ij}$ and $u_{kl}$ where $d(j, l) = 1$.

    For some vertices $l$ with  $d(l, 1) = 2$ it is enough to apply Lemma~\ref{lem:choose_q_right} first and then 
    Lemma~\ref{lem:choose_q_middle} to get that the missing monomial is $0$:
    \begin{table}[H]
        \begin{tabularx}{0.26\linewidth}[t]{|*{3}{X|}l|}
            \multicolumn{4}{c}{Lemma~\ref{lem:choose_q_right}}\\
            \hline
            $j$ &$l$ &  $q$ &  $p$\\
             \hline
            $1$&$4$ & $3$ & $\{1, 8\} $\\
               &$5$& $6$ & $\{1, 8\} $\\
               &$10$& $5$ & $\{1, 2\} $ \\
               \hline
        \end{tabularx}
        \quad \quad
        \begin{tabularx}{0.2\linewidth}[t]{|*{4}{X|}}
            \multicolumn{4}{c}{Lemma~\ref{lem:choose_q_middle}}\\
            \hline
            $j$ &$l$ &  $p$ &  $q$\\
             \hline
            $1$&$4$ & $8$ & $2$\\
               &$5$ & $8$ & $2$\\
               &$10$& $2$ &$6$\\
               \hline
        \end{tabularx}
    \end{table}
    For the other three vertices, we also first apply Lemma~\ref{lem:choose_q_right}:
    \begin{table}[H]
        \begin{tabularx}{0.26\linewidth}[t]{|*{3}{X|}l|}
            \multicolumn{4}{c}{Lemma~\ref{lem:choose_q_right}}\\
            \hline
            $j$ &$l$ &  $q$ &  $p$\\
             \hline
            $1$&$7$ & $6$ & $\{1, 4\} $\\
               &$8$& $6$ & $\{1, 4, 5\} $\\
               &$9$& $7$ & $\{1, 3, 4\} $ \\
               \hline
        \end{tabularx}
    \end{table}
    To get that $u_{i 1} u_{k 7} u_{i 4} = 0$  we apply Lemma~\ref{lem:monomial_zero} with $q := 5$.

    Next, we consider the automorphism
    \[
        \sigma = (1\;12)(2\;11)(3\;10)(4\;9)(5\;8)(6\;7)
    .\] 
    We see that $\sigma(1) = 12$ and $\sigma(5) = 8$ and thus with Lemma~\ref{lem:get_commutation_by_automorphism}
    we get that  $u_{k 8}$ and $u_{r 12}$ commute. To see now that both  $u_{i 1} u_{k 8} u_{i 4} = 0$ and
    $u_{i 1} u_{k 8} u_{i 5} = 0$ hold, we can apply Lemma~\ref{lem:monomial_zero} with $q := 12$ to each of those monomials.
    With the same automorphism $\sigma$, Lemma~\ref{lem:get_commutation_by_automorphism} shows that  $u_{k 9}$ and 
    $u_{r 12}$ commute, and again, we see that  $u_{i 1} u_{k 9} u_{i 3} = 0$ and $u_{i 1} u_{k 9} u_{i 4} = 0$ hold
    by applying Lemma~\ref{lem:monomial_zero} with $q:= 12$ to both monomials.

    All in all we see that all generators commute with $u_{i 1}$ and thus with Corollary~\ref{cor:commutation_with_one_implies_all}
    we get $\QBan(C_{12}(2, 5^+)) = G_{aut}(C_{12}(2, 5^+))$.
\end{proof}
\begin{prop}
    The graph $C_{12}(4, 5^+)$ does not have quantum symmetries.
\end{prop}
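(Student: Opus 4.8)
The plan is to follow the template of Proposition~\ref{prop:k2_c6} and the other appendix proofs: show that $u_{i1}$ commutes past $u_{kl}$ for all $i,k,l$, so that vertex-transitivity and Corollary~\ref{cor:commutation_with_one_implies_all} force $C(\QBan(C_{12}(4,5^+)))$ to be commutative. The first step is to record the geometry. Writing $C_{12}(4,5^+)$ as the $12$-cycle together with the distance-$4$ chords and the semi-circulant perfect matching attaching each even vertex to one odd vertex, one checks that the graph is $5$-regular and, crucially, has diameter $2$: from the vertex $1$ there are exactly five vertices at distance $1$ and six at distance $2$. By Lemma~\ref{lem:distances} this confines the whole argument to the distances $m = d(i,k) = d(j,l) \in \{1,2\}$, so the usual painful distance-$3$ analysis is avoided, and by Corollary~\ref{cor:commutation_with_one_implies_all} it suffices to treat the five neighbours and the six distance-$2$ vertices $l$ of $1$.

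The main computational engine is the common-neighbour count. Among the edges at $1$, exactly one lies in no triangle while the other four lie in triangles with two common neighbours; in the distance-$2$ shell the common-neighbour counts with $1$ are $1,1,2,2,2,4$. Whenever a monomial $u_{i1}u_{kl}u_{ip}$ has $|\CN(1,l)| \neq |\CN(l,p)|$, Corollary~\ref{cor:different_numbers_common_neighbours} kills it outright; I expect this alone to dispose of the no-triangle neighbour entirely (all its cross terms are mismatched against $0$), of the distance-$2$ vertex with count $4$ entirely, and of the bulk of the cross terms for the remaining $l$. The leftover monomials are exactly those where the two counts coincide; for each surviving $p\neq 1$ with $d(p,l)=m$ I would pick a witness $q$ and apply Lemma~\ref{lem:choose_q_middle} to get $u_{i1}u_{kl}u_{ip}=0$ directly, or first use Lemma~\ref{lem:choose_q_right} to shrink the expansion $u_{i1}u_{kl}\sum_p u_{ip}$ and then clear the survivors, just as in the tables of the preceding proofs; once only $p=1$ remains, $u_{i1}u_{kl}=u_{i1}u_{kl}u_{i1}$ and Lemma~\ref{lem:adj_commute} gives commutation.

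The hard part will be that neither distance can be cleared in isolation: there are stubborn monomials at distance $1$ (e.g.\ a term of shape $u_{i1}u_{k2}u_{i6}$) for which \emph{no} single $q$ meets the hypotheses of Lemma~\ref{lem:choose_q_middle}, and stubborn monomials at distance $2$ (e.g.\ $u_{i1}u_{k4}u_{i9}$) with the same defect, so the two shells are genuinely entangled. The resolution is a bootstrap driven by the automorphism group, which here is $D_6$ generated by the rotation $v \mapsto v+2 \bmod 12$ and the parity-reversing reflections $v \mapsto c-v \bmod 12$ with $c$ odd (exactly the symmetries preserving the even-vertex matching). First I would establish, using only Lemma~\ref{lem:choose_q_middle} and Corollary~\ref{cor:different_numbers_common_neighbours}, the pairs that need no prior input: the no-triangle neighbour, the count-$4$ distance-$2$ vertex, and a count-$1$ distance-$2$ vertex such as $3$ (whose single coincident cross term clears with the witness $q=2$). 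Then, transporting these commutations along suitable reflections via Lemma~\ref{lem:get_commutation_by_automorphism} (for instance $v \mapsto 3-v$ carries the established pair $(1,3)$ to $(2,12)$), I obtain the auxiliary commutations that feed the hypotheses of Lemma~\ref{lem:choose_q_right} and Lemma~\ref{lem:monomial_zero}; these then finish the stubborn terms (e.g.\ $u_{i1}u_{k2}$ closes via Lemma~\ref{lem:choose_q_right} with $q=12$ once $(2,12)$ is known).

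Finally I would flag the bookkeeping obstacle: because $D_6$ acts simply transitively on the twelve vertices, the stabiliser of $1$ is trivial, so — unlike the $D_{12}$-circulants — one cannot collapse the list of targets $l$ to a single representative, and each of the eleven pairs $(1,l)$ must be handled. The real content is therefore to choose an order in which to establish these pairs so that every application of Lemma~\ref{lem:choose_q_right} or Lemma~\ref{lem:monomial_zero} invokes only a commutation already proven and transported; laying out that dependency correctly is the crux, after which the individual verifications are the routine, table-driven computations of the appendix.
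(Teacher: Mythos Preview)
Your plan is correct and follows the same overall architecture as the paper's proof: establish a first batch of pairs $(1,l)$ with no prior input, transport those commutations along $D_6$ via Lemma~\ref{lem:get_commutation_by_automorphism}, and then close the remaining pairs with Lemma~\ref{lem:choose_q_right}/Lemma~\ref{lem:monomial_zero}. Your structural claims (diameter $2$, $5$-regularity, $\Aut = D_6$ with trivial point-stabiliser, the common-neighbour profile $\{0,2,2,2,2\}$ on edges and $\{1,1,2,2,2,4\}$ at distance $2$) all check out, as do your two worked examples: $l=3$ really does reduce to the single cross term $p=5$ killed by $q=2$, and the transport $(1,3)\mapsto(2,12)$ under $v\mapsto 3-v$ really does make $q=12$ available for $l=2$.

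The one substantive difference from the paper is your systematic use of Corollary~\ref{cor:different_numbers_common_neighbours} as a first filter. The paper does not invoke $\CN$-counts for this graph at all; it clears $l\in\{3,10,11,12\}$ purely by tabulating witnesses for Lemma~\ref{lem:choose_q_middle} (five rows for each of $3,10,11$ and four for $12$), then handles $\{4,7,8\}$ with the bootstrap (using the reflections $v\mapsto 5-v$, $v\mapsto v+6$, $v\mapsto 9-v$ to import the needed commutations into Lemma~\ref{lem:monomial_zero}), and finally $\{2,5,6,9\}$ by Lemma~\ref{lem:choose_q_right} with $q=12$. Your $\CN$-filter genuinely shortens the first stage: it dispatches $l=12$ and $l=10$ with no witness-hunting whatsoever and reduces $l\in\{3,11\}$ to a single surviving $p$ each, so the Lemma~\ref{lem:choose_q_middle} tables shrink from eighteen rows to two. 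The price is that you must compute the $\CN$-profile up front, but that is cheap here; the trade is clearly in your favour. The bootstrap phase and the dependency ordering you describe coincide with the paper's.
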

\begin{proof}
    For the vertices  $l \in \{3, 10, 11, 12\} $, it is sufficient to apply Lemma~\ref{lem:choose_q_middle} to show that
    $u_{i 1}$ and $u_{k l}$ commute:
    \begin{table}[H]
        \begin{tabularx}{0.2\linewidth}[t]{|*{4}{X|}}
            \multicolumn{4}{c}{Lemma~\ref{lem:choose_q_middle}}\\
            \hline
            $j$ &$l$ &  $p$ &  $q$\\
             \hline
            $1$&$3$ & $5$ & $2$\\
               & & $6$ & $12$\\
               && $9$&  $12$\\
               &&$10$& $12$\\
               && $12$& $4$\\
               &$10$ & $3$ & $1$\\
               & & $4$ & $2$\\
               & &$7$&$2$\\
               &&$8$& $2$\\
               && $12$& $4$\\
               \hline
        \end{tabularx}
        \quad \quad
        \begin{tabularx}{0.2\linewidth}[t]{|*{4}{X|}}
            \multicolumn{4}{c}{Lemma~\ref{lem:choose_q_middle}}\\
            \hline
            $j$ &$l$ &  $p$ &  $q$\\
             \hline
               $1$&$11$& $2$& $10$\\
               && $5$& $10$\\
               && $6$& $10$\\
               &&$8$& $2$\\
               && $9$& $8$\\
               &$12$& $4$& $6$\\
               && $7$& $2$\\
               && $8$& $2$\\
               &&$11$& $1$\\
               \hline
        \end{tabularx}
    \end{table}
    We will now take a look at the rest of the vertices in distance $2$ to the vertex  $1$, i.e.  $l \in \{4, 7, 8\} $.
    To simplify, we will again first apply Lemma~\ref{lem:choose_q_middle}:
    \begin{table}[H]
        \begin{tabularx}{0.2\linewidth}[t]{|*{4}{X|}}
            \multicolumn{4}{c}{Lemma~\ref{lem:choose_q_middle}}\\
            \hline
            $j$ &$l$ &  $p$ &  $q$\\
             \hline
            $1$&$4$ & $2$ & $5$\\
               & & $6$ & $7$\\
               && $7$&  $2$\\
               &$7$ & $4$ & $2$\\
               & & $5$ & $4$\\
               & &$9$&$6$\\
               &$8$&$6$& $9$\\
               && $10$& $11$\\
               &&$11$& $1$\\
               \hline
        \end{tabularx}
    \end{table}
    Next, we want to show that $u_{i 1} u_{k 4} u_{i 9} = 0$ and $u_{i 1} u_{k 4} u_{i 10} = 0$ hold.
    For this, we first observe that the permutation 
    \[
        \phi = (1\;4)(2\;3)(5\;12)(6\;11)(7\;10)(8\;9)
    \] 
    is an automorphism of $C_{12}(4, 5^+)$.
    By Lemma~\ref{lem:get_commutation_by_automorphism} with $\phi$ we thus get that  $u_{k 4}$ and $u_{r 2}$ commute.
    We can thus apply Lemma~\ref{lem:monomial_zero} to both monomials with $q:=2$ to get the desired result
    and see that  $u_{i 1}$ and $u_{k 4}$ commute.

    For $l = 7$, we want to show that  $u_{i 1} u_{k 7} u_{i 2} = 0 = u_{i 1} u_{k 7} u_{i 10}$.
    Similarly to above, we note that 
    \[
        \sigma = (1\;7)(2\;8)(3\;9)(4\;10)(5\;11)(6\;12)
    \] 
    is an automorphism of $C_{12}(4, 5^+)$ and thus by Lemma~\ref{lem:get_commutation_by_automorphism} we get
    commutation of $u_{k 7}$ and $u_{r 9}$. Then, by Lemma~\ref{lem:monomial_zero} with $q := 9$, we get the 
    desired result and thus  $u_{i 1}$ and $u_{ k 7 }$ commute.

    For $l = 8$, we need to show that  $u_{i 1} u_{k 8} u_{i 2} = 0 = u_{i 1} u_{k 8} u_{i 5}$.
    In this case, we use the automorphism of $C_{12}(4, 5^+)$
    \[
        \tau = (1\;8)(2\;7)(3\;6)(4\;5)(9\;12)(10\;11)
    .\] 
    Using this with Lemma~\ref{lem:get_commutation_by_automorphism}, we see that $u_{k 8}$ and $u_{i 6}$ commute
    and can thus apply Lemma~\ref{lem:monomial_zero} with $q := 6$ to get the desired outcome.
    We thus get that $u_{i 1}$ and $u_{k 8}$ commute and with this, all generators 
    $u_{ij}$ and $u_{kl}$ commute where $d(j, l) = 2$.

    For the remaining vertices $l$ with $d(l, 1) = 1$, we now apply Lemma~\ref{lem:choose_q_right} and, 
    where necessary, Lemma~\ref{lem:choose_q_middle} to see that the remaining monomial also vanishes:
    \begin{table}[H]
        \begin{tabularx}{0.26\linewidth}[t]{|*{3}{X|}l|}
            \multicolumn{4}{c}{Lemma~\ref{lem:choose_q_right}}\\
            \hline
            $j$ &$l$ &  $q$ &  $p$\\
             \hline
              $1$ &$5$& $12$ & $\{1, 4\} $\\
               &$6$& $12$ & $\{1, 7\} $ \\
               &$9$& $12$ &$\{1, 8\} $\\
             &$2$ & $12$ & $\{1\} $\\
               \hline
        \end{tabularx}
        \quad \quad
        \begin{tabularx}{0.2\linewidth}[t]{|*{4}{X|}}
            \multicolumn{4}{c}{Lemma~\ref{lem:choose_q_middle}}\\
            \hline
            $j$ &$l$ &  $p$ &  $q$\\
             \hline
            $1$&$5$ & $4$ & $6$\\
               & $6$& $7$ & $2$\\
               &$9$& $8$&  $2$\\
               \hline
        \end{tabularx}
    \end{table}
    We thus see that all generators commute and we get $\QBan(C_{12}(4, 5^+)) = G_{aut}(C_{12}(4, 5^+))$.
\end{proof}

\newpage
\section{Computing Non-Commutative Groebner bases in OSCAR}

\label{appendix:computing_non_commutative_groebner_bases_in_oscar}
\smallskip
\begin{center} by \textsc{Julien Schanz and Daniel Schultz} \end{center}
In this section, we give some details of our implementation of the non-commutative Buchberger algorithm 
in the OSCAR\cite{oscar} package in the Julia~\cite{julia} programming language and how to use it
when computing quantum symmetries of graphs.

We used the basic non-commutative Buchberger algorithm that was described in the PhD thesis of Xingqiang Xiu~\cite{xiu_2012}
together with some simple cases of the optimization strategies described there connected to the removal of redundant
obstructions.

We moreover used the selection strategy described in the PhD thesis of Keller~\cite{keller_1997} to select the next
obstruction that will be considered in the Buchberger algorithm.

When using the Groebner bases for quantum symmetries, we can not include the involution. Given a graph $\Gamma$ on $n$ vertices, 
we therefore compute the Groebner basis of the following ideal:
\[
    R(\Gamma) := \langle u_{ij}^2 = u_{ij}, \sum_{k} u_{ik} = \sum_{k} = u_{ki}, Au = uA \rangle
.\] 
Using this ideal, we can define the $\C$-algebra
 \[
     A^+(\Gamma) = \langle u_{ij} | 1 \le i, j \le n \rangle / R(\Gamma)
.\] 
We then have the following implication:
\begin{lem}
    Let $\Gamma$ be a graph on $n$ vertices and let  $a$ and  $b$ be polynomials in the letters $u_{ij}$ for $1 \le i, j \le n$.
    If $a = b$ holds in  $A^+(\Gamma)$, then  $a = b$ also holds in  $\QBan(\Gamma)$.

    We moreover have that if $A^+(\Gamma)$ is commutative then so is  $\QBan(\Gamma)$.
\end{lem}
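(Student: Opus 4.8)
The plan is to produce a single unital $\C$-algebra homomorphism $\pi \colon A^+(\Gamma) \to C(\QBan(\Gamma))$ sending each generator $u_{ij}$ to the corresponding generator $u_{ij}$, and then to transport equalities and commutativity along it. First I would recall the two defining presentations side by side: $A^+(\Gamma)$ is the quotient of the free unital $\C$-algebra $\C\langle u_{ij} : 1 \le i,j \le n\rangle$ by the two-sided ideal $R(\Gamma)$ generated by $u_{ij}^2 = u_{ij}$, $\sum_k u_{ik} = 1 = \sum_k u_{ki}$ and $A_\Gamma u = u A_\Gamma$; whereas $C(\QBan(\Gamma))$ is the universal $C^*$-algebra on the same generators subject to $u_{ij} = u_{ij}^* = u_{ij}^2$, the same row and column sum relations, and $A_\Gamma u = u A_\Gamma$. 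The key observation is that every generator of $R(\Gamma)$ is also a relation holding in $C(\QBan(\Gamma))$: the $A^+$-relations form a subset of the $\QBan$-relations (the latter merely impose the extra self-adjointness $u_{ij} = u_{ij}^*$).

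Next I would construct $\pi$ from the universal property of the free algebra. There is a unique algebra homomorphism $\tilde\pi \colon \C\langle u_{ij}\rangle \to C(\QBan(\Gamma))$ determined on free generators by $u_{ij} \mapsto u_{ij}$. By the observation above, $\tilde\pi$ sends every generator of $R(\Gamma)$ to $0$, hence annihilates the whole ideal $R(\Gamma)$, and therefore descends to a homomorphism $\pi \colon A^+(\Gamma) \to C(\QBan(\Gamma))$ with $\pi(u_{ij}) = u_{ij}$. It is worth emphasising that $\pi$ is only asked to be a $\C$-algebra map, not a $*$-homomorphism; since $A^+(\Gamma)$ carries no involution at all, this is exactly what makes the map well defined despite the definitional mismatch.

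The first assertion is then immediate: if $a = b$ holds in $A^+(\Gamma)$ for polynomials $a, b$ in the letters $u_{ij}$, applying $\pi$ and using $\pi(u_{ij}) = u_{ij}$ yields that the images of $a$ and $b$ coincide in $C(\QBan(\Gamma))$, i.e. $a = b$ holds in $\QBan(\Gamma)$. For the second assertion, assuming $A^+(\Gamma)$ commutative gives $u_{ij} u_{kl} = u_{kl} u_{ij}$ for all $i,j,k,l$; pushing this through $\pi$ shows the generators of $C(\QBan(\Gamma))$ pairwise commute. Because these generators are self-adjoint, the $*$-subalgebra they generate is commutative, and as it is dense in $C(\QBan(\Gamma))$, its norm closure $C(\QBan(\Gamma))$ is commutative as well.

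The only point genuinely requiring care — the main (and rather mild) obstacle — is the justification that $\pi$ is well defined across the gap between a bare $\C$-algebra built from strictly fewer relations and a $C^*$-algebra. This is resolved exactly by the remark that a $\C$-algebra homomorphism out of $A^+(\Gamma)$ need only respect the algebra relations defining $R(\Gamma)$, all of which are present on the target, so the additional $C^*$- and involution structure on $C(\QBan(\Gamma))$ is harmless; the density argument in the second part is the standard fact that commutativity of a dense $*$-subalgebra passes to its closure.
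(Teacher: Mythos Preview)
Your proof is correct and follows essentially the same approach as the paper: construct the algebra homomorphism $A^+(\Gamma) \to C(\QBan(\Gamma))$ sending $u_{ij} \mapsto u_{ij}$, and then transport equalities and commutativity along it, using density for the second claim. The paper's proof is simply a terser version of what you wrote.
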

\begin{proof}
    Since we have an algebra homomorphism given by
    \begin{align*}
        \phi: A^+(\Gamma) &\to \QBan(\Gamma)\\
        u_{ij} &\mapsto u_{ij}
    \end{align*}
    whose image lies dense in $\QBan(\Gamma)$, the statements follow.
\end{proof}
Note however, that the reverse implication does not hold, i.e. we can not conclude that $\QBan(\Gamma)$ is non-commutative
if  $A^+(\Gamma)$ is non-commutative.

\end{document}